\documentclass[aop]{imsart}

\makeatletter
\def\@serieslogo{}
\def\@issueinfo{}
\def\@copyrightyear{}
\def\@copyrightline{}
\def\@PII{}
\makeatother

\RequirePackage{amsthm,amsmath,amsfonts,amssymb}
\RequirePackage[authoryear,round]{natbib}%% author-year citations for AOP
\RequirePackage[colorlinks,citecolor=blue,urlcolor=blue]{hyperref}
\RequirePackage{graphicx}

\startlocaldefs
\theoremstyle{plain}

\newtheorem{theorem}{Theorem}[section]
\newtheorem{lemma}[theorem]{Lemma}
\theoremstyle{definition}
\newtheorem{definition}[theorem]{Definition}

\endlocaldefs

\usepackage{mathrsfs}
\usepackage{enumerate}

\newtheorem{proposition}[theorem]{Proposition}
\newtheorem{corollary}[theorem]{Corollary}
\theoremstyle{definition}

\newtheorem{remark}[theorem]{Remark}
\newtheorem{maintheorem}[theorem]{Main Theorem}
\numberwithin{equation}{section}

\newcommand{\calL}{\mathcal{L}}

\usepackage{tikz}

\begin{document}

\begin{frontmatter}
\title{Statistical Limit Theorems for Axiom A Diffeomorphisms: \\
Exponential Mixing, Central Limit Theorem, \\and Large Deviations}
\runtitle{Statistical Limit Theorems for Axiom A Diffeomorphisms}
\runauthor{A. Thiam}

%\begin{aug}
%%%%%%%%%%%%%%%%%%%%%%%%%%%%%%%%%%%%%%%%%%%%%%%%
%%% Only one address is permitted per author. %%
%%% Only division, organization and e-mail is %%
%%% included in the address.                  %%
%%% Additional information such as            %%
%%% identifying the corresponding author must %%
%%% be included in in the Acknowledgments     %%
%%% section if necessary.                     %%
%%% ORCID can be inserted by command:         %%
%%% \orcid{0000-0000-0000-0000}               %%
%%%%%%%%%%%%%%%%%%%%%%%%%%%%%%%%%%%%%%%%%%%%%%%%
%\author[A]{\fnms{Abdoulaye}~\snm{Thiam} \ead[label=e1]{athiam@allenuniversity.edu}}
%
%%%%%%%%%%%%%%%%%%%%%%%%%%%%%%%%%%%%%%%%%%%%%%%
%%% Addresses                                %%
%%%%%%%%%%%%%%%%%%%%%%%%%%%%%%%%%%%%%%%%%%%%%%%
%\address[A]{Division of Mathematics and Natural Sciences, Allen University, Columbia, South Carolina 29204, USA \textbf{}\printead[presep={ ,\ }]{e1}}
%
%\end{aug}

\begin{aug}

\author{\fnms{Abdoulaye}~\snm{Thiam}}

\address{Division of Mathematics and Natural Sciences, Allen University, Columbia, South Carolina 29204, USA\\
\textrm{E-mail:}~\href{mailto:athiam@allenuniversity.edu}{\text{athiam@allenuniversity.edu}}}

\end{aug}

%\begin{abstract}
%We establish statistical limit theorems for equilibrium states of Axiom A diffeomorphisms, derived from the spectral gap of the Ruelle transfer operator (Part~I) transferred to smooth dynamics through the Markov partition coding (Part~III). This Part contains five Main Theorems: the Volume Lemma with explicit two-sided bounds on the Riemannian volume of dynamical Bowen balls; exponential decay of correlations with explicit rates derived from the spectral gap; the Central Limit Theorem with Berry-Esseen bounds at the optimal rate, with an explicit spectral formula for the asymptotic variance and a characterization of its degeneracy through the Liv\v{s}ic coboundary condition; the Almost Sure Invariance Principle providing pathwise Brownian approximation with polynomial error; and a large deviations principle with rate function given by the Legendre transform of the pressure. The individual results are due to Sinai, Ruelle, Ratner, Denker-Philipp, Gou\"{e}zel, Kifer, Melbourne-Nicol, and Young; the contribution is their derivation from a single spectral mechanism with explicit dependence on hyperbolicity data. This Part constitutes Part~V of a six-part series.
%\end{abstract}

\begin{abstract}
\hspace{-0.6cm}We establish statistical limit theorems for equilibrium states of Axiom~A diffeomorphisms, derived from the spectral gap of the Ruelle transfer operator established in Part~I \cite{Thiam2026a} and transferred to smooth dynamics through the Markov partition coding of Part~III \cite{Thiam2026c}. This Part contains five Main Theorems. The first proves the Volume Lemma with explicit two-sided bounds on the Riemannian volume of dynamical Bowen balls in terms of Birkhoff sums of the geometric potential. The second establishes exponential decay of correlations with explicit mixing rates computed from the spectral gap of the normalized transfer operator. The third proves the Central Limit Theorem with Berry-Esseen bounds at the optimal rate, with an explicit spectral formula for the asymptotic variance and a characterization of its degeneracy through the Liv\v{s}ic coboundary condition. The fourth establishes the Almost Sure Invariance Principle, providing pathwise Brownian approximation with polynomial error via the martingale embedding method. The fifth proves a large deviations principle with rate function given by the Legendre transform of the pressure. The individual results are due to Sinai, Ruelle, Ratner, Denker-Philipp, Gou\"{e}zel, Kifer, Melbourne-Nicol, and Young; the contribution is their derivation from a single spectral mechanism with explicit dependence on hyperbolicity data. This Part constitutes Part~V of a six-part series on the thermodynamic formalism for hyperbolic dynamical systems.
\end{abstract}

\begin{keyword}[class=MSC]
\kwd[Primary ]{37D35}
\kwd[; secondary ]{37A25, 37D20, 60F05, 60F10}
\end{keyword}

\begin{keyword}
\kwd{Axiom A diffeomorphisms}
\kwd{exponential mixing}
\kwd{central limit theorem}
\kwd{large deviations}
\kwd{spectral gap}
\kwd{volume lemma}
\end{keyword}

\end{frontmatter}
%%%%%%%%%%%%%%%%%%%%%%%%%%%%%%%%%%%%%%%%%%%%%%
%% Please use \tableofcontents for articles %%
%% with 50 pages and more                   %%
%%%%%%%%%%%%%%%%%%%%%%%%%%%%%%%%%%%%%%%%%%%%%%
%\tableofcontents

\begin{center}
\textit{Dedicated to the memory of Jean-Christophe Yoccoz (1957--2016),}\\
\textit{Fields Medalist and Professor at the Coll\`{e}ge de France, with whom the author}\\
\textit{had the privilege of working, and who introduced him to hyperbolic dynamics.}
\end{center}

\thispagestyle{empty}

\makeatletter
\renewcommand{\ps@headings}{%
  \def\@oddfoot{\hfill\thepage\hfill}%
  \let\@evenfoot\@oddfoot%
  \def\@evenhead{\hfill\normalfont\small\textit{A.~Thiam}\hfill}%
  \def\@oddhead{\hfill\normalfont\small\textit{Statistical Limit Theorems for Axiom A Diffeomorphisms}\hfill}%
  \let\@mkboth\markboth%
}
\pagestyle{headings}
\makeatother

\setlength{\parskip}{0.3em}

\section{Introduction}\label{sec:introduction}

This Part derives the statistical limit theorems for equilibrium states of Axiom~A diffeomorphisms from the spectral gap established in Part~I \cite{Thiam2026a}. This Part contains \emph{five Main Theorems}: Main Theorem~\ref{thm:volume_lemma} (Volume Lemma) gives explicit two-sided bounds on the Riemannian volume of dynamical Bowen balls, which  \cite{Bowen1975} cites from \cite{BowenRuelle1975} without proof; Main Theorem~\ref{thm:exponential_mixing} (Exponential Mixing) establishes exponential decay of correlations with explicit rate $\gamma \geq c(\lambda, \alpha, \|\phi\|_\alpha)$; Main Theorem~\ref{thm:clt} (Central Limit Theorem) gives the CLT with Berry-Esseen bound $O(n^{-1/2})$ via the Nagaev-Guivarc'h method, with an explicit spectral formula for the variance and the Liv\v{s}ic degeneracy criterion; Main Theorem~\ref{thm:asip} (Almost Sure Invariance Principle) provides pathwise Brownian approximation with error $O(n^{1/2-\delta})$, following  \cite{MelbourneNicol2005, MelbourneNicol2009}; and Main Theorem~\ref{thm:large_deviations} (Large Deviations Principle) establishes the LDP with rate function $I(a) = \sup_t\{ta - (P(\phi+tg) - P(\phi))\}$.

The individual results are known. Exponential mixing for Axiom~A systems was established by  \cite{Sinai1972},  \cite{Ruelle1976, Ruelle1978}, and  \cite{Bowen1975}; the definitive treatment of transfer operators and decay rates appears in  \cite{Baladi2000} and  \cite{Dolgopyat1998}. The CLT was proved by  \cite{Ratner1973} for geodesic flows, building on the martingale approach of  \cite{Gordin1969} and the spectral method of  \cite{Nagaev1957}; Berry-Esseen bounds at rate $O(n^{-1/2})$ are due to  \cite{Gouezel2005} for non-uniformly expanding maps. The ASIP was established by  \cite{DenkerPhilipp1984} and refined by  \cite{MelbourneNicol2005}. Large deviations were developed by  \cite{Kifer1990},  \cite{Young1990}, and  \cite{OreyPelikan1988}. Our contribution is the derivation of all these from a single spectral gap, with explicit constants depending on the hyperbolicity data $(\lambda, \alpha, \|\phi\|_\alpha, N, M)$.

The contributions of this Part are fivefold, corresponding to its five Main Theorems. First, the Volume Lemma (Main Theorem~\ref{thm:volume_lemma}) establishes the two-sided bound $C_\varepsilon^{-1}\exp(S_n\phi^{(u)}(x)) \leq m(B_x(\varepsilon,n)) \leq C_\varepsilon\exp(S_n\phi^{(u)}(x))$ on the Riemannian volume of dynamical Bowen balls, with the prefactor $C_\varepsilon = \exp\!\left(\frac{C_1\varepsilon + C_2\varepsilon^\theta}{1-\lambda^\theta}\right)$ given by explicit curvature and derivative bounds, and with $\lambda$ the hyperbolicity constant and $\theta$ the H\"{o}lder exponent of the geometric potential; the Volume Lemma is cited in \cite{Bowen1975} from \cite{BowenRuelle1975} without proof. Second, exponential mixing (Main Theorem~\ref{thm:exponential_mixing}) holds with decay rate $\theta = e^{-\gamma}$ where $\gamma \geq \min\{\alpha\log\lambda^{-1}, c(\phi)/(1+\|\phi\|_\alpha)\}$, the second expression being the quantitative consequence of the normalized transfer operator spectral gap. Third, the Central Limit Theorem (Main Theorem~\ref{thm:clt}) holds with Berry-Esseen rate $O(n^{-1/2})$ (Proposition~\ref{thm:berry_esseen}), the spectral variance formula $\sigma^2(g) = \lim_n n^{-1}\mathrm{Var}_{\mu_\phi}(S_n g)$ (Proposition~\ref{prop:variance_formula}), and the characterization $\sigma^2(g) = 0$ if and only if $g$ is a Liv\v{s}ic coboundary (Proposition~\ref{prop:zero_variance}). Fourth, the Almost Sure Invariance Principle (Main Theorem~\ref{thm:asip}) provides pathwise Brownian approximation $S_n X = \sigma W_n + O(n^{1/2-\delta})$ almost surely, with $\delta > 0$ depending on the spectral gap and the H\"{o}lder exponent; the functional CLT, the law of the iterated logarithm, Strassen's functional LIL, and exponential deviation inequalities follow as consequences. Fifth, the Large Deviations Principle (Main Theorem~\ref{thm:large_deviations}) holds for Birkhoff averages with rate function $I(a) = \sup_t\{ta - (P(\phi+tg) - P(\phi))\} = P(\phi)^*(a)$, the Legendre transform of the shifted pressure, together with extensions to empirical measures (Proposition~\ref{thm:level2_ldp}) and Lyapunov exponents (Proposition~\ref{thm:lyapunov_ldp}). All constants in all five theorems trace back to the single spectral gap bound $r_{\mathrm{ess}}(\calL_\phi) \leq \alpha\lambda$ imported from Part~I \cite{Thiam2026a}.

\cite{KellerLiverani1999} proved stability of isolated eigenvalues of transfer operators under bounded perturbation, which is the spectral input required for the Nagaev-Guivarc'h method we use to prove the CLT (Main Theorem~\ref{thm:clt}). Their theorem, however, is qualitative; by contrast, we track the stability constants as explicit functions of the hyperbolicity data. In a parallel direction,  \cite{ParryPollicott1990} and, more recently, \cite{PollicottSharp1998} proved exponential error terms in the prime orbit theorem for hyperbolic flows through the spectral theory of the transfer operator. Building on the same spectral gap mechanism, our Volume Lemma (Main Theorem~\ref{thm:volume_lemma}) and decay of correlations estimate (Main Theorem~\ref{thm:exponential_mixing}) are derived with explicit rates, thereby complementing the Parry-Pollicott-Sharp zeta function approach.

Turning to the textbook and survey literature, \cite{VianaOliveira2016}, Chapter~12, Section~12.3, proved exponential decay of correlations for H\"older observables with respect to equilibrium states of H\"older potentials on topologically exact expanding maps; in a complementary direction, \cite{Luzzatto2006} surveys the broader non-uniformly expanding theory into which our uniformly hyperbolic quantitative version fits as the anchor case. Beyond the expanding setting,  \cite{ClimenhagaLuzzattoPesin2017, ClimenhagaLuzzattoPesin2022} treat the geometric SRB construction, to which our spectral approach is complementary. Finally,  \cite{Viana2020} surveys Lyapunov exponent regularity, which lies adjacent to our large deviations principle for Birkhoff sums of the geometric potential.

Our technical approach rests on four tools, each imported or developed to transfer the spectral gap of Part~I \cite{Thiam2026a} into a specific statistical conclusion. The first is the normalized transfer operator $\widehat{\calL}_\phi = e^{-P(\phi)}h_\phi^{-1}\calL_\phi(h_\phi\,\cdot)$ acting on $C^\alpha(\Omega_s)$, which has $\widehat{\calL}_\phi\mathbf{1} = \mathbf{1}$ and preserves $\mu_\phi$; exponential mixing (Main Theorem~\ref{thm:exponential_mixing}) follows from the fact that $\widehat{\calL}_\phi$ has spectral radius $1$ with the leading eigenvalue simple and separated from the rest of the spectrum by the gap inherited from Part~I \cite{Thiam2026a}. The second is the Nagaev-Guivarc'h spectral perturbation method \cite{Nagaev1957, GuivarchHardy1988}: to prove the CLT with Berry-Esseen rate (Main Theorem~\ref{thm:clt}), we perturb $\calL_\phi$ by $e^{isg}$ for a H\"{o}lder observable $g$ and use the spectral gap to track the simple dominant eigenvalue $\lambda(s)$ analytically; the asymptotic variance equals $\lambda''(0)$, and Esseen's inequality applied to the characteristic function $\lambda(s/\sqrt{n})^n$ yields the $O(n^{-1/2})$ rate. The third is the martingale embedding method of Melbourne-Nicol \cite{MelbourneNicol2005, MelbourneNicol2009}: to prove the ASIP (Main Theorem~\ref{thm:asip}), we decompose the centered Birkhoff sum $S_n g$ as a martingale plus a coboundary using the Gordin decomposition, couple the martingale to Brownian motion using Skorokhod embedding, and bound the coboundary error by the spectral gap; the resulting error $O(n^{1/2-\delta})$ is almost sure. The fourth is the G\"{a}rtner-Ellis theorem applied to the cumulant $\Lambda(t) = P(\phi+tg) - P(\phi)$: to prove the LDP (Main Theorem~\ref{thm:large_deviations}), we use the analytic dependence of the pressure on the potential (Theorem~\ref{thm:analytic_imported}, imported from Part~I \cite{Thiam2026a}) to verify the differentiability hypothesis of G\"{a}rtner-Ellis, and the rate function is obtained as the Legendre transform. The Volume Lemma (Main Theorem~\ref{thm:volume_lemma}) is a geometric-measure-theoretic input that does not use the spectral gap directly: it comes from a local product decomposition of the dynamical ball $B_x(\varepsilon,n)$ along stable and unstable manifolds, combined with the distortion estimates on the Jacobian of $f^n$ along unstable directions that follow from the H\"{o}lder continuity of the geometric potential. These four spectral tools together with the Volume Lemma suffice to derive every statistical result in this Part.

This Part uses three results from the companion Parts: the Ruelle-Perron-Frobenius theorem with spectral gap, established in Part~I \cite{Thiam2026a}; the Analytic Dependence on Potential theorem, established in Part~I \cite{Thiam2026a}; and the Symbolic Coding Main Theorem producing the coding map $\pi: \Sigma_A \to \Lambda$, established in Part~III \cite{Thiam2026c}. These are restated in Section~\ref{sec:imported}. Part~VI \cite{Thiam2026f} uses the statistical theorems established here to develop Liv\v{s}ic rigidity, multifractal analysis, and fluctuation theorems.

This Part is organized as follows. Section~\ref{sec:imported} restates the four results imported from Parts~I and~III--IV \cite{Thiam2026a, Thiam2026c, Thiam2026d}. Section~\ref{sec:preliminaries} fixes notation and establishes the H\"{o}lder regularity of the geometric potential. Section~\ref{sec:volume_lemmas} proves Main Theorem~\ref{thm:volume_lemma} (Volume Lemma) and its companion (Proposition~\ref{thm:second_volume_lemma}) through a local product decomposition and Jacobian distortion estimates. Section~\ref{sec:equilibrium_states} constructs the unique equilibrium state for each H\"{o}lder potential on a mixing basic set (Theorem~\ref{thm:equilibrium_basic}) with quantitative Gibbs bounds (Proposition~\ref{thm:gibbs_bounds}). Section~\ref{sec:decay_correlations} introduces the normalized transfer operator and proves Main Theorem~\ref{thm:exponential_mixing} (Exponential Mixing). Section~\ref{sec:clt} derives Main Theorem~\ref{thm:clt} (Central Limit Theorem) via the Nagaev-Guivarc'h method, together with Berry-Esseen bounds (Proposition~\ref{thm:berry_esseen}), the variance formula (Proposition~\ref{prop:variance_formula}), and the zero-variance characterization (Proposition~\ref{prop:zero_variance}). Section~\ref{sec:limit_theorems} establishes Main Theorem~\ref{thm:asip} (ASIP) via the Melbourne-Nicol martingale embedding, with the functional CLT (Corollary~\ref{cor:fclt}), the law of the iterated logarithm (Proposition~\ref{thm:lil}), Strassen's functional LIL (Corollary~\ref{cor:strassen}), and exponential deviation inequalities (Proposition~\ref{prop:exponential_deviation}) as consequences. Section~\ref{sec:large_deviations} proves Main Theorem~\ref{thm:large_deviations} (LDP) and extends it to empirical measures (Proposition~\ref{thm:level2_ldp}) and Lyapunov exponents (Proposition~\ref{thm:lyapunov_ldp}). Section~\ref{sec:srb_measures} records the foundation that this Part provides for the SRB measure theory in Part~VI \cite{Thiam2026f}. Section~\ref{sec:numerical} illustrates the CLT for the golden mean shift. Section~\ref{sec:conclusion} summarizes the five Main Theorems and states open problems. The appendix collects technical proofs: Volume Lemma estimates, spectral perturbation theory, measure disintegration, closing lemma bounds, Borel-Cantelli estimates, and dimension tools.

Turning from construction to statistical consequences, Section~\ref{sec:decay_correlations} introduces the normalized transfer operator (Proposition~\ref{prop:normalized_operator}) and uses its spectral decomposition to prove Main Theorem~\ref{thm:exponential_mixing} (Exponential Mixing), with the decay rate bounded explicitly by the spectral gap. Section~\ref{sec:clt} derives Main Theorem~\ref{thm:clt} (Central Limit Theorem) via the Nagaev-Guivarc'h spectral perturbation method, together with Berry-Esseen bounds at the optimal rate $O(n^{-1/2})$ (Proposition~\ref{thm:berry_esseen}), the spectral variance formula (Proposition~\ref{prop:variance_formula}), and the characterization of zero variance through the Liv\v{s}ic coboundary condition (Proposition~\ref{prop:zero_variance}). With this spectral machinery in place, Section~\ref{sec:limit_theorems} establishes Main Theorem~\ref{thm:asip} (Almost Sure Invariance Principle) via the martingale embedding method of  \cite{MelbourneNicol2005, MelbourneNicol2009}; the ASIP immediately yields the functional CLT (Corollary~\ref{cor:fclt}), the law of the iterated logarithm (Proposition~\ref{thm:lil}), Strassen's functional LIL (Corollary~\ref{cor:strassen}), and exponential deviation inequalities (Proposition~\ref{prop:exponential_deviation}). Section~\ref{sec:large_deviations} proves Main Theorem~\ref{thm:large_deviations} (Large Deviations Principle), extends it to empirical measures (Proposition~\ref{thm:level2_ldp}) and Lyapunov exponents (Proposition~\ref{thm:lyapunov_ldp}), and establishes qualitative properties of the rate function including its quadratic approximation near the mean (Proposition~\ref{prop:quadratic_rate}). Section~\ref{sec:srb_measures} records how the equilibrium state theory, Volume Lemmas, and attractor characterization established in the present Part provide the foundation for the SRB measure theory developed in Part~VI \cite{Thiam2026f}. Section~\ref{sec:numerical} provides a complete numerical illustration of the CLT for the golden mean shift, computing the asymptotic variance, spectral gap, and Berry-Esseen rate explicitly. Section~\ref{sec:conclusion} concludes the Part with a summary of the five Main Theorems, a discussion of the unifying spectral mechanism, and open problems. The appendix collects the supporting technical material: complete Volume Lemma estimates, spectral perturbation theory, measure disintegration, closing lemma bounds, Borel-Cantelli estimates, and dimension tools.

\section{Results from Companion Parts}\label{sec:imported}

We restate the key results from Parts \cite{Thiam2026a,Thiam2026b,Thiam2026c,Thiam2026d} that are used throughout. Proofs appear in the cited companion Parts.

\begin{theorem}[Ruelle-Perron-Frobenius with Spectral Gap, Part~I \cite{Thiam2026a}]\label{thm:spectral_imported}
For a topologically mixing SFT $(\Sigma_A,\sigma)$ and $\phi \in \mathcal{H}_\alpha(\Sigma_A^+)$, the transfer operator $\mathcal{L}_\phi$ on $\mathcal{H}_\alpha$ has a simple dominant eigenvalue $\lambda = e^{P(\phi)}$ with spectral gap: $r_{\mathrm{ess}}(\mathcal{L}_\phi) \leq \alpha\lambda < \lambda$. For all $g \in \mathcal{H}_\alpha$: $\|\lambda^{-n}\mathcal{L}_\phi^n g - \nu(g)h\|_\alpha \leq C\gamma^n\|g\|_\alpha$ where $\gamma < 1$ and $C$ depend explicitly on $(\alpha, \|\phi\|_\alpha, N, M)$.
\end{theorem}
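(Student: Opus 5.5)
We prove the statement by the classical Ruelle route, in three stages. First, a Lasota--Yorke (Doeblin--Fortet) inequality. Second, Hennion's theorem, which turns that inequality into a bound on the essential spectral radius. Third, a positivity (Birkhoff cone) argument, which gives simplicity of $\lambda = e^{P(\phi)}$ and an explicit gap.

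Throughout, the H\"older seminorm on $\Sigma_A^+$ is taken with respect to the metric $d(x,y)=\alpha^{n(x,y)}$. Here $n(x,y)$ is the first index at which $x$ and $y$ disagree. Then $\mathcal{H}_\alpha$ compactly embeds in $C(\Sigma_A^+)$ by Arzel\`a--Ascoli.

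\textbf{Step 1: the Lasota--Yorke inequality.} Fix $x,y$ in the same cylinder and $n\ge1$. Pair the preimages $\sigma^{-n}x$ and $\sigma^{-n}y$ along common admissible words $w$. The bounded distortion estimate
\[
|S_n\phi(wx)-S_n\phi(wy)| \le \frac{|\phi|_\alpha}{1-\alpha}\, d(x,y)
\]
holds because $d(wx,wy)=\alpha^n d(x,y)$. Using it, we show
\[
|\mathcal{L}_\phi^n g|_\alpha \le C_0\left(\alpha^n |g|_\alpha + \|g\|_\infty\right)\|\mathcal{L}_\phi^n \mathbf 1\|_\infty .
\]
The constant $C_0 = e^{|\phi|_\alpha/(1-\alpha)}\bigl(1+|\phi|_\alpha/(1-\alpha)\bigr)$ is explicit.

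Next we need $\|\mathcal{L}_\phi^n\mathbf 1\|_\infty \le K\lambda^n$. To get it, we compare $\mathcal{L}^n_\phi\mathbf 1(x)$ across points using mixing. Let $M$ be the primitivity exponent, so that $A^M>0$. Any two points then have comparable preimage sums, with ratio at most $e^{2M\|\phi\|_\infty+|\phi|_\alpha/(1-\alpha)}$. This gives supermultiplicativity up to constants, and hence the bound with $\lambda=\lim\|\mathcal{L}^n\mathbf 1\|_\infty^{1/n}=e^{P(\phi)}$. The alphabet size $N$ enters through the crude bound $\|\mathcal{L}_\phi\|_\infty\le Ne^{\|\phi\|_\infty}$.

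\textbf{Step 2: essential spectral radius.} Combine the Lasota--Yorke inequality of Step 1, normalized by $\lambda^{-n}$, with the compact embedding into $C^0$. Hennion's theorem (the Ionescu-Tulcea--Marinescu theorem) then gives $r_{\mathrm{ess}}(\lambda^{-1}\mathcal{L}_\phi)\le\alpha$, that is, $r_{\mathrm{ess}}(\mathcal{L}_\phi)\le\alpha\lambda<\lambda$. Since the spectral radius equals $\lambda$ (by Step 1 together with positivity), $\lambda$ is in the spectrum, and it is an isolated eigenvalue of finite multiplicity.

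\textbf{Step 3: existence, simplicity and explicit gap.} For existence of $h$ and $\nu$, apply Schauder--Tychonoff to the dual action $\nu\mapsto\mathcal{L}^*\nu/\mathcal{L}^*\nu(\mathbf 1)$. This produces $\nu$ with $\mathcal{L}_\phi^*\nu=\lambda\nu$. Then $h$ is obtained as a limit point of the Ces\`aro averages of $\lambda^{-n}\mathcal{L}^n\mathbf 1$, normalized so that $\nu(h)=1$.

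For uniqueness and the quantitative rate, use Birkhoff's cone method. Consider the cone
\[
\mathcal{C}_a=\{g>0 : g(x)\le e^{a\,d(x,y)}g(y) \text{ for } x,y \text{ in the same cylinder}\}.
\]
Step 1 shows that $\lambda^{-1}\mathcal{L}_\phi$ maps $\mathcal{C}_a$ into $\mathcal{C}_{\alpha a+|\phi|_\alpha}$. Take $a$ large, namely $a=2|\phi|_\alpha/(1-\alpha)$. Then $\mathcal{L}^M$, using $A^M>0$, maps the cone into a subcone of finite projective diameter $\Delta$. The diameter $\Delta$ is bounded explicitly in terms of $M$, $\|\phi\|_\alpha$ and $\alpha$. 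Birkhoff's contraction coefficient is $\tanh(\Delta/4)$, which gives projective contraction. Hence
\[
\|\lambda^{-n}\mathcal{L}^n g-\nu(g)h\|_\infty\le C\tanh(\Delta/4)^{n/M}\|g\|_\alpha
\]
for $g\in\mathcal{C}_a$. We extend to general $g$ by writing $g=(g+B\|g\|_\alpha)-B\|g\|_\alpha$, which places both pieces in the cone for a suitable $B$. We then upgrade the $C^0$ bound to an $\mathcal{H}_\alpha$ bound by applying Lasota--Yorke once more. This yields $\gamma=\max\{\alpha,\tanh(\Delta/4)\}^{1/(2M)}$, say, and this exponential convergence forces $\lambda$ to be simple with no other spectrum on $|z|=\lambda$.

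The main obstacle is Step 3: bounding the projective diameter $\Delta$ after $M$ steps with explicit dependence on $(\alpha,\|\phi\|_\alpha,N,M)$. It requires careful bookkeeping of the ratio $\mathcal{L}^Mg(x)/\mathcal{L}^Mg(y)$ across \emph{different} cylinders, where the primitivity of $A$ must be used quantitatively.
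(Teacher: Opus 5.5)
Your proposal is correct and follows essentially the route the paper attributes to Part~I. The explicit gap comes from Birkhoff cone contraction after $M$ steps; the paper quotes $\gamma\le\max(\alpha^{1/3},(1-\eta)^{1/(3M)})$, the same shape as your $\gamma$ obtained by splitting $n$ and upgrading the $C^0$ bound with Lasota--Yorke. Your separate Lasota--Yorke/Hennion step is what gives the sharp bound $r_{\mathrm{ess}}(\mathcal{L}_\phi)\le\alpha\lambda$, which the cone rate alone would not. The only slips are bookkeeping: the comparison ratio for $\mathcal{L}_\phi^n\mathbf{1}$, and hence $\Delta$, carries an extra factor up to $N^{M-1}$ from counting length-$M$ words ending at $x_0$, so $\Delta$ also depends on $N$; and your Lasota--Yorke constant must be at least $2$ to cover pairs $x,y$ in different cylinders.
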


\noindent The proof is given in Part~I \cite{Thiam2026a}, where it is established via the Birkhoff cone contraction technique with explicit computation of the spectral gap.

\begin{theorem}[Analytic Dependence on Potential, Part~I \cite{Thiam2026a}]\label{thm:analytic_imported}
The maps $\phi \mapsto P(\phi)$, $\phi \mapsto h_\phi$, $\phi \mapsto \nu_\phi$, $\phi \mapsto \mu_\phi$ are real-analytic on $\mathcal{H}_\alpha$. The derivatives: $P'(\phi;\psi) = \int\psi\,d\mu_\phi$ and $P''(\phi;\psi) = \lim_{n\to\infty}n^{-1}\mathrm{Var}_{\mu_\phi}(S_n\psi)$.
\end{theorem}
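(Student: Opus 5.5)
The plan is the classical Kato perturbation argument applied to the family $t\mapsto \calL_{\phi+t\psi}$, with Theorem~\ref{thm:spectral_imported} supplying the spectral gap. Fix $\phi,\psi\in\mathcal{H}_\alpha$. Since $\calL_{\phi+\psi}g=\calL_\phi(e^{\psi}g)$ and $\psi\mapsto e^{\psi}$ is an entire map into the Banach algebra $\mathcal{H}_\alpha$ (it is a convergent power series, and $\|uv\|_\alpha\le\|u\|_\alpha\|v\|_\alpha$), the map $\phi\mapsto\calL_\phi$ is real-analytic, indeed entire, from $\mathcal{H}_\alpha$ into the bounded operators on $\mathcal{H}_\alpha$. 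By Theorem~\ref{thm:spectral_imported}, $\lambda_\phi=e^{P(\phi)}$ is a simple isolated eigenvalue, and the rest of the spectrum lies in a disc of radius strictly smaller than $\lambda_\phi$.

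Choose a small circle $\Gamma$ around $\lambda_\phi$ that separates it from the rest of the spectrum. Because the resolvent depends continuously on the operator, $\Gamma$ stays in the resolvent set for all $\phi'$ in a neighbourhood of $\phi$. Define the Riesz projection
\[
\Pi_{\phi'}=\frac{1}{2\pi i}\oint_\Gamma (z-\calL_{\phi'})^{-1}\,dz .
\]
It is analytic in $\phi'$ and has rank one, since the rank is locally constant. Set $\lambda_{\phi'}=\mathrm{tr}(\calL_{\phi'}\Pi_{\phi'})$, which is analytic. Then $P(\phi')=\log\lambda_{\phi'}$ is analytic, and it is the pressure because it is the leading eigenvalue for $\phi'$ close to $\phi$.

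Next normalize the eigendata. Put $h_{\phi'}=\Pi_{\phi'}h_\phi/\nu_\phi(\Pi_{\phi'}h_\phi)$, where the denominator is nonzero near $\phi$. Define $\nu_{\phi'}$ through the dual projection, $\nu_{\phi'}(g)=\nu_\phi(\Pi_{\phi'}g)/\nu_\phi(\Pi_{\phi'}\mathbf 1)$, and rescale so that $\nu_{\phi'}(h_{\phi'})=1$. All of these are analytic, and so is $\mu_{\phi'}=h_{\phi'}\nu_{\phi'}$, viewed in the dual of $\mathcal{H}_\alpha$. Positivity of these objects follows by identifying them with the ones produced by Theorem~\ref{thm:spectral_imported} for $\phi'$.

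For the derivatives, differentiate $\calL_{\phi+t\psi}h_t=\lambda_t h_t$ at $t=0$, where $\calL'=\calL_\phi(\psi\,\cdot)$. Pairing with $\nu_\phi$ and using $\nu_\phi\calL_\phi=\lambda\nu_\phi$ kills the $h'$ terms and gives $\lambda'=\lambda\,\nu_\phi(\psi h_\phi)=\lambda\int\psi\,d\mu_\phi$. Hence $P'(\phi;\psi)=\int\psi\,d\mu_\phi$.

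The second derivative is the main obstacle. I would use the standard identity
\[
\nu_\phi\bigl(\lambda^{-n}\calL_{\phi+t\psi}^n h_\phi\bigr)=\int e^{tS_n\psi}\,d\mu_\phi ,
\]
which holds because $\calL_{\phi+t\psi}^n g=\calL_\phi^n(e^{tS_n\psi}g)$. The spectral decomposition then gives
\[
\int e^{tS_n\psi}\,d\mu_\phi=c(t)\,e^{n(P(\phi+t\psi)-P(\phi))}+O(\rho^n),
\]
uniformly for small complex $t$, where $c(t)$ is analytic with $c(0)=1$ and $\rho<1$. Taking logarithms, $\log\int e^{tS_n\psi}\,d\mu_\phi=n\Lambda(t)+\log c(t)+O(\rho^n)$ with $\Lambda(t)=P(\phi+t\psi)-P(\phi)$, analytic on a fixed complex disc. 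By Cauchy estimates the $t$-derivatives can be taken termwise. The second derivative at $0$ of the left side is $\mathrm{Var}_{\mu_\phi}(S_n\psi)$, so $\mathrm{Var}_{\mu_\phi}(S_n\psi)=n\,P''(\phi;\psi)+O(1)$. Dividing by $n$ gives $P''(\phi;\psi)=\lim_{n\to\infty}n^{-1}\mathrm{Var}_{\mu_\phi}(S_n\psi)$.

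The delicate points are twofold. First, the error terms must be uniform on a complex neighbourhood of $0$, which comes from the continuity of the Riesz projection. Second, $\log c(t)$ must stay bounded, which holds because $c(0)=1$.
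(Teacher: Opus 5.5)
Your proposal is correct. The analyticity step (a Riesz projection for the simple isolated eigenvalue of the entire family $\phi'\mapsto\mathcal{L}_{\phi'}$) and the first-derivative computation (differentiate the eigen-equation and pair with $\nu_\phi$ to kill the $h'$ terms) are essentially what the paper does; the paper obtains analyticity of the eigenvalue via the implicit function theorem rather than a Riesz projection.

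You genuinely diverge at the second derivative.
\begin{itemize}
\item The paper differentiates $\mathcal{L}_{\phi+z\psi}h_z=\lambda(z)h_z$ twice and solves for $h'(0)$ with the reduced resolvent $(e^{P(\phi)}-\mathcal{L}_\phi)^{-1}Q$. This gives the Green--Kubo series $\int\tilde\psi^2\,d\mu_\phi+2\sum_{k\ge1}\mathrm{Cov}_{\mu_\phi}(\psi,\psi\circ f^k)$, where $\tilde\psi=\psi-\mu_\phi(\psi)$. Identifying this series with $\lim n^{-1}\mathrm{Var}_{\mu_\phi}(S_n\psi)$ then needs exponential decay of correlations and the Ces\`aro computation of Proposition~\ref{prop:variance_formula}.
\item You instead write the moment generating function as $c(t)e^{n\Lambda(t)}+O(\rho^n)$, uniformly on a complex disc, and read off the cumulants by Cauchy estimates.
\end{itemize}
Your route bypasses $h'(0)$ and the correlation series entirely. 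It gives $\mathrm{Var}_{\mu_\phi}(S_n\psi)=nP''(\phi;\psi)+O(1)$ in one stroke, and it is the same perturbative mechanism the paper later uses for the CLT. The paper's route, in exchange, produces the explicit covariance formula that it reuses elsewhere (Proposition~\ref{prop:variance_spectral}, the golden-mean computation).

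One detail in yours should be made explicit. The $O(\rho^n)$ error is absolute, whereas $|e^{n\Lambda(t)}|$ can decay for non-real $t$. So before taking logarithms, shrink the disc until $|\lambda_t|>r$ uniformly, where $r$ is the radius of a circle enclosing the rest of the spectrum. Uniform resolvent bounds on $|z|=r$ give $\|N_t^n\|\le Cr^n$ for $N_t=\mathcal{L}_{\phi+t\psi}(I-\Pi_t)$. The error is then relatively $O((r/|\lambda_t|)^n)$, and $\log$ of the bracket is analytic and equal to $\log c(t)+O(\rho'^n)$ uniformly on the disc. That is exactly what your Cauchy-estimate step needs.
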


\noindent The proof is given in Part~I \cite{Thiam2026a}, where the analyticity is derived from Kato perturbation theory applied to the simple isolated eigenvalue of the transfer operator.

\begin{theorem}[Symbolic Coding, Main Theorem of Part~III \cite{Thiam2026c}]\label{thm:coding_imported3}
Every mixing basic set $\Lambda$ of a $C^2$ Axiom A diffeomorphism admits a Markov partition with coding map $\pi:\Sigma_A \to \Lambda$ satisfying $\pi\circ\sigma = f\circ\pi$, injective $\mu_\phi$-a.e. The potential $\phi^* = \phi\circ\pi \in \mathcal{H}_\alpha(\Sigma_A)$ whenever $\phi$ is H\"{o}lder on $\Lambda$, so all spectral and statistical results transfer.
\end{theorem}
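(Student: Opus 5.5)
This statement is imported from Part~III, so the plan is to reconstruct the standard Bowen argument. It has three parts: build a Markov partition from rectangles, code orbits through the transition matrix, and check Hölder regularity of the pulled-back potential.

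\emph{Markov partition.} First, fix $\varepsilon>0$ small relative to the local product structure constant of $\Lambda$. Let $\delta$ be given by the shadowing lemma for $\varepsilon$, and take a finite $\delta$-dense set $P=\{p_1,\dots,p_N\}\subset\Lambda$. Let $\Sigma(P)$ be the SFT of sequences with $d(f(p_{x_i}),p_{x_{i+1}})<\delta$. Each such pseudo-orbit is $\varepsilon$-shadowed by a unique $\theta(x)\in\Lambda$, by expansivity. The sets $T_s=\theta\{x:x_0=s\}$ are rectangles, i.e.\ closed under the bracket $[\cdot,\cdot]$, and they cover $\Lambda$. Next, refine the cover by Bowen's intersection procedure: for each $x$, intersect the rectangles containing $x$ and split each $T_j$ into the four pieces determined by the stable/unstable relations with $T_k$. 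This yields finitely many rectangles $R_i$ with disjoint interiors satisfying the Markov property
\[
f\big(W^s(x,R_i)\big)\subset W^s(fx,R_j),\qquad f^{-1}\big(W^u(fx,R_j)\big)\subset W^u(x,R_i)
\]
whenever $x\in\operatorname{int}R_i\cap f^{-1}\operatorname{int}R_j$.

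\emph{Coding.} Set $A_{ij}=1$ iff $\operatorname{int}R_i\cap f^{-1}\operatorname{int}R_j\neq\emptyset$, and define $\pi(x)=\bigcap_{n\in\mathbb{Z}} f^{-n}R_{x_n}$. Hyperbolicity contracts the diameters of $\bigcap_{|n|\le m} f^{-n}R_{x_n}$ like $C\lambda^{m}$. Hence $\pi$ is well defined, continuous and onto, and it satisfies $\pi\circ\sigma=f\circ\pi$. Mixing of $f|_\Lambda$ gives that $A$ is aperiodic, so $\sigma$ is mixing.

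\emph{Hölder transfer.} If $x_n=y_n$ for $|n|\le m$, then $d(\pi x,\pi y)\le C\lambda^m$. So $|\phi\circ\pi(x)-\phi\circ\pi(y)|\le |\phi|_\beta C^\beta\lambda^{\beta m}$. This means $\phi^*=\phi\circ\pi$ is Hölder in the metric $d_\alpha(x,y)=\alpha^{m}$ with $\alpha=\lambda^\beta$. Bowen's lemma then passes to a cohomologous one-sided potential in $\mathcal{H}_{\sqrt\alpha}(\Sigma_A^+)$, so Theorem~\ref{thm:spectral_imported} applies.

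\emph{Main obstacle: a.e.\ injectivity.} Non-injectivity occurs only at points whose orbit meets the boundary $\partial R=\partial^sR\cup\partial^uR$. By the Markov property, $f(\partial^sR)\subset\partial^sR$ and $f^{-1}(\partial^uR)\subset\partial^uR$. So any ergodic invariant measure giving $\partial^sR$ positive mass would be supported on $\bigcap_n f^n(\partial^s R)$, a closed invariant set with empty interior in $\Lambda$. For $\mu_\phi$ this is impossible, because the Gibbs property of the equilibrium state gives every nonempty open subset of $\Lambda$ positive measure. I would argue this as follows:

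1. Transport $\mu_\phi$ to the symbolic equilibrium state $m$ of $\phi^*$.
2. Use the Gibbs bounds to see that $m$ charges every cylinder.
3. Observe that the symbolic preimage of $\bigcup_n f^n\partial R$ is a proper closed invariant subset of a mixing SFT.
4. Conclude that it is $m$-null by ergodicity together with full support of $m$.

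Finally, outside $\pi^{-1}\big(\bigcup_n f^n\partial R\big)$ the itinerary of $\pi x$ determines $x$, so $\pi$ is injective there, which gives injectivity $\mu_\phi$-a.e.
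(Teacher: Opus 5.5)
Your proposal is correct and follows the same route the paper relies on: Bowen's small-diameter Markov partition from Part~III, H\"{o}lder regularity of $\phi\circ\pi$ via quantitative expansiveness (as in Step~2 of the proof of Theorem~\ref{thm:equilibrium_basic}), and a.e.\ injectivity from $\sigma(D^s)\subset D^s$ together with ergodicity and full support of the Gibbs measure (as in Step~4 there). One small fix is needed in your item~3: the set $\pi^{-1}\bigl(\bigcup_{n\in\mathbb{Z}} f^n\partial\mathcal{R}\bigr)$ is invariant but not closed, so the full-support argument does not apply to it directly. Instead, apply the closed-set argument to $\bigcap_{n\ge 0}\sigma^n\pi^{-1}(\partial^s\mathcal{R})$ and its analogue for $\partial^u\mathcal{R}$, exactly as in your preceding paragraph, and then conclude using $\pi^{-1}(f^nA)=\sigma^n\pi^{-1}(A)$ and countable subadditivity.
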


\noindent The proof is given in Part~III \cite{Thiam2026c}, where the coding map is constructed via Markov partitions of small diameter.

\begin{theorem}[Gibbs Equivalence Theorem, assembled from the four Main Theorems of Part~IV \cite{Thiam2026d}]\label{thm:gibbs_equiv_imported}
For a mixing basic set $\Lambda$ and H\"{o}lder potential $\phi$, the following are equivalent: (i) $\mu$ is the symbolic Gibbs measure for $\phi^* = \phi\circ\pi$ (Part~I \cite{Thiam2026a}); (ii) $\mu$ is the unique equilibrium state (Part~II \cite{Thiam2026b}); (iii) $\mu$ is the eigenmeasure of $\mathcal{L}_\phi^*$ with eigenvalue $e^{P(\phi)}$; (iv) $\mu$ satisfies the unstable Jacobian condition $J^u_\mu f = e^{P(\phi)-\phi}$. The transfer operator $\mathcal{L}_\phi$ on $C^\alpha(\Lambda)$ is quasi-compact with spectral gap $\rho_{\mathrm{ess}} \leq \lambda^\alpha \cdot e^{P(\phi)} < e^{P(\phi)}$, and the SRB measure $\mu^+ = \mu_{\phi^{(u)}}$ has absolutely continuous conditional measures along unstable manifolds.
\end{theorem}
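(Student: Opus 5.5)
The statement is Theorem~\ref{thm:gibbs_equiv_imported}, which is assembled from Part~IV. My plan is to prove a cycle of implications (i)$\Rightarrow$(iii)$\Rightarrow$(iv)$\Rightarrow$(ii)$\Rightarrow$(i), and then to obtain the spectral and SRB statements as by-products. Throughout, I will work through the coding map $\pi:\Sigma_A\to\Lambda$ of Theorem~\ref{thm:coding_imported3}. It semiconjugates $\sigma$ to $f$, is injective $\mu$-a.e. for every fully supported invariant measure in play, and turns $\phi$ into $\phi^*=\phi\circ\pi\in\mathcal{H}_\alpha$.

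\emph{Step 1: (i)$\Rightarrow$(iii) and (i)$\Rightarrow$(ii).} I will first replace $\phi^*$ by a cohomologous one-sided potential $\psi$ on $\Sigma_A^+$ (the Sinai lemma). Theorem~\ref{thm:spectral_imported} then gives $h_\psi$, $\nu_\psi$ and the Gibbs measure $h_\psi\nu_\psi$, which satisfies the Gibbs bounds on cylinders. The eigenmeasure property $\mathcal{L}^*\nu=e^{P}\nu$ is pushed forward by $\pi$ and gives (iii). For (ii), I will use the variational principle on cylinders: Gibbs bounds give $h_\mu(\sigma)+\int\phi^*\,d\mu=P(\phi)$. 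Uniqueness of the equilibrium state follows from strict concavity, or equivalently from the fact that any other equilibrium state $m$ satisfies $m\ll\mu$ by the Gibbs bounds, and ergodicity then forces $m=\mu$. Since $\pi$ is finite-to-one and injective a.e., entropy is preserved, so $\pi_*\mu$ is the unique equilibrium state for $\phi$ on $\Lambda$.

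\emph{Step 2: (iii)$\Rightarrow$(iv)$\Rightarrow$(ii).} Given an eigenmeasure $\mu$ with $\mathcal{L}_\phi^*\mu=e^{P(\phi)}\mu$, I will interpret the transfer operator on $\Lambda$ via local unstable leaves and disintegrate $\mu$ along $W^u_{\mathrm{loc}}$. The eigen-equation says that the Radon--Nikodym derivative of $\mu\circ f$ relative to $\mu$ on unstable plaques is $e^{P(\phi)-\phi}$, which is exactly the Jacobian condition (iv). From (iv), I will compute the entropy with a Rokhlin formula along an increasing unstable partition $\xi$: $h_\mu(f)=\int\log J^u_\mu f\,d\mu=P(\phi)-\int\phi\,d\mu$. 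This is the equality case of the variational principle, so (ii) holds. The implication (ii)$\Rightarrow$(i) then follows from the uniqueness established in Step 1.

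\emph{Step 3: the spectral gap on $C^\alpha(\Lambda)$ and the SRB statement.} For the gap, I will transfer Theorem~\ref{thm:spectral_imported} through $\pi$. The constant $\alpha$ there is the symbolic contraction rate, which I identify with $\lambda^\alpha$ after choosing the symbolic metric $d(x,y)=\lambda^{n(x,y)}$ compatible with the hyperbolicity. A Lasota--Yorke inequality $\|\mathcal{L}^n g\|_\alpha\le C\lambda^{\alpha n}e^{nP}\|g\|_\alpha+C e^{nP}\|g\|_\infty$, combined with Hennion's theorem, then gives $\rho_{\mathrm{ess}}\le\lambda^\alpha e^{P(\phi)}$. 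For the SRB claim, I will take $\phi=\phi^{(u)}=-\log|\det Df|_{E^u}|$, whose regularity is Hölder. Condition (iv) then makes the conditionals have Jacobian proportional to the Riemannian unstable Jacobian. Bounded distortion along $W^u$ shows they are equivalent to leaf volume.

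\emph{Main obstacle.} I expect the hardest step to be (iii)$\Rightarrow$(iv). It requires a genuine definition of the transfer operator on the invertible system $\Lambda$ and a measurable disintegration along unstable leaves compatible with $\mathcal{L}^*$. I would handle it by working with the increasing partition subordinate to $W^u$ built from the Markov rectangles. On that partition the eigen-relation can be checked rectangle by rectangle, with holonomy distortion controlled by the Hölder bounds.
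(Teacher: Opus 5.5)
The paper gives no argument for this statement here (it is imported from Part~IV), so your sketch has to stand on its own. It breaks at the first arrow of your cycle.

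In Step~1 the Gibbs measure is $\mu=h_\psi\nu_\psi$, but the relation $\mathcal{L}_\psi^*\nu_\psi=e^{P}\nu_\psi$ belongs to $\nu_\psi$, not to $\mu$. Since $h_\psi\cdot\mathcal{L}_\psi g=\mathcal{L}_\psi(g\cdot h_\psi\circ\sigma)$, one gets $\mathcal{L}_\psi^*\mu=e^{P}(h_\psi\circ\sigma)\,\nu_\psi$. This equals $e^{P}\mu$ only if $h_\psi\circ\sigma=h_\psi$, i.e.\ only if $h_\psi$ is constant (a normalized potential). Dually, $\nu_\psi$ is $\sigma$-invariant only when $\mathcal{L}_\psi\mathbf{1}=e^{P}$. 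So (i)$\Rightarrow$(iii) is false in general. In Step~2 you also cannot feed the eigenmeasure into the Rokhlin formula $h_\mu(f)=\int\log J^u_\mu f\,d\mu$ or into the variational principle, because it is not invariant.

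The same conflation affects (iv). The exact Jacobian $e^{P-\psi}$ is a property of the conformal measure, and it involves the Sinai-reduced $\psi$, not $\phi$. The invariant equilibrium state satisfies it only up to a multiplicative coboundary $(\rho\circ f)/\rho$, with $\rho>0$ built from $h_\psi$ and the Sinai transfer function. This matches the paper's own Proposition~\ref{prop:normalized_operator}, where the equilibrium state is $h_\phi\nu_\phi$. So the equivalence cannot be proved as literally stated. Item (iii) has to become ``$\mu=h_\phi\nu$ with $\nu$ the eigenmeasure'' (or the potential must be normalized). Item (iv) has to become a Jacobian condition modulo a coboundary, or a statement about conformal leaf measures.

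A smaller point in Step~1: uniqueness does not come from ``strict concavity'' (the entropy map is affine), and the Gibbs bounds alone do not give $m\ll\mu$. What works is Bowen's argument: an ergodic $m\neq\mu$ is singular to $\mu$, and the Gibbs bounds then force $h_m(\sigma)+\int\phi^*\,dm<P$.

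Step~3 fails on $C^\alpha(\Lambda)$. This is the difficulty you flag as the main obstacle, but you locate it only in (iii)$\Rightarrow$(iv). Since $f|_\Lambda$ is invertible, an operator genuinely acting on $C^\alpha(\Lambda)$ is the weighted composition $\mathcal{L}_\phi^n g=e^{S_n\phi\circ f^{-n}}\,g\circ f^{-n}$. If $\mathcal{L}_\phi g=e^{P(\phi)}g$, then $\|g\|_\infty\le e^{\sup S_n\phi-nP(\phi)}\|g\|_\infty$ for every $n$. The exponent tends to $-\infty$, because $\max_\nu\int\phi\,d\nu<P(\phi)$ when $h_{\mathrm{top}}(f|_\Lambda)>0$ (otherwise a maximizing measure would be a zero-entropy equilibrium state). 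Hence $g=0$: $e^{P(\phi)}$ is not even an eigenvalue there, and no Lasota--Yorke/Hennion argument yields the stated picture.

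Pulling back through $\pi$ does not help. A H\"older function on $\Lambda$ becomes a function of the two-sided sequence, while Theorem~\ref{thm:spectral_imported} concerns $\mathcal{H}_\alpha(\Sigma_A^+)$, i.e.\ functions constant on local stable sets (functions on unstable transversals of the rectangles). The gap must be stated on that quotient or on an anisotropic space.

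Finally, your SRB argument treats the factor $e^{P(\phi^{(u)})}$ in the Jacobian $e^{P(\phi^{(u)})}|\det Df|_{E^u}|$ as a harmless proportionality constant. That silently uses $P(\phi^{(u)})=0$, which by Proposition~\ref{prop:attractor_char} holds only when $\Lambda$ is an attractor. For a non-attractor, $\Lambda\cap W^u_{\mathrm{loc}}(x)$ is $m^u$-null; otherwise absolute continuity of the stable lamination would give $m(W^s(\Lambda))>0$. The conditionals of $\mu^+$ live on that null set, so they cannot be absolutely continuous. Consistently, $h_{\mu^+}(f)=P(\phi^{(u)})+\int\log|\det Df|_{E^u}|\,d\mu^+$ falls short of the Pesin value. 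The SRB clause needs the attractor hypothesis.
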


\noindent The proof is given in Part~IV \cite{Thiam2026d}, where the four Main Theorems establish structural stability, quasi-compactness, SRB measure existence, and the Pesin entropy formula.

\section{Preliminaries}\label{sec:preliminaries}

This section establishes the mathematical setting for the ergodic theory of Axiom A diffeomorphisms. We recall essential definitions and results from measure theory, functional analysis, and the geometric theory developed in Part~III \cite{Thiam2026c}, fixing notation used throughout.

Throughout, we write $f \asymp g$ to mean $c_1 g \leq f \leq c_2 g$ for positive constants $c_1, c_2$ independent of the relevant variables.

\subsection{Measure-Theoretic and Geometric Foundations}

\subsection{Measure-Theoretic and Geometric Foundations}

Let $(X, \mathcal{B}, \mu)$ be a probability space and $T : X \to X$ a measurable transformation preserving $\mu$.

\begin{definition}[Ergodicity]
The measure $\mu$ is ergodic for $T$ if every $T$-invariant measurable set $A$ (i.e., $T^{-1}(A) = A$) satisfies $\mu(A) \in \{0, 1\}$. Equivalently, every $T$-invariant measurable function is constant $\mu$-almost everywhere.
\end{definition}

\begin{definition}[Mixing Properties]
The system $(T, \mu)$ is mixing if for all measurable sets $A, B$,
\begin{equation}
\lim_{n \to \infty} \mu(A \cap T^{-n}B) = \mu(A)\mu(B).
\end{equation}
The system is weak mixing if for all measurable $A, B$,
\begin{equation}
\lim_{n \to \infty} \frac{1}{n}\sum_{k=0}^{n-1} |\mu(A \cap T^{-k}B) - \mu(A)\mu(B)| = 0.
\end{equation}
\end{definition}

\begin{theorem}[Birkhoff Ergodic Theorem]\label{thm:birkhoff}
Let $(T, \mu)$ be a measure-preserving system and $g \in L^1(\mu)$. Then the limit
\begin{equation}
g^*(x) = \lim_{n \to \infty} \frac{1}{n}\sum_{k=0}^{n-1} g(T^k x)
\end{equation}
exists $\mu$-almost everywhere and in $L^1(\mu)$, with $g^* \circ T = g^*$ a.e. and $\int g^* \, d\mu = \int g \, d\mu$. If $\mu$ is ergodic, then $g^* = \int g \, d\mu$ almost everywhere.
\end{theorem}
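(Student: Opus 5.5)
The plan is to follow the classical route through the maximal ergodic inequality (Garsia's proof). This gives almost-everywhere convergence first. The $L^1$ statement and the identification of the limit then follow from approximation and invariance.

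\emph{Step 1 (maximal inequality).} For $g\in L^1(\mu)$, set $S_n g=\sum_{k=0}^{n-1}g\circ T^k$, $M_N=\max_{0\le n\le N}S_n g$ with $S_0g=0$, and $E=\{\sup_n S_n g>0\}$. I will prove $\int_E g\,d\mu\ge 0$. On the set $\{M_N>0\}$ we have $S_{n+1}g=g+S_ng\circ T\le g+M_N\circ T$ for $0\le n\le N$. Hence $M_N\le g+M_N\circ T$ there. Integrating over $\{M_N>0\}$ and using $M_N\ge0$ everywhere together with invariance of $\mu$ gives $\int_{\{M_N>0\}}g\ge\int M_N-\int M_N\circ T=0$. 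Letting $N\to\infty$, dominated convergence gives the claim. Applying it to $g-a$ restricted to a $T$-invariant set $A$ yields $a\,\mu(A\cap\{\sup_n \tfrac1n S_ng>a\})\le\int_{A\cap\{\sup>a\}}g$.

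\emph{Step 2 (a.e.\ convergence).} Let $\overline g=\limsup \tfrac1n S_ng$ and $\underline g=\liminf\tfrac1nS_ng$. Both are $T$-invariant, since $\tfrac1nS_ng\circ T-\tfrac1nS_ng=\tfrac1n(g\circ T^n-g)$ and $g\circ T^n/n\to0$ a.e. The a.e.\ statement follows from Borel--Cantelli, because $\sum\mu(|g|>n\epsilon)\le\|g\|_1/\epsilon$. For rationals $a<b$, consider the invariant set $A_{a,b}=\{\underline g<a<b<\overline g\}$. Applying Step 1 on $A_{a,b}$ to $g$ at level $b$, and to $-g$ at level $-a$, gives $b\,\mu(A_{a,b})\le\int_{A_{a,b}}g\le a\,\mu(A_{a,b})$. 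Therefore $\mu(A_{a,b})=0$, and taking a countable union shows that $g^*=\lim\tfrac1nS_ng$ exists a.e.

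\emph{Step 3 ($L^1$ convergence and the integral).} For bounded $g$, the bounded convergence theorem gives $L^1$ convergence. For general $g$, I approximate by a bounded $h$ with $\|g-h\|_1<\epsilon$. The averaging operators are $L^1$-contractions, and Fatou gives $\|g^*-h^*\|_1\le\epsilon$. A $3\epsilon$ argument then shows that $\tfrac1nS_ng\to g^*$ in $L^1$. Since $\int\tfrac1nS_ng=\int g$, passing to the limit gives $\int g^*=\int g$. Invariance $g^*\circ T=g^*$ was obtained in Step 2.

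\emph{Step 4 (ergodic case).} If $\mu$ is ergodic, the invariant function $g^*$ is a.e.\ constant, by the equivalent formulation in the definition of ergodicity. Step 3 forces this constant to be $\int g\,d\mu$.

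The main obstacle is Step 1. Getting the maximal inequality right, in particular the sign bookkeeping on $\{M_N>0\}$ and the use of $M_N\ge0$, is the only non-formal point. Everything after it is routine measure theory.
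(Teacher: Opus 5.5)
Your argument is correct. It is the classical proof via the maximal ergodic inequality: Garsia's lemma applied on the invariant sets $\{\underline g<a<b<\overline g\}$, then Fatou and approximation by bounded functions, which is the route of Walters, Theorem~1.14, the reference the paper cites in place of giving its own proof. The one point to make explicit is that Step~2 only gives a limit in $[-\infty,\infty]$; finiteness of $g^*$ a.e.\ comes from the Fatou bound $\|g^*-h^*\|_1\le\|g-h\|_1$ in Step~3.
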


\noindent The proof is classical; see  \cite{Birkhoff1931} or  \cite[Theorem~1.14]{Walters1982}. The mean ergodic theorem (convergence in $L^2$) was established independently by  \cite{vonNeumann1932}.

\begin{definition}[Measure-Theoretic Entropy]
For a finite measurable partition $\xi = \{A_1, \ldots, A_k\}$, define the entropy $H_\mu(\xi) = -\sum_i \mu(A_i) \log \mu(A_i)$. The entropy of $T$ with respect to $\xi$ is
\begin{equation}
h_\mu(T, \xi) = \lim_{n \to \infty} \frac{1}{n} H_\mu\left(\bigvee_{j=0}^{n-1} T^{-j}\xi\right)
\end{equation}
where $\bigvee$ denotes the common refinement. The measure-theoretic entropy of $(T, \mu)$ is $h_\mu(T) = \sup_\xi h_\mu(T, \xi)$.
\end{definition}

\begin{definition}[Bernoulli Property]
A measure-preserving system $(T, \mu)$ is Bernoulli if it is measurably isomorphic to a Bernoulli shift, i.e., there exists a measurable bijection $\Phi : X \to \{1, \ldots, k\}^{\mathbb{Z}}$ conjugating $T$ to the shift and pushing $\mu$ forward to a product measure.
\end{definition}

The hierarchy of mixing properties satisfies: Bernoulli $\Rightarrow$ K-automorphism $\Rightarrow$ mixing $\Rightarrow$ weak mixing $\Rightarrow$ ergodic.

Let $M$ be a compact smooth Riemannian manifold with metric $g$ and induced distance function $d$. The Riemannian volume measure $m$ is defined locally by $dm = \sqrt{\det(g_{ij})} \, dx^1 \cdots dx^d$ in coordinates.

\begin{definition}[Jacobian]
For a diffeomorphism $f : M \to M$ and $x \in M$, the Jacobian is
\begin{equation}
\mathrm{Jac}(f)(x) = |\det Df_x| = \frac{dm \circ f^{-1}}{dm}(f(x)).
\end{equation}
For a linear subspace $E \subset T_x M$, the restricted Jacobian is
\begin{equation}
\mathrm{Jac}(Df_x|_E) = |\det(Df_x|_E : E \to Df_x(E))|
\end{equation}
computed with respect to the induced inner products.
\end{definition}

\begin{definition}[Dynamical Balls]
For $x \in M$, $\varepsilon > 0$, and $n \geq 1$, the dynamical ball (or Bowen ball) is
\begin{equation}
B_x(\varepsilon, n) = \{y \in M : d(f^k x, f^k y) \leq \varepsilon \text{ for all } k \in [0, n)\}.
\end{equation}
\end{definition}

The geometry of dynamical balls is fundamental to the Volume Lemmas developed in Section \ref{sec:volume_lemmas}.

\subsection{Axiom A Diffeomorphisms and the Geometric Potential}

We recall the essential geometric theory from Part~III \cite{Thiam2026c}. The foundational work on Markov partitions for Axiom~A systems is due to  \cite{Sinai1968a, Sinai1968b} and  \cite{Bowen1970a, Bowen1970b, Bowen1971}; see also  \cite{Ruelle1968, Ruelle1973} for the statistical mechanics perspective and  \cite{Bowen1974} for uniqueness of equilibrium states.

\begin{definition}[Axiom A Diffeomorphism]
A $C^r$ diffeomorphism $f : M \to M$ ($r \geq 1$) satisfies Axiom A if:
\begin{enumerate}
\item[(A1)] The nonwandering set $\Omega(f)$ is hyperbolic: there exists a continuous $Df$-invariant splitting $T_\Omega M = E^s \oplus E^u$ and constants $c > 0$, $\lambda \in (0, 1)$ such that
\begin{equation}
\|Df^n v\| \leq c\lambda^n \|v\| \text{ for } v \in E^s, \quad \|Df^{-n} v\| \leq c\lambda^n \|v\| \text{ for } v \in E^u.
\end{equation}
\item[(A2)] Periodic points are dense in $\Omega(f)$.
\end{enumerate}
\end{definition}

\begin{definition}[Basic Set]
A basic set is a closed, $f$-invariant, topologically transitive subset $\Omega_s \subset \Omega(f)$ that is locally maximal (i.e., there exists an open neighborhood $U$ with $\Omega_s = \bigcap_{n \in \mathbb{Z}} f^n(U)$).
\end{definition}

\begin{theorem}[Spectral Decomposition, Main Theorem of Part~III \cite{Thiam2026c}]\label{thm:spectral_decomp}
For an Axiom A diffeomorphism, $\Omega(f) = \Omega_1 \cup \cdots \cup \Omega_s$ is a disjoint union of finitely many basic sets. Each basic set decomposes as $\Omega_i = X_{1,i} \cup \cdots \cup X_{n_i,i}$ where $f(X_{j,i}) = X_{j+1,i \mod n_i}$ and $f^{n_i}|_{X_{1,i}}$ is topologically mixing.
\end{theorem}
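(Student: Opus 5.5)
The plan is to prove the two assertions of Theorem~\ref{thm:spectral_decomp} separately: first the finite decomposition of $\Omega(f)$ into basic sets, then the cyclic decomposition of each basic set into mixing pieces. Both rest on the local product structure of the hyperbolic set $\Omega(f)$, density of periodic points (A2), and the shadowing/closing lemma.

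\textbf{Step 1: relation on periodic points.} For hyperbolic periodic points $p,q \in \mathrm{Per}(f)$, write $p \sim q$ if $W^u(\mathcal{O}(p))$ meets $W^s(\mathcal{O}(q))$ transversally at a point other than an orbit point, and $W^u(\mathcal{O}(q))$ meets $W^s(\mathcal{O}(p))$ likewise. I will show $\sim$ is an equivalence relation. Transitivity follows from the $\lambda$-lemma: if $W^u(\mathcal{O}(p))$ crosses $W^s(\mathcal{O}(q))$, then iterates of $W^u(\mathcal{O}(p))$ accumulate $C^1$ on $W^u(\mathcal{O}(q))$. Hence a transverse crossing of $W^u(\mathcal{O}(q))$ with $W^s(\mathcal{O}(r))$ produces a crossing of $W^u(\mathcal{O}(p))$ with $W^s(\mathcal{O}(r))$.

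\textbf{Step 2: finitely many classes, closures are basic sets.} Let $\delta$ be the size of local stable and unstable manifolds from the local product structure. If $p,q \in \Omega$ satisfy $d(p,q) < \delta$, then $W^u_\delta(p)$ and $W^s_\delta(q)$ intersect transversally, and by (A2) nearby periodic points are equivalent. By compactness $\Omega$ is covered by finitely many $\delta/2$-balls, so there are finitely many classes. Their closures $\Omega_i$ are pairwise at distance at least $\delta/2$, hence disjoint, and they cover $\Omega$ by (A2).
\begin{itemize}
\item[(a)] \emph{Invariance} is immediate, since $f$ maps classes to classes.
\item[(b)] \emph{Transitivity}: given open sets $U,V$ meeting $\Omega_i$, pick periodic points $p \in U$ and $q \in V$. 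By the $\lambda$-lemma some iterate of a neighbourhood of $p$ reaches a neighbourhood of $q$.
\item[(c)] \emph{Local maximality}: take $U$ to be a small neighbourhood of $\Omega_i$. Any orbit staying in $U$ is an $\varepsilon$-pseudo-orbit of $\Omega$ points, so by shadowing it is shadowed by an orbit in $\Omega$. By the closing lemma and the local product structure that orbit lies in $\Omega_i$, using that $\Omega_i$ is closed under the bracket $[x,y]=W^s_\delta(x)\cap W^u_\delta(y)$.
\end{itemize}

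\textbf{Step 3: cyclic decomposition.} Fix a periodic $p \in \Omega_i$ and set $X_{1,i} = \overline{W^u(p)\cap \Omega_i}$. I will show that the sets $f^j(X_{1,i})$ are either equal or disjoint and open in $\Omega_i$. Openness comes from the bracket: points near $X_{1,i}$ have unstable leaves crossing $W^s$ of points of $W^u(p)$, and the $\lambda$-lemma places them in $X_{1,i}$. Since $\Omega_i$ is compact and these sets are open, only finitely many are distinct, which gives the period $n_i$ with $f^{n_i}(X_{1,i}) = X_{1,i}$.

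Mixing of $g = f^{n_i}|_{X_{1,i}}$ is the main obstacle. Given open $U,V \subset X_{1,i}$, pick periodic $q \in U$. The key claim is that $W^u(q)\cap X_{1,i}$ is dense in $X_{1,i}$, which I will derive from the choice of $X_{1,i}$ together with the equivalence $q\sim p$ inside one cyclic piece. Then $g^{kn}(W^u_\delta(q)) \to$ dense along the multiples of the period $n$ of $q$, and these iterates meet $V$ for all large $k$. The delicate point is removing the dependence on residues mod $n$, i.e.\ showing that every residue class is eventually hit. The plan is to use that the $g$-periods of points in $X_{1,i}$ have gcd $1$, which is what the choice of $n_i$ as the minimal return time of $X_{1,i}$ ensures. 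Combining periodic orbits with coprime periods via shadowing then yields intersections for all sufficiently large times.
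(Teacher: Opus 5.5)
The paper gives no argument for this statement (it is imported from Part~III), so your outline has to stand on its own. Steps~1--2 and the choice $X_{1,i}=\overline{W^u(p)\cap\Omega_i}$ follow the standard Smale--Bowen route. The mixing step, however, has a genuine gap.

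\textbf{The gap.} You assert that the $g$-periods of periodic points in $X_{1,i}$ have $\gcd$ equal to $1$ because $n_i$ is the minimal return time of $X_{1,i}$. Minimality of $n_i$ only says that $f^j(X_{1,i})\neq X_{1,i}$ for $0<j<n_i$. It says nothing about a possible finer cyclic structure of $g$ inside $X_{1,i}$, and ruling that out is exactly what mixing means. The usual logic runs the other way: once $g$ is mixing, the closing lemma applied to $g^N(U)\cap U\neq\emptyset$ and $g^{N+1}(U)\cap U\neq\emptyset$ gives periodic points with periods dividing $N$ and $N+1$, whence $\gcd=1$. As written, the step presupposes its conclusion. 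The shadowing concatenation of loops with coprime periods is also not carried out: you would need to control transit times out of $U$ and into $V$, and keep the shadowing orbit inside $X_{1,i}$.

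\textbf{The fix.} The detour is unnecessary, because your own key claim already handles all residues. Let $q\in U$ be periodic with $g^m q=q$. Apply density of $W^u(q)\cap X_{1,i}$ not to $V$ but to each $V_j=g^{-j}(V)$, $0\le j<m$; these are nonempty and open since $g$ is a homeomorphism of $X_{1,i}$. Choose $z_j\in W^u(q)\cap V_j$. Then $g^{-km}z_j\to q$, so $g^{-km}z_j\in U$ for $k\ge K_j$. Hence $g^j z_j\in g^{km+j}(U)\cap V$, and so $g^N(U)\cap V\neq\emptyset$ for every $N\ge m\max_j K_j$.

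The real work is therefore the density claim $X_{1,i}\subseteq\overline{W^u(q)\cap\Omega}$ for periodic $q\in X_{1,i}$. Your openness sketch has stable and unstable interchanged. The useful point is $w=W^s_\delta(x)\cap W^u_\delta(z)$, for periodic $x$ near some $z\in W^u(p)\cap\Omega$. By local product structure $w\in W^u(p)\cap\Omega$, and $f^{kr}w\to x$ for a common period $r$ of $p$ and $x$. No $\lambda$-lemma is needed; it would only give accumulation in $M$, not by points of $\Omega$. This argument yields openness of $X_{1,i}$ in $\Omega_i$, and from that $\overline{W^u(q)\cap\Omega}\subseteq X_{1,i}$. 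The inclusion you actually need is the reverse one, and that is where $q\sim p$ must be used.

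\textbf{Smaller repairs.}
\begin{itemize}
\item With ``other than an orbit point'' your relation is not reflexive: a sink, or a saddle with no homoclinic points, is equivalent to nothing. So the classes need not cover $\mathrm{Per}(f)$.
\item Transitivity of $f|_{\Omega_i}$ needs connecting orbits inside $\Omega_i$, not merely in $M$.
\item Finiteness of $\{f^j(X_{1,i})\}$ follows from $f^{\mathrm{per}(p)}(X_{1,i})=X_{1,i}$, not from openness plus compactness; a compact space can carry infinitely many disjoint clopen sets.
\item You still need $\bigcup_j f^j(X_{1,i})=\Omega_i$. This follows from transitivity, since that union is clopen and invariant.
\end{itemize}
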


\noindent The proof is given in Part~III \cite{Thiam2026c}.

\begin{definition}[Stable and Unstable Manifolds]
For $x \in \Omega$ and small $\varepsilon > 0$,
\begin{align}
W^s_\varepsilon(x) &= \{y \in M : d(f^n x, f^n y) \leq \varepsilon \text{ for all } n \geq 0\}, \\
W^u_\varepsilon(x) &= \{y \in M : d(f^{-n} x, f^{-n} y) \leq \varepsilon \text{ for all } n \geq 0\}.
\end{align}
The global manifolds are $W^s(x) = \bigcup_{n \geq 0} f^{-n}(W^s_\varepsilon(f^n x))$ and similarly for $W^u(x)$.
\end{definition}

\begin{theorem}[Stable Manifold Theorem, Main Theorem of Part~III \cite{Thiam2026c}]\label{thm:stable_manifold_prelim}
For a hyperbolic set $\Lambda$ of a $C^r$ diffeomorphism ($r \geq 1$), the local stable manifold $W^s_\varepsilon(x)$ is a $C^r$ embedded disk tangent to $E^s_x$ at $x$, varying continuously with $x$ in the $C^r$ topology.
\end{theorem}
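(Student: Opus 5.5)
The plan is the classical Hadamard--Perron graph transform argument for the local stable manifold; the unstable case follows by applying the same argument to $f^{-1}$. First I will linearize near $\Lambda$. Using the exponential map, I write $f$ near each $x\in\Lambda$ as a family of maps
\[
F_x = \exp_{f(x)}^{-1}\circ f\circ \exp_x : T_xM(\rho)\to T_{f(x)}M.
\]
In the splitting $E^s_x\oplus E^u_x$ this has the form $F_x(v,w) = (A_x v + a_x(v,w),\, B_x w + b_x(v,w))$. Here $A_x = Df_x|_{E^s}$ and $B_x = Df_x|_{E^u}$. After replacing the Riemannian metric by an adapted (Mather) metric, or replacing $f$ by an iterate $f^N$ with $c\lambda^N<1$, we may assume $\|A_x\|\le\lambda$ and $\|B_x^{-1}\|\le\lambda$. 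The nonlinear terms $a_x, b_x$ vanish to first order at $0$. Since $f$ is $C^r$ and $\Lambda$ is compact, their Lipschitz constants on the $\rho$-ball are at most $\eta(\rho)\to0$, uniformly in $x$. The splitting is only continuous, but I will extend it to a continuous, non-invariant splitting on a neighborhood. This costs only an additional small $\eta$.

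Next I will construct $W^s_\varepsilon(x)$ as a graph. Consider the space $\mathcal{G}$ of families $\{\psi_{x}\}_{x\in\Lambda}$ with each $\psi_x: E^s_x(\varepsilon)\to E^u_x(\varepsilon)$ Lipschitz with constant at most $1$ and $\psi_x(0)=0$. Define the backward graph transform $\Gamma$ by declaring $\operatorname{graph}(\Gamma\psi)_x$ to be $F_x^{-1}\bigl(\operatorname{graph}\psi_{f(x)}\bigr)\cap(\text{box})$. Cone invariance gives two facts. If two points on the graph differ by $(\delta v,\delta w)$ with $\|\delta w\|\le\|\delta v\|$, their preimages satisfy the same cone condition. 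The unstable component is also expanded by $F_x$ at rate at least $\lambda^{-1}-2\eta$, so $\Gamma$ is well defined and maps $\mathcal{G}$ into itself. In the sup-metric, $\Gamma$ contracts by a factor $(\lambda+\eta)/(\lambda^{-1}-\eta)<1$. Its unique fixed point $\psi^*$ has a graph consisting exactly of the points whose forward orbits stay $\varepsilon$-close. The reason is that any point off the graph has its unstable coordinate relative to the graph expanded, so it leaves the box. This identifies $\exp_x(\operatorname{graph}\psi^*_x)$ with $W^s_\varepsilon(x)$ from the definition. The same contraction also gives continuity in $x$ in $C^0$: the fixed point of a contraction depending continuously on parameters depends continuously on them.

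For tangency and smoothness, I will first show that $\psi^*_x$ is differentiable at $0$ with derivative $0$. Invariance of the fixed point gives $\mathrm{Lip}(\psi^*_x|_{B(r)})\le\eta(r)\to0$, which yields tangency to $E^s_x$. For $C^1$, I will run the fiber contraction theorem of Hirsch--Pugh--Shub on the bundle of candidate derivatives $L_x(v)\in L(E^s,E^u)$ with norm at most $1$. Over the base contraction $\Gamma$, there is an induced map on derivatives, which is a fiber contraction by the same cone estimate. The fiber contraction theorem makes its invariant section globally attracting, and it equals $D\psi^*$, so $\psi^*\in C^1$. Higher regularity $C^r$ follows by induction with jets, applying the fiber contraction to the $k$-jet bundle. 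This step is the main obstacle. The fiber contraction rate at level $k$ is roughly $\|A\|^{k}\|B^{-1}\|$-type, and one must check that it stays below $1$. For stable manifolds, which are the strong-contracting leaves, this holds for every $k\le r$ because $\|A\|^k\le\lambda^k\le1$. Continuity of the family in the $C^r$ topology then comes from the continuity of the fixed section in the parameter $x$, using the same fiber contraction argument.

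Finally, I undo the passage to $f^N$ by observing that $W^s_\varepsilon$ for $f^N$ and for $f$ coincide up to adjusting $\varepsilon$ by a bounded factor. I then apply everything to $f^{-1}$ to obtain the statement for $W^u_\varepsilon(x)$.
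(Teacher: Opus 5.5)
Your proposal is correct and follows the route the paper indicates: the paper defers the proof to Part~III, via the backward graph transform, and your Hadamard--Perron construction plus the Hirsch--Pugh--Shub fiber-contraction bootstrap on $k$-jets (rate roughly $\|A\|^k\|B^{-1}\|\le\lambda^{k+1}<1$) is the standard way to carry that out. Two small precision points do not affect the argument: in the plain sup-metric the graph transform contracts only by about $\lambda$, since your factor $(\lambda+\eta)/(\lambda^{-1}-\eta)$ is the rate for the weighted metric $\sup_{v\neq 0}|\psi_1(v)-\psi_2(v)|/|v|$; and the local stable sets of $f$ and $f^N$ are nested, $W^s_{\varepsilon/L^N}(x,f^N)\subset W^s_\varepsilon(x,f)\subset W^s_\varepsilon(x,f^N)$ with $L=\mathrm{Lip}(f)$, rather than equal, so what you actually get is that $W^s_\varepsilon(x)$ is a neighborhood of $x$ inside a $C^r$ embedded disk, which is the intended content of the statement.
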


\noindent The proof is given in Part~III \cite{Thiam2026c}, via the backward graph transform with explicit size estimates.

\begin{definition}[Attractor]
A basic set $\Omega_s$ is an attractor if there exists an open neighborhood $U$ of $\Omega_s$ with $f(\overline{U}) \subset U$. Equivalently, $W^s_\varepsilon(\Omega_s) = \bigcup_{x \in \Omega_s} W^s_\varepsilon(x)$ is a neighborhood of $\Omega_s$.
\end{definition}

The geometric potential is central to the ergodic theory of Axiom A diffeomorphisms.

\begin{definition}[Geometric Potential]\label{def:geometric_potential}
For $x \in \Omega$, define the unstable Jacobian
\begin{equation}
\lambda^{(u)}(x) = |\det Df_x|_{E^u_x}| = \mathrm{Jac}(Df_x : E^u_x \to E^u_{f(x)})
\end{equation}
and the geometric potential
\begin{equation}
\phi^{(u)}(x) = -\log \lambda^{(u)}(x) = -\log |\det Df_x|_{E^u_x}|.
\end{equation}
\end{definition}

\begin{proposition}[H\"{o}lder Continuity of Geometric Potential]\label{prop:geometric_holder}
For a $C^2$ basic set $\Omega_s$, the geometric potential $\phi^{(u)} : \Omega_s \to \mathbb{R}$ is H\"{o}lder continuous. More precisely, if the unstable distribution $x \mapsto E^u_x$ is $\beta$-H\"{o}lder continuous, then $\phi^{(u)}$ is $\beta$-H\"{o}lder with
\begin{equation}
|\phi^{(u)}|_\beta \leq C(\|Df\|_{C^1}, \|Df^{-1}\|_{C^1}, |E^u|_\beta).
\end{equation}
\end{proposition}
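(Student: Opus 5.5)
The strategy is to write $\phi^{(u)}(x) = -\log|\det(Df_x|_{E^u_x})|$ as a composition of three maps and to check that each step preserves $\beta$-H\"older regularity. The maps are $x \mapsto (x, E^u_x)$, then $(x,E) \mapsto |\det(Df_x|_E)|$, then $t \mapsto -\log t$. We use the $\beta$-H\"older hypothesis on the distribution $x \mapsto E^u_x$, and $C^1$ regularity of $Df$, which holds since $f$ is $C^2$.

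First we fix a convenient metric. Cover a neighbourhood of $\Omega_s$ by finitely many charts of uniform size $\delta_0$ in which $TM$ is trivialized. For points $x,y$ with $d(x,y) < \delta_0$ in a common chart, measure the distance between $E^u_x$ and $E^u_y$ in the Grassmannian $G_k(\mathbb{R}^d)$, with $k=\dim E^u$ constant on the basic set. Concretely, represent $E^u_y$ as the graph of a linear map $L_{xy}: E^u_x \to E^s_x$ with $\|L_{xy}\| \le |E^u|_\beta\, d(x,y)^\beta$. Pairs with $d(x,y)\ge\delta_0$ are trivial: $\phi^{(u)}$ is bounded by $\log\max(\|Df\|,\|Df^{-1}\|)\cdot k$, so $|\phi^{(u)}(x)-\phi^{(u)}(y)| \le 2\|\phi^{(u)}\|_\infty \delta_0^{-\beta} d(x,y)^\beta$.

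For nearby points we use an orthonormal basis $v_1,\dots,v_k$ of $E^u_x$. The vectors $w_i = v_i + L_{xy}v_i$ span $E^u_y$. We then write
\[
|\det(Df_y|_{E^u_y})| = \frac{\|Df_y w_1\wedge\cdots\wedge Df_y w_k\|}{\|w_1\wedge\cdots\wedge w_k\|}.
\]
The numerator differs from $\|Df_x v_1\wedge\cdots\wedge Df_x v_k\|$ by at most $C k \|Df\|^{k-1}\bigl(\|Df\|_{C^1} d(x,y) + \|Df\|\,\|L_{xy}\|\bigr)$, using multilinearity of the wedge product and the Lipschitz bound $\|Df_x - Df_y\| \le \|Df\|_{C^1} d(x,y)$ in the trivialization. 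The denominator lies in $[1, (1+\|L_{xy}\|)^k]$. Since $\beta\le1$, both errors are $O(d(x,y)^\beta)$.

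Finally we pass through the logarithm. The key point is a uniform lower bound $|\det(Df_x|_{E^u_x})| \ge \|Df^{-1}\|^{-k} > 0$. This holds because $Df_x|_{E^u_x}$ is invertible with inverse $Df^{-1}_{f(x)}|_{E^u_{f(x)}}$, whose operator norm is at most $\|Df^{-1}\|$. Thus $-\log$ is Lipschitz on the relevant range with constant $\|Df^{-1}\|^k$, and we obtain $|\phi^{(u)}(x)-\phi^{(u)}(y)| \le C\, d(x,y)^\beta$ with $C$ depending only on $\|Df\|_{C^1}$, $\|Df^{-1}\|_{C^1}$, $|E^u|_\beta$ and $k\le \dim M$. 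The distortion of the trivialization is also controlled by these quantities together with fixed chart data.

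The main obstacle is bookkeeping rather than anything conceptual. Different fibres $T_xM$ and $T_yM$ have to be compared, and the H\"older seminorm $|E^u|_\beta$ must be defined consistently with the trivialization used for $Df$. I would handle this by fixing the finite atlas once, with constants absorbed into $C$. I would also prove the wedge-product perturbation estimate as a small linear-algebra lemma: for $k$-frames $A,B$ and linear maps $T,S$, $\bigl|\|TA\| - \|SB\|\bigr| \le C(\|T-S\| + \|A-B\|)$ on bounded sets. With the atlas fixed and that lemma available, the rest is direct.
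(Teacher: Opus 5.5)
Your proposal is correct and follows essentially the paper's route: the paper writes $\phi^{(u)}$ as the H\"older section $x\mapsto E^u_x$ composed with a function that is smooth in $E$ on the Grassmannian. You make explicit what its three-line proof leaves implicit, namely the $x$-dependence of $Df_x$ (where $f\in C^2$ enters) and the uniform lower bound $|\det(Df_x|_{E^u_x})|\ge\|Df^{-1}\|^{-k}$ needed to pass through the logarithm. One minor slip: $E^s_x$ need not be orthogonal to $E^u_x$, so the Gram determinant of $w_i=v_i+L_{xy}v_i$ can be slightly below $1$ and the denominator is only $1+O(\|L_{xy}\|)$ rather than in $[1,(1+\|L_{xy}\|)^k]$ (alternatively take $L_{xy}$ with values in $(E^u_x)^\perp$), which does not affect the conclusion.
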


\begin{proof}
The map $x \mapsto E^u_x$ is $\beta$-H\"{o}lder by the H\"{o}lder Continuity of Hyperbolic Splitting theorem of Part~III \cite{Thiam2026c}. The determinant function $E \mapsto \det(Df_x|_E)$ is smooth in $E$ (varying in the Grassmannian). The composition of a smooth function with a H\"{o}lder function is H\"{o}lder with the same exponent, giving the result.
\end{proof}

\begin{remark}[Metric Independence]
While $\phi^{(u)}$ depends on the Riemannian metric, the periodic orbit sums $S_n\phi^{(u)}(x) = \sum_{k=0}^{n-1} \phi^{(u)}(f^k x)$ for $f^n x = x$ are metric-independent, as $\exp(-S_n\phi^{(u)}(x)) = |\det Df^n_x|_{E^u_x}|$ is an intrinsic quantity. By Proposition \ref{prop:cohomology_characterization} below, the equilibrium state $\mu^+ = \mu_{\phi^{(u)}}$ and pressure $P(\phi^{(u)})$ are also metric-independent.
\end{remark}

\subsection{H\"{o}lder Spaces and Transfer Operators}

The geometric potential inherits its regularity from the hyperbolic splitting. For sharper results on anisotropic Banach spaces, see  \cite{GouezelLiverani2006}; for the connection to zeta functions and periodic orbit structure, see \cite{ParryPollicott1990}. We verify this here and recall the transfer operator and its spectral properties, as the H\"{o}lder exponent controls the spectral gap and all subsequent estimates.

\begin{definition}[H\"{o}lder Space]
For a compact metric space $X$ and $\alpha \in (0, 1]$, the H\"{o}lder space $C^\alpha(X)$ consists of functions $g : X \to \mathbb{R}$ with finite norm
\begin{equation}
\|g\|_\alpha = \|g\|_\infty + |g|_\alpha, \quad |g|_\alpha = \sup_{x \neq y} \frac{|g(x) - g(y)|}{d(x, y)^\alpha}.
\end{equation}
\end{definition}

\begin{definition}[Transfer Operator]
For $\phi \in C^\alpha(\Omega_s)$, the transfer operator $\mathcal{L}_\phi : C^\alpha(\Omega_s) \to C^\alpha(\Omega_s)$ is defined via the symbolic coding. Using a Markov partition $\mathcal{R}$ with coding map $\pi : \Sigma_A \to \Omega_s$ and symbolic potential $\phi^* = \phi \circ \pi$, we have
\begin{equation}
(\mathcal{L}_{\phi^*} g)(a) = \sum_{b : \sigma(b) = a} e^{\phi^*(b)} g(b)
\end{equation}
for $g \in C^\alpha(\Sigma_A)$, and the smooth transfer operator is defined by conjugation.
\end{definition}

\begin{theorem}[Quasi-compactness of Transfer Operator, Main Theorem of Part~IV \cite{Thiam2026d}]\label{thm:spectral_properties}
For a mixing basic set $\Omega_s$ and $\phi \in C^\alpha(\Omega_s)$, the transfer operator $\mathcal{L}_\phi$ on $C^\alpha$ is quasi-compact with:
\begin{enumerate}
\item[(i)] Spectral radius $\rho(\mathcal{L}_\phi) = e^{P(\phi)}$ where $P(\phi)$ is the topological pressure.
\item[(ii)] Essential spectral radius $\rho_{\mathrm{ess}}(\mathcal{L}_\phi) \leq \theta \cdot e^{P(\phi)}$ with $\theta = \lambda^\alpha < 1$.
\item[(iii)] The eigenvalue $e^{P(\phi)}$ is simple with strictly positive eigenfunction $h_\phi > 0$ and probability eigenmeasure $\nu_\phi$.
\end{enumerate}
\end{theorem}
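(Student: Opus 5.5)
The plan is to pull the statement back to the symbolic setting through the coding of Theorem~\ref{thm:coding_imported3} and then read off (i)--(iii) from Theorem~\ref{thm:spectral_imported}. The technical care goes into how the H\"older space on $\Omega_s$ corresponds to the space on $\Sigma_A$, and, for the essential radius, into passing to one-sided sequences.

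\textbf{Step 1: reduction to a one-sided shift.} Set $\phi^* = \phi\circ\pi$. By Theorem~\ref{thm:coding_imported3}, $\phi^*$ is H\"older on $\Sigma_A$ for the metric $d_\theta(a,b)=\theta^{N(a,b)}$ with $\theta=\lambda^\alpha$. This holds because $d(\pi a,\pi b)\le C\lambda^{N(a,b)}$, by uniform contraction on stable and unstable leaves of the rectangles. Since $\phi^*$ depends on two-sided sequences, I would use the standard Sinai/Bowen homology trick and write $\phi^* = \psi + u - u\circ\sigma$. Here $\psi$ depends only on future coordinates and lies in $\mathcal H_{\alpha/2}(\Sigma_A^+)$, and $u$ is bounded H\"older. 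Cohomologous potentials have the same pressure. Moreover $\mathcal L_{\phi^*}$ and $\mathcal L_\psi$ are conjugate by multiplication by $e^{u}$, up to the usual shift convention, so they share spectral radius, essential spectral radius and peripheral spectrum. The smooth operator is defined by conjugation through $\pi$, so it suffices to analyse $\mathcal L_\psi$ on $\mathcal H(\Sigma_A^+)$.

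\textbf{Step 2: the three items.} Apply Theorem~\ref{thm:spectral_imported} to $\psi$ and identify $P(\psi)=P_\sigma(\phi^*)=P_f(\phi)$. The second equality uses that $\pi$ is finite-to-one and bijective off a set that carries no pressure, which is the standard Bowen argument. This gives (i), since the spectral radius equals the simple dominant eigenvalue $e^{P(\phi)}$. It also gives (iii): there is a strictly positive eigenfunction $h$, namely the limit of $\lambda^{-n}\mathcal L^n\mathbf 1$, and positivity follows from mixing and the lower bound of the cone argument. The eigenmeasure $\nu$ comes from the dual. Transporting these back gives $h_\phi$ and $\nu_\phi=\pi_*\nu$.

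For (ii), I would prove a Lasota--Yorke (Doeblin--Fortet) inequality directly:
\[
|\mathcal L_\psi^n g|_\theta \le C\,\theta^n e^{nP}\,|g|_\theta + C\, e^{nP}\|g\|_\infty .
\]
The first term comes from the fact that inverse branches contract $d_\theta$ by a factor $\theta$. The second term comes from bounded distortion of $e^{S_n\psi}$. The unit ball of $\|\cdot\|_\theta$ is relatively compact in $\|\cdot\|_\infty$ by Arzel\`a--Ascoli. Hennion's theorem then yields $\rho_{\mathrm{ess}}\le \theta e^{P(\phi)}=\lambda^\alpha e^{P(\phi)}$.

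\textbf{Main obstacle.} I expect the difficulty to be the bookkeeping of H\"older exponents. Two places lose a factor: the homology step halves the exponent, and the comparison of $C^\alpha(\Omega_s)$ with $\mathcal H(\Sigma_A)$ involves the contraction rate of $\pi$. Together these threaten to degrade the constant $\theta=\lambda^\alpha$. To control this, I would first choose the symbolic metric exponent so that the pullback $g\mapsto g\circ\pi$ maps $C^\alpha$ isometrically, up to constants, into $\mathcal H_{\theta}$. I would then run the Lasota--Yorke inequality directly for $\phi^*$ on functions constant on local stable sets, which avoids the square-root loss. If that fails, I would state (ii) with $\theta=\lambda^{\alpha/2}$ and note that any $\theta<1$ suffices for the subsequent applications.
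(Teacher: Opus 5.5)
The paper gives no argument for this statement (it is imported from Part~IV). So the question is whether your reduction reaches the operator in the statement, and it does not. The break is the sentence ``it suffices to analyse $\mathcal{L}_\psi$ on $\mathcal{H}(\Sigma_A^+)$.''

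Since $f|_{\Omega_s}$ and the two-sided shift are bijections, the defining sum $\sum_{\sigma b=a}$ has exactly one term. The operator in the statement is therefore the weighted composition operator $\mathcal{L}_\phi g=e^{\phi\circ f^{-1}}\,g\circ f^{-1}$. Multiplication by $e^{-u}$ does conjugate $\mathcal{L}_{\phi^*}$ to $g\mapsto e^{\psi\circ\sigma^{-1}}\,g\circ\sigma^{-1}$, but on the same two-sided space, where it is still invertible, with inverse $g\mapsto e^{-\psi}\,g\circ\sigma$.

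The one-sided Ruelle operator $\sum_{\sigma b=a}e^{\psi(b)}g(b)$ acts on functions of $(a_0,a_1,\dots)$, a different space, and it is not even similar to the two-sided operator. Whenever some symbol $j$ has two predecessors $i_1\neq i_2$, which is automatic for a mixing SFT other than a fixed point, the nonzero H\"older function $e^{-\psi}(\mathbf{1}_{[i_1j]}-\mathbf{1}_{[i_2j]})$ lies in its kernel. Nor does $g\mapsto g\circ\pi$ land in $\mathcal{H}(\Sigma_A^+)$, because pullbacks of elements of $C^\alpha(\Omega_s)$ depend on past coordinates.

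The ``transport back'' in (iii) fails for the same reason. The objects $h$ and $\nu$ live on $\Sigma_A^+$ while $\pi$ is defined on $\Sigma_A$, so $\pi_*\nu$ is meaningless and $h$ induces no continuous function on $\Omega_s$. What your Step~2 actually proves, with a correct Lasota--Yorke/Hennion argument, is the Ruelle--Perron--Frobenius theorem on $\Sigma_A^+$, i.e.\ Theorem~\ref{thm:spectral_imported} again.

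This cannot be repaired by a cleverer choice of metric inside $C^\alpha(\Omega_s)$. Suppose some continuous $h>0$ satisfied $e^{\phi\circ f^{-1}}h\circ f^{-1}=e^{P(\phi)}h$. Then $\phi=P(\phi)+\log h\circ f-\log h$, so $\int\phi\,d\mu=P(\phi)$ for every invariant $\mu$. The variational principle then gives $P(\phi)=P(\phi)+h_{\mathrm{top}}(f|_{\Omega_s})$. Hence (iii) fails for the operator as defined on every mixing basic set other than a fixed point.

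A correct version must change the space. One option is functions on the quotient of the rectangles by local stable leaves; this is what $\Sigma_A^+$ codes, and there your one-sided argument is the right one. Another is anisotropic spaces in the spirit of \cite{GouezelLiverani2006}. Your write-up should say explicitly which operator it concerns.

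Within the one-sided setting your exponent concern is also real. The standard Sinai lemma only yields $\psi\in\mathcal{H}_{\theta^{1/2}}$, so this route gives only $\rho_{\mathrm{ess}}\le\lambda^{\alpha/2}e^{P(\phi)}$. Your proposed remedy of running Lasota--Yorke ``directly for $\phi^*$ on functions constant on local stable sets'' is not well defined. A one-sided preimage $b$ of $a$ does not determine $\phi^*(b)$. Making the weight a function of future coordinates without changing pressure and equilibrium state is exactly the homology step, which is where the square root is lost.
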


\subsection{Symbolic Dynamics and Specification}

We recall the symbolic coding of basic sets by subshifts of finite type and the quantitative expansiveness estimate. These are the tools that transfer spectral results from the symbolic setting of Part~I \cite{Thiam2026a} to smooth dynamics.

\begin{definition}[Subshift of Finite Type]
For an $m \times m$ matrix $A$ with entries in $\{0, 1\}$, the two-sided subshift of finite type is
\begin{equation}
\Sigma_A = \{(a_n)_{n \in \mathbb{Z}} \in \{1, \ldots, m\}^{\mathbb{Z}} : A_{a_n a_{n+1}} = 1 \text{ for all } n\}
\end{equation}
with the shift map $\sigma : \Sigma_A \to \Sigma_A$ defined by $(\sigma a)_n = a_{n+1}$.
\end{definition}

\begin{theorem}[Symbolic Coding, Main Theorem of Part~III \cite{Thiam2026c}]\label{thm:symbolic_coding_prelim}
For a basic set $\Omega_s$ with Markov partition $\mathcal{R}$ and transition matrix $A$, there exists a continuous surjection $\pi : \Sigma_A \to \Omega_s$ with:
\begin{enumerate}
\item[(i)] $\pi \circ \sigma = f \circ \pi$.
\item[(ii)] $\pi$ is injective on a dense $G_\delta$ set of full measure for any ergodic invariant measure with zero boundary measure.
\item[(iii)] $h_{\mathrm{top}}(f|_{\Omega_s}) = h_{\mathrm{top}}(\sigma|_{\Sigma_A}) = \log \rho(A)$.
\end{enumerate}
\end{theorem}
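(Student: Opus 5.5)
We prove the three assertions of Theorem~\ref{thm:symbolic_coding_prelim} in turn from the Markov partition $\mathcal{R}=\{R_1,\dots,R_m\}$. We take $\mathcal{R}$ as given by Theorem~\ref{thm:coding_imported3}, with diameter smaller than the expansivity constant $\varepsilon_0$ of $f|_{\Omega_s}$.

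\emph{Definition of $\pi$ and property (i).} For $a\in\Sigma_A$ we define $\pi(a)$ as the unique point of $\bigcap_{n\in\mathbb{Z}} f^{-n}(R_{a_n})$. The finite intersections $\bigcap_{|n|\le N} f^{-n}R_{a_n}$ are nonempty whenever $a$ is admissible. This follows from the Markov property: $f(W^u(x,R_i))\supset W^u(fx,R_j)$ and $f(W^s(x,R_i))\subset W^s(fx,R_j)$ when $A_{ij}=1$, combined with the local product structure $[\cdot,\cdot]$ on rectangles. By compactness the infinite intersection is nonempty. Any two of its points stay within $\operatorname{diam}\mathcal{R}<\varepsilon_0$ along the whole orbit, so expansiveness makes it a single point. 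The quantitative bound $\operatorname{diam}\bigcap_{|n|\le N}f^{-n}R_{a_n}\le 2c\lambda^N\operatorname{diam}\mathcal{R}$, from (A1), gives H\"older continuity of $\pi$ for the metric $d(a,b)=\lambda^{\min\{|n|:a_n\ne b_n\}}$. This is also the source of the statement that $\phi\circ\pi\in\mathcal{H}_\alpha$. Surjectivity holds because the rectangles cover $\Omega_s$, so every orbit visits some itinerary. The conjugacy $\pi\circ\sigma=f\circ\pi$ is immediate from the definition: $f(\pi(a))$ lies in $\bigcap_n f^{-n}R_{a_{n+1}}$.

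\emph{Property (ii).} Let $\partial\mathcal{R}=\bigcup_i(\partial^sR_i\cup\partial^uR_i)$. The Markov property gives $f(\partial^s\mathcal{R})\subset\partial^s\mathcal{R}$ and $f^{-1}(\partial^u\mathcal{R})\subset\partial^u\mathcal{R}$. A point whose orbit never meets $\partial\mathcal{R}$ has a unique itinerary, since it lies in the interior of exactly one rectangle at each time. The set of such points is $G=\Omega_s\setminus\bigcup_{n\in\mathbb{Z}}f^n(\partial\mathcal{R})$. Each $f^n(\partial\mathcal{R})$ is closed and nowhere dense in $\Omega_s$, because rectangles are closures of their interiors. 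By Baire, $G$ is a dense $G_\delta$, and $\pi$ is injective on $\pi^{-1}(G)$. If $\mu$ is ergodic invariant with $\mu(\partial\mathcal{R})=0$, then by invariance $\mu\bigl(\bigcup_n f^n\partial\mathcal{R}\bigr)=0$, so $G$ has full $\mu$-measure.

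\emph{Property (iii).} Since $\pi$ is a factor map, $h_{\mathrm{top}}(f|_{\Omega_s})\le h_{\mathrm{top}}(\sigma|_{\Sigma_A})$. For the reverse inequality we bound the fibers: the cardinality of $\pi^{-1}(x)$ is at most $m^2$. This is the standard Bowen argument. If more than $m^2$ preimages existed, two of them would share their symbols at time $0$ together with a coordinate in the ``past'' and ``future'' sense. Product structure would then allow swapping tails, and expansiveness would force them to coincide. A finite-to-one factor preserves topological entropy (Bowen's inequality $h(\sigma)\le h(f)+\sup_x h(\sigma,\pi^{-1}x)$, with the fiber term zero). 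Finally, $h_{\mathrm{top}}(\sigma|_{\Sigma_A})=\log\rho(A)$ because the number of admissible words of length $n$ is $\sum_{ij}(A^{n-1})_{ij}\asymp\rho(A)^n$ by Perron--Frobenius.

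\emph{Main obstacle.} The bounded-to-one estimate in (iii) is the most delicate step. It also requires care about boundary points lying in several rectangles, whereas (i) and (ii) are essentially formal once the Markov property is available. We would first try the direct pigeonhole and product-structure argument. If that fails, we would fall back on the variational principle: by (ii) with zero boundary measure, $\pi$ is a measure-theoretic isomorphism for the measure of maximal entropy of $\sigma$. Its push-forward then has entropy $\log\rho(A)$, which gives the reverse inequality. Using this route requires verifying that the push-forward gives $\partial\mathcal{R}$ measure zero, which follows from the Gibbs property of the Parry measure.
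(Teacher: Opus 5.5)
Your bounded-to-one step in (iii) does not work as sketched. The paper itself gives no proof of this statement; it imports the theorem from Part~III. So I compare your argument with Bowen's standard one, which your (i) and (ii) follow.

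\textbf{The gap in (iii).} Take $m^2+1$ codings of $x$ that are distinct on $[-k,k]$. Pigeonhole only forces two of them, $a\neq b$, to agree at the two times $-k$ and $k$, not at time $0$ as well. Splicing tails then produces further codings of the same point $x$. But expansiveness only says that a sequence codes at most one point. It says nothing about several sequences coding one point, so no contradiction arises.

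The missing idea is a no-diamond argument from the rectangle geometry:
\begin{enumerate}
\item Iterate the Markov property at interior points and pass to closures. Then both $K_a=\bigcap_{|n|\le k}f^{-n}R_{a_n}$ and $K_b$ contain the stable piece $f^{k}W^s(f^{-k}x,R_{a_{-k}})$ and the unstable piece $f^{-k}W^u(f^{k}x,R_{a_k})$ through $x$. This is exactly where $a_{\pm k}=b_{\pm k}$ is used.
\item Both $K_a$ and $K_b$ are closed under $[\cdot,\cdot]$. So $K_a\cap K_b$ contains the bracket of these two pieces, which is a set with nonempty interior in $\Omega_s$.
\item The interior of a finite intersection is the intersection of the interiors. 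This contradicts $\operatorname{int}R_{a_n}\cap\operatorname{int}R_{b_n}=\emptyset$ at an index where $a_n\neq b_n$.
\end{enumerate}

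\textbf{Your fallback.} It already closes (iii) and is the cleaner main argument. It is the mechanism the paper uses in Steps~4--5 of the proof of Theorem~\ref{thm:equilibrium_basic}. Let $\mu_{\mathrm{P}}$ be the Parry measure. Two points need fixing.
\begin{enumerate}
\item Null boundary for $\pi_*\mu_{\mathrm{P}}$ does not follow from the Gibbs property alone. Argue instead as follows. $D^s=\pi^{-1}(\partial^s\mathcal{R})$ is closed with $\sigma D^s\subset D^s$. So $\bigcap_{n\ge 0}\sigma^nD^s$ is invariant and has the same measure as $D^s$. By ergodicity that measure is $0$ or $1$. It is not $1$, because the complement of $D^s$ contains a cylinder (around the itinerary of a point of $G$), and $\mu_{\mathrm{P}}$ charges cylinders. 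The same argument handles $D^u$.
\item Ergodicity and full support of $\mu_{\mathrm{P}}$ require $A$ to be irreducible. This follows from transitivity of $f|_{\Omega_s}$ and density of $G$.
\end{enumerate}
Then $\pi$ is injective on the invariant full-measure Borel set $\pi^{-1}(G)$, hence an isomorphism mod $0$ (Lusin--Souslin). So $h_{\pi_*\mu_{\mathrm{P}}}(f)=\log\rho(A)$, and the variational principle gives $h_{\mathrm{top}}(f|_{\Omega_s})\geq\log\rho(A)$. This route avoids finite-to-one altogether.

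\textbf{Surjectivity.} Your surjectivity argument is also incomplete. Picking any $R_{a_n}\ni f^nx$ need not give an admissible sequence when the orbit meets $\partial\mathcal{R}$, since admissibility means $\operatorname{int}R_{a_n}\cap f^{-1}\operatorname{int}R_{a_{n+1}}\neq\emptyset$. Instead, note that points of $G$ have admissible itineraries. So the compact set $\pi(\Sigma_A)$ contains the dense set $G$, and hence equals $\Omega_s$.

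Relatedly, the Markov inclusions hold at points of $\operatorname{int}R_i\cap f^{-1}\operatorname{int}R_j$, not merely whenever $A_{ij}=1$. That is the form needed for the nonemptiness of finite cylinder intersections (Bowen's Lemma~3.17).

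The rest is fine: the remainder of (i) and (ii), the factor inequality, and $h_{\mathrm{top}}(\sigma)=\log\rho(A)$.
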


\begin{definition}[Expansiveness Constant]
The diffeomorphism $f|_{\Omega_s}$ is expansive with constant $\varepsilon_0 > 0$ if $d(f^n x, f^n y) \leq \varepsilon_0$ for all $n \in \mathbb{Z}$ implies $x = y$.
\end{definition}

\begin{lemma}[Quantitative Expansiveness, Part~III \cite{Thiam2026c}]\label{lem:quantitative_expansiveness}
There exist $\varepsilon_0 > 0$ and $\alpha \in (0, 1)$ depending on the hyperbolicity constants such that if $x \in \Omega_s$, $y \in M$, and $d(f^k x, f^k y) \leq \varepsilon_0$ for all $k \in [-N, N]$, then $d(x, y) < \alpha^N$.
\end{lemma}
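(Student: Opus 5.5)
The plan is to argue by contraction along the stable and unstable directions inside a local product chart. I would first fix $\varepsilon_0$ below the size of the local stable and unstable manifolds given by Theorem~\ref{thm:stable_manifold_prelim}, and small enough that the local product structure holds. This means: for $x \in \Omega_s$ and $y$ with $d(x,y) \leq \varepsilon_0$, the intersection $z = W^u_{\delta}(x) \cap W^s_{\delta}(y)$ is a single point, with $\delta$ comparable to $\varepsilon_0$. Here $W^s$ and $W^u$ are understood as the local manifolds of a hyperbolic neighbourhood of $\Omega_s$; local maximality lets us enlarge $\Omega_s$ to a hyperbolic set containing the nearby orbit of $y$. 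I would also use the standard bound $d(x,y) \leq K\bigl(d(x,z) + d(z,y)\bigr)$, with $K$ depending only on the angle between $E^s$ and $E^u$ (bounded away from zero by continuity and compactness). So it suffices to bound each of the two legs separately.

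Next I would iterate. Since $d(f^k x, f^k y) \leq \varepsilon_0$ for $|k| \leq N$, the bracket construction commutes with $f$ as long as the orbits stay $\varepsilon_0$-close. Hence $f^k z = [f^k x, f^k y]$ for $|k| \leq N$, with $f^k z$ within $C\varepsilon_0$ of both orbits.

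\begin{itemize}
\item[(a)] For the unstable leg, $z \in W^u_\delta(x)$ and $f^N z \in W^u_{C\varepsilon_0}(f^N x)$. Backward contraction along unstable manifolds, $d(f^{-n}u, f^{-n}v) \leq c\lambda^n d(u,v)$ for $u,v$ on a common local unstable leaf, then gives $d(x,z) \leq c\lambda^N \cdot C\varepsilon_0$.
\item[(b)] Symmetrically, using $f^{-N} z \in W^s_{C\varepsilon_0}(f^{-N} y)$ and forward contraction on stable leaves, $d(z,y) \leq c\lambda^N C\varepsilon_0$.
\end{itemize}

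Combining the two bounds gives $d(x,y) \leq 2KcC\varepsilon_0 \lambda^N$.

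Finally I would convert this into the stated form $d(x,y) < \alpha^N$. Choose any $\alpha \in (\lambda, 1)$. Then $2KcC\varepsilon_0 \lambda^N < \alpha^N$ for all $N \geq N_0$. Shrinking $\varepsilon_0$ so that $2KcC\varepsilon_0 < 1$ covers all $N \geq 1$. The case $N = 0$ needs $d(x,y) < 1$, which holds once $\varepsilon_0 < 1$. Alternatively one can take $\alpha$ closer to $1$ to absorb the constant.

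The main obstacle I expect is step (a)/(b): justifying that the exponential contraction in (A1), stated for tangent vectors, holds for distances along local leaves at points $y \notin \Omega_s$. I would handle this with the adapted (Mather) metric, where $c = 1$ and contraction holds infinitesimally on a neighbourhood of $\Omega_s$ by continuity of the cone fields. One then integrates along leaf curves, whose lengths are comparable to distances since the leaves are uniformly $C^1$ embedded disks. The constants obtained in the adapted metric transfer back to $d$ via the uniform equivalence of the two metrics, and that change is absorbed into $K$ and $C$.
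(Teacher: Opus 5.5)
Your argument has a gap: it does not handle $y \notin \Omega_s$, which the statement allows. The paper gives no proof of this lemma (it is imported from Part~III and credited to Bowen's Lemma~4.2), so your proof has to stand on its own.

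For $y \in \Omega_s$ it does. That is the standard bracket argument, and it covers the coding estimate in Step~2 of Theorem~\ref{thm:equilibrium_basic}.

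For $y \notin \Omega_s$, your first step fails. Local maximality says the opposite of what you use it for. Since $\Omega_s = \bigcap_{n \in \mathbb{Z}} f^n(U)$, the orbit of any $y \notin \Omega_s$ leaves $U$, so no hyperbolic set near $\Omega_s$ contains it. The set $W^s_\delta(y)$, defined by $\delta$-closeness for all $n \geq 0$, is then shaped by the dynamics after the orbit exits $U$, where you have no hyperbolicity at all.

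For instance, if $f^n y$ eventually falls into a sink, $W^s_\delta(y)$ has nonempty interior: a thin full-dimensional slab around a stable-like disk through $y$. Then $W^u_\delta(x) \cap W^s_\delta(y)$ is a small $d_u$-dimensional piece rather than a single point. In general nothing makes $W^s_\delta(y)$ a $C^1$ disk of dimension $d_s$ transverse to $W^u_\delta(x)$, or even guarantees that the intersection is nonempty. Step~(b), forward contraction along a stable leaf through $y$, therefore has no leaf to act on.

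Your final paragraph (adapted metric, integrating along leaf curves) addresses a different issue: turning infinitesimal contraction into contraction of distances. It presupposes the leaf whose existence is the actual problem.

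The repair is to apply the cone fields to the displacement between the two orbits, in exponential charts along the orbit of $x$, where the splitting actually lives.
\begin{itemize}
\item Put $v_k = \exp_{f^k x}^{-1}(f^k y)$ for $|k| \leq N$. Then $|v_k| \leq C\varepsilon_0$ and $v_{k+1} = F_k(v_k)$, where $F_k = \exp_{f^{k+1}x}^{-1} \circ f \circ \exp_{f^k x}$.
\item The map $F_k$ satisfies $F_k(0) = 0$, $DF_k(0) = Df_{f^k x}$, and $\mathrm{Lip}(F_k - Df_{f^k x}) \leq \eta(\varepsilon_0) \to 0$ on the $C\varepsilon_0$-ball, uniformly in $k$ and $x$.
\item Write $v = v^s + v^u \in E^s_{f^k x} \oplus E^u_{f^k x}$ and $|v| = \max\{\|v^s\|', \|v^u\|'\}$, where $\|\cdot\|'$ is an adapted metric with rate $\lambda' \in (\lambda, 1)$.
\item If $\|v_k^u\|' \geq \|v_k^s\|'$, then $\|v_{k+1}^u\|' \geq ((\lambda')^{-1} - \eta)|v_k|$ and $\|v_{k+1}^s\|' \leq (\lambda' + \eta)|v_k|$. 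So for small $\eta$ the unstable cone is preserved and $|v_{k+1}| \geq \mu |v_k|$, with $\mu = (\lambda')^{-1} - \eta > 1$.
\item Symmetrically, the stable cone is preserved by $F_{k-1}^{-1}$ with the same expansion.
\item Since $v_0$ lies in one of the two cones, $|v_0| \leq \mu^{-N} \max\{|v_N|, |v_{-N}|\}$. That is, $d(x,y) \leq C'\varepsilon_0 \mu^{-N}$ for every $y \in M$.
\end{itemize}
Your conversion to $\alpha^N$ then goes through with $\alpha \in (\mu^{-1}, 1)$.

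If you prefer to keep the bracket, replace $W^s_\delta(y)$ by a small disk through $y$ tangent to the continuously extended stable cone field. Backward invariance of those cones on the $\varepsilon_0$-neighbourhood of the orbit segment is what step~(b) really needs.

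A minor point: $d(x,y) \leq d(x,z) + d(z,y)$ is just the triangle inequality, so $K = 1$. The angle between $E^s$ and $E^u$ enters only in the reverse bound $d(x,z) + d(z,y) \leq K\, d(x,y)$, which is what keeps $f^k z$ close to both orbits.
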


This lemma, which appears as Lemma 4.2 in \cite{Bowen1975}, is fundamental for the H\"{o}lder continuity arguments throughout this Part.

\section{Volume Lemmas with Quantitative Estimates}\label{sec:volume_lemmas}

The Volume Lemmas establish the fundamental connection between Riemannian measure and the geometric potential, providing the bridge between smooth geometry and thermodynamic formalism.  \cite{Bowen1975} cited these results from \cite{BowenRuelle1975} without proof. In this section, we provide complete self-contained proofs with explicit constants.

\subsection{Statement of Results}

We state the two Volume Lemmas, which give two-sided bounds on the Riemannian volume of dynamical Bowen balls in terms of Birkhoff sums of the geometric potential. These bounds are the quantitative foundation for equilibrium state theory and large deviations.

\begin{maintheorem}[Volume Lemma]\label{thm:volume_lemma}
Let $\Omega_s$ be a basic set for a $C^2$ Axiom A diffeomorphism $f : M \to M$, and let $\phi^{(u)}$ be the geometric potential. For sufficiently small $\varepsilon > 0$, there exists a constant $C_\varepsilon > 0$ such that for all $x \in \Omega_s$ and all $n \geq 1$,
\begin{equation}
C_\varepsilon^{-1} \exp(S_n\phi^{(u)}(x)) \leq m(B_x(\varepsilon, n)) \leq C_\varepsilon \exp(S_n\phi^{(u)}(x))
\end{equation}
where $S_n\phi^{(u)}(x) = \sum_{k=0}^{n-1} \phi^{(u)}(f^k x)$ and $B_x(\varepsilon, n) = \{y \in M : d(f^k x, f^k y) \leq \varepsilon, 0 \leq k < n\}$.

The constant $C_\varepsilon$ satisfies
\begin{equation}
C_\varepsilon = \exp\left(\frac{C_1 \varepsilon + C_2 \varepsilon^\theta}{1 - \lambda^\theta}\right)
\end{equation}
where $C_1, C_2$ depend on curvature bounds and $\|Df\|_{C^1}$, $\theta$ is the H\"{o}lder exponent of $\phi^{(u)}$, and $\lambda$ is the hyperbolicity constant.
\end{maintheorem}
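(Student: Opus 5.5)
The plan follows the classical Bowen--Ruelle scheme: decompose the Bowen ball in local product coordinates, then control the unstable Jacobian along orbits by bounded distortion. Fix $\varepsilon$ smaller than the local product structure scale and the expansiveness constant $\varepsilon_0$ of Lemma~\ref{lem:quantitative_expansiveness}. We will work with the image $f^{n-1}(B_x(\varepsilon,n))$ rather than the ball itself. Using the local product structure at $f^{n-1}x$, this image is comparable, up to replacing $\varepsilon$ by $c\varepsilon$ on either side, to a set of the form
\[
\bigcup_{z\in W^s_{c\varepsilon}(f^{n-1}x)} f^{n-1}\bigl(\text{unstable plaque of size } \varepsilon\lambda^{n-1-k}\text{-controlled}\bigr).
\]
It is cleaner to describe it directly. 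A point $y\in B_x(\varepsilon,n)$ lies on a local $C^1$ foliation, transverse to the stable direction, that approximates $W^u$; these leaves can be taken to be pieces of local unstable manifolds of nearby points, or graphs in the $E^u$ cone. The leaf of $y$ through $W^s_\varepsilon(x)$ is mapped by $f^{n-1}$ onto a disk of radius about $\varepsilon$ in the unstable direction. Disintegrating $m$ along this foliation uses absolute continuity of the foliation, which is valid for $C^2$ diffeomorphisms, with Jacobian bounded by a constant depending on curvature. This gives
\[
m(B_x(\varepsilon,n)) \asymp \int_{W^s_\varepsilon(x)} m_{u}\bigl(f^{-(n-1)}D^u_\varepsilon(f^{n-1}z)\bigr)\,dm_s(z).
\]
Here $D^u_\varepsilon$ is an unstable disk, and the stable factor contributes a bounded quantity comparable to $\varepsilon^{\dim E^s}$, which is absorbed into $C_\varepsilon$.

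The second step is the change of variables along each unstable leaf:
\[
m_u\bigl(f^{-(n-1)}D\bigr)=\int_D \bigl|\det Df^{-(n-1)}|_{T W^u}\bigr|\,dm_u .
\]
This reduces the problem to a distortion estimate: for $w$ on the leaf through $y\in B_x(\varepsilon,n)$, we need
\[
\Bigl|\sum_{k=0}^{n-1}\bigl(\log\mathrm{Jac}(Df|_{T_{f^kw}W})-\log\lambda^{(u)}(f^kx)\bigr)\Bigr| \le \frac{C_1\varepsilon+C_2\varepsilon^\theta}{1-\lambda^\theta}.
\]
We split each summand into two errors. The first is the error from comparing the tangent plane $T_{f^kw}W$ with $E^u_{f^kw'}$ at a nearby point of $\Omega_s$, or with the invariant cone direction. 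The second is the error from comparing $E^u$ at the two points, which Proposition~\ref{prop:geometric_holder} bounds by $|\phi^{(u)}|_\theta\, d(f^kw,f^kx)^\theta$. By hyperbolicity, $d(f^kw,f^kx)\le c\varepsilon(\lambda^{k}+\lambda^{n-1-k})$: the stable component contracts forward and the unstable component contracts backward from time $n-1$. Summing $\varepsilon^\theta(\lambda^{\theta k}+\lambda^{\theta(n-1-k)})$ over $k$ yields the geometric factor $(1-\lambda^\theta)^{-1}$. The term linear in $\varepsilon$ comes from the $C^2$ bound on $f$, equivalently the curvature of the leaves and the Lipschitz constant of $x\mapsto \log|\det Df_x|_E|$ in $x$ for fixed $E$.

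The third step combines these pieces. Since $D^u_\varepsilon$ has $m_u$-volume bounded above and below by constants depending only on $\varepsilon$ and curvature bounds, the distortion estimate gives
\[
m_u\bigl(f^{-(n-1)}D\bigr)\asymp \exp\bigl(S_{n-1}\phi^{(u)}(x)\bigr).
\]
Passing from $S_{n-1}$ to $S_n$ costs the factor $e^{\pm\|\phi^{(u)}\|_\infty}$. Integrating over the stable factor then yields both bounds. For the lower bound, we must check that the product set built from $W^s_{c\varepsilon}(x)$ and the pulled-back disks of radius $c\varepsilon$ is actually contained in $B_x(\varepsilon,n)$. This follows from the triangle inequality along the orbit, using forward contraction on stable pieces and backward contraction of unstable disks.

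The main obstacle will be the distortion step for points $y\notin\Omega_s$. Their leaves are not genuine unstable manifolds of points of $\Omega_s$, so the tangent planes $T_{f^kw}W$ must be shown to converge exponentially to $E^u$ along the orbit. For this I would use the graph transform contraction of Theorem~\ref{thm:stable_manifold_prelim}. Cone-field invariance gives angle decay of order $\lambda^{k}$ times the initial angle, with the initial angle $\lesssim\varepsilon^\theta$ from the H\"older splitting. This also sums geometrically. Keeping the constants in exactly the stated form, with $C_1\varepsilon$ and $C_2\varepsilon^\theta$ over $1-\lambda^\theta$, may require absorbing the bounded stable-volume and endpoint factors into $C_1$ and $C_2$ after restricting to small $\varepsilon$.
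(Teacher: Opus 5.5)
The step that fails is your last one. The factors you propose to absorb are the stable volume $\asymp\varepsilon^{d_s}$, the unstable disk volume $\asymp\varepsilon^{d_u}$, and the endpoint factor $e^{\pm\|\phi^{(u)}\|_\infty}$. None of them can be written as $\exp\bigl((C_1\varepsilon+C_2\varepsilon^\theta)/(1-\lambda^\theta)\bigr)$ with $C_1,C_2$ independent of $\varepsilon$, because that expression tends to $1$ as $\varepsilon\to 0$. Restricting to small $\varepsilon$ makes the mismatch worse, not better.

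No proof can reach the displayed form. For $n=1$, $B_x(\varepsilon,1)$ is a Riemannian ball, so $m(B_x(\varepsilon,1))\asymp\varepsilon^d$, and the lower bound forces $C_\varepsilon\geq c\,\varepsilon^{-d}e^{\phi^{(u)}(x)}$. More generally, your own computation gives $m(B_x(\varepsilon,n))\asymp\varepsilon^{d}e^{S_n\phi^{(u)}(x)}$ uniformly in $n$.

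What your argument actually proves is the estimate with $C_\varepsilon=C_0\,\varepsilon^{-d}\exp\bigl((C_1\varepsilon+C_2\varepsilon^\theta)/(1-\lambda^\theta)\bigr)$, where $C_0$ collects the geometric constants and $e^{\|\phi^{(u)}\|_\infty}$. This is exactly the constant the paper's own proof ends with in its ``Explicit Constant'' paragraph, and it contradicts the formula displayed in the statement. You should state this corrected constant and say that the displayed one is wrong, rather than claim it can be attained.

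Apart from this, your argument is the standard Bowen--Ruelle proof, and as a sketch it is sound. It is in fact more accurate than the paper's proof, which has three problems:
\begin{itemize}
\item It claims the stable slice $D^s_x(\varepsilon,n)$ has width $C\lambda^n\varepsilon$. This is false, since forward iteration contracts $W^s_\varepsilon(x)$, so $W^s_{\varepsilon/c}(x)\subset B_x(\varepsilon,n)$ for every $n$.
\item Its lower bound ends with a factor $\lambda^{nd_s}$ that it ``absorbs'' into $C_\varepsilon$, which is impossible since $\lambda^{nd_s}\to 0$.
\item It applies a distortion lemma proved only for $x,y\in\Omega_s$ to points of the Bowen ball off $\Omega_s$.
\end{itemize}
Your stable factor $\asymp\varepsilon^{d_s}$, your leafwise change of variables, and your cone-convergence step for points off $\Omega_s$ are exactly what repairs these.

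One thing to make precise: choose the foliation smooth at time $0$ (say, disks parallel to $E^u_x$ in a chart at $x$) and push it forward. The unstable cone is invariant under $Df$ but not under $Df^{-1}$, so the disks $f^{-(n-1)}D^u_\varepsilon(f^{n-1}z)$ are controlled only if they arise this way. Fubini for a smooth foliation then replaces any appeal to absolute continuity of $W^u$. That appeal would not be available anyway in general, since the local unstable manifolds of points of $\Omega_s$ fill a neighborhood only when $\Omega_s$ is a repeller.
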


\begin{proposition}[Second Volume Lemma]\label{thm:second_volume_lemma}
Let $\Omega_s$ be a $C^2$ basic set. For small $\varepsilon, \delta > 0$, there exists $d = d(\varepsilon, \delta) > 0$ such that
\begin{equation}
m(B_y(\delta, n)) \geq d \cdot m(B_x(\varepsilon, n))
\end{equation}
whenever $x \in \Omega_s$ and $y \in B_x(\varepsilon, n) \cap \Omega_s$. The constant $d$ satisfies
\begin{equation}
d(\varepsilon, \delta) \geq \exp\left(-\frac{C_3(\varepsilon + \delta)}{1 - \lambda^\theta}\right) \cdot \left(\frac{\delta}{\varepsilon}\right)^{d_u}
\end{equation}
where $d_u = \dim E^u$ and $C_3$ depends on the geometry.
\end{proposition}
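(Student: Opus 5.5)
The plan is to deduce the Second Volume Lemma from the two-sided bounds of Main Theorem~\ref{thm:volume_lemma}, applied at the two scales $\varepsilon$ and $\delta$, together with a bounded-distortion comparison of Birkhoff sums along the shared orbit segment. Since $y \in B_x(\varepsilon,n)\cap\Omega_s$, Main Theorem~\ref{thm:volume_lemma} applies at $y$ with radius $\delta$ and at $x$ with radius $\varepsilon$. This gives
\begin{equation}
\frac{m(B_y(\delta,n))}{m(B_x(\varepsilon,n))} \geq C_\delta^{-1}C_\varepsilon^{-1}\exp\bigl(S_n\phi^{(u)}(y)-S_n\phi^{(u)}(x)\bigr).
\end{equation}
It therefore suffices to bound $|S_n\phi^{(u)}(y)-S_n\phi^{(u)}(x)|$ uniformly in $n$.

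The distortion step is where hyperbolicity enters. Because $d(f^kx,f^ky)\le\varepsilon$ for $0\le k<n$ and both points lie in $\Omega_s$, I will use the local product structure to form the bracket point $z=[x,y]\in W^s_{C\varepsilon}(x)\cap W^u_{C\varepsilon}(y)$. Along the stable direction, $d(f^kx,f^kz)\le c\lambda^k C\varepsilon$ holds going forward. Along the unstable direction, contraction under $f^{-1}$ gives $d(f^kz,f^ky)\le c\lambda^{n-1-k}C\varepsilon$, because $f^{n-1}z$ and $f^{n-1}y$ stay $C\varepsilon$-close. This is the standard use of Lemma~\ref{lem:quantitative_expansiveness}. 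Using the $\theta$-Hölder bound of Proposition~\ref{prop:geometric_holder}, each of the two sums is then dominated by a geometric series:
\begin{equation}
|S_n\phi^{(u)}(x)-S_n\phi^{(u)}(y)| \le 2|\phi^{(u)}|_\theta (cC\varepsilon)^\theta\sum_{k\ge0}\lambda^{k\theta}\le \frac{C'\varepsilon^\theta}{1-\lambda^\theta}.
\end{equation}
This constant is uniform in $n$, in $x$, and in $y$.

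The main obstacle is reconciling these bounds with the stated form of $d(\varepsilon,\delta)$. A naive product $C_\delta^{-1}C_\varepsilon^{-1}$ gives an exponential factor of the required shape $\exp(-C_3(\varepsilon+\delta)^{\theta}/(1-\lambda^\theta))$. However, it loses the polynomial factor $(\delta/\varepsilon)^{d_u}$, which really comes from the ratio of the scales. In Main Theorem~\ref{thm:volume_lemma} that ratio is hidden inside $C_\varepsilon$, which silently absorbs the $n$-independent volume $\asymp \varepsilon^{d_s+d_u}$ of the initial ball.

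To make the $\varepsilon,\delta$ dependence explicit, I will instead go back to the product decomposition underlying the proof of the Volume Lemma. Up to bounded distortion, $B_x(\varepsilon,n)$ is comparable to $W^s_\varepsilon\times f^{-n}(W^u_\varepsilon(f^nx))$. The unstable factor has volume $\asymp \varepsilon^{d_u}\exp(S_n\phi^{(u)})$, and the stable factor has volume $\asymp\varepsilon^{d_s}$. I then compare the unstable factors at scales $\delta$ and $\varepsilon$ centered at $f^ny$ and $f^nx$, since unstable volumes scale like $(\delta/\varepsilon)^{d_u}$ after pulling back by $f^{-n}$ with the distortion bound above. The stable factors contribute a ratio that is bounded below once $\delta\le\varepsilon$ is fixed. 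For $\delta\le\varepsilon$ that ratio can be absorbed by shrinking $C_3$'s exponent convention, or else one notes that the $d_s$ factor is a fixed constant depending on $(\varepsilon,\delta)$ only.

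If that bookkeeping becomes delicate, the fallback is to establish only the existence of $d(\varepsilon,\delta)>0$, which follows directly from the displayed ratio and the distortion bound. The explicit lower bound would then be recorded with whatever exponent the product decomposition naturally yields.
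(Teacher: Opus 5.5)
Your existence argument is the paper's. Steps~2--3 of the paper's proof apply Main Theorem~\ref{thm:volume_lemma} at $(y,\delta)$ and at $(x,\varepsilon)$, and bridge the two Birkhoff sums with the $n$-uniform distortion bound of Lemma~\ref{lem:jacobian_distortion}. Your bracket argument with $z=[x,y]\in\Omega_s$ splits the orbit into a forward-contracting stable leg and a backward-contracting unstable leg. That gives $d(f^kx,f^ky)\lesssim\varepsilon(\lambda^k+\lambda^{n-1-k})$, which justifies the distortion bound better than the paper's appeal to Lemma~\ref{lem:quantitative_expansiveness} (that lemma carries no factor of $\varepsilon$). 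So the existence of some $d(\varepsilon,\delta)>0$ is fine.

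The gap is the explicit constant, specifically the step where you absorb the stable ratio $(\delta/\varepsilon)^{d_s}$ into $\exp\bigl(-C_3(\varepsilon+\delta)/(1-\lambda^\theta)\bigr)$.
\begin{itemize}
\item With $C_3$ depending only on the geometry, that exponential tends to $1$ as $\varepsilon,\delta\to0$. Meanwhile $(\delta/\varepsilon)^{d_s}\to0$ as $\delta/\varepsilon\to0$, so it cannot be absorbed.
\item The distortion term is of order $\varepsilon^\theta$, which is not $O(\varepsilon)$. So your exponent $(\varepsilon+\delta)^\theta$ is not the linear one displayed either.
\end{itemize}

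No bookkeeping repairs this, because the displayed bound is false whenever $d_s\ge1$. Take a linear hyperbolic automorphism $A$ of $\mathbb{T}^2$ and $y=x$. Lifting to $\mathbb{R}^2$, for small $\varepsilon$ one has $B_x(\varepsilon,n)=x+\varepsilon E_n$ with $E_n=\{v:\|A^kv\|\le1,\ 0\le k<n\}$. Hence $m(B_x(\delta,n))/m(B_x(\varepsilon,n))=(\delta/\varepsilon)^2$. With $d_u=1$ and $\delta=\varepsilon/4$, this ratio is $1/16$, while the claimed lower bound is roughly $1/4$. So the bound fails once $\varepsilon$ is small.

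The paper's Step~4 reaches the stated form only through $(\delta/\varepsilon)^{\dim M}\ge(\delta/\varepsilon)^{d_u}$, which is reversed for $\delta<\varepsilon$. It also manipulates the ratio $C_\delta^{-1}/C_\varepsilon$, although Step~3 produced the product $C_\delta^{-1}C_\varepsilon^{-1}$. With a symmetric constant $C_\varepsilon\asymp\varepsilon^{-\dim M}$, that product is $\asymp(\varepsilon\delta)^{\dim M}$, not a ratio of scales.

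Your fallback is the right call, but present it as a correction of the statement, not a proof of it. Your product decomposition supplies the asymmetric Volume Lemma $c\,\varepsilon^{\dim M}e^{S_n\phi^{(u)}(x)}\le m(B_x(\varepsilon,n))\le C\,\varepsilon^{\dim M}e^{S_n\phi^{(u)}(x)}$. Combined with your distortion bound, this gives $d(\varepsilon,\delta)\ge (c/C)\,(\delta/\varepsilon)^{d_s+d_u}\exp\bigl(-C'\varepsilon^\theta/(1-\lambda^\theta)\bigr)$. That bound has the full dimension in the power and the H\"older power of $\varepsilon$ in the exponent.
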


\subsection{Proofs of the Volume Lemmas}

We require several geometric lemmas before proving the Volume Lemmas.

\begin{lemma}[Local Product Structure of Dynamical Balls]\label{lem:local_product}
For small $\varepsilon > 0$ and $x \in \Omega_s$, the dynamical ball $B_x(\varepsilon, n)$ has a local product structure: there exists a diffeomorphism
\begin{equation}
\Phi_x : D^s_x(\varepsilon, n) \times D^u_x(\varepsilon) \to B_x(\varepsilon, n)
\end{equation}
where $D^s_x(\varepsilon, n) = B_x(\varepsilon, n) \cap W^s_\varepsilon(x)$ is a disk in the local stable manifold and $D^u_x(\varepsilon) = W^u_\varepsilon(x) \cap B_\varepsilon(x)$ is a disk in the local unstable manifold.
\end{lemma}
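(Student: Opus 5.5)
The plan is to build $\Phi_x$ from a local product (bracket) map in coordinates adapted to the hyperbolic splitting, and to rescale the unstable factor so that it reaches the unstable width of $B_x(\varepsilon,n)$. Since the statement is asserted for the metric $d$ in which $B_x(\varepsilon,n)$ is defined, I first replace $d$ near $\Omega_s$ by the box-type metric of an adapted (Lyapunov) metric. In exponential charts at $x$, a point is written as $(v^s,v^u)\in E^s_x\oplus E^u_x$, and distances are measured as the maximum of the stable and unstable components. In this metric, condition (A1) holds with $c=1$. Lemma~\ref{lem:quantitative_expansiveness} and the equivalence of metrics then cost only the constant rescaling of $\varepsilon$ already absorbed in the constants $C_1,C_2$ of Main Theorem~\ref{thm:volume_lemma}.

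\textbf{Step 1: describe $B_x(\varepsilon,n)$ as a product.} For small $\varepsilon$, points of $W^s_\varepsilon(x)$ stay $\varepsilon$-close to the orbit of $x$ for all forward times, so $D^s_x(\varepsilon,n)=W^s_\varepsilon(x)$ is independent of $n$. In the unstable direction, the $n$-step condition is equivalent to $f^{n-1}y$ lying within $\varepsilon$ of $f^{n-1}x$ along unstable leaves. Using $c=1$ and domination (forward contraction of the stable component, backward contraction of the unstable one), I will show that
\[
B_x(\varepsilon,n)=\bigl\{[y,w]: y\in W^s_\varepsilon(x),\ w\in f^{-(n-1)}W^u_\varepsilon(f^{n-1}x)\bigr\},
\]
where $[y,w]=W^u_{2\varepsilon}(y)\cap W^s_{2\varepsilon}(w)$ is the local product of Part~III. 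One inclusion follows by iterating forward and checking each coordinate separately. The other follows from invariance of the graph-transform cones.

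\textbf{Step 2: define $\Phi_x$ with the stated domain $D^s_x(\varepsilon,n)\times D^u_x(\varepsilon)$.} Let $\psi_n:W^u_\varepsilon(x)\to f^{-(n-1)}W^u_\varepsilon(f^{n-1}x)$ be the composite of two maps. The first identifies $W^u_\varepsilon(x)$ with $W^u_\varepsilon(f^{n-1}x)$ through the exponential charts, in which both are $C^2$ graphs over the $\varepsilon$-balls in $E^u$ (linearly identified via any isometry $E^u_x\cong E^u_{f^{n-1}x}$). The second is $f^{-(n-1)}$. Each factor is a $C^2$ diffeomorphism of disks, by Theorem~\ref{thm:stable_manifold_prelim} applied to $f^{-1}$. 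Set $\Phi_x(y,z)=[y,\psi_n(z)]$. By Step~1, $\Phi_x$ maps onto $B_x(\varepsilon,n)$. It is injective because local stable and unstable leaves meet in at most one point.

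\textbf{Step 3: smoothness, the main obstacle.} The true bracket is only H\"older in the transverse variable, because the stable and unstable foliations are only H\"older. Along each factor it is $C^2$ (restricting to a fixed $y$ or a fixed $w$ gives a leaf parametrization), but joint differentiability fails in general. My first attempt is to replace the true local manifolds through points of $M\setminus\Omega_s$ by $C^2$ fake foliations built by the Hadamard--Perron graph transform in the fixed chart at $x$. These are families of $C^2$ disks tangent to the stable and unstable cones which depend $C^1$ on the base point, since in a single chart the transverse parameter enters smoothly. The bracket taken with respect to these fake leaves is then a $C^1$ map of $(y,w)$. Its derivative at every point is the graph of the cone-splitting isomorphism, hence invertible by transversality, so the inverse function theorem makes $\Phi_x$ a diffeomorphism onto its image. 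The delicate point is that Step~1 must be redone with the fake leaves. Exactness of the image $B_x(\varepsilon,n)$ survives because the fake leaves satisfy the same cone contraction estimates for the $n$ iterates involved, and they agree with the true $W^s_\varepsilon(x)$ and $W^u_\varepsilon$ on the leaves through $x$. This is where I expect most of the work. If it fails, the fallback is to prove the lemma with ``diffeomorphism'' weakened to a bi-H\"older homeomorphism that is $C^2$ along leaves. That weaker form is all the Jacobian distortion argument of Main Theorem~\ref{thm:volume_lemma} actually uses.
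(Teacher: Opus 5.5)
There is a genuine gap at the step you flag, and it already appears in Step~1. The claimed equality
\[
B_x(\varepsilon,n)=\bigl\{[y,w]: y\in W^s_\varepsilon(x),\ w\in f^{-(n-1)}W^u_\varepsilon(f^{n-1}x)\bigr\}
\]
is false in general for nonlinear $f$, even in the adapted box metric. In the chart at $x$, the stable-side face of $B_x(\varepsilon,n)$ is $\{\|v^s\|=\varepsilon\}$, a union of affine translates of $E^u_x$. The unstable plaque through a point $y\in\partial W^s_\varepsilon(x)$, whether true or a graph-transform fake plaque, is a graph whose tangent lies in the unstable cone but is generically tilted away from $E^u_x$. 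Moving along that plaque therefore changes $\|v^s\|$ to first order. So on one side, $[y,w]$ leaves $\{\|v^s\|\le\varepsilon\}\supset B_x(\varepsilon,n)$. Conversely, points of that face bracket to stable coordinates with $\|v^s\|>\varepsilon$, which lie outside $W^s_\varepsilon(x)$. The same happens on the unstable-side faces: these are $f^{-(n-1)}$-preimages of flat $E^s$-plaques at $f^{n-1}x$, not stable leaves.

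Consequently, ``checking each coordinate separately'' gives neither inclusion. In Step~3 the cone estimates only bound the tilt and cannot make it vanish, so ``exactness survives'' does not follow. What you actually get is that $\{[y,w]\}$ is sandwiched between Bowen balls of radii comparable to $\varepsilon$. That is enough for Main Theorem~\ref{thm:volume_lemma}, but it is not a diffeomorphism onto $B_x(\varepsilon,n)$, and your H\"older fallback has the same defect. Separately, the claimed $C^1$ transverse dependence of your fake leaves is not automatic. Plaque families obtained as invariant objects of a graph transform are in general only H\"older transversally, just like the true foliations; joint smoothness comes for free only for finite-time constructions.

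The fix is to drop the requirement that the plaques through $x$ be the true local manifolds. The lemma only asks for some diffeomorphism from the abstract product, so use plaques that \emph{are} the faces. Concretely, set $F(z)=\bigl(v^s_x(z),\,v^u_{f^{n-1}x}(f^{n-1}z)\bigr)$, the stable chart coordinate at time $0$ paired with the unstable chart coordinate at time $n-1$.
\begin{itemize}
\item $F$ is $C^2$, being a finite composition.
\item $dF$ is invertible: a kernel vector lies in the unstable cone at time $0$, while its $Df^{n-1}$-image lies in the stable cone, so it must be zero.
\item With $c=1$ and $\lambda\varepsilon+C\varepsilon^2<\varepsilon$, the stable constraints at intermediate times hold automatically.
\item The part of each slice $\{v^s_x=a\}$ lying in $B_x(\varepsilon,n)$ is carried by $f^{n-1}$ onto a graph over the full unstable $\varepsilon$-disk at $f^{n-1}x$.
\end{itemize}
Hence $F$ maps $B_x(\varepsilon,n)$ bijectively onto $\{\|a\|\le\varepsilon\}\times\{\|b\|\le\varepsilon\}$. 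Composing with the graph projections of $W^s_\varepsilon(x)=D^s_x(\varepsilon,n)$ and $W^u_\varepsilon(x)=D^u_x(\varepsilon)$ onto these disks gives $\Phi_x$, with no fake foliations at all.

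For comparison, the paper's proof simply applies the true bracket $(y_s,y_u)\mapsto[y_s,y_u]$ on $D^s_x(\varepsilon,n)\times D^u_x(\varepsilon)$ and cites Part~III for smoothness. Your departures from it are sound: shrinking the unstable factor to $f^{-(n-1)}W^u_\varepsilon(f^{n-1}x)$, noting that $D^s_x(\varepsilon,n)=W^s_\varepsilon(x)$ by definition, and noting that the true bracket is only H\"older. The missing piece is the exact image. Finally, all of this concerns the box-metric Bowen ball; for the original Riemannian $d$ you only obtain the sandwich, not the literal statement.
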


\begin{proof}
For $y \in B_x(\varepsilon, n)$, define $\Phi_x^{-1}(y) = (y_s, y_u)$ where $y_s \in W^s_\varepsilon(x)$ and $y_u \in W^u_\varepsilon(x)$ are the unique points with $y \in W^u_\varepsilon(y_s) \cap W^s_\varepsilon(y_u)$. This exists by the local product structure of hyperbolic sets (Part~III \cite{Thiam2026c}).

The stable component $y_s$ lies in $D^s_x(\varepsilon, n)$ because for $k \in [0, n)$:
\begin{equation}
d(f^k y_s, f^k x) \leq d(f^k y_s, f^k y) + d(f^k y, f^k x) \leq C\lambda^k d(y_s, y) + \varepsilon
\end{equation}
where we used that $y_s$ and $y$ lie on the same local unstable manifold, which contracts under backward iteration. For small $\varepsilon$, this is bounded by $2\varepsilon$.

The map $\Phi_x(y_s, y_u) = [y_s, y_u]$ (the bracket operation from canonical coordinates) is a diffeomorphism by the local product structure of Part~III \cite{Thiam2026c}.
\end{proof}

\begin{lemma}[Stable Disk Geometry]\label{lem:stable_disk}
The stable disk $D^s_x(\varepsilon, n)$ satisfies:
\begin{equation}
D^s_x(\varepsilon, n) \subset \{y \in W^s_\varepsilon(x) : d_{W^s}(y, x) \leq C\lambda^n \varepsilon\}
\end{equation}
where $d_{W^s}$ denotes intrinsic distance in the stable manifold. In particular, $D^s_x(\varepsilon, n)$ contracts exponentially in $n$ with respect to stable manifold measure.
\end{lemma}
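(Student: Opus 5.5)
The plan is to work entirely inside the local stable leaf $W^s_\varepsilon(x)$ and compare three metrics on it: the ambient distance $d$, the intrinsic leaf distance $d_{W^s}$, and the dynamical distances $d(f^k\cdot,f^k\cdot)$ for $0\le k<n$. The hyperbolicity estimate (A1) and the Stable Manifold Theorem~\ref{thm:stable_manifold_prelim} supply the contraction and comparability information that relates them.

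First, I will show that $d$ and $d_{W^s}$ are uniformly comparable on $W^s_\varepsilon(x)$: $d\le d_{W^s}\le (1+C\varepsilon)\,d$. By Theorem~\ref{thm:stable_manifold_prelim}, $W^s_\varepsilon(x)$ is a $C^2$ embedded disk tangent to $E^s_x$, varying continuously in $x\in\Omega_s$. By compactness of $\Omega_s$ its second fundamental form is uniformly bounded, which gives this comparison. Second, I will integrate (A1) along a $d_{W^s}$-geodesic from $x$ to $y\in W^s_\varepsilon(x)$, using that $f^k$ maps $W^s_\varepsilon(x)$ into $W^s_{\varepsilon}(f^kx)$ with tangent vectors in $E^s$. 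This yields
\begin{equation*}
d_{W^s}(f^k y,f^k x)\le c\,\lambda^k\, d_{W^s}(y,x)\le c(1+C\varepsilon)\lambda^k\varepsilon ,\qquad 0\le k\le n .
\end{equation*}

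Third, I will transport the Bowen-ball condition defining $D^s_x(\varepsilon,n)$ back to time $0$ using the decomposition of Lemma~\ref{lem:local_product}. Under the bracket chart $\Phi_x$, a point of $B_x(\varepsilon,n)$ is determined by its stable and unstable coordinates. On the stable factor, the quantity measured at the terminal time is $d_{W^s}(f^ny_s,f^nx)$, and by the second step this is at most $c(1+C\varepsilon)\lambda^n\varepsilon$. The aim is to read the displayed inclusion as this terminal-time bound, recorded in the leaf coordinates carried by $\Phi_x$. With $C=c(1+C\varepsilon)$, this gives the radius $C\lambda^n\varepsilon$. The ``in particular'' clause then follows because the $C^2$ leaf volume of $f^n D^s_x(\varepsilon,n)$ is bounded by $(C\lambda^n\varepsilon)^{d_s}$ times a uniform constant, where $d_s=\dim E^s$; this constant comes from the curvature bound of the first step.

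The main obstacle is the third step, where the exponential factor must be placed on the right object. The Bowen constraint $d(f^ky,f^kx)\le\varepsilon$ for $y\in W^s_\varepsilon(x)$ is automatically satisfied up to the constant $c$. Hence no time-$0$ shrinking of the leaf radius is forced, and the $\lambda^n$ gain is genuinely produced only after applying $f^n$. I would first try to make the identification through the chart $\Phi_x$ of Lemma~\ref{lem:local_product} precise, so that the intrinsic distance in the statement is the one transported along $f^n$. If that identification cannot be made to match the literal time-$0$ reading, I would flag the inclusion as holding only in this forward-transported sense. That is also the only sense used later in the Volume Lemma, where the stable factor contributes a bounded multiplicative constant to $m(B_x(\varepsilon,n))$.
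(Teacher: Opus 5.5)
Your closing diagnosis is correct, and it undermines the statement itself, not only your Step~3. With the paper's definition $W^s_\varepsilon(x)=\{y: d(f^ky,f^kx)\le\varepsilon \text{ for all } k\ge 0\}$, one has $W^s_\varepsilon(x)\subset B_x(\varepsilon,n)$ for every $n$. Hence $D^s_x(\varepsilon,n)=W^s_\varepsilon(x)$ does not depend on $n$ at all.

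For a concrete failure, take the cat map $(x_1,x_2)\mapsto(2x_1+x_2,\,x_1+x_2)$ on $\mathbb{T}^2$ with the flat metric. There $W^s_\varepsilon(x)$ contains the stable segment of half-length $\varepsilon$ through $x$, so $\sup_{y\in D^s_x(\varepsilon,n)}d_{W^s}(y,x)\ge\varepsilon$ for every $n$. This contradicts the inclusion as soon as $C\lambda^n<1$. Likewise $m^s(D^s_x(\varepsilon,n))\asymp\varepsilon^{d_s}$, so the ``in particular'' clause is false too.

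The paper's proof obtains the inclusion from $d(f^{n-1}y,f^{n-1}x)\ge c^{-1}\lambda^{-(n-1)}d_{W^s}(y,x)$, attributed to expansion along stable manifolds under backward iteration. But $f^{n-1}$ is a forward iterate, and on $W^s_\varepsilon(x)$ the valid inequality is the reverse one, $d(f^{n-1}y,f^{n-1}x)\le c\lambda^{n-1}d_{W^s}(y,x)$, which is exactly your Step~2. With $W^s$ replaced by $W^u$ the paper's argument does work: it is the unstable slice $B_x(\varepsilon,n)\cap W^u_\varepsilon(x)$ that shrinks exponentially.

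Consequently, the step you propose to try first cannot succeed, namely making the identification through $\Phi_x$ match the literal reading. $\Phi_x$ is a time-$0$ chart and $d_{W^s}(y,x)$ is a time-$0$ quantity, so no choice of coordinates converts your terminal-time bound into the displayed inclusion. What your Steps~1--2 establish is a different, true statement: $f^n(D^s_x(\varepsilon,n))\subset\{z\in W^s(f^nx): d_{W^s}(z,f^nx)\le C\lambda^n\varepsilon\}$. You should state plainly that the lemma as written is false, rather than leave this as a fallback.

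Two smaller points.
\begin{enumerate}
\item In Step~2, the tangent vectors of $W^s_\varepsilon(x)$ at points off $\Omega_s$ do not lie in $E^s$. You need a continuous extension of the splitting to a neighborhood, or a stable cone field, at the price of a rate $\lambda'\in(\lambda,1)$.
\item Your remark on downstream use does not describe this paper. Its Volume Lemma upper bound uses the literal time-$0$ estimate $m^s(D^s_x(\varepsilon,n))\le C''\varepsilon^{d_s}\lambda^{nd_s}$. You are right that the stable factor contributes an $n$-independent constant. However, that rests on the time-$0$ fact $m^s(D^s_x(\varepsilon,n))\asymp\varepsilon^{d_s}$, i.e.\ on the failure of this lemma, not on its forward-transported version.
\end{enumerate}
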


\begin{proof}
For $y \in D^s_x(\varepsilon, n)$, we have $d(f^{n-1} y, f^{n-1} x) \leq \varepsilon$. Since $y \in W^s_\varepsilon(x)$,
\begin{equation}
d(f^{n-1} y, f^{n-1} x) \geq c^{-1} \lambda^{-(n-1)} d_{W^s}(y, x)
\end{equation}
by the expansion along stable manifolds under backward iteration. Thus $d_{W^s}(y, x) \leq c\lambda^{n-1} \varepsilon$.
\end{proof}

\begin{lemma}[Jacobian Distortion]\label{lem:jacobian_distortion}
For $x, y \in \Omega_s$ with $d(f^k x, f^k y) \leq \varepsilon$ for $k \in [0, n)$, we have
\begin{equation}
\left|\log\frac{\mathrm{Jac}(Df^n|_{E^u_x})}{\mathrm{Jac}(Df^n|_{E^u_y})}\right| \leq \frac{C_2 \varepsilon^\theta}{1 - \lambda^\theta}
\end{equation}
where $\theta$ is the H\"{o}lder exponent of $E^u$ and $C_2$ depends on $\|D^2 f\|$.
\end{lemma}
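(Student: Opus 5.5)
The plan is to telescope the logarithm of the Jacobian ratio along the orbit and then sum a geometric series of Hölder errors. By the chain rule and $Df$-invariance of $E^u$,
\begin{equation}
\log\frac{\mathrm{Jac}(Df^n|_{E^u_x})}{\mathrm{Jac}(Df^n|_{E^u_y})} = \sum_{k=0}^{n-1}\bigl(\phi^{(u)}(f^k y) - \phi^{(u)}(f^k x)\bigr) = S_n\phi^{(u)}(y) - S_n\phi^{(u)}(x),
\end{equation}
since $\mathrm{Jac}(Df^n|_{E^u_x}) = \prod_{k=0}^{n-1}\lambda^{(u)}(f^k x)$. Proposition~\ref{prop:geometric_holder} gives the pointwise bound
\begin{equation}
\bigl|\phi^{(u)}(f^k y)-\phi^{(u)}(f^k x)\bigr| \le |\phi^{(u)}|_\theta\, d(f^k x, f^k y)^\theta,
\end{equation}
with $|\phi^{(u)}|_\theta$ controlled by $\|D^2 f\|$ and the Hölder constant of $E^u$. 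The crude estimate $d(f^kx,f^ky)\le\varepsilon$ only yields $n\varepsilon^\theta$. So the crux is to show the distances along the orbit decay geometrically away from the endpoints.

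To get that decay, I would use the local product structure from Lemma~\ref{lem:local_product}. Set $z=[x,y]$, so that $z\in W^s_\varepsilon(x)\cap W^u_\varepsilon(y)$; this lies in $\Omega_s$ by local maximality. Then split via the triangle inequality:
\begin{equation}
d(f^kx,f^ky)^\theta \le d(f^kx,f^kz)^\theta + d(f^kz,f^ky)^\theta.
\end{equation}
The stable piece is handled as in Lemma~\ref{lem:stable_disk}: $d(f^kx,f^kz)\le c\lambda^k\varepsilon$. For the unstable piece, $f^{n-1}z$ and $f^{n-1}y$ are within $C\varepsilon$ on a local unstable manifold, and backward iteration contracts. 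This gives $d(f^kz,f^ky)\le c\lambda^{n-1-k}C\varepsilon$.

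Summing over $k$ then gives
\begin{equation}
\sum_{k=0}^{n-1} d(f^kx,f^ky)^\theta \le 2c^\theta C^\theta\varepsilon^\theta\sum_{j\ge0}\lambda^{j\theta} = \frac{2(cC)^\theta\varepsilon^\theta}{1-\lambda^\theta}.
\end{equation}
Taking $C_2 = 2(cC)^\theta|\phi^{(u)}|_\theta$ produces the stated bound.

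I expect the main obstacle to be the unstable-side bound. It requires that $f^kz$ stays $\varepsilon$-close to $f^ky$ for every $k<n$, so that the bracket remains inside local unstable manifolds along the whole orbit segment. I would handle this by choosing $\varepsilon$ below the product-structure scale. An induction using the uniform continuity of the bracket then shows $d(f^kz,f^ky)\le C\varepsilon$ for $0\le k<n$, with $C$ depending only on the bracket's Lipschitz constant. With that in hand, the backward contraction along $W^u$, which is (A1) applied to $f^{-1}$ on unstable leaves via the Stable Manifold Theorem~\ref{thm:stable_manifold_prelim}, gives the required geometric decay.
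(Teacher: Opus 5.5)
Your proof is correct. It has the same skeleton as the paper's: telescope the log-ratio into $S_n\phi^{(u)}(y)-S_n\phi^{(u)}(x)$, apply the H\"older bound of Proposition~\ref{prop:geometric_holder} termwise, and sum a two-sided geometric series. The difference lies in how you obtain the two-sided decay of $d(f^kx,f^ky)$.

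\textbf{The paper's route.} The paper cites Lemma~\ref{lem:quantitative_expansiveness}, applied to the pair $(f^jx,f^jy)$ on a window of half-length $\min\{j,n-1-j\}$, and identifies the expansiveness rate $\alpha$ with $\lambda$. This takes one line. However, the lemma as stated only gives $d(f^jx,f^jy)<\alpha^{\min\{j,n-1-j\}}$, with no factor of $\varepsilon$ and an unspecified rate. So it does not literally produce the $\varepsilon^\theta$ prefactor and the $1-\lambda^\theta$ denominator in the statement.

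\textbf{Your route.} Your bracket decomposition $z=[x,y]$ has a forward-contracting stable leg and a backward-contracting unstable leg. It gives the $\varepsilon$-proportional estimate $d(f^kx,f^ky)\le C\varepsilon(\lambda^{k}+\lambda^{n-1-k})$ directly, which is exactly what the form $C_2\varepsilon^\theta/(1-\lambda^\theta)$ requires. The cost is invoking the local product structure and a uniform Lipschitz bound $d([x,y],x)\le C\,d(x,y)$ for the bracket.

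\textbf{A simplification.} The step you flag as the main obstacle needs no induction on the bracket. For $0\le k<n$, your stable estimate and the hypothesis give $d(f^kz,f^ky)\le d(f^kz,f^kx)+d(f^kx,f^ky)\le (cC+1)\varepsilon$. For negative times, $z\in W^u_{C\varepsilon}(y)$ gives closeness directly. Together these place $f^{n-1}z$ in $W^u_{C'\varepsilon}(f^{n-1}y)$, so backward contraction applies once $\varepsilon$ is small. Also, you do not need $z\in\Omega_s$, since only distances along $W^s(x)$ and $W^u(y)$ enter, and $x,y\in\Omega_s$ already.
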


\begin{proof}
By the H\"{o}lder continuity of $\phi^{(u)}$ (Proposition \ref{prop:geometric_holder}),
\begin{equation}
|S_n\phi^{(u)}(x) - S_n\phi^{(u)}(y)| \leq \sum_{k=0}^{n-1} |\phi^{(u)}(f^k x) - \phi^{(u)}(f^k y)| \leq |\phi^{(u)}|_\theta \sum_{k=0}^{n-1} d(f^k x, f^k y)^\theta.
\end{equation}

By Lemma \ref{lem:quantitative_expansiveness}, for $j \in [0, n)$ we have $d(f^j x, f^j y) \leq C\alpha^{\min\{j, n-j-1\}}$ where $\alpha = \lambda$. Thus
\begin{equation}
\sum_{k=0}^{n-1} d(f^k x, f^k y)^\theta \leq 2\sum_{j=0}^\infty \varepsilon^\theta \lambda^{j\theta} = \frac{2\varepsilon^\theta}{1 - \lambda^\theta}.
\end{equation}

Since $S_n\phi^{(u)}(x) = -\log \mathrm{Jac}(Df^n|_{E^u_x})$, the result follows.
\end{proof}

\begin{proof}[Proof of Main Theorem~\ref{thm:volume_lemma}]
We prove upper and lower bounds separately.

\textbf{Upper Bound:} By Lemma \ref{lem:local_product}, $B_x(\varepsilon, n)$ has local product structure. The measure decomposes as
\begin{equation}
m(B_x(\varepsilon, n)) = \int_{D^s_x(\varepsilon, n)} m^u_y(D^u_y(\varepsilon, n, y)) \, dm^s(y)
\end{equation}
where $m^s, m^u$ denote the induced Riemannian measures on stable and unstable manifolds, and $D^u_y(\varepsilon, n, y)$ is the unstable slice through $y$.

For the unstable factor, the disk $D^u_y(\varepsilon, n, y)$ maps under $f^n$ to a set containing $W^u_{\varepsilon/2}(f^n y)$ for small $\varepsilon$. By the change of variables formula,
\begin{equation}
m^u(D^u_y(\varepsilon, n, y)) = \int_{f^n(D^u_y)} \frac{1}{\mathrm{Jac}(Df^n|_{E^u})} \, dm^u.
\end{equation}

Using the distortion bound (Lemma \ref{lem:jacobian_distortion}) and that $f^n(D^u_y)$ has measure comparable to $\varepsilon^{d_u}$:
\begin{equation}
m^u(D^u_y(\varepsilon, n, y)) \leq C' \varepsilon^{d_u} \exp(-S_n\phi^{(u)}(y)) \cdot \exp\left(\frac{C_2 \varepsilon^\theta}{1 - \lambda^\theta}\right).
\end{equation}

For the stable factor, by Lemma \ref{lem:stable_disk}:
\begin{equation}
m^s(D^s_x(\varepsilon, n)) \leq C'' \varepsilon^{d_s} \cdot \lambda^{n \cdot d_s}
\end{equation}
where $d_s = \dim E^s$.

Combining and using $d_s + d_u = d = \dim M$:
\begin{equation}
m(B_x(\varepsilon, n)) \leq C''' \varepsilon^d \cdot \exp(-S_n\phi^{(u)}(x)) \cdot \exp\left(\frac{2C_2\varepsilon^\theta}{1-\lambda^\theta}\right)
\end{equation}
where we absorbed the $\lambda^{n \cdot d_s}$ factor (which is bounded) and used distortion to replace $y$ with $x$.

\textbf{Lower Bound:} Define $V_n = f^n(B_x(\varepsilon/2, n))$. We show $V_n$ contains a set of definite $m$-measure. For any $y \in B_x(\varepsilon/2, n)$, the point $f^n(y)$ satisfies $d(f^n(y), f^n(x)) \leq \varepsilon/2$. Since $f$ expands along unstable manifolds by at least $\lambda^{-1}$ per iterate, the image $f^n(W^u_{\varepsilon/2}(x) \cap B_x(\varepsilon/2, n))$ contains the unstable disk $W^u_{\varepsilon/4}(f^n(x))$ for small $\varepsilon$ (the expansion factor $\lambda^{-n}$ overwhelms the initial radius, and the image wraps around to cover a fixed-size disk). By Lemma~\ref{lem:unstable_geometry}, $m^u(W^u_{\varepsilon/4}(f^n(x))) \geq c_1(\varepsilon/4)^{d_u}$ where $c_1$ depends only on curvature bounds.

For the stable direction, $V_n$ contains points within distance $\varepsilon/2$ of $f^n(x)$ in the stable direction (since $B_x(\varepsilon/2, n)$ has stable width $\varepsilon/2$ at time $0$, which maps forward to stable width at most $\varepsilon/2$ at time $n$). By the local product structure, $V_n$ contains a product neighborhood of $f^n(x)$ of dimensions $(\varepsilon/4)^{d_u} \times (\varepsilon/2)^{d_s}$ in unstable and stable directions. The Riemannian measure of this product set satisfies $m(V_n) \geq c_{\mathrm{prod}}(\varepsilon/4)^{d_u}(\varepsilon/2)^{d_s} \geq c_2\varepsilon^d$ where $c_2 = c_{\mathrm{prod}}4^{-d_u}2^{-d_s}$ depends only on the geometry.

By the change of variables formula:
\begin{equation}
m(B_x(\varepsilon/2, n)) = \int_{V_n} \frac{1}{|\det Df^n_{f^{-n}(z)}|} \, dm(z).
\end{equation}

The total Jacobian decomposes as $|\det Df^n_y| = \mathrm{Jac}(Df^n|_{E^u_y}) \cdot \mathrm{Jac}(Df^n|_{E^s_y}) \cdot J_{\angle}(y,n)$ where $J_{\angle}$ accounts for the angle between $E^s$ and $E^u$ and satisfies $c_3^{-1} \leq J_{\angle} \leq c_3$ uniformly (since the angle is bounded below by the adapted metric construction in Part~III \cite{Thiam2026c}). The stable Jacobian satisfies $\mathrm{Jac}(Df^n|_{E^s_y}) \geq c^{-1}\lambda^{-nd_s}$ (expansion under forward iteration in the stable direction, since $\|Df^{-1}|_{E^s}\| \leq \lambda$). Thus $|\det Df^n_y| \geq c_4^{-1}\lambda^{-nd_s} \cdot \mathrm{Jac}(Df^n|_{E^u_y})$.

By the distortion bound (Lemma~\ref{lem:jacobian_distortion}), for $y \in B_x(\varepsilon/2, n)$: $\mathrm{Jac}(Df^n|_{E^u_y}) \leq \exp(C_2\varepsilon^\theta/(1-\lambda^\theta)) \cdot \mathrm{Jac}(Df^n|_{E^u_x})$. Substituting:
\begin{equation}
\frac{1}{|\det Df^n_y|} \geq c_4\lambda^{nd_s} \cdot \exp(S_n\phi^{(u)}(x)) \cdot \exp\left(-\frac{C_2\varepsilon^\theta}{1-\lambda^\theta}\right).
\end{equation}

Integrating over $V_n$ with $m(V_n) \geq c_2\varepsilon^d$:
\begin{equation}
m(B_x(\varepsilon/2, n)) \geq c_2 c_4 \varepsilon^d \lambda^{nd_s} \exp(S_n\phi^{(u)}(x)) \exp\left(-\frac{C_2\varepsilon^\theta}{1-\lambda^\theta}\right).
\end{equation}

The factor $\lambda^{nd_s}$ is absorbed as follows: the geometric potential $\phi^{(u)} = -\log|\det Df|_{E^u}|$ captures only the unstable expansion, while the full volume of $B_x(\varepsilon, n)$ contracts in stable directions at rate $\lambda^{nd_s}$. Since $S_n\phi^{(\mathrm{full})}(x) = S_n\phi^{(u)}(x) + S_n\phi^{(s)}(x)$ and $\exp(S_n\phi^{(s)}(x)) \asymp \lambda^{nd_s}$ (bounded ratio by distortion), the factor $\lambda^{nd_s}\exp(S_n\phi^{(u)}(x))$ is comparable to $\exp(S_n\phi^{(u)}(x))$ up to a multiplicative constant independent of $n$. Thus $m(B_x(\varepsilon, n)) \geq C_\varepsilon^{-1}\exp(S_n\phi^{(u)}(x))$ with $C_\varepsilon$ absorbing all $n$-independent factors.

Since $B_x(\varepsilon, n) \supset B_x(\varepsilon/2, n)$, the lower bound follows.

\textbf{Explicit Constant:} Combining the bounds, we can take
\begin{equation}
C_\varepsilon = C_0 \varepsilon^{-d} \exp\left(\frac{C_1 \varepsilon + C_2 \varepsilon^\theta}{1 - \lambda^\theta}\right)
\end{equation}
where $C_0$ depends on the geometry of $M$ and the hyperbolicity constants.
\end{proof}

\begin{proof}[Proof of Proposition~\ref{thm:second_volume_lemma}]
Let $x \in \Omega_s$ and $y \in B_x(\varepsilon, n) \cap \Omega_s$. We must compare $m(B_y(\delta, n))$ with $m(B_x(\varepsilon, n))$.

\textbf{Step 1: Geometric Containment.} If $\delta \leq \varepsilon/2$, then $B_y(\delta, n) \subset B_x(\varepsilon + \delta, n)$ since for $z \in B_y(\delta, n)$ and $k \in [0, n)$:
\begin{equation}
d(f^k z, f^k x) \leq d(f^k z, f^k y) + d(f^k y, f^k x) \leq \delta + \varepsilon.
\end{equation}

\textbf{Step 2: Jacobian Comparison.} By Lemma \ref{lem:jacobian_distortion}, since $y \in B_x(\varepsilon, n)$:
\begin{equation}
|S_n\phi^{(u)}(y) - S_n\phi^{(u)}(x)| \leq \frac{C_2 \varepsilon^\theta}{1 - \lambda^\theta}.
\end{equation}

\textbf{Step 3: Volume Comparison.} By the Volume Lemma (Main Theorem~\ref{thm:volume_lemma}):
\begin{align}
m(B_y(\delta, n)) &\geq C_\delta^{-1} \exp(S_n\phi^{(u)}(y)) \\
&\geq C_\delta^{-1} \exp\left(S_n\phi^{(u)}(x) - \frac{C_2\varepsilon^\theta}{1-\lambda^\theta}\right) \\
&\geq C_\delta^{-1} C_\varepsilon^{-1} \exp\left(-\frac{C_2\varepsilon^\theta}{1-\lambda^\theta}\right) \cdot m(B_x(\varepsilon, n)).
\end{align}

\textbf{Step 4: Constant Estimate.} The ratio of constants satisfies
\begin{equation}
\frac{C_\delta^{-1}}{C_\varepsilon} = \left(\frac{\delta}{\varepsilon}\right)^d \exp\left(-\frac{C_1(\varepsilon - \delta) + C_2(\varepsilon^\theta - \delta^\theta)}{1-\lambda^\theta}\right).
\end{equation}

For the leading term, $(\delta/\varepsilon)^d \geq (\delta/\varepsilon)^{d_u}$ where we only need the unstable contribution. Combining all factors:
\begin{equation}
d(\varepsilon, \delta) = \exp\left(-\frac{C_3(\varepsilon + \delta)}{1-\lambda^\theta}\right) \cdot \left(\frac{\delta}{\varepsilon}\right)^{d_u}
\end{equation}
where $C_3$ absorbs all the geometric constants.
\end{proof}

\subsection{Applications of Volume Lemmas}

The Volume Lemmas yield two immediate consequences: a pressure formula for the geometric potential and a characterization of attractors among basic sets. Both are used in Section~\ref{sec:equilibrium_states}.

\begin{proposition}[Pressure Formula via Volume Growth]\label{prop:pressure_volume}
For a $C^2$ basic set $\Omega_s$ and small $\varepsilon > 0$,
\begin{equation}
P_{f|_{\Omega_s}}(\phi^{(u)}) = \lim_{n \to \infty} \frac{1}{n} \log m(B(\varepsilon, n)) \leq 0
\end{equation}
where $B(\varepsilon, n) = \bigcup_{x \in \Omega_s} B_x(\varepsilon, n)$.
\end{proposition}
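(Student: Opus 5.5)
We prove the pressure formula by the Bowen--Ruelle covering argument: cover $B(\varepsilon,n)$ by finitely many Bowen balls centred on a maximal separated set, so that $m(B(\varepsilon,n))$ is comparable to a partition sum for $\phi^{(u)}$. The volume of each ball is controlled by the Volume Lemma (Main Theorem~\ref{thm:volume_lemma}), and the sign is then forced by $m(M)<\infty$.

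\textbf{Step 1 (covering by spanning sets).} Fix small $\varepsilon$ and let $E_n\subset\Omega_s$ be a maximal $(n,\varepsilon)$-separated set. By maximality, $\Omega_s\subset\bigcup_{x\in E_n}B_x(\varepsilon,n)$. The triangle inequality then gives
\begin{equation}
B(\varepsilon,n)\subset\bigcup_{x\in E_n}B_x(2\varepsilon,n).
\end{equation}
Applying the upper bound of Main Theorem~\ref{thm:volume_lemma} at scale $2\varepsilon$ yields
\begin{equation}
m(B(\varepsilon,n))\le C_{2\varepsilon}\sum_{x\in E_n}\exp(S_n\phi^{(u)}(x)).
\end{equation}

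\textbf{Step 2 (disjoint packing).} Since $E_n$ is $(n,\varepsilon)$-separated, the balls $B_x(\varepsilon/3,n)$, $x\in E_n$, are pairwise disjoint. Each is contained in $B(\varepsilon,n)$. By the lower bound of Main Theorem~\ref{thm:volume_lemma} at scale $\varepsilon/3$,
\begin{equation}
m(B(\varepsilon,n))\ge C_{\varepsilon/3}^{-1}\sum_{x\in E_n}\exp(S_n\phi^{(u)}(x)).
\end{equation}

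\textbf{Step 3 (pressure).} The constants $C_{2\varepsilon}$ and $C_{\varepsilon/3}$ do not depend on $n$. Hence
\begin{equation}
\frac1n\log m(B(\varepsilon,n))=\frac1n\log Z_n(\phi^{(u)},\varepsilon)+O(1/n),
\end{equation}
where $Z_n(\phi^{(u)},\varepsilon)$ is the separated-set partition sum. For expansive $f|_{\Omega_s}$ and $\varepsilon$ below the expansiveness constant (Lemma~\ref{lem:quantitative_expansiveness}), the standard Walters argument gives $\frac1n\log Z_n\to P(\phi^{(u)})$. The reason is that the distortion bound of Lemma~\ref{lem:jacobian_distortion} makes $Z_n$ essentially independent of the choice of maximal separated set and of $\varepsilon$, up to bounded factors. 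Moreover, $\log Z_n$ is submultiplicative up to a constant, so the limit exists, not merely the $\limsup$.

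\textbf{Step 4 (sign).} Since $B(\varepsilon,n)\subset M$, we have $m(B(\varepsilon,n))\le m(M)<\infty$. Therefore $\frac1n\log m(B(\varepsilon,n))\le \frac1n\log m(M)\to0$, which gives $P(\phi^{(u)})\le0$.

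\textbf{Main obstacle.} The hardest point is Step 3: showing the limit exists and equals the pressure rather than only bounding the $\limsup$. I would handle it by the two-sided comparison above together with near-multiplicativity of $Z_n$. That multiplicativity uses bounded distortion and the specification property of the mixing basic set, inherited through the coding of Theorem~\ref{thm:symbolic_coding_prelim}. One must also check that the Volume Lemma, stated for centres in $\Omega_s$, is applied only at such centres. This holds here because $E_n\subset\Omega_s$.
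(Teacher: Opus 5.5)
Your argument is correct, but it is not the route the paper takes; it is essentially Bowen's original argument. The paper works with $(n,\varepsilon)$-spanning sets and uses the Volume Lemma in one direction only. It packs the balls $B_x(\varepsilon/2,n)$ into $M$ to get $\sum_{x\in E}e^{S_n\phi^{(u)}(x)}\le C_{\varepsilon/2}\,m(M)$, hence $P(\phi^{(u)})\le 0$. The disjointness it invokes actually requires a separated set, which is what your Step~2 uses; since a maximal separated set is spanning, this costs nothing.

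For the reverse direction, the paper appeals to the variational principle and the Pesin entropy formula for $\mu^+$ to get $P(\phi^{(u)})\ge 0$. That is only valid when $\Omega_s$ is an attractor: for a non-attracting basic set $P(\phi^{(u)})<0$, as Proposition~\ref{prop:attractor_char} itself asserts. The paper also never compares $m(B(\varepsilon,n))$ with a partition sum. So the displayed identity is really supplied by your two-sided sandwich $C_{\varepsilon/3}^{-1}Z_n\le m(B(\varepsilon,n))\le C_{2\varepsilon}Z_n$, which holds for every basic set; the sign then follows for free from $m(M)<\infty$. That identity is exactly what step (b)$\Rightarrow$(c) of Proposition~\ref{prop:attractor_char} uses. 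What the paper's route adds, in the attractor case only, is the identification $P(\phi^{(u)})=0$ through the SRB measure, which your argument neither needs nor provides.

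Two small corrections to your Step~3.

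\begin{enumerate}
\item Independence of $Z_n$ from the choice of maximal separated set already follows from your sandwich, with no distortion needed. Independence of the scale uses expansiveness (a time shift by the $N$ of Lemma~\ref{lem:quantitative_expansiveness}), not distortion alone.
\item Neither mixing nor specification is needed, which matters because the statement allows non-mixing basic sets. Submultiplicativity of $Z_n$ (you wrote ``$\log Z_n$ submultiplicative''; you mean $\log Z_n$ subadditive, up to constants and the scale changes above) follows from expansiveness and the Bowen property. Fekete's lemma then gives existence of the limit, and Walters' theorem identifies the $\limsup$ with $P(\phi^{(u)})$.
\end{enumerate}

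In the mixing case there is a shorter route: rerun your disjoint/cover argument with the Gibbs bounds of Proposition~\ref{thm:gibbs_bounds} for $\mu_{\phi^{(u)}}$ in place of $m$. This gives $Z_n\asymp e^{nP(\phi^{(u)})}$ directly.
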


\begin{proof}
The pressure $P(\phi^{(u)})$ is defined via spanning sets: 
\begin{equation}
P(\phi^{(u)}) = \lim_{\varepsilon \to 0} \limsup_{n \to \infty} \frac{1}{n} \log \inf \sum_{x \in E} e^{S_n \phi^{(u)}(x)},
\end{equation}
where the infimum is over $(n, \varepsilon)$-spanning sets $E \subset \Omega_s$.

\textbf{Upper bound.} For any $(n, \varepsilon)$-spanning set $E$, the dynamical balls $\{B_x(\varepsilon, n) : x \in E\}$ cover $\Omega_s$. By the Volume Lemma (Main Theorem~\ref{thm:volume_lemma}), $m(B_x(\varepsilon, n)) \geq C_\varepsilon^{-1} e^{S_n \phi^{(u)}(x)}$ for each $x$. Since $m(M) < \infty$, $m(M) \geq \sum_{x \in E} m(B_x(\varepsilon/2, n))$ (disjoint sub-balls), giving $\sum_{x \in E} e^{S_n \phi^{(u)}(x)} \leq C_{\varepsilon/2} \cdot m(M)$. Taking logarithms, dividing by $n$, and passing to the limit: $P(\phi^{(u)}) \leq 0$.

\textbf{Lower bound.} By the variational principle (Theorem~\ref{thm:gibbs_equiv_imported}), $P(\phi^{(u)}) \geq h_{\mu^+}(f) + \int \phi^{(u)} \, d\mu^+$ where $\mu^+$ is the SRB measure. Since $\mu^+$ is the equilibrium state for $\phi^{(u)}$ (if it exists), equality holds and $P(\phi^{(u)}) = h_{\mu^+}(f) + \int \phi^{(u)} \, d\mu^+ = 0$ by the Pesin entropy formula (which gives $h_{\mu^+}(f) = -\int \phi^{(u)} \, d\mu^+$). Alternatively, the lower bound $P(\phi^{(u)}) \geq 0$ follows from the Volume Lemma applied to a maximal $(n, \varepsilon)$-separated set: such a set has cardinality at least $c \cdot e^{-S_n \phi^{(u)}(x_0)}$ for some $x_0$, giving $\sum e^{S_n \phi^{(u)}(x)} \geq c > 0$ uniformly in $n$.
\end{proof}

\begin{proposition}[Attractor Characterization]\label{prop:attractor_char}
For a $C^2$ basic set $\Omega_s$, the following are equivalent:
\begin{enumerate}
\item[(a)] $\Omega_s$ is an attractor.
\item[(b)] $m(W^s(\Omega_s)) > 0$.
\item[(c)] $P_{f|_{\Omega_s}}(\phi^{(u)}) = 0$.
\end{enumerate}
\end{proposition}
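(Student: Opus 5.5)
The plan is to prove the cycle of implications (a) $\Rightarrow$ (b) $\Rightarrow$ (c) $\Rightarrow$ (a). The middle implication rests on the Volume Lemma (Main Theorem~\ref{thm:volume_lemma}) and the pressure formula of Proposition~\ref{prop:pressure_volume}. The outer two implications are essentially topological, using local product structure.

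\textbf{(a) $\Rightarrow$ (b).} By the definition of attractor adopted above, $W^s_\varepsilon(\Omega_s)$ is a neighborhood of $\Omega_s$. A nonempty open set has positive Riemannian volume, and $W^s(\Omega_s) \supset W^s_\varepsilon(\Omega_s)$, so $m(W^s(\Omega_s))>0$.

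\textbf{(b) $\Rightarrow$ (c).} Proposition~\ref{prop:pressure_volume} already gives $P(\phi^{(u)}) \le 0$, so only $P(\phi^{(u)}) \geq 0$ is needed. Write $W^s(\Omega_s)=\bigcup_{N\ge0} f^{-N}(W^s_\varepsilon(\Omega_s))$. Since $m(W^s(\Omega_s))>0$, some $f^{-N}(W^s_\varepsilon(\Omega_s))$ has positive measure. Because $f^{-N}$ is a diffeomorphism, the set $W^s_\varepsilon(\Omega_s)$ itself then has positive measure, say $a>0$. Each point $z\in W^s_\varepsilon(\Omega_s)$ lies on $W^s_\varepsilon(y)$ for some $y\in\Omega_s$, so $d(f^kz,f^ky)\le\varepsilon$ for all $k\ge0$. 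Hence for every $n$,
\[
W^s_\varepsilon(\Omega_s)\subset \bigcup_{y\in\Omega_s} B_y(\varepsilon,n) \subset \bigcup_{x\in E_n} B_x(3\varepsilon,n),
\]
where $E_n\subset\Omega_s$ is an $(n,\varepsilon)$-spanning set of minimal weight. Applying the upper bound of the Volume Lemma with $3\varepsilon$ gives
\[
0<a\le \sum_{x\in E_n} m(B_x(3\varepsilon,n)) \le C_{3\varepsilon}\sum_{x\in E_n} e^{S_n\phi^{(u)}(x)}.
\]
Taking $\frac1n\log$ and then $n\to\infty$ yields $P(\phi^{(u)})\ge 0$. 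To avoid the $\varepsilon\to0$ issue in the definition of pressure, the plan is to use that for expansive $f|_{\Omega_s}$ the pressure is already computed at any fixed scale below the expansiveness constant.

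\textbf{(c) $\Rightarrow$ (a).} This is the step I expect to be the main obstacle. I would argue by contrapositive: suppose $\Omega_s$ is not an attractor, and show $P(\phi^{(u)})<0$. First I need a geometric fact. If $W^u_\varepsilon(x)\subset\Omega_s$ for all $x\in\Omega_s$, then local product structure together with local maximality makes $W^s_\varepsilon(\Omega_s)$ a neighborhood of $\Omega_s$, i.e.\ $\Omega_s$ is an attractor. So if $\Omega_s$ is not an attractor, there exist a point $x_0\in\Omega_s$ and a point $w\in W^u_\varepsilon(x_0)$ with $d(w,\Omega_s)\ge\eta>0$. By minimality and transitivity, together with uniform continuity of the unstable lamination, there is $N$ such that every unstable disk $W^u_\varepsilon(x)$, $x\in\Omega_s$, contains a sub-ball of radius $r>0$ whose $f^N$-image stays at distance $\ge\eta/2$ from $\Omega_s$. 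Points of this sub-ball leave the isolating neighborhood $U$ within a bounded number of further iterates.

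Next, set $A_n=\{z: f^kz\in \overline{U_\varepsilon(\Omega_s)},\ 0\le k<n\}$, the set of points staying near $\Omega_s$ for time $n$. Cover $A_n$ by Bowen balls centred on $\Omega_s$ and work in the unstable direction using the Jacobian distortion of Lemma~\ref{lem:jacobian_distortion}. Every $L=N+O(1)$ iterates, a fixed proportion $\rho<1$ of unstable volume escapes, so
\[
m(A_n)\le C\rho^{\,n/L}.
\]
On the other hand, $A_n$ contains $\bigcup_{x\in E_n}B_x(\varepsilon/2,n)$ with $E_n$ a maximal $(n,\varepsilon)$-separated set, and these balls are disjoint. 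The lower bound of the Volume Lemma then gives
\[
\sum_{x\in E_n} e^{S_n\phi^{(u)}(x)}\le C_{\varepsilon/2}\,C\,\rho^{\,n/L},
\]
hence $P(\phi^{(u)})\le \frac{\log\rho}{L}<0$.

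The delicate point is making the escaping-proportion estimate uniform. This needs bounded distortion along unstable leaves, supplied by Lemma~\ref{lem:jacobian_distortion}, together with compactness of $\Omega_s$ for choosing $N$ and $r$. I would first try to prove it on a single unstable plaque and then integrate over the stable direction via Lemma~\ref{lem:local_product}.
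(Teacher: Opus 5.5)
Your proof has the same architecture as the paper's: the cycle (a)$\Rightarrow$(b)$\Rightarrow$(c)$\Rightarrow$(a), a trivial first step, covering $W^s_\varepsilon(\Omega_s)$ by Bowen balls for the second, and a contrapositive ``a definite fraction escapes every $N$ steps'' argument for the third. The difference is in how you extract the pressure bounds, and in places you are more careful.

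In (b)$\Rightarrow$(c) the paper silently replaces $m(W^s(\Omega_s))>0$ by $m(W^s_\varepsilon(\Omega_s))>0$. It then invokes the equality in Proposition~\ref{prop:pressure_volume}, whose ``lower bound'' argument in the paper would give $P(\phi^{(u)})\ge 0$ for every basic set and so cannot be right as written. You justify the reduction via $W^s(\Omega_s)=\bigcup_N f^{-N}(W^s_\varepsilon(\Omega_s))$ and use only the upper Volume Lemma bound on a spanning set. Expansiveness is not needed there: the infimum over $(n,\varepsilon)$-spanning sets only increases as $\varepsilon\downarrow 0$, so a lower bound at one fixed scale suffices.

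In (c)$\Rightarrow$(a) the paper gets the escaping fraction from the Second Volume Lemma and converts the decay of $m(B(\varepsilon/2,n))$ into $P<0$ through the same Proposition~\ref{prop:pressure_volume}. You get the fraction from distortion plus a uniform escaping sub-disk. You then convert via the disjoint balls $B_x(\varepsilon/2,n)\subset A_n$ and the lower Volume Lemma, which needs only the two-sided Volume Lemma and holds at every smaller scale. You also escape in the correct direction, forward along $W^u$. The paper's text uses $f^{-N}$, which cannot move points of $W^u_\varepsilon(x)$ away from $\Omega_s$.

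The price is that your crux, $m(A_n)\le C\rho^{n/L}$, must be proved by hand, and three points in your sketch need repair.

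\begin{enumerate}
\item[(i)] The uniform sub-disk claim rests on density of every unstable leaf in $\Omega_s$, uniformly in the base point. This comes from topological mixing, with the cyclic decomposition in the non-mixing case. It does not come from minimality of $f|_{\Omega_s}$, which fails because periodic points are dense.
\item[(ii)] Lemma~\ref{lem:jacobian_distortion} is stated for $x,y\in\Omega_s$, but when $\Omega_s$ is not an attractor the escaping points lie off $\Omega_s$. You need the $C^2$ estimate $\bigl|\log|\det Df^n_y|-\log|\det Df^n_z|\bigr|\le C$ for $y,z$ in a common Bowen ball $B_x(\varepsilon,n)$ with $x\in\Omega_s$. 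This follows from $d(f^ky,f^kz)\le C\varepsilon\lambda_1^{\min\{k,n-1-k\}}$ for some $\lambda_1<1$. The paper's Second Volume Lemma has the same defect: it is stated only for $y\in\Omega_s$, so it does not literally apply to escaping points.
\item[(iii)] Covering $A_n$ by Bowen balls centred on $\Omega_s$ requires finite-time shadowing. Because the covering balls overlap, a per-ball escaping fraction does not by itself give $m(A_{n+L})\le\rho\, m(A_n)$. Put the escaping pieces inside the disjoint balls $B_x(\varepsilon/2,n)$ of a maximal separated set, and compare them with the covering balls $B_x(C\varepsilon,n)$ through the two-sided Volume Lemma.
\end{enumerate}

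With these additions your argument is complete. The paper's version omits the same bookkeeping.
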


\begin{proof}
$(a) \Rightarrow (b)$: If $\Omega_s$ is an attractor, then $W^s_\varepsilon(\Omega_s)$ is a neighborhood of $\Omega_s$, hence has positive measure.

$(b) \Rightarrow (c)$: If $m(W^s_\varepsilon(\Omega_s)) > 0$, then $m(B(\varepsilon, n)) \geq m(W^s_\varepsilon(\Omega_s)) > 0$ for all $n$, so $P(\phi^{(u)}) \geq 0$ by Proposition \ref{prop:pressure_volume}. Combined with $P(\phi^{(u)}) \leq 0$, we get equality.

$(c) \Rightarrow (a)$: We prove the contrapositive: if $\Omega_s$ is not an attractor, then $P(\phi^{(u)}) < 0$. Since $\Omega_s$ is not an attractor, it is not Lyapunov stable: there exists $\gamma > 0$ such that for every neighborhood $U$ of $\Omega_s$, some orbit starting in $U$ eventually leaves the $\gamma$-neighborhood of $\Omega_s$. More precisely, since $\Omega_s$ is a basic set that is not an attractor, the unstable manifold $W^u(\Omega_s)$ is not contained in a small neighborhood of $\Omega_s$. This means: for every $x \in \Omega_s$, the local unstable manifold $W^u_\varepsilon(x)$ contains points $z$ with $d(f^{-N}(z), \Omega_s) > \gamma$ for some uniform $N$ and $\gamma > 0$.

Now consider the Bowen ball $B(\varepsilon, n) = \bigcup_{x\in\Omega_s}B_x(\varepsilon,n)$. We claim $m(B(\varepsilon/2, n+N)) \leq (1-d)\,m(B(\varepsilon/2, n))$ for some $d > 0$. To see this: the set $B(\varepsilon/2, n+N)$ consists of points that $(\varepsilon/2)$-shadow $\Omega_s$ for $n+N$ steps. By the Volume Lemma (Main Theorem~\ref{thm:volume_lemma}), $m(B_x(\varepsilon/2,n)) \leq C_\varepsilon e^{S_n\phi^{(u)}(x)}$. The essential estimate: the points in $B_x(\varepsilon/2, n)$ that also stay $\varepsilon/2$-close for $N$ more backward steps must avoid the ``escaping'' portion of $W^u_\varepsilon(x)$. By the Second Volume Lemma (Proposition~\ref{thm:second_volume_lemma}), the escaping portion has volume at least $d\cdot m(B_x(\varepsilon/2,n))$ where $d = d(\varepsilon/2,\gamma) > 0$ depends on $\gamma$ and $\varepsilon$ but not on $n$ or $x$. Thus $m(B(\varepsilon/2,n+N)) \leq (1-d)\,m(B(\varepsilon/2,n))$, giving $m(B(\varepsilon/2,n)) \leq (1-d)^{n/N}m(B(\varepsilon/2,0))$, hence $P(\phi^{(u)}) \leq N^{-1}\log(1-d) < 0$.
\end{proof}

\section{Equilibrium States for Basic Sets}\label{sec:equilibrium_states}

This section establishes the existence, uniqueness, and characterization of equilibrium states for H\"{o}lder continuous potentials on basic sets. We provide complete proofs with explicit bounds, extending the treatment in \cite{Bowen1975} with quantitative estimates.

\subsection{Existence and Uniqueness}

We construct the unique equilibrium state for each H\"{o}lder potential on a mixing basic set by transferring the symbolic Gibbs measure through the coding map. The proof assembles results from Parts~I, II, and III \cite{Thiam2026a, Thiam2026b, Thiam2026c}.

\begin{theorem}[Equilibrium States for Basic Sets]\label{thm:equilibrium_basic}
Let $\Omega_s$ be a basic set for an Axiom A diffeomorphism $f : M \to M$ and let $\phi : \Omega_s \to \mathbb{R}$ be H\"{o}lder continuous with exponent $\alpha \in (0, 1]$. Then:
\begin{enumerate}
\item[(i)] There exists a unique equilibrium state $\mu_\phi \in \mathcal{M}_f(\Omega_s)$ satisfying
\begin{equation}
h_{\mu_\phi}(f) + \int \phi \, d\mu_\phi = P(\phi) = \sup_{\mu \in \mathcal{M}_f(\Omega_s)} \left\{h_\mu(f) + \int \phi \, d\mu\right\}.
\end{equation}
\item[(ii)] The measure $\mu_\phi$ is ergodic.
\item[(iii)] If $f|_{\Omega_s}$ is topologically mixing, then $\mu_\phi$ is Bernoulli.
\end{enumerate}
\end{theorem}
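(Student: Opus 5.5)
The plan is to reduce everything to the symbolic setting via the coding map of Theorem~\ref{thm:coding_imported3}, using the spectral decomposition (Theorem~\ref{thm:spectral_decomp}) to handle the non-mixing case. First I would treat the case where $f|_{\Omega_s}$ is topologically mixing. Take a Markov partition of small diameter with coding $\pi:\Sigma_A\to\Omega_s$, where $A$ is aperiodic, and set $\phi^*=\phi\circ\pi\in\mathcal{H}_\alpha(\Sigma_A)$. By the standard cohomology trick (Sinai's lemma), $\phi^*$ is cohomologous to a one-sided potential $\psi\in\mathcal{H}_{\alpha/2}(\Sigma_A^+)$. Theorem~\ref{thm:spectral_imported} then gives $h_\psi$, $\nu_\psi$ and the Gibbs measure $\mu^*=h_\psi\nu_\psi$, extended to the two-sided shift. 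The spectral gap gives the Gibbs property
\[
C^{-1}\le \frac{\mu^*([a_0\cdots a_{n-1}])}{\exp(-nP+S_n\phi^*(a))}\le C,
\]
and from this the standard argument shows $\mu^*$ is the unique equilibrium state of $\phi^*$. For $\nu$ invariant, one writes $h_\nu+\int\phi^*d\nu-P$ as a limit of $-\frac1n\sum\nu(C)\log(\nu(C)/\mu^*(C))$ over $n$-cylinders, and equality forces $\nu\ll\mu^*$; ergodicity of $\mu^*$ then forces $\nu=\mu^*$.

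Next I transfer the result to $\Omega_s$ and set $\mu_\phi=\pi_*\mu^*$. Two facts are needed here.
\begin{itemize}
\item Pressures agree: $P_f(\phi)=P_\sigma(\phi^*)$. This holds because $\pi$ is a finite-to-one semiconjugacy (bounded multiplicity from the Markov structure), so both entropy and pressure are preserved.
\item The boundary of the Markov partition is null for every equilibrium state. The set $\bigcup_n f^n(\partial\mathcal{R})$ is a proper closed invariant set, so by the Gibbs bounds it has $\mu^*$-measure zero.
\end{itemize}
Given these, any $f$-invariant $\mu$ lifts to a $\sigma$-invariant $\tilde\mu$ with $\pi_*\tilde\mu=\mu$ and $h_{\tilde\mu}=h_\mu$; this uses finite-to-one-ness and the Abramov–Rokhlin formula, or simply that the fibres are finite. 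So equilibrium states of $\phi$ correspond to equilibrium states of $\phi^*$, which gives (i) by uniqueness upstairs.

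Ergodicity (ii) is inherited through the factor map. The Bernoulli property (iii) comes from mixing of $\mu^*$ with exponential decay on cylinders (Theorem~\ref{thm:spectral_imported}), which yields weak Bernoulli for the cylinder partition. Friedman–Ornstein then makes $(\sigma,\mu^*)$ Bernoulli. Since $\pi$ is injective $\mu^*$-a.e., $(f,\mu_\phi)$ is isomorphic to it and hence Bernoulli as well.

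For a general (transitive, non-mixing) basic set, I decompose $\Omega_s=X_1\cup\dots\cup X_{p}$ with $f^p|_{X_1}$ mixing. Let $\mu_1$ be the unique equilibrium state of $S_p\phi$ for $f^p|_{X_1}$ and set $\mu_\phi=\frac1p\sum_{j<p}f^j_*\mu_1$. Uniqueness follows because any equilibrium state for $f$, restricted and normalized on $X_1$, is an equilibrium state for $f^p$ with pressure $pP(\phi)$. Ergodicity follows from the cyclic structure. Alternatively, one can code directly by an irreducible SFT and use Perron–Frobenius with period.

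The main obstacle I expect is the weak Bernoulli step. It requires uniform control of $|\mu^*(C\cap\sigma^{-n-k}D)-\mu^*(C)\mu^*(D)|$ summed over all cylinders $C,D$ of arbitrary length, not just for a fixed pair. My first attempt would be to use the Gibbs property together with $\|\lambda^{-n}\mathcal{L}^n g-\nu(g)h\|_\alpha\le C\gamma^n\|g\|_\alpha$ applied to $g=\mathbf 1_C$ after shifting. The key point is that $\mathcal{L}^{|C|}$ of a normalized cylinder indicator is uniformly H\"older, which gives a relative error $O(\gamma^k)$ uniformly in the cylinder lengths.
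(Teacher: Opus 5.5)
Your architecture is the paper's: reduce to the mixing case via the spectral decomposition, code by $\pi$, take the symbolic Gibbs measure $\mu^*$ and push it forward, and get uniqueness by lifting an arbitrary equilibrium state to $\Sigma_A$. The differences are in the sub-steps.

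\emph{Pressure and entropy.} You obtain $P_f(\phi)=P_\sigma(\phi^*)$ and $h_{\tilde\mu}=h_\mu$ from the bounded-to-one property of $\pi$. The paper does not use that fact. It uses only $h_{\tilde\mu}(\sigma)\ge h_\mu(f)$ for an extension, which is all that uniqueness needs. It then closes the sandwich $P_\sigma(\phi^*)=h_{\mu_\phi}(f)+\int\phi\,d\mu_\phi\le P_f(\phi)\le P_\sigma(\phi^*)$, where $h_{\mu_\phi}(f)=h_{\mu^*}(\sigma)$ comes from a.e.\ injectivity of $\pi$.

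\emph{Symbolic facts.} You pass to a one-sided potential by Sinai's lemma. You then prove upstairs uniqueness by Bowen's relative-entropy argument, and the Bernoulli property by weak Bernoulli plus Friedman--Ornstein. The paper simply imports all of these from Part~I.

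Note also that the existence of the lift $\tilde\mu$ has nothing to do with finite fibres. It holds for any continuous surjective semiconjugacy of compact spaces, by Hahn--Banach/Riesz plus Ces\`aro averaging (Lemma~\ref{lem:measure_lifting}).

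One step is wrong as written. The set $\bigcup_n f^n(\partial\mathcal R)$ is invariant but not closed; typically it is dense in $\Omega_s$. So the principle ``a proper closed invariant set is null for an ergodic measure of full support'' cannot be applied to it, and an invariant non-closed set can well have full measure.

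The missing ingredient is the Markov property of the boundaries: $f(\partial^s\mathcal R)\subset\partial^s\mathcal R$ and $f^{-1}(\partial^u\mathcal R)\subset\partial^u\mathcal R$. The repair goes as follows.
\begin{enumerate}
\item[(a)] With $D^s=\pi^{-1}(\partial^s\mathcal R)$, the Markov property gives $\sigma D^s\subset D^s$. Hence the sets $\sigma^nD^s$ decrease and all have measure $\mu^*(D^s)$, so $\mu^*(D^s)=\mu^*\bigl(\bigcap_{n\ge0}\sigma^nD^s\bigr)$.
\item[(b)] This intersection \emph{is} closed, $\sigma$-invariant and proper. It is therefore null by ergodicity together with the full support given by the Gibbs bounds.
\item[(c)] The same argument with $\sigma^{-1}$ handles $D^u=\pi^{-1}(\partial^u\mathcal R)$.
\item[(d)] Then $\bigcup_{n\in\mathbb Z}\sigma^n(D^s\cup D^u)$ is a countable union of null sets, hence null.
\end{enumerate}
This is Step~4 of the paper. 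In your route the repair matters only for (iii), where you need $\pi$ injective $\mu^*$-a.e.\ to transfer the Bernoulli property. You obtained part (i) without it.
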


\begin{proof}
The proof proceeds through the symbolic coding, transferring results from Parts I and II \cite{Thiam2026a, Thiam2026b}.

\textbf{Step 1: Reduction to Mixing Case.} By the spectral decomposition (Theorem \ref{thm:spectral_decomp}), we may write $\Omega_s = X_1 \cup \cdots \cup X_m$ where $f(X_k) = X_{k+1 \mod m}$ and $f^m|_{X_1}$ is mixing. Every $f$-invariant measure $\mu$ on $\Omega_s$ satisfies $\mu(X_k) = 1/m$, and $\mu_0 = m \cdot \mu|_{X_1}$ is $f^m$-invariant on $X_1$.

The correspondence $\mu \leftrightarrow \mu_0$ is a bijection between $\mathcal{M}_f(\Omega_s)$ and $\mathcal{M}_{f^m}(X_1)$ satisfying $h_{\mu_0}(f^m) = m \cdot h_\mu(f)$ and $\int S_m\phi \, d\mu_0 = m \int \phi \, d\mu$. Thus finding equilibrium states for $\phi$ with respect to $f$ reduces to finding equilibrium states for $S_m\phi$ with respect to $f^m$ on the mixing set $X_1$.

\textbf{Step 2: Symbolic Coding.} Assume $f|_{\Omega_s}$ is mixing. Let $\mathcal{R} = \{R_1, \ldots, R_k\}$ be a Markov partition with diameter less than the expansiveness constant $\varepsilon_0$, and let $\pi : \Sigma_A \to \Omega_s$ be the coding map (Theorem \ref{thm:symbolic_coding_prelim}).

Define the symbolic potential $\phi^* = \phi \circ \pi : \Sigma_A \to \mathbb{R}$. For $x, y \in \Sigma_A$ with $x_j = y_j$ for $j \in [-N, N]$, we have $f^j(\pi(x)), f^j(\pi(y)) \in R_{x_j}$ for such $j$, giving $d(\pi(x), \pi(y)) < \alpha^N$ by Lemma \ref{lem:quantitative_expansiveness}. Thus
\begin{equation}
|\phi^*(x) - \phi^*(y)| \leq |\phi|_\alpha \cdot \alpha^{N\alpha}
\end{equation}
showing $\phi^* \in \mathcal{F}_A$ (the space of functions with summable variation, as in Part~I \cite{Thiam2026a}).

\textbf{Step 3: Gibbs Measure on $\Sigma_A$.} By Theorem \ref{thm:spectral_properties} and the Spectral-Variational-Geometric Equivalence Main Theorem of Part~I \cite{Thiam2026a}, there exists a unique Gibbs measure $\mu_{\phi^*}$ for $\phi^*$ on $\Sigma_A$. This measure is:
\begin{enumerate}
\item[(a)] The unique equilibrium state for $\phi^*$: $h_{\mu_{\phi^*}}(\sigma) + \int \phi^* \, d\mu_{\phi^*} = P_\sigma(\phi^*)$.
\item[(b)] Ergodic (since $\sigma|_{\Sigma_A}$ is mixing and $\mu_{\phi^*}$ is mixing by the Exponential Decay of Correlations theorem of Part~I \cite{Thiam2026a}).
\item[(c)] Bernoulli by the statistical properties of Part~I \cite{Thiam2026a}.
\end{enumerate}

\textbf{Step 4: Boundary Null Sets.} Define $D^s = \pi^{-1}(\partial^s \mathcal{R})$ and $D^u = \pi^{-1}(\partial^u \mathcal{R})$ where $\partial^s \mathcal{R} = \bigcup_i \partial^s R_i$ is the stable boundary. These are closed proper subsets of $\Sigma_A$ satisfying $\sigma(D^s) \subset D^s$ and $\sigma^{-1}(D^u) \subset D^u$.

Since $\mu_{\phi^*}$ is $\sigma$-invariant, $\mu_{\phi^*}(\sigma^n D^s) = \mu_{\phi^*}(D^s)$ for all $n$. By invariance under $\sigma$, $\mu_{\phi^*}(\bigcap_{n \geq 0} \sigma^n D^s) = \mu_{\phi^*}(D^s)$. The intersection is $\sigma$-invariant, so by ergodicity it has measure 0 or 1. Since its complement contains a nonempty open set (which has positive measure by the Gibbs property established in Part~I \cite{Thiam2026a}), we conclude $\mu_{\phi^*}(D^s) = 0$. Similarly $\mu_{\phi^*}(D^u) = 0$.

\textbf{Step 5: Push-forward to $\Omega_s$.} Define $\mu_\phi = \pi_* \mu_{\phi^*}$, i.e., $\mu_\phi(E) = \mu_{\phi^*}(\pi^{-1}(E))$. Then $\mu_\phi$ is $f$-invariant since $\pi \circ \sigma = f \circ \pi$.

The map $\pi$ is injective on $\Sigma_A \setminus \bigcup_{n \in \mathbb{Z}} \sigma^n(D^s \cup D^u)$, which has full $\mu_{\phi^*}$-measure. Thus $(f, \mu_\phi)$ and $(\sigma, \mu_{\phi^*})$ are measurably conjugate, giving $h_{\mu_\phi}(f) = h_{\mu_{\phi^*}}(\sigma)$.

Therefore:
\begin{equation}
h_{\mu_\phi}(f) + \int \phi \, d\mu_\phi = h_{\mu_{\phi^*}}(\sigma) + \int \phi^* \, d\mu_{\phi^*} = P_\sigma(\phi^*) \geq P_f(\phi)
\end{equation}
where the inequality follows from the Universal Variational Principle Main Theorem of Part~II \cite{Thiam2026b}. Combined with the reverse inequality from the variational principle, we get $P_\sigma(\phi^*) = P_f(\phi)$ and $\mu_\phi$ is an equilibrium state.

\textbf{Step 6: Uniqueness.} Suppose $\mu$ is any equilibrium state for $\phi$. By the measure lifting lemma (Lemma \ref{lem:measure_lifting} below), there exists $\nu \in \mathcal{M}_\sigma(\Sigma_A)$ with $\pi_* \nu = \mu$. Since entropy does not decrease under factor maps, $h_\nu(\sigma) \geq h_\mu(f)$, so
\begin{equation}
h_\nu(\sigma) + \int \phi^* \, d\nu \geq h_\mu(f) + \int \phi \, d\mu = P(\phi) = P(\phi^*).
\end{equation}
Thus $\nu$ is an equilibrium state for $\phi^*$, forcing $\nu = \mu_{\phi^*}$ by uniqueness on $\Sigma_A$. Then $\mu = \pi_* \nu = \pi_* \mu_{\phi^*} = \mu_\phi$.

The Bernoulli property of $\mu_\phi$ follows from that of $\mu_{\phi^*}$ via the measurable conjugacy.
\end{proof}

\begin{lemma}[Measure Lifting]\label{lem:measure_lifting}
For any $\mu \in \mathcal{M}_f(\Omega_s)$, there exists $\nu \in \mathcal{M}_\sigma(\Sigma_A)$ with $\pi_* \nu = \mu$.
\end{lemma}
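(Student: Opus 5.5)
The plan is to build $\nu$ as a weak-$*$ limit of lifts of measures that are easy to lift, using two facts: $\pi:\Sigma_A \to \Omega_s$ is a continuous surjection between compact metric spaces with $\pi\circ\sigma = f\circ\pi$ (Theorem~\ref{thm:symbolic_coding_prelim}), and the push-forward $\pi_*$ is weak-$*$ continuous.

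\textbf{Step 1 (non-invariant lift).} First I will find some Borel probability $\tilde\nu$ on $\Sigma_A$, not necessarily $\sigma$-invariant, with $\pi_*\tilde\nu = \mu$. For each $x\in\Omega_s$, the fibre $\pi^{-1}(x)$ is a nonempty compact set. By the Kuratowski--Ryll-Nardzewski measurable selection theorem there is a Borel section $s:\Omega_s\to\Sigma_A$ with $\pi\circ s = \mathrm{id}$. In fact a direct construction suffices here: choose the lexicographically least point of each fibre, coordinate by coordinate, which yields a Borel map. Then set $\tilde\nu = s_*\mu$.

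If one prefers to avoid selection theorems, there is an alternative route. Approximate $\mu$ weak-$*$ by finitely supported measures $\sum_i p_i\delta_{x_i}$, lift each atom to $\delta_{a_i}$ with $\pi(a_i)=x_i$, and take a weak-$*$ limit point of the lifts. By continuity of $\pi_*$, that limit pushes forward to $\mu$.

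\textbf{Step 2 (averaging).} Next I will define the Ces\`aro averages
\[
\nu_n = \frac1n\sum_{k=0}^{n-1}\sigma^k_*\tilde\nu .
\]
Using $\pi\circ\sigma^k = f^k\circ\pi$ and the $f$-invariance of $\mu$, each term satisfies $\pi_*\sigma^k_*\tilde\nu = f^k_*\pi_*\tilde\nu = f^k_*\mu = \mu$. Hence $\pi_*\nu_n = \mu$ for every $n$.

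\textbf{Step 3 (limit).} Since $\Sigma_A$ is compact, the space of probability measures on it is weak-$*$ compact, so a subsequence $\nu_{n_j}$ converges to some $\nu$. The standard Krylov--Bogolyubov estimate $\|\sigma_*\nu_n - \nu_n\| \le 2/n$ in total variation, together with continuity of $\sigma$, gives $\sigma_*\nu = \nu$. Continuity of $\pi$ gives $\int g\circ\pi\,d\nu = \lim_j\int g\circ\pi\,d\nu_{n_j} = \int g\,d\mu$ for every $g\in C(\Omega_s)$. Therefore $\pi_*\nu=\mu$ and $\nu \in \mathcal{M}_\sigma(\Sigma_A)$.

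\textbf{Main obstacle.} The only delicate point is Step~1, namely producing a measurable lift at all. I would handle it with the finite-atom approximation, since that uses only surjectivity and continuity of $\pi$ and sidesteps measurability issues entirely. The invariance and limit steps are routine.
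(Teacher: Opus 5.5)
Your proof is correct, and its overall structure matches the paper's. Both produce some non-invariant lift of $\mu$, average it along $\sigma$, and pass to a weak-$*$ limit, using $\pi_*\sigma^k_* = f^k_*\pi_*$ and the weak-$*$ continuity of $\pi_*$.

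The real difference is in Step~1. The paper builds its initial lift $\beta$ by functional analysis. It sets $F(g\circ\pi)=\int g\,d\mu$ on the subspace $\{g\circ\pi : g\in C(\Omega_s)\}\subset C(\Sigma_A)$; this is well defined and positive because $\pi$ is onto. It then extends $F$ positively by Hahn--Banach (for instance, dominated by $h\mapsto\max h$) and applies Riesz representation.

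Your atomic-approximation route replaces this with weak-$*$ density of finitely supported measures and compactness of $\mathcal{M}(\Sigma_A)$. Equivalently, $\pi_*\mathcal{M}(\Sigma_A)$ is a compact convex set containing every $\delta_x$, so it is all of $\mathcal{M}(\Omega_s)$. This is more elementary and avoids both the extension theorem and any measurability questions. Your selection route instead gives an explicit lift $s_*\mu$, at the cost of a selection theorem, or of the lex-min construction, which uses the coordinate structure of $\Sigma_A$ (with an enumeration of $\mathbb{Z}$).

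Your Steps~2--3 also spell out two points the paper only asserts: the Krylov--Bogolyubov estimate that gives $\sigma$-invariance of the limit, and the continuity argument showing $\pi_*\nu=\mu$.
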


\begin{proof}
Define a positive linear functional $F : C(\Sigma_A) \to \mathbb{R}$ on the subspace $\{g \circ \pi : g \in C(\Omega_s)\}$ by $F(g \circ \pi) = \int g \, d\mu$. By a variant of the Hahn-Banach theorem preserving positivity \cite{KatokHasselblatt1995}, $F$ extends to a positive linear functional on $C(\Sigma_A)$ with $F(1) = 1$, corresponding to some probability measure $\beta$.

By compactness of $\mathcal{M}(\Sigma_A)$, let $\nu = \lim_{k \to \infty} \frac{1}{n_k} \sum_{j=0}^{n_k-1} \sigma^j_* \beta$ for some subsequence. Then $\nu$ is $\sigma$-invariant and $\pi_* \nu = \mu$ since $\pi_*(\sigma^j_* \beta) = f^j_*(\pi_* \beta) = f^j_* \mu = \mu$.
\end{proof}

\subsection{Gibbs Property, Cohomology, and the SRB Measure}

We establish quantitative Gibbs bounds for the equilibrium state, characterize when two potentials give the same measure (the Livšic coboundary condition), and identify the SRB measure as the equilibrium state for the geometric potential. The connection between Gibbs measures and $g$-measures (\cite{Keane1972}; see also  \cite{BerghoutFernandezVerbitskiy2019}) and conformal measures (\cite{DenkerUrbanski1991}) provides an alternative perspective on these characterizations.

\begin{proposition}[Gibbs Bounds]\label{thm:gibbs_bounds}
Let $\mu_\phi$ be the equilibrium state for a H\"{o}lder potential $\phi$ on a mixing basic set $\Omega_s$. For small $\varepsilon > 0$, there exist constants $c_1, c_2 > 0$ such that for all $x \in \Omega_s$ and $n \geq 1$:
\begin{equation}
c_1 \leq \frac{\mu_\phi(B_x(\varepsilon, n))}{\exp(-nP(\phi) + S_n\phi(x))} \leq c_2.
\end{equation}
\end{proposition}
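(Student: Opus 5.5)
The plan is to transfer the symbolic Gibbs property of Part~I through the coding map $\pi$ of Theorem~\ref{thm:symbolic_coding_prelim}, and to compare Bowen balls in $\Omega_s$ with two-sided cylinders in $\Sigma_A$. By Theorem~\ref{thm:equilibrium_basic}, $\mu_\phi = \pi_*\mu_{\phi^*}$ and $P_f(\phi) = P_\sigma(\phi^*)$. The symbolic Gibbs measure satisfies, for every admissible word,
\[
K^{-1} \leq \frac{\mu_{\phi^*}([a_{-p}\cdots a_{n-1+p}])}{\exp(-(n+2p)P(\phi) + S_{n+2p}\phi^*(\sigma^{-p}a))} \leq K,
\]
with $K$ depending on $(\alpha,\|\phi\|_\alpha,N,M)$ via the bounded distortion $\sum_k \mathrm{var}_k\phi^* < \infty$. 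This bound comes from Step~2 of Theorem~\ref{thm:equilibrium_basic} together with Theorem~\ref{thm:spectral_imported}. Since $p$ will be fixed and independent of $n$, the $2p$ extra terms change the exponent by at most $2p(\|\phi\|_\infty + |P(\phi)|)$, which is absorbed into the constants.

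\textbf{Upper bound.} Choose $\varepsilon$ smaller than half the expansiveness constant $\varepsilon_0$ and smaller than a Lebesgue-type number for the rectangles. If $y \in B_x(\varepsilon,n)\cap\Omega_s$ and $y = \pi(b)$, $x = \pi(a)$, then for $0 \le k < n$ the points $f^k x$ and $f^k y$ are $\varepsilon$-close. Hence $b_k$ lies among the boundedly many symbols $j$ with $R_j$ meeting the $\varepsilon$-neighbourhood of $R_{a_k}$. This alone gives too many cylinders, since their number grows exponentially in $n$. I therefore use expansiveness in the refined form of Lemma~\ref{lem:quantitative_expansiveness} applied to $\pi$: points whose orbits stay $\varepsilon$-close on $[0,n)$ have codes that can be chosen to agree on $[p, n-p)$ for a fixed $p$. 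This is the standard Bowen argument, and it works because nearby points in the same rectangle with close orbits lie in the same rectangle chain away from the ends.

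It follows that $\pi^{-1}(B_x(\varepsilon,n))$ is covered by at most $m^{2p}$ cylinders of the form $[c_0\cdots c_{p-1} a_p\cdots a_{n-p-1} c'\cdots]$. Applying the cylinder Gibbs bound to each and using bounded distortion to replace $S_n\phi^*$ along each code by $S_n\phi(x)$, with error $\le 2p\|\phi\|_\infty + \sum_k |\phi|_\alpha \alpha^{k\alpha}$, gives $\mu_\phi(B_x(\varepsilon,n)) \le c_2 \exp(-nP(\phi)+S_n\phi(x))$.

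\textbf{Lower bound.} Fix $p$ with $\mathrm{diam}\,\pi([a_{-p}\cdots a_p]) < \varepsilon$ uniformly; this is possible by Lemma~\ref{lem:quantitative_expansiveness}, since $\alpha^p < \varepsilon$. Then for $x = \pi(a)$ the cylinder $C = [a_{-p}\cdots a_{n-1+p}]$ satisfies $\pi(C) \subset B_x(\varepsilon,n)$, because $f^k\pi(C) \subset \pi([a_{k-p}\cdots a_{k+p}])$ has diameter $<\varepsilon$ and contains $f^k x$. Hence $\mu_\phi(B_x(\varepsilon,n)) \ge \mu_{\phi^*}(C) \ge K^{-1}e^{-2p(\|\phi\|_\infty+|P|)}\exp(-nP(\phi)+S_n\phi(x))$, which gives $c_1$ explicitly.

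The main obstacle is the upper bound's combinatorial step: ensuring that symbolic codes of points in a Bowen ball agree with $x$'s code except in boundary windows of bounded length. Codes are not unique on $\partial\mathcal{R}$, and closeness of points does not force equality of rectangle symbols when the points are near rectangle boundaries. I would first try to handle this with Bowen's trick of replacing $\varepsilon$ by a smaller $\varepsilon'$ and using the Markov property together with local product structure. If $f^kx, f^ky$ are $\varepsilon$-close for $0 \le k < n$, then $[f^kx,f^ky]$-type bracket points in a common rectangle shadow both orbits, so $y$ lies in $\pi$ of a union of boundedly many cylinders of length $n$ indexed by the choice of rectangles at a bounded number of transition positions. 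Failing that, I would bound the multiplicity by counting: at each time the number of admissible alternatives is at most $m$, but the Gibbs weights of all cylinders through which points of the Bowen ball pass, summed over alternatives, telescope via the eigenmeasure relation $\mathcal{L}^*_{\phi^*}\nu = e^{P}\nu$ to a bounded multiple of the main term.
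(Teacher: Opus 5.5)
Your route is the paper's: sandwich $B_x(\varepsilon,n)$ between $\pi$-images of cylinders and transfer the symbolic Gibbs bounds. Your lower bound, via the padded cylinder $[a_{-p}\cdots a_{n-1+p}]$, is correct. The upper bound, however, rests on a false claim: points of $B_x(\varepsilon,n)$ need not have codes agreeing with a code of $x$ on $[p,n-p)$, for any fixed $p$.

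Lemma~\ref{lem:quantitative_expansiveness} only gives $d(f^kx,f^ky)<\alpha^{\min\{k,n-1-k\}}$. Metric closeness says nothing about rectangle membership near $\partial\mathcal R$, and two orbits can straddle the boundary during the whole window. Here is an example (say $\Omega_s=M$ is Anosov). Suppose that:
\begin{itemize}
\item near a fixed point $q$ the partition is cut into quadrants by $W^s_{\mathrm{loc}}(q)$ and $W^u_{\mathrm{loc}}(q)$;
\item $Df_q$ preserves the orientations of $E^s_q$ and $E^u_q$;
\item two quadrants adjacent across $W^u_{\mathrm{loc}}(q)$ lie in different rectangles $R_i\neq R_j$.
\end{itemize}
Take $x,y$ in the interiors of these two quadrants, on a common local stable leaf, at distance $\delta\ll\varepsilon$ from $W^u_{\mathrm{loc}}(q)$ and close enough (depending on $n$) to $W^s_{\mathrm{loc}}(q)$. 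Then $d(f^kx,f^ky)\le C\lambda^k\delta\le\varepsilon$, so $y\in B_x(\varepsilon,n)$. Yet both orbits stay in their open quadrants for $0\le k<n$. So the itineraries on $[0,n)$ are unique, constantly $i$ and constantly $j$, and they differ at every time.

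Hence the cover by cylinders $[c_0\cdots c_{p-1}a_p\cdots a_{n-p-1}c'\cdots]$ fails. So does your distortion step, which compares Birkhoff sums along codes sharing the middle block of $a$. The paper's phrase ``contained in a slightly larger cylinder image'' glosses over the same point.

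What is true, and what your first fallback gestures at, is that a word meeting the ball is essentially determined by its two end symbols. Suppose $y,z\in B_x(\varepsilon,n)\cap\Omega_s$ have codes $b,c$ with $b_0=c_0$ and $b_{n-1}=c_{n-1}$. Let $e$ be the spliced sequence, equal to $c$ on $[0,n)$ and to $b$ elsewhere. It is admissible and satisfies $d(f^k\pi(e),f^ky)\le\mathrm{diam}\,\mathcal R+2\varepsilon$ for all $k\in\mathbb Z$. So $\pi(e)=y$ by expansiveness, once $\mathrm{diam}\,\mathcal R+2\varepsilon\le\varepsilon_0$.

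Consequently, for each pair $(s,t)$ of end symbols, one of two things happens:
\begin{itemize}
\item only one $n$-word with these ends meets the ball; or
\item every $b\in\pi^{-1}(B_x(\varepsilon,n))$ with $(b_0,b_{n-1})=(s,t)$ has $\#\pi^{-1}(\pi(b))\ge 2$.
\end{itemize}
In the second case such $b$ lie in $\pi^{-1}\bigl(\bigcup_j f^j(\partial^s\mathcal R\cup\partial^u\mathcal R)\bigr)$, which is $\mu_{\phi^*}$-null by Steps~4--5 of the proof of Theorem~\ref{thm:equilibrium_basic}. Therefore $\mu_\phi(B_x(\varepsilon,n))\le m^2\max_w\mu_{\phi^*}([w])$, the maximum taken over words $w$ with $\pi[w]\cap B_x(\varepsilon,n)\neq\emptyset$. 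As the example shows, these words need not resemble the code of $x$.

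For their weights, use geometric rather than symbolic distortion. Applying Lemma~\ref{lem:quantitative_expansiveness} at each time $k$ gives $|S_n\phi(y)-S_n\phi(x)|\le|\phi|_\alpha\sum_k\alpha^{\alpha\min\{k,n-1-k\}}\le C$ for $y\in B_x(\varepsilon,n)\cap\Omega_s$, whence $c_2=m^2Ke^{C}$.

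Your telescoping fallback cannot replace this counting step. Every cylinder meeting the ball has weight comparable to $e^{-nP(\phi)+S_n\phi(x)}$, so you must control how many of them carry mass. Summing over all per-time alternatives instead reintroduces the exponential count.
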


\begin{proof}
This follows from the Gibbs property of $\mu_{\phi^*}$ on $\Sigma_A$ (the Spectral-Variational-Geometric Equivalence Main Theorem of Part~I \cite{Thiam2026a}) and the relationship between dynamical balls in $\Omega_s$ and cylinder sets in $\Sigma_A$.

For $x \in \Omega_s$ with itinerary $(x_k)$, the dynamical ball $B_x(\varepsilon, n)$ contains $\pi(\{y \in \Sigma_A : y_k = x_k, 0 \leq k < n\})$ and is contained in a slightly larger cylinder image. The Gibbs bounds for cylinder measures transfer to dynamical ball measures with constants depending on $\varepsilon$ and the Markov partition geometry.
\end{proof}

\begin{proposition}[Cohomological Characterization]\label{prop:cohomology_characterization}
Let $\phi, \psi : \Omega_s \to \mathbb{R}$ be H\"{o}lder continuous on a mixing basic set. The following are equivalent:
\begin{enumerate}
\item[(i)] $\mu_\phi = \mu_\psi$.
\item[(ii)] There exist constants $K, L$ such that $|S_m\phi(x) - S_m\psi(x) - Km| \leq L$ for all $x \in \Omega_s$ and $m \geq 0$.
\item[(iii)] $S_m\phi(x) - S_m\psi(x) = Km$ whenever $f^m x = x$.
\item[(iv)] There exists a H\"{o}lder function $u : \Omega_s \to \mathbb{R}$ and constant $K$ with $\phi(x) - \psi(x) = K + u(f(x)) - u(x)$.
\end{enumerate}
If these hold, then $K = P(\phi) - P(\psi)$.
\end{proposition}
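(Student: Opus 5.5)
I would prove the cycle (iv) $\Rightarrow$ (ii) $\Rightarrow$ (i) $\Rightarrow$ (iii) $\Rightarrow$ (iv), and read off $K = P(\phi)-P(\psi)$ along the way.

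\emph{(iv) $\Rightarrow$ (ii).} Summing the coboundary relation along an orbit telescopes:
\[
S_m\phi(x) - S_m\psi(x) = Km + u(f^m x) - u(x).
\]
So (ii) holds with $L = 2\|u\|_\infty$, which is finite because $u$ is continuous on the compact set $\Omega_s$.

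\emph{(ii) $\Rightarrow$ (i).} I would use the Gibbs bounds of Proposition~\ref{thm:gibbs_bounds}. For every $x\in\Omega_s$ and $n\ge1$,
\[
\mu_\phi(B_x(\varepsilon,n)) \asymp e^{-nP(\phi)+S_n\phi(x)} \asymp e^{L}\, e^{-nP(\phi)+nK+S_n\psi(x)},
\]
and $\mu_\psi(B_x(\varepsilon,n)) \asymp e^{-nP(\psi)+S_n\psi(x)}$. Dividing, the ratio $\mu_\phi(B_x(\varepsilon,n))/\mu_\psi(B_x(\varepsilon,n))$ lies between two constant multiples of $e^{n(K-P(\phi)+P(\psi))}$. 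Since both measures are probabilities, one can cover $\Omega_s$ by Bowen balls of bounded multiplicity (a maximal $(n,\varepsilon)$-separated set gives this). Summing the balls then forces $K = P(\phi)-P(\psi)$, since otherwise the total mass of one measure would blow up or vanish exponentially.

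With $K$ identified, the two measures are comparable on all Bowen balls, uniformly in $n$. Bowen balls generate the Borel $\sigma$-algebra by expansiveness (Lemma~\ref{lem:quantitative_expansiveness}). It follows that $\mu_\phi \ll \mu_\psi$ with bounded density. Both measures are ergodic by Theorem~\ref{thm:equilibrium_basic}, and mutually absolutely continuous ergodic invariant measures coincide, so $\mu_\phi = \mu_\psi$. One could alternatively pass to cylinders in $\Sigma_A$ via the coding of Theorem~\ref{thm:coding_imported3}, where the covering argument is exact.

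\emph{(i) $\Rightarrow$ (iii).} Let $f^m x = x$ and take $n = km$. Periodicity gives $S_{km}\phi(x) = kS_m\phi(x)$, and similarly for $\psi$. Applying the Gibbs bounds to $\mu_\phi = \mu_\psi$ on the same ball $B_x(\varepsilon,km)$ gives
\[
\bigl|k\,(S_m\phi(x) - S_m\psi(x)) - km\,(P(\phi)-P(\psi))\bigr| \le \log(c_2/c_1).
\]
Dividing by $k$ and letting $k\to\infty$ yields (iii) with $K = P(\phi)-P(\psi)$, independent of $x$ and $m$.

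\emph{(iii) $\Rightarrow$ (iv).} This is the Liv\v{s}ic theorem, which I expect to be the main obstacle. Set $w = \phi-\psi-K$, so $w$ is H\"older and sums to zero over every periodic orbit.

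1. Since $f|_{\Omega_s}$ is transitive, choose a point $x_0$ with dense forward orbit.
2. Define $u(f^n x_0) = S_n w(x_0)$ on that orbit. By construction $u(fz) - u(z) = w(z)$ on the orbit.
3. Show $u$ is uniformly H\"older on the orbit. Suppose $d(f^a x_0, f^b x_0) < \delta$ with $a<b$. The Anosov closing lemma, which holds for basic sets by local product structure and is recorded in the appendix's closing lemma bounds, gives a periodic point $p$ of period $b-a$ whose orbit exponentially shadows the segment from both ends.
4. Because $S_{b-a}w(p) = 0$, H\"older continuity of $w$ and the two-sided exponential shadowing give
\[
|u(f^b x_0) - u(f^a x_0)| = |S_{b-a}w(f^a x_0)| \le C\,d(f^a x_0, f^b x_0)^{\theta}.
\]
This is the same geometric-sum estimate as in Lemma~\ref{lem:jacobian_distortion}.
5. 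Extend $u$ by uniform continuity to all of $\Omega_s$. It is H\"older, and $w = u\circ f - u$ holds everywhere by density and continuity.

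The delicate points are the uniform closing constants and the exponent bookkeeping. If only summable-variation regularity were available, I would instead carry out the Liv\v{s}ic construction on $\Sigma_A$, using the fact that periodic points of $\sigma$ map to periodic points of $f$. I would then push $u$ down, checking that $u$ is constant on fibres of $\pi$ through shadowing.
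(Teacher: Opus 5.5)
Your proposal is correct, but it takes a different route from the paper. The paper runs the cycle (i)$\Rightarrow$(ii)$\Rightarrow$(iii)$\Rightarrow$(iv)$\Rightarrow$(i), and apart from the elementary periodic-orbit step (ii)$\Rightarrow$(iii) it proves everything by citation: (i)$\Rightarrow$(ii) and (iv)$\Rightarrow$(i) come from lifting to $\Sigma_A$ and quoting the symbolic cohomology characterization of Part~I, and (iii)$\Rightarrow$(iv) is quoted as the Liv\v{s}ic theorem.

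You run (iv)$\Rightarrow$(ii)$\Rightarrow$(i)$\Rightarrow$(iii)$\Rightarrow$(iv) using only tools inside this paper: the Gibbs bounds of Proposition~\ref{thm:gibbs_bounds}, uniqueness and ergodicity from Theorem~\ref{thm:equilibrium_basic}, and the closing lemma (Lemma~\ref{lem:closing}) for the standard dense-orbit Liv\v{s}ic construction. Your (i)$\Rightarrow$(iii) is in effect a Gibbs-bound proof of (i)$\Rightarrow$(ii) combined with the paper's $k\to\infty$ trick on periodic orbits. In fact, comparing the two Gibbs bounds on $B_x(\varepsilon,n)$ at an arbitrary $x\in\Omega_s$ already gives (ii) outright with $K=P(\phi)-P(\psi)$.

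What your route buys is self-containment. It also actually derives the final claim $K=P(\phi)-P(\psi)$, which the paper's proof never addresses. The paper's route buys only brevity.

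One step of yours is loosely justified. In (ii)$\Rightarrow$(i), forward Bowen balls shrink to local stable disks, not to points. Moreover, comparability $\mu_\phi\asymp\mu_\psi$ on a family that merely generates the Borel $\sigma$-algebra does not by itself pass to all Borel sets. You need two-sided balls (obtained from invariance) together with a covering or semiring argument. Your fallback to two-sided cylinders in $\Sigma_A$, where every open set is a countable disjoint union of cylinders, does supply this.

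A cleaner route avoids measure comparison entirely. Divide (ii) by $m$ and integrate: this gives $\int\phi\,d\mu-\int\psi\,d\mu=K$ for every invariant $\mu$. Hence $P(\phi)=P(\psi)+K$ by the variational principle, and the two variational problems have the same maximizers. Uniqueness in Theorem~\ref{thm:equilibrium_basic} then gives $\mu_\phi=\mu_\psi$.
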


\begin{proof}
This is \cite[Proposition 4.5]{Bowen1975}, which we prove with explicit H\"{o}lder bounds.

$(i) \Rightarrow (ii)$: By the symbolic coding, $\mu_{\phi^*} = \mu_{\psi^*}$, and the cohomology characterization of Part~I \cite{Thiam2026a} gives the bounded orbit distortion.

$(ii) \Rightarrow (iii)$: For $f^m x = x$, the estimate $|S_{jm}\phi(x) - S_{jm}\psi(x) - jmK| \leq L$ for all $j$ forces $S_m\phi(x) - S_m\psi(x) = mK$.

$(iv) \Rightarrow (i)$: The symbolic versions satisfy the same relation, and the cohomology characterization of Part~I \cite{Thiam2026a} gives $\mu_{\phi^*} = \mu_{\psi^*}$.

$(iii) \Rightarrow (iv)$: This is the content of the Liv\v{s}ic theorem (see  \cite{Livsic1971, Livsic1972}; a complete proof with optimal H\"{o}lder bounds appears in Part~VI \cite{Thiam2026f}).
\end{proof}

\begin{definition}[SRB Measure]
For a basic set $\Omega_s$ of a $C^2$ diffeomorphism, the SRB measure is defined as $\mu^+ = \mu_{\phi^{(u)}}$, the unique equilibrium state for the geometric potential $\phi^{(u)} = -\log|\det Df|_{E^u}|$.
\end{definition}

\begin{proposition}[Properties of $\mu^+$]\label{prop:srb_properties}
The SRB measure $\mu^+$ satisfies:
\begin{enumerate}
\item[(i)] $\mu^+$ is independent of the Riemannian metric.
\item[(ii)] If $\Omega_s$ is an attractor, then $P(\phi^{(u)}) = 0$ and $h_{\mu^+}(f) = -\int \phi^{(u)} \, d\mu^+ = \int \log|\det Df|_{E^u}| \, d\mu^+$.
\item[(iii)] $\mu^+$ is the unique measure satisfying both (a) ergodicity and (b) the Pesin entropy formula $h_\mu(f) = \int \log|\det Df|_{E^u}| \, d\mu$.
\end{enumerate}
\end{proposition}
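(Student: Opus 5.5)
We prove the three parts in order, using the Cohomological Characterization (Proposition~\ref{prop:cohomology_characterization}), the Attractor Characterization (Proposition~\ref{prop:attractor_char}) and Theorem~\ref{thm:equilibrium_basic}.

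\textbf{Part (i).} Let $g, g'$ be two Riemannian metrics, with geometric potentials $\phi^{(u)}_g$ and $\phi^{(u)}_{g'}$. For $f^m x = x$, the quantity $\exp(-S_m\phi^{(u)}(x)) = |\det Df^m_x|_{E^u_x}|$ is intrinsic, since the determinant of an endomorphism of $E^u_x$ does not depend on the inner product. Hence $S_m\phi^{(u)}_g(x) - S_m\phi^{(u)}_{g'}(x) = 0$ on periodic orbits. This is condition (iii) of Proposition~\ref{prop:cohomology_characterization} with $K=0$, so (i) gives $\mu_{\phi^{(u)}_g} = \mu_{\phi^{(u)}_{g'}}$. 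Both potentials are H\"older by Proposition~\ref{prop:geometric_holder}.

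A more transparent alternative avoids Liv\v{s}ic. Let $u(x) = -\log$ of the ratio of the two volume forms on $E^u_x$. This $u$ is continuous and H\"older, because $E^u$ is H\"older and both metrics are smooth. Then $\phi^{(u)}_g - \phi^{(u)}_{g'} = u\circ f - u$ exactly, which is condition (iv) with $K=0$. I would present this version.

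\textbf{Part (ii).} If $\Omega_s$ is an attractor, Proposition~\ref{prop:attractor_char} (a)$\Rightarrow$(c) gives $P(\phi^{(u)})=0$. Since $\mu^+$ is the equilibrium state (Theorem~\ref{thm:equilibrium_basic}), we have $h_{\mu^+}(f) + \int\phi^{(u)}\,d\mu^+ = 0$. Substituting $\phi^{(u)} = -\log|\det Df|_{E^u}|$ gives the formula.

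\textbf{Part (iii).} The statement as written needs the attractor hypothesis, or else the normalization $P(\phi^{(u)})=0$. I would state this explicitly, since on non-attractors $P(\phi^{(u)})<0$ and no invariant measure satisfies the Pesin formula.

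Under that hypothesis, $\mu^+$ is ergodic by Theorem~\ref{thm:equilibrium_basic}(ii) and satisfies Pesin's formula by (ii).

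Conversely, suppose $\mu$ is ergodic and $h_\mu(f) = \int\log|\det Df|_{E^u}|\,d\mu$. Then $h_\mu(f) + \int \phi^{(u)}\,d\mu = 0 = P(\phi^{(u)})$, so $\mu$ attains the supremum in the variational principle. It is therefore an equilibrium state for $\phi^{(u)}$, and uniqueness in Theorem~\ref{thm:equilibrium_basic}(i) forces $\mu=\mu^+$. Ergodicity is not needed for this argument; it is only part of the characterization.

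\textbf{Main obstacle.} The hard step is the hypothesis issue in (iii), that is, making precise that the claim relies on $P(\phi^{(u)})=0$. If that is to be avoided, one has to argue that without the attractor property no measure satisfies the formula. That follows from the Ruelle inequality $h_\mu \le \int\log|\det Df|_{E^u}|\,d\mu + P(\phi^{(u)})$, which is just the variational principle, combined with $P(\phi^{(u)})<0$. So in the non-attractor case the set of measures in (iii) is empty, and the uniqueness statement should be read conditionally.
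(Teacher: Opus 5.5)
Your proof is correct. For (i) and (ii) it matches the paper. The paper also argues that a change of metric alters $\phi^{(u)}$ by a coboundary, which is your second, Liv\v{s}ic-free version and the better one to present. It likewise reads (ii) off Proposition~\ref{prop:attractor_char}.

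Part (iii) is where you differ. The paper does not argue it here. It invokes the Pesin entropy formula characterization from Part~IV, re-derived in Part~VI via absolute continuity of conditional measures along unstable manifolds. You derive it directly from the variational principle and the uniqueness of equilibrium states in Theorem~\ref{thm:equilibrium_basic}: any $\mu\in\mathcal{M}_f(\Omega_s)$ with $h_\mu(f)=\int\log|\det Df|_{E^u}|\,d\mu$ satisfies $h_\mu(f)+\int\phi^{(u)}\,d\mu=0=P(\phi^{(u)})$, so $\mu=\mu^+$.

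Your route has three advantages:
\begin{enumerate}
\item[(1)] It is elementary and stays inside this Part.
\item[(2)] It shows that ergodicity plays no role in the uniqueness.
\item[(3)] It exposes a hypothesis missing from the statement. For a non-attractor, $h_{\mu^+}(f)+\int\phi^{(u)}\,d\mu^+=P(\phi^{(u)})<0$, so $\mu^+$ itself violates (b), and (iii) as literally written is false. It must be read on an attractor, with uniqueness taken within $\mathcal{M}_f(\Omega_s)$.
\end{enumerate}
In exchange, the cited route supplies the geometric content: the equivalence between Pesin's formula and absolutely continuous unstable conditionals. Part~VI needs that, but the uniqueness claim does not.

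One naming point. The inequality $h_\mu(f)\le\int\log|\det Df|_{E^u}|\,d\mu+P(\phi^{(u)})$ is the variational principle, not Ruelle's inequality. Because $P(\phi^{(u)})\le 0$, it sharpens Ruelle's inequality for measures on $\Omega_s$.
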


\begin{proof}
Part (i) follows from Proposition \ref{prop:cohomology_characterization}: different metrics give potentials differing by a coboundary plus constant, which does not change the equilibrium state.

Part (ii) follows from Proposition \ref{prop:attractor_char}: attractors satisfy $P(\phi^{(u)}) = 0$.

Part (iii) is the Pesin entropy formula characterization, proved in Part~IV \cite{Thiam2026d} as a Main Theorem and re-derived with complete absolute-continuity proofs in Part~VI \cite{Thiam2026f}.
\end{proof}

\section{Exponential Decay of Correlations}\label{sec:decay_correlations}

This section establishes exponential decay of correlations for equilibrium states of H\"{o}lder potentials, with explicit rates determined by the spectral gap of the transfer operator. This extends the mixing results of  \cite{Bowen1975} with quantitative bounds essential for applications.

\subsection{Spectral Theory and Mixing Rates}

We introduce the normalized transfer operator and establish its spectral properties. The spectral gap of this operator is the single quantity from which all mixing and statistical estimates in Sections~\ref{sec:clt}--\ref{sec:large_deviations} are derived.

\begin{definition}[Correlation Function]
For observables $g, h : \Omega_s \to \mathbb{R}$ and an invariant measure $\mu$, the correlation function is
\begin{equation}
C_n(g, h; \mu) = \int g \cdot (h \circ f^n) \, d\mu - \int g \, d\mu \int h \, d\mu.
\end{equation}
The system $(f, \mu)$ has exponential decay of correlations for observables in a class $\mathcal{C}$ if there exist $C > 0$ and $\theta \in (0, 1)$ such that
\begin{equation}
|C_n(g, h; \mu)| \leq C \|g\|_{\mathcal{C}} \|h\|_{\mathcal{C}} \cdot \theta^n
\end{equation}
for all $g, h \in \mathcal{C}$ and $n \geq 0$.
\end{definition}

\begin{maintheorem}[Exponential Mixing]\label{thm:exponential_mixing}
Let $\Omega_s$ be a mixing basic set for a $C^2$ Axiom A diffeomorphism, and let $\mu_\phi$ be the equilibrium state for a H\"{o}lder potential $\phi \in C^\alpha(\Omega_s)$. For all $g, h \in C^\alpha(\Omega_s)$:
\begin{equation}
|C_n(g, h; \mu_\phi)| \leq C \|g\|_\alpha \|h\|_\alpha \cdot \theta^n
\end{equation}
where $\theta = e^{-\gamma}$ with
\begin{equation}
\gamma \geq \min\left\{\alpha \log \lambda^{-1}, \frac{c(\phi)}{1 + \|\phi\|_\alpha}\right\} > 0
\end{equation}
and $c(\phi) > 0$ depends on the spectral gap of the normalized transfer operator.
\end{maintheorem}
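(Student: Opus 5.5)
The plan is to reduce the statement to the symbolic setting and read off the decay from the spectral gap of the normalized transfer operator. By Theorem~\ref{thm:coding_imported3} and Theorem~\ref{thm:equilibrium_basic}, $\mu_\phi=\pi_*\mu_{\phi^*}$ and $\pi$ is a measurable conjugacy on a set of full measure. Hence
\begin{equation}
C_n(g,h;\mu_\phi)=C_n(g\circ\pi,h\circ\pi;\mu_{\phi^*}),
\end{equation}
and by Lemma~\ref{lem:quantitative_expansiveness} the lifts satisfy $\|g\circ\pi\|\le C\|g\|_\alpha$ in a symbolic H\"older norm with exponent determined by $\alpha$ and $\lambda$. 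This first step is where the term $\alpha\log\lambda^{-1}$ in the minimum enters: H\"older regularity on $\Omega_s$ becomes regularity for the symbolic metric at a rate tied to $\lambda^\alpha$.

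The second step passes from the two-sided shift $\Sigma_A$ to the one-sided shift $\Sigma_A^+$, following Sinai's trick. I will write $\phi^*=\phi^+ + u\circ\sigma-u$ with $\phi^+$ depending only on future coordinates and $u$ H\"older; the equilibrium state is unchanged by Proposition~\ref{prop:cohomology_characterization}. For $g^*=g\circ\pi$ I will replace $g^*$ by $g^*\circ\sigma^{k}$, which up to an error of order $\lambda^{\alpha k}\|g\|_\alpha$ is close to a function of the future coordinates only. I will do the same for $h^*$ and optimize by choosing $k\approx n/2$. The cost is a halving of the rate, which I will absorb into the constant $c(\phi)$ and into the $\alpha\log\lambda^{-1}$ branch.

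The third step applies Theorem~\ref{thm:spectral_imported} to the normalized operator $\widehat{\calL}=e^{-P}h^{-1}\calL_{\phi^+}(h\,\cdot)$. Since $\widehat{\calL}\mathbf 1=\mathbf 1$ and $\widehat{\calL}^*\mu=\mu$, the duality identity
\begin{equation}
\int g\,(h\circ\sigma^n)\,d\mu=\int \widehat{\calL}^n g\cdot h\,d\mu
\end{equation}
gives
\begin{equation}
|C_n|\le\|\widehat{\calL}^n g-\mu(g)\|_\infty\|h\|_\infty\le C\gamma_0^n\|g\|_\alpha\|h\|_\alpha .
\end{equation}
The constants $C$ and $\gamma_0$ come from conjugating the estimate of Theorem~\ref{thm:spectral_imported} by $h_\phi$. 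This only requires two-sided bounds $\|h_\phi^{\pm1}\|_\alpha\le e^{O(\|\phi\|_\alpha)}$, which I will take from the cone construction of Part~I.

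The main obstacle is the explicit form $c(\phi)/(1+\|\phi\|_\alpha)$ of the second branch. I expect the Birkhoff cone argument of Part~I to produce a projective-diameter bound $\Delta\lesssim 1+\|\phi\|_\alpha$ for the image cone. The contraction coefficient $\tanh(\Delta/4)$ then satisfies $-\log\tanh(\Delta/4)\gtrsim e^{-\Delta/2}$, which is weaker than $1/(1+\|\phi\|_\alpha)$. I will therefore first try to obtain the claimed form by working with iterates $\widehat{\calL}^{N}$, with $N$ the mixing time of $A$, and reading $\gamma\ge -\log\tanh(\Delta_N/4)/N$. Failing that, I will define $c(\phi)$ as the gap itself, namely $c(\phi):=(1+\|\phi\|_\alpha)\log(1/\gamma_0)$. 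This makes the displayed inequality a tautology that reflects the spectral gap, and positivity follows from $\gamma_0<1$.
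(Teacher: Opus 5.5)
Your argument is correct in outline, but it takes a different route from the paper's. The paper never leaves $\Omega_s$. It lets the normalized operator $\widehat{\calL}_\phi$ act directly on $C^\alpha(\Omega_s)$, with quasi-compactness imported from Part~IV, and splits $\widehat{\calL}_\phi=\Pi_1+R$. It then bounds $|C_n|$ by $\|g\|_\infty\|R^n\tilde h\|_\infty$ and asserts $\rho(R)\le\lambda^\alpha$ because mixing excludes peripheral eigenvalues; the $c(\phi)$ branch is attributed to perturbation theory with no further argument.

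You instead pass through the two-sided coding, Sinai's cohomology trick, and the shift-by-$k$ approximation by functions of future coordinates. Only then do you invoke the one-sided gap of Theorem~\ref{thm:spectral_imported}. This is Bowen's classical argument.

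The paper's route is shorter and nominally yields the $\alpha\log\lambda^{-1}$ branch, but that branch does not follow. Ruling out eigenvalues of modulus $1$ does not rule out isolated eigenvalues in the annulus $\lambda^\alpha<|z|<1$. The route also presupposes a quasi-compact transfer operator for the invertible map $f$ on $C^\alpha(\Omega_s)$. For an invertible map that only makes sense after passing to a one-sided quotient, which is exactly what your Sinai step does. Your route uses nothing beyond the symbolic gap, at the price of a halved rate.

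You are also right that the cone estimates only give a gap of order $e^{-O(1+\|\phi\|_\alpha)}$, not $c/(1+\|\phi\|_\alpha)$ with $c$ independent of $\phi$. So the displayed inequality can only be read with $c(\phi)$ defined from the gap, which is in effect what the paper does too. Incidentally, your duality $\int g\,(h\circ\sigma^n)\,d\mu=\int\widehat{\calL}^n g\cdot h\,d\mu$ is correctly oriented. The paper's display puts $\widehat{\calL}^n$ on $h$, so it really computes $C_n(h,g)$; this is harmless only because the final bound is symmetric.

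Two bookkeeping points need repair.

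\emph{The norm of the approximant.} In your third step, $\widehat{\calL}^n$ is applied not to $g$ but to the one-sided approximant $G_k$ of $g^*\circ\sigma^k$. Its one-sided H\"older seminorm is of order $\beta^{-k}\|g\|_\alpha$, not $\|g\|_\alpha$, where $\beta<1$ is the symbolic H\"older ratio (your $\lambda^\alpha$). With this bound and $k\approx n/2$, the main term is $\beta^{-n/2}\gamma_0^n$, which fails to decay when $\gamma_0\ge\beta^{1/2}$. There are two ways out:
\begin{itemize}
\item take $k=\epsilon n$ with $\epsilon\log\beta^{-1}<\log\gamma_0^{-1}$; or
\item first use the Lasota--Yorke inequality $|\widehat{\calL}^k G_k|_\beta\le C\beta^k|G_k|_\beta+C\|G_k\|_\infty=O(\|g\|_\alpha)$, and spend the gap only on the remaining $n-k$ iterates.
\end{itemize}
The second option gives $|C_n|\le C(\beta^k+\gamma_0^{n-k})\|g\|_\alpha\|h\|_\alpha$. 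This does justify $k\approx n/2$, with rate $\max\{\beta,\gamma_0\}^{1/2}$.

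\emph{Absorbing the halving.} The halving cannot be absorbed into the fixed branch $\alpha\log\lambda^{-1}$, but it does not need to be. Since $\min\{A,B\}\le B$, it suffices to set $c(\phi):=-\tfrac12(1+\|\phi\|_\alpha)\log\max\{\beta,\gamma_0\}$. Your fallback definition omits this factor $\tfrac12$.
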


The proof uses the spectral theory of transfer operators developed in Part~I \cite{Thiam2026a} and Section \ref{sec:preliminaries}.

Let $\mathcal{L}_\phi$ be the transfer operator for $\phi$ with spectral radius $e^{P(\phi)}$, simple leading eigenvalue with eigenfunction $h_\phi > 0$ and eigenmeasure $\nu_\phi$.

\begin{definition}[Normalized Transfer Operator]
The normalized transfer operator is
\begin{equation}
\widehat{\mathcal{L}}_\phi = e^{-P(\phi)} h_\phi^{-1} \mathcal{L}_\phi (h_\phi \cdot)
\end{equation}
acting on $C^\alpha(\Omega_s)$.
\end{definition}

\begin{proposition}[Properties of $\widehat{\mathcal{L}}_\phi$]\label{prop:normalized_operator}
The normalized operator satisfies:
\begin{enumerate}
\item[(i)] $\widehat{\mathcal{L}}_\phi 1 = 1$ and $\widehat{\mathcal{L}}_\phi^* \mu_\phi = \mu_\phi$ where $\mu_\phi = h_\phi \nu_\phi$.
\item[(ii)] Spectral radius $\rho(\widehat{\mathcal{L}}_\phi) = 1$ with simple eigenvalue $1$.
\item[(iii)] Essential spectral radius $\rho_{\mathrm{ess}}(\widehat{\mathcal{L}}_\phi) \leq \theta_0 < 1$ where $\theta_0 = \lambda^\alpha$.
\item[(iv)] The spectrum in $\{|z| > \theta_0\}$ consists of $\{1\}$ plus finitely many eigenvalues of modulus $< 1$.
\end{enumerate}
\end{proposition}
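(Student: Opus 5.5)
The plan is to derive all four properties from the fact that $\widehat{\mathcal{L}}_\phi$ is conjugate to a scalar multiple of $\mathcal{L}_\phi$. Write $M_h$ for multiplication by $h_\phi$. Then
\[
\widehat{\mathcal{L}}_\phi = e^{-P(\phi)} M_h^{-1}\mathcal{L}_\phi M_h .
\]
First I will check that $M_h$ and $M_h^{-1}$ are bounded on $C^\alpha(\Omega_s)$. By Theorem~\ref{thm:spectral_properties}(iii), $h_\phi$ is H\"older and strictly positive. On the compact set $\Omega_s$ this gives $\inf h_\phi>0$, so $1/h_\phi$ is H\"older with
\[
|1/h_\phi|_\alpha \le |h_\phi|_\alpha/(\inf h_\phi)^2 ,
\]
and $C^\alpha$ is a Banach algebra. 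Since similarity preserves the spectrum, the essential spectrum and algebraic multiplicities, every spectral statement for $\widehat{\mathcal{L}}_\phi$ is the corresponding statement for $\mathcal{L}_\phi$ with the spectrum scaled by $e^{-P(\phi)}$.

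For (i), I compute directly:
\[
\widehat{\mathcal{L}}_\phi 1 = e^{-P(\phi)}h_\phi^{-1}\mathcal{L}_\phi h_\phi = 1 .
\]
For the dual statement, take $g\in C^\alpha$. Using $\mathcal{L}_\phi^*\nu_\phi=e^{P(\phi)}\nu_\phi$,
\[
\int \widehat{\mathcal{L}}_\phi g\,d\mu_\phi = e^{-P(\phi)}\int \mathcal{L}_\phi(h_\phi g)\,d\nu_\phi = \int h_\phi g\,d\nu_\phi = \int g\,d\mu_\phi .
\]
The function $h_\phi$ is normalized by $\nu_\phi(h_\phi)=1$, so $\mu_\phi$ is a probability measure. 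By Theorem~\ref{thm:gibbs_equiv_imported} and Theorem~\ref{thm:equilibrium_basic}, it is the equilibrium state. I will also record the identity $\widehat{\mathcal{L}}_\phi(g\cdot h\circ f)=h\,\widehat{\mathcal{L}}_\phi g$, which comes from the sum-over-preimages formula; it will be needed later for correlations.

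For (ii) and (iii), Theorem~\ref{thm:spectral_properties}(i) and (ii) give $\rho(\mathcal{L}_\phi)=e^{P(\phi)}$ and $\rho_{\mathrm{ess}}(\mathcal{L}_\phi)\le\lambda^\alpha e^{P(\phi)}$. The conjugation then yields $\rho(\widehat{\mathcal{L}}_\phi)=1$ and $\rho_{\mathrm{ess}}(\widehat{\mathcal{L}}_\phi)\le\lambda^\alpha=\theta_0$. Simplicity of $e^{P(\phi)}$ in Theorem~\ref{thm:spectral_properties}(iii) transfers to simplicity of $1$, with eigenvector $1=h_\phi^{-1}h_\phi$ and spectral projection $g\mapsto \mu_\phi(g)\,1$.

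The only real content is (iv), namely that there are no eigenvalues of modulus exactly one other than $1$. I expect this to be the main obstacle. By quasi-compactness, the spectrum in $\{|z|>\theta_0\}$ consists of finitely many isolated eigenvalues of finite multiplicity, so it suffices to exclude $|\kappa|=1$ with $\kappa\neq1$. My first attempt will import the convergence statement of Theorem~\ref{thm:spectral_imported} through the coding of Theorem~\ref{thm:coding_imported3}: the bound
\[
\|\widehat{\mathcal{L}}^n g-\mu_\phi(g)\|\le C\gamma^n\|g\|
\]
with $\gamma<1$ forces the peripheral spectrum to be $\{1\}$ directly. Because that theorem is stated on the symbolic H\"older space, I will also carry a self-contained argument as a backup. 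Suppose $\widehat{\mathcal{L}}_\phi g=\kappa g$ with $|\kappa|=1$. Since $\widehat{\mathcal{L}}_\phi$ is a positive operator with $\widehat{\mathcal{L}}_\phi1=1$, we get $|g|\le\widehat{\mathcal{L}}_\phi|g|$. Integrating against $\mu_\phi$ gives equality, so $|g|$ is $\widehat{\mathcal{L}}_\phi$-invariant, and by simplicity $|g|$ is constant. Equality in the triangle inequality over preimages then forces $g\circ f=\kappa^{-1}g$. This gives an eigenfunction of the Koopman operator, which by topological mixing and continuity of $g$ (or by mixing of $\mu_\phi$ through Bernoullicity in Theorem~\ref{thm:equilibrium_basic}) forces $\kappa=1$.

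The remaining care is that $\widehat{\mathcal{L}}_\phi$ on $\Omega_s$ is defined via the coding, so "preimages" must be read on $\Sigma_A^+$. I will run this equality argument symbolically, where the positivity structure is explicit.
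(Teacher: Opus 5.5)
Your treatment of (i)--(iii) matches the paper's. The paper verifies $\widehat{\mathcal{L}}_\phi 1=1$ and $\widehat{\mathcal{L}}_\phi^*\mu_\phi=\mu_\phi$ by the same computation. It then dispatches (ii)--(iv) in one sentence, citing Theorem~\ref{thm:spectral_properties} and the fact that the normalization is a similarity. Your check that multiplication by $h_\phi^{\pm1}$ is bounded on $C^\alpha$ is left implicit there.

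The genuine difference is (iv). Similarity only transports what Theorem~\ref{thm:spectral_properties} actually asserts, and its items (i)--(iii) do not exclude eigenvalues $e^{P(\phi)}\kappa$ with $|\kappa|=1$, $\kappa\neq1$. For a transitive but non-mixing subshift of finite type of period $p$, the leading eigenvalue is still simple with positive eigenfunction, yet $e^{P(\phi)}$ times every $p$-th root of unity is also an eigenvalue. So mixing must enter in a form that the cited conclusions do not record. The paper only makes this explicit later, in Step~5 of the proof of Main Theorem~\ref{thm:exponential_mixing}, by appeal to Part~I. You supply this missing input directly, in two ways.

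\begin{itemize}
\item Your first route uses the exponential convergence in Theorem~\ref{thm:spectral_imported}. It is the shortest, and it even confines the whole non-leading spectrum to $\{|z|\le\gamma\}$. But it is stated on $\mathcal{H}_\alpha(\Sigma_A^+)$, so it is only as solid as the paper's identification of the operator on $C^\alpha(\Omega_s)$ with the symbolic one.
\item Your second route runs as follows: positivity gives $|g|\le\widehat{\mathcal{L}}_\phi|g|$; integration and simplicity make $|g|$ constant; equality in the triangle inequality over preimages gives $g\circ\sigma=\kappa^{-1}g$; and topological mixing then forces $\kappa=1$. This is the classical self-contained argument and shows exactly where mixing is used. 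Running it on $\Sigma_A^+$ is the right choice.
\end{itemize}

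When you write out the second route, add two details:
\begin{itemize}
\item Upgrading $\widehat{\mathcal{L}}_\phi|g|\ge|g|$ with equal $\mu_\phi$-integrals to pointwise equality needs $\mathrm{supp}\,\mu_\phi$ to be the whole space; the Gibbs property gives this.
\item A non-real $\kappa$ requires working on the complexification of the real space $C^\alpha$.
\end{itemize}

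Finally, quasi-compactness yields only finitely many eigenvalues in $\{|z|\ge r\}$ for each $r>\rho_{\mathrm{ess}}$. Finiteness in the open region $\{|z|>\theta_0\}$ is automatic only when $\rho_{\mathrm{ess}}<\theta_0$. Your proof repeats this imprecision, but it is already present in the statement and in the paper's proof.
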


\begin{proof}
Part (i): By construction, $\mathcal{L}_\phi h_\phi = e^{P(\phi)} h_\phi$ and $\mathcal{L}_\phi^* \nu_\phi = e^{P(\phi)} \nu_\phi$. Direct computation shows $\widehat{\mathcal{L}}_\phi 1 = 1$. For the adjoint:
\begin{align}
\int g \, d(\widehat{\mathcal{L}}_\phi^* \mu_\phi) &= \int \widehat{\mathcal{L}}_\phi g \, d\mu_\phi = \int e^{-P(\phi)} h_\phi^{-1} \mathcal{L}_\phi(h_\phi g) \cdot h_\phi \, d\nu_\phi \\
&= e^{-P(\phi)} \int \mathcal{L}_\phi(h_\phi g) \, d\nu_\phi = e^{-P(\phi)} \cdot e^{P(\phi)} \int h_\phi g \, d\nu_\phi = \int g \, d\mu_\phi.
\end{align}

Parts (ii)-(iv) follow from the spectral theory of $\mathcal{L}_\phi$ (Theorem \ref{thm:spectral_properties}) since the normalization is a similarity transformation preserving the spectrum.
\end{proof}

\subsection{Proof of Exponential Mixing with Explicit Bounds}

We give the complete proof of Main Theorem~\ref{thm:exponential_mixing}, deriving the exponential decay rate from the spectral decomposition of the normalized transfer operator.

\begin{proof}[Proof of Main Theorem~\ref{thm:exponential_mixing}]
\textbf{Step 1: Spectral Decomposition.} By Proposition \ref{prop:normalized_operator}, the operator $\widehat{\mathcal{L}}_\phi$ on $C^\alpha(\Omega_s)$ has spectral decomposition
\begin{equation}
\widehat{\mathcal{L}}_\phi = \Pi_1 + R
\end{equation}
where $\Pi_1$ is the spectral projection onto the eigenspace for eigenvalue $1$ (which is one-dimensional, spanned by constants) and $R$ has spectral radius $\rho(R) = \theta_1 < 1$.

The projection $\Pi_1$ is given by $\Pi_1 g = \mu_\phi(g) \cdot 1$ (the conditional expectation onto constants).

\textbf{Step 2: Iteration.} For $n \geq 1$:
\begin{equation}
\widehat{\mathcal{L}}_\phi^n = \Pi_1 + R^n
\end{equation}
with $\|R^n\|_{C^\alpha \to C^\alpha} \leq C' \theta_1^n$ for some $C' > 0$.

\textbf{Step 3: Correlation Computation.} For $g, h \in C^\alpha(\Omega_s)$:
\begin{align}
\int g \cdot (h \circ f^n) \, d\mu_\phi &= \int g \cdot (h \circ f^n) \cdot h_\phi \, d\nu_\phi.
\end{align}

Using the duality relation $\int \mathcal{L}_\phi(u) \cdot v \, d\nu_\phi = e^{P(\phi)} \int u \cdot (v \circ f) \, d\nu_\phi$ (derived from $\mathcal{L}_\phi^* \nu_\phi = e^{P(\phi)} \nu_\phi$), we have
\begin{equation}
h \circ f^n = e^{-nP(\phi)} h_\phi^{-1} \cdot \mathcal{L}_\phi^n(h_\phi \cdot h) = h_\phi^{-1} \cdot \widehat{\mathcal{L}}_\phi^n(h_\phi \cdot h) \cdot h_\phi^{-1} \cdot h_\phi = \widehat{\mathcal{L}}_\phi^n(h_\phi h) / h_\phi
\end{equation}
on appropriate domains. More directly:
\begin{align}
\int g \cdot (h \circ f^n) \, d\mu_\phi &= \int (g h_\phi) \cdot \widehat{\mathcal{L}}_\phi^n(h) \, d\nu_\phi \cdot (\text{adjustment factors}).
\end{align}

A cleaner approach uses:
\begin{equation}
C_n(g, h; \mu_\phi) = \int g \cdot \widehat{\mathcal{L}}_\phi^n(h - \mu_\phi(h)) \, d\mu_\phi
\end{equation}
since $\widehat{\mathcal{L}}_\phi^* \mu_\phi = \mu_\phi$ and $\mu_\phi(h \circ f^n) = \mu_\phi(h)$.

\textbf{Step 4: Application of Spectral Gap.} Let $\tilde{h} = h - \mu_\phi(h)$, so $\mu_\phi(\tilde{h}) = 0$. Then $\Pi_1 \tilde{h} = 0$, giving
\begin{equation}
\widehat{\mathcal{L}}_\phi^n \tilde{h} = R^n \tilde{h}.
\end{equation}

Therefore:
\begin{align}
|C_n(g, h; \mu_\phi)| &= \left|\int g \cdot R^n \tilde{h} \, d\mu_\phi\right| \\
&\leq \|g\|_\infty \|R^n \tilde{h}\|_\infty \\
&\leq \|g\|_\infty \cdot C' \theta_1^n \|\tilde{h}\|_\alpha \\
&\leq C' \|g\|_\alpha \|h\|_\alpha \cdot \theta_1^n.
\end{align}

\textbf{Step 5: Bound on $\theta_1$.} The second largest eigenvalue modulus satisfies $\theta_1 \leq \max\{\theta_0, |\lambda_2|\}$ where $\theta_0 = \lambda^\alpha$ is the essential spectral radius bound and $\lambda_2$ is the second largest eigenvalue (if any exist outside the essential spectrum).

For mixing basic sets, the aperiodicity of the transition matrix ensures there are no peripheral eigenvalues other than $1$ (by the spectral analysis of the Ruelle-Perron-Frobenius theorem in Part~I \cite{Thiam2026a}). Thus $\theta_1 \leq \theta_0 = \lambda^\alpha$, giving
\begin{equation}
\gamma = -\log \theta_1 \geq \alpha \log \lambda^{-1} > 0.
\end{equation}

The refined bound involving $c(\phi)$ comes from the perturbation theory of the spectral gap, which depends on the specific potential $\phi$ through the transfer operator structure.
\end{proof}

\begin{proposition}[Spectral Gap Lower Bound]\label{thm:spectral_gap_bound}
For a mixing basic set and $\phi \in C^\alpha(\Omega_s)$, the spectral gap
\begin{equation}
\mathrm{gap}(\widehat{\mathcal{L}}_\phi) = 1 - \rho(R)
\end{equation}
satisfies
\begin{equation}
\mathrm{gap}(\widehat{\mathcal{L}}_\phi) \geq 1 - \lambda^\alpha.
\end{equation}
For the geometric potential $\phi^{(u)}$, the gap satisfies
\begin{equation}
\mathrm{gap}(\widehat{\mathcal{L}}_{\phi^{(u)}}) \geq c_0(\lambda, \alpha, \dim E^u) > 0
\end{equation}
with an explicit constant $c_0$ depending only on hyperbolicity data.
\end{proposition}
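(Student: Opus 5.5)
The first bound comes straight from the spectral picture of Proposition~\ref{prop:normalized_operator}. Since $\widehat{\mathcal{L}}_\phi$ is conjugate to $e^{-P(\phi)}\mathcal{L}_\phi$ by multiplication by the positive H\"older function $h_\phi$, Theorem~\ref{thm:spectral_properties} gives $\rho_{\mathrm{ess}}(\widehat{\mathcal{L}}_\phi)\le\lambda^\alpha$. On the complement of the constants we have $R=\widehat{\mathcal{L}}_\phi-\Pi_1$, and $\rho(R)$ is the maximum of $\rho_{\mathrm{ess}}$ and the moduli of the isolated eigenvalues other than $1$.

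It remains to show there are no such eigenvalues in the annulus $\lambda^\alpha<|z|\le 1$ beyond the value $1$ itself. I would argue this as in Step~5 of the proof of Main Theorem~\ref{thm:exponential_mixing}: pass to the symbolic side through Theorem~\ref{thm:coding_imported3}. There Theorem~\ref{thm:spectral_imported} gives the uniform contraction $\|\lambda^{-n}\mathcal{L}^n_\phi g-\nu(g)h\|_\alpha\le C\gamma^n\|g\|_\alpha$. I would then check that the cone-contraction rate $\gamma$ can be taken to be at most $\lambda^\alpha$ once the metric on $\Sigma_A$ is chosen so that cylinder diameters shrink like $\lambda^n$. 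This matches the H\"older exponent $\alpha$ on $\Omega_s$ via Lemma~\ref{lem:quantitative_expansiveness}. With that, $\rho(R)\le\lambda^\alpha$ and so $\mathrm{gap}\ge 1-\lambda^\alpha$.

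For the geometric potential, the plan is to use Proposition~\ref{prop:geometric_holder}. It says $\phi^{(u)}$ is $\beta$-H\"older with seminorm controlled by $\|Df\|_{C^1}$, $\|Df^{-1}\|_{C^1}$ and $|E^u|_\beta$, and $\beta$ is itself a function of the hyperbolicity rates. Also $\|\phi^{(u)}\|_\infty\le \dim E^u\cdot\log\|Df\|_\infty$. Feeding these bounds into the explicit constants of Theorem~\ref{thm:spectral_imported}, and specializing the first part with $\alpha$ replaced by $\beta$, gives $c_0=1-\lambda^{\beta}$. Any extra dependence enters only through the distortion constant $|\phi^{(u)}|_\beta/(1-\lambda^\beta)$, which bounds the Birkhoff-cone aperture and hence the projective diameter, so it depends on $(\lambda,\alpha,\dim E^u)$ alone. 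Here $\beta$ is the H\"older exponent from Proposition~\ref{prop:geometric_holder}, not the $\alpha$ of the first bound.

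The main obstacle is the claim that there are no eigenvalues with $\lambda^\alpha<|z|<1$. Quasi-compactness alone does not exclude them, and the mixing argument only rules out the unit circle. I would first try to get the bound directly from the contraction coefficient of the Birkhoff cone. A Lasota--Yorke inequality $|\widehat{\mathcal{L}}^n g|_\alpha\le C\lambda^{\alpha n}|g|_\alpha+C\|g\|_\infty$, combined with the $C^0$ contraction on mean-zero functions, would then give $\rho(R)\le\max\{\lambda^\alpha,\tau\}$. Here $\tau<1$ is the cone contraction rate, computable from $\tanh$ of a quarter of the projective diameter. If $\tau>\lambda^\alpha$ in general, the honest conclusion is $\mathrm{gap}\ge 1-\max\{\lambda^\alpha,\tau\}$, and the stated bound has to be weakened accordingly. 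That is where I expect the argument to need the most care.
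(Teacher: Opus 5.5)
\textbf{The obstacle in your last paragraph cannot be overcome, because the first bound is false.} Your diagnosis there is correct: quasi-compactness does not rule out eigenvalues with $\lambda^\alpha<|z|<1$, and the mixing argument only rules out the unit circle. Such eigenvalues do occur.

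Take the Smale horseshoe, a mixing basic set coded by the full $2$-shift with a fixed hyperbolicity constant $\lambda$. Use the locally constant potential $\phi(x)=\log P_{x_0x_1}$ with $P_{11}=P_{22}=1-\epsilon$ and $P_{12}=P_{21}=\epsilon$; it lies in $C^\alpha(\Omega_s)$ for every $\alpha$.
\begin{itemize}
\item Then $\calL_\phi\mathbf{1}=\mathbf{1}$, so $P(\phi)=0$, $h_\phi=\mathbf{1}$ and $\widehat{\calL}_\phi=\calL_\phi$.
\item Let $g=\pm1$ according to $x_0$. Then $\widehat{\calL}_\phi g=(1-2\epsilon)g$, $\mu_\phi(g)=0$, and $C_n(g,g;\mu_\phi)=(1-2\epsilon)^n$.
\item Choosing $1-2\epsilon>\lambda^\alpha$ gives $\rho(R)>\lambda^\alpha$.
\end{itemize}

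So the plan in your second paragraph cannot work. Tuning the symbolic metric to force the cone rate below $\lambda^\alpha$ is hopeless, because no choice of metric moves an eigenvalue whose eigenfunction is locally constant.

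The paper's proof reaches the bound only through an invalid step: it passes from $\rho_{\mathrm{ess}}(\widehat{\calL}_\phi)\le\lambda^\alpha$ to ``every eigenvalue other than $1$ has modulus at most $\lambda^\alpha$.'' That is exactly the step you warn about, and it sits badly with Proposition~\ref{prop:normalized_operator}(iv), which allows finitely many eigenvalues in that annulus. Step~5 of the proof of Main Theorem~\ref{thm:exponential_mixing}, which you take as your model, has the same gap, since aperiodicity only excludes eigenvalues on the unit circle.

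What can actually be proved is your fallback: $\mathrm{gap}\ge 1-\max\{\lambda^\alpha,\tau\}$. Here $\tau<1$ is the cone contraction rate, which depends on $\|\phi\|_\alpha$, $N$ and $M$. This is what Theorem~\ref{thm:spectral_imported} really supplies, and it is what the term $c(\phi)/(1+\|\phi\|_\alpha)$ in Main Theorem~\ref{thm:exponential_mixing} encodes.

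\textbf{Your treatment of $\phi^{(u)}$ has the same defect.} Setting $c_0=1-\lambda^\beta$ is just the false first bound with $\alpha$ replaced by $\beta$.

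It also fails to give the claimed dependence on $(\lambda,\alpha,\dim E^u)$ alone, for two reasons.
\begin{itemize}
\item The inputs you use, namely $\|Df\|_{C^1}$, $\|Df^{-1}\|_{C^1}$, $|E^u|_\beta$ and $\|Df\|_\infty$, are not functions of $(\lambda,\alpha,\dim E^u)$.
\item The projective diameter of the image cone depends on the Markov partition through $N$ and the mixing time $M$, not only on the distortion constant.
\end{itemize}

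The paper's own formula $c_0=1-\max(\lambda^{\alpha/3},(1-\eta)^{1/(3M)})$ shows this openly, so it also falls short of the stated dependence. Concretely, on a zero-dimensional basic set with constant unstable Jacobian, $\mu^+$ is the Parry measure of $\Sigma_A$. Functions of $x_0$ then evolve under a stochastic matrix similar to $A/\rho(A)$, so the gap is at most $1-|\lambda_2(A)|/\rho(A)$. That quantity is governed by the transition matrix, not by $(\lambda,\alpha,\dim E^u)$.

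The conclusion you can defend is $c_0=c_0(\lambda,\beta,\|\phi^{(u)}\|_\beta,N,M)>0$, obtained from the cone estimate rather than from the essential spectral radius.
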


\begin{proof}
The first bound $\mathrm{gap}(\widehat{\mathcal{L}}_\phi) \geq 1 - \lambda^\alpha$ follows directly from Theorem~\ref{thm:spectral_properties}(ii): the essential spectral radius of $\widehat{\mathcal{L}}_\phi$ on $C^\alpha$ satisfies $\rho_{\mathrm{ess}}(\widehat{\mathcal{L}}_\phi) \leq \lambda^\alpha < 1$, so any eigenvalue other than $1$ has modulus at most $\lambda^\alpha$, giving $\rho(R) \leq \lambda^\alpha$ and $\mathrm{gap} = 1 - \rho(R) \geq 1 - \lambda^\alpha$.

For the geometric potential $\phi^{(u)}$, the spectral gap depends on the specific potential through the cone contraction analysis. By the Birkhoff cone contraction estimate of Part~I \cite{Thiam2026a}, the spectral gap rate $\gamma$ of the transfer operator satisfies $\gamma \leq \max(\alpha^{1/3}, (1-\eta)^{1/(3M)})$ where $\eta$ is the cone contraction coefficient and $M$ is the mixing time. For $\phi^{(u)}$, the variation $V(\phi^{(u)}) \leq C(\|Df\|_{C^1}, |E^u|_\beta)$ is bounded by hyperbolicity data (Proposition~\ref{prop:geometric_holder}), and the cone contraction $\eta$ depends on $V(\phi^{(u)})$ and the mixing time $M$ through the explicit cone contraction formula of Part~I \cite{Thiam2026a}. Setting $c_0 = 1 - \max(\lambda^{\alpha/3}, (1-\eta(\phi^{(u)}))^{1/(3M)})$ gives the claimed bound, with $c_0 > 0$ depending only on $(\lambda, \alpha, \dim E^u)$ through the chain of estimates.
\end{proof}

\subsection{Polynomial Decay and Higher-Order Correlations}

For observables with lower regularity than H\"{o}lder, the decay of correlations is polynomial rather than exponential. We also bound multiple (higher-order) correlations, which are needed for the Berry-Esseen estimates in Section~\ref{sec:clt}.

\begin{proposition}[Decay Rates for Different Regularities]\label{prop:decay_rates}
Let $\mu_\phi$ be an equilibrium state on a mixing basic set. Then:
\begin{enumerate}
\item[(i)] For $g \in C^\alpha$, $h \in L^\infty$: $|C_n(g, h; \mu_\phi)| \leq C \|g\|_\alpha \|h\|_\infty \cdot \theta^n$.
\item[(ii)] For $g, h \in C^0$ (continuous): $|C_n(g, h; \mu_\phi)| = o(1)$ as $n \to \infty$, but the rate depends on the modulus of continuity.
\item[(iii)] For $g, h \in L^2(\mu_\phi)$: $|C_n(g, h; \mu_\phi)| \to 0$ by the mean ergodic theorem, but without explicit rate.
\end{enumerate}
\end{proposition}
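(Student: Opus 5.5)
The proof treats the three parts separately, each by a duality argument built on the normalized operator $\widehat{\mathcal{L}}_\phi$ of Proposition~\ref{prop:normalized_operator}.

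\textbf{Part (i).} Here we cannot place $h$ inside a H\"older space, so the transfer operator must act on $g$ instead. The transfer-operator identity $\int u\,(v\circ f^n)\,d\mu_\phi = \int \widehat{\mathcal{L}}_\phi^n(u)\, v\,d\mu_\phi$ follows from $\widehat{\mathcal{L}}_\phi^*\mu_\phi=\mu_\phi$ and the duality relation used in Step~3 of the proof of Main Theorem~\ref{thm:exponential_mixing}. One subtlety is that $\mathcal{L}_\phi$ is defined through the one-sided coding. I would therefore first use the standard reduction: replace $g$ by a function cohomologous to it that depends only on future coordinates, or equivalently approximate it by functions constant on stable leaves. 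The H\"older norm then changes only by a constant factor (square-root exponent, in Sinai's lemma). With the duality in hand, set $\tilde g = g-\mu_\phi(g)$, so that $\Pi_1\tilde g=0$. Then
\[
|C_n(g,h;\mu_\phi)| = \Bigl|\int R^n\tilde g\cdot h\,d\mu_\phi\Bigr| \le \|h\|_\infty\,\|R^n\tilde g\|_\infty \le 2C'\theta_1^n\|g\|_\alpha\|h\|_\infty .
\]
This uses the bound $\|R^n\|\le C'\theta_1^n$ from Step~2 of that proof. The only real work is the one-sided reduction. If it is lossy, I will restate the rate as $\theta^{n/2}$, which is absorbed by redefining $\theta$. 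This is the step I expect to be the main obstacle, because the paper's $\mathcal{L}_\phi$ is somewhat informally "defined by conjugation."

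\textbf{Part (ii).} I will use density. Given continuous $g,h$ and $\eta>0$, choose $g_\eta,h_\eta\in C^\alpha$ with $\|g-g_\eta\|_\infty,\|h-h_\eta\|_\infty<\eta$; for instance, take Lipschitz/H\"older mollifications via inf-convolution on the compact metric space. Since $\mu_\phi$ is $f$-invariant, $|C_n(g,h)-C_n(g_\eta,h_\eta)|\le 4\eta(\|g\|_\infty+\|h\|_\infty+\eta)$. By Main Theorem~\ref{thm:exponential_mixing}, $|C_n(g_\eta,h_\eta)|\le C\|g_\eta\|_\alpha\|h_\eta\|_\alpha\theta^n$. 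Letting $n\to\infty$ and then $\eta\to0$ gives $o(1)$. The rate is governed by how $\|g_\eta\|_\alpha$ blows up as $\eta\to0$, which is controlled by the modulus of continuity. Optimizing $\eta$ against $n$ yields the modulus-dependent statement; for example, if the modulus is $\omega(t)$, the inf-convolution at scale $t$ has $C^\alpha$ norm of order $\omega(t)t^{-\alpha}$.

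\textbf{Part (iii).} $C^\alpha$ is dense in $C^0$, and $C^0$ is dense in $L^2(\mu_\phi)$. Since $|C_n(g,h)|\le 2\|g\|_2\|h\|_2$ uniformly in $n$, an $\varepsilon/3$ approximation argument transfers the convergence from part (ii) to all of $L^2$. Equivalently, mixing of $\mu_\phi$, which follows from Theorem~\ref{thm:equilibrium_basic} since Bernoulli implies mixing, gives weak convergence $h\circ f^n\rightharpoonup\mu_\phi(h)$ in $L^2$. No rate can hold, because arbitrary $L^2$ functions can be built to have slowly decaying correlations.
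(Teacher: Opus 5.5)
Your parts (ii) and (iii) are fine. Part (i), however, has a genuine gap at exactly the step you flagged, and the gap cannot be closed: for an invertible $f$, statement (i) is false.

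The identity $\int u\,(v\circ f^n)\,d\mu_\phi=\int \widehat{\mathcal{L}}_\phi^n(u)\,v\,d\mu_\phi$ holds for the one-sided Ruelle operator only when $v$ is a function of the future coordinates, i.e.\ constant along local stable sets. For arbitrary $v\in L^\infty(\mu_\phi)$, the operator satisfying this identity is $u\mapsto u\circ f^{-n}$, which is an isometry of $L^1(\mu_\phi)$. So with $\tilde g=g-\mu_\phi(g)$ you have $C_n(g,h;\mu_\phi)=\int(\tilde g\circ f^{-n})\,h\,d\mu_\phi$. Choosing $h=\mathrm{sgn}(\tilde g\circ f^{-n})$ gives $\sup_{\|h\|_\infty\le 1}|C_n(g,h;\mu_\phi)|=\|\tilde g\|_{L^1(\mu_\phi)}$ for every $n$. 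Hence no bound $C\|g\|_\alpha\|h\|_\infty\theta^n$ with $\theta<1$ is possible unless $g$ is a.e.\ constant. Retreating to $\theta^{n/2}$ does not help, because the failure is not a loss of rate.

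Even for one fixed continuous $h$ the decay can be polynomial. Take a hyperbolic toral automorphism $A$ of $\mathbb{T}^2$ with Lebesgue measure (the equilibrium state of $\phi\equiv0$) and $0\neq k\in\mathbb{Z}^2$. Then $g(x)=\cos 2\pi\langle k,x\rangle$ and $h(x)=\sum_{j\ge1}j^{-2}\cos 2\pi\langle (A^{T})^{-j}k,x\rangle$ give $C_n(g,h)=\tfrac12 n^{-2}$.

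Both of your proposed reductions act on $g$, whereas the obstruction sits in $h$. Sinai's lemma $g=g'+u-u\circ f$ changes $C_n(g,h)$ by $C_n(u,h)-C_{n-1}(u,h)$, which is the original H\"older-versus-$L^\infty$ problem again, now for $u$. Approximating $g$ by a function of the coordinates in $[-m,m]$ and shifting makes $g$ one-sided, but $h\circ f^n$ still depends on the past.

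What is true is (i) on the one-sided shift $\Sigma_A^+$, or on $\Omega_s$ for $h$ constant along local stable manifolds. There your estimate $|\int R^n\tilde g\cdot h\,d\mu_\phi|\le\|R^n\tilde g\|_\infty\|h\|_{L^1}$ is exactly the right argument. In the two-sided case it gives rate $\theta^{cn}$ after the $[-m,m]$ approximation of $g$.

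The paper's own proof of (i), ``duality between $C^\alpha$ and $(C^\alpha)^*$ with $L^\infty\subset(C^\alpha)^*$,'' has the same defect. Main Theorem~\ref{thm:exponential_mixing} controls $h$ only through $\|h\|_\alpha$, and no duality argument converts that into $\|h\|_\infty$.

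The paper's proof of (ii) approximates only $g$ and then invokes (i), so it inherits the problem. Your proof of (ii) approximates both $g$ and $h$ by H\"older functions and uses only the H\"older--H\"older bound, so it is correct and is the better route.

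Your (iii), via density in $L^2(\mu_\phi)$ and the uniform bound $|C_n(g,h)|\le 2\|g\|_2\|h\|_2$ (or directly via mixing), matches the paper's appeal to mixing. Note that the mean ergodic theorem named in the statement gives only Ces\`aro convergence, so mixing is genuinely what is needed there.
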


\begin{proof}
Part (i) follows from the duality between $C^\alpha$ and $(C^\alpha)^*$, with $L^\infty \subset (C^\alpha)^*$.

Part (ii) uses approximation: for $\varepsilon > 0$, approximate $g$ by $g_\varepsilon \in C^\alpha$ with $\|g - g_\varepsilon\|_\infty < \varepsilon$. Then
\begin{align}
|C_n(g, h)| &\leq |C_n(g - g_\varepsilon, h)| + |C_n(g_\varepsilon, h)| \\
&\leq 2\varepsilon \|h\|_\infty + C\|g_\varepsilon\|_\alpha \|h\|_\infty \theta^n.
\end{align}
For fixed $\varepsilon$, the second term vanishes as $n \to \infty$.

Part (iii) is standard functional analysis: mixing implies $C_n(g, h) \to 0$ for all $L^2$ functions.
\end{proof}

\begin{definition}[Multiple Correlation]
For observables $g_0, g_1, \ldots, g_k$ and times $0 = n_0 < n_1 < \cdots < n_k$, the multiple correlation is
\begin{equation}
C_{n_1, \ldots, n_k}(g_0, \ldots, g_k) = \int \prod_{j=0}^k g_j \circ f^{n_j} \, d\mu - \prod_{j=0}^k \int g_j \, d\mu.
\end{equation}
\end{definition}

\begin{proposition}[Decay of Multiple Correlations]\label{prop:multiple_correlations}
For $g_0, \ldots, g_k \in C^\alpha$ and gaps $m_j = n_j - n_{j-1}$:
\begin{equation}
|C_{n_1, \ldots, n_k}(g_0, \ldots, g_k)| \leq C_k \prod_{j=0}^k \|g_j\|_\alpha \cdot \theta^{\min_j m_j}.
\end{equation}
\end{proposition}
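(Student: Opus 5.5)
Two issues shape the plan. First, the bound as stated cannot be read literally with only the minimal gap: a plain telescoping gives the gap-sum form $\sum_j \theta^{m_j}$. That sum is at most $k\,\theta^{\min_j m_j}$, so with $C_k$ absorbing the factor $k$ the stated form follows. Second, a general H\"older $g_j$ is a two-sided function on $\Omega_s$, so the argument must first be run on one-sided observables and then extended; this is the main obstacle.

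The plan is an induction on $k$ that peels off one gap at a time. The case $k=1$ is Main Theorem~\ref{thm:exponential_mixing}, which gives $|C_{n_1}(g_0,g_1)|\le C\|g_0\|_\alpha\|g_1\|_\alpha\theta^{m_1}$.

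For the inductive step, use $f$-invariance of $\mu_\phi$ to shift all times by $-n_1$. Write
\[
G=\prod_{j\ge1} g_j\circ f^{n_j-n_1}
\]
and decompose
\[
\int \prod_{j=0}^k g_j\circ f^{n_j}\,d\mu_\phi=\int g_0\,(G\circ f^{n_1})\,d\mu_\phi .
\]
I want to apply the spectral gap at the first gap $m_1$ with $G$ treated as an $L^\infty$ observable, via Proposition~\ref{prop:decay_rates}(i). This gives
\[
\Bigl|\int g_0\,(G\circ f^{n_1})\,d\mu_\phi-\mu_\phi(g_0)\,\mu_\phi(G)\Bigr|\le C\|g_0\|_\alpha\prod_{j\ge1}\|g_j\|_\infty\,\theta^{m_1}.
\]
Only sup norms of $g_1,\dots,g_k$ enter here, so no H\"older norm of the long product $G$ is needed. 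Next, $\mu_\phi(G)$ is a $(k-1)$-fold correlation with gaps $m_2,\dots,m_k$, so the induction hypothesis applies. Telescoping,
\[
|C|\le \sum_{j}|\text{error at gap } j|\cdot\prod_{i}\|g_i\|_\alpha \le C_k\prod_{i}\|g_i\|_\alpha\,\theta^{\min_j m_j},
\]
where $C_k=kC$ after bounding each $\theta^{m_j}$ by $\theta^{\min m_j}$.

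To justify the $L^\infty$ version of Proposition~\ref{prop:decay_rates}(i), I use the normalized operator $\widehat{\mathcal{L}}_\phi$ through the duality $\int \widehat{\mathcal{L}}^n_\phi u\cdot v\,d\mu_\phi=\int u\,(v\circ f^n)\,d\mu_\phi$. Then
\[
\int g_0\,(G\circ f^{n_1})\,d\mu_\phi=\int \widehat{\mathcal{L}}^{n_1}_\phi g_0\cdot G\,d\mu_\phi ,
\]
and $\|\widehat{\mathcal{L}}^{n_1}_\phi g_0-\mu_\phi(g_0)\|_\infty\le C'\theta^{n_1}\|g_0\|_\alpha$ by Step~2 of the proof of Main Theorem~\ref{thm:exponential_mixing}. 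This works cleanly only for observables depending on future coordinates, that is, functions lifted to $\Sigma_A^+$ through Theorem~\ref{thm:coding_imported3}.

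To remove that restriction I would use the standard Sinai trick. Replace each $g_j$ by $g_j\circ f^{r}$ composed with the projection along local stable manifolds. This gives a one-sided function $\tilde g_j$ with $\|g_j\circ f^r-\tilde g_j\|_\infty\le C\|g_j\|_\alpha\lambda^{\alpha r}$, while $\|\tilde g_j\|_{\alpha/2}\le C\|g_j\|_\alpha$. The estimate for $\|\tilde g_j\|_{\alpha/2}$ comes from Lemma~\ref{lem:quantitative_expansiveness} together with the H\"older cylinder estimate of Step~2 of Theorem~\ref{thm:equilibrium_basic}. I would choose $r=\lfloor\min_j m_j/2\rfloor$. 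The approximation error, summed over the $k+1$ factors using sup bounds on the others, is then $O(\theta^{\min m_j})$ with a new $\theta$, and the one-sided argument above applies to the remaining gaps, which are at least $\min m_j/2$. The rate $\theta$ deteriorates to roughly $\max(\theta,\lambda^{\alpha})^{1/2}$, which the statement's unspecified $\theta$ absorbs.
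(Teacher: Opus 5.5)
Your left-to-right peeling takes a different route from the paper's, and a better-founded one. The paper splits at the index $j_*$ of the minimal gap and bounds the cross term by $C\|F\|_\infty\|H\|_\alpha\theta^{m}$. That needs a H\"older norm of the composite block $H=G_R$, a product of the $g_j\circ f^{n_j}$ whose norm is not controlled by $\prod_j\|g_j\|_\alpha$. It also applies Proposition~\ref{prop:decay_rates}(i) with the $C^\alpha$ and $L^\infty$ roles reversed. In your scheme the H\"older observable is always the single function at time $0$, the long product enters only through its sup norm, and each telescoped term needs the H\"older norm of only one factor. You are also right that the $C^\alpha$-versus-$L^\infty$ bound holds only for one-sided observables. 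The genuine gap is in your reduction to that case: the claim $\|\tilde g_j\|_{\alpha/2}\le C\|g_j\|_\alpha$, uniformly in $r$, is false.

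Take $\tilde g_j=g_j\circ f^r\circ p$. The approximation $\|g_j\circ f^r-\tilde g_j\|_\infty\le C\|g_j\|_\alpha\lambda^{\alpha r}$ is correct. However, on the reference unstable slice $p$ is the identity, so $\tilde g_j=g_j\circ f^r$ there. By the Markov property, $f^r$ maps a piece of that slice of diameter $\lesssim\lambda^r$ (a one-sided cylinder of length $r+1$) onto a full unstable plaque of a rectangle. Hence, as soon as $g_j$ is not constant along unstable plaques, $\tilde g_j$ oscillates by an $r$-independent amount on sets of size $\lambda^r$. Every H\"older seminorm of $\tilde g_j$, in any exponent, is therefore at least of order $\lambda^{-cr}$; the correct bound is $|\tilde g_j|_\alpha\lesssim\lambda^{-\alpha r}\|g_j\|_\alpha$ for the one-sided metric $\lambda^{N(x,y)}$. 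Halving the exponent is the device of Sinai's lemma for the transfer function in the cohomology of potentials; it does nothing for $g_j\circ f^r$. Fed into your spectral step, this gives $\lambda^{-\alpha r}\theta^{m_1}$. With $r=\lfloor\min_j m_j/2\rfloor$ that decays only if $\theta<\lambda^{\alpha/2}$, and the essential spectral radius bound does not exclude isolated eigenvalues between $\lambda^\alpha$ and $1$.

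The missing ingredient is the smoothing effect of the transfer operator. By bounded distortion of the weights of $\widehat{\mathcal{L}}_\phi^r$ (equivalently, the Lasota--Yorke inequality $|\widehat{\mathcal{L}}_\phi^r u|_\alpha\le C\lambda^{\alpha r}|u|_\alpha+C\|u\|_\infty$), one has $\|\widehat{\mathcal{L}}_\phi^r\tilde g_j\|_\alpha\le C\|g_j\|_\alpha$ uniformly in $r$. At each gap, write $\widehat{\mathcal{L}}_\phi^{m}\tilde g=\widehat{\mathcal{L}}_\phi^{m-r}(\widehat{\mathcal{L}}_\phi^{r}\tilde g)$ and use $\mu_\phi(\widehat{\mathcal{L}}_\phi^{r}\tilde g)=\mu_\phi(\tilde g)$ to get $C\theta^{m-r}\|g\|_\alpha\le C\theta^{m/2}\|g\|_\alpha$. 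It is this, not a uniform bound on $\tilde g_j$, that yields your claimed rate $\max(\theta,\lambda^\alpha)^{1/2}$. Alternatively, keep the crude bound and take $r=\epsilon\min_j m_j$ with $\lambda^{-\alpha\epsilon}\theta<1$; your one-norm-per-term telescoping allows this, since each term then carries only a single factor $\lambda^{-\alpha r}$.
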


\begin{proof}
We proceed by induction on $k$. The case $k = 1$ is Main Theorem~\ref{thm:exponential_mixing}.

For $k \geq 2$, write $m_* = \min_j m_j$ and let $j_*$ be the index achieving $m_* = m_{j_*}$. Define $G_L = \prod_{j < j_*} g_j \circ f^{n_j}$ and $G_R = \prod_{j > j_*} g_j \circ f^{n_j}$. Then
\begin{equation}
\int \prod_{j=0}^k g_j \circ f^{n_j} \, d\mu = \int (G_L \cdot g_{j_*} \circ f^{n_{j_*}}) \cdot (G_R \circ f^{n_{j_*}}) \, d\mu.
\end{equation}
Setting $F = G_L \cdot g_{j_*} \circ f^{n_{j_*}}$ and $H = G_R$, we have $F \in L^\infty(\mu)$ with $\|F\|_\infty \leq \prod_{j \leq j_*}\|g_j\|_\infty$ and $H \circ f^{n_{j_*}} = G_R \circ f^{n_{j_*}}$.

The gap between $F$ (which depends on coordinates up to time $n_{j_*}$) and $H \circ f^{n_{j_*+1}}$ (which depends on coordinates from time $n_{j_*+1}$ onward) is $m_{j_*+1} \geq m_*$. By the exponential decay of correlations (Proposition~\ref{prop:decay_rates}(i), which gives exponential decay for $C^\alpha$ vs $L^\infty$):
\begin{equation}
\left|\int F \cdot (H \circ f^{m_{j_*+1}}) \, d\mu - \int F \, d\mu \int H \, d\mu\right| \leq C\|F\|_\infty\|H\|_\alpha \cdot \theta^{m_{j_*+1}} \leq C\prod_j\|g_j\|_\alpha \cdot \theta^{m_*}.
\end{equation}
Expanding $\int F\,d\mu$ and $\int H\,d\mu$ using the inductive hypothesis (each is a product of integrals plus lower-order correlation terms that decay with their respective gaps, all of which are $\geq m_*$), the total error is bounded by $C_k\prod_j\|g_j\|_\alpha \cdot \theta^{m_*}$.
\end{proof}

\section{Central Limit Theorem}\label{sec:clt}

This section establishes the Central Limit Theorem for Birkhoff sums of H\"{o}lder observables with respect to equilibrium states, including Berry-Esseen error bounds.  \cite{Bowen1975} states the CLT (Theorem~1.28) without proof, referring to  \cite{Ratner1973}; we give the proof with Berry-Esseen bounds.

\subsection{Statement, Asymptotic Variance, and Non-Degeneracy}

We state the CLT, Berry-Esseen bound, and variance formula, and characterize the degeneracy of the asymptotic variance through the Livšic coboundary condition. The variance is computed spectrally via the second derivative of the pressure.

\begin{maintheorem}[Central Limit Theorem]\label{thm:clt}
Let $\Omega_s$ be a mixing basic set for a $C^2$ Axiom A diffeomorphism, $\mu_\phi$ the equilibrium state for a H\"{o}lder potential $\phi$, and $g \in C^\alpha(\Omega_s)$ with $\int g \, d\mu_\phi = 0$. Define the Birkhoff sums $S_n g = \sum_{k=0}^{n-1} g \circ f^k$.

If the asymptotic variance
\begin{equation}
\sigma^2(g) = \lim_{n \to \infty} \frac{1}{n} \int (S_n g)^2 \, d\mu_\phi
\end{equation}
is positive, then
\begin{equation}
\frac{S_n g}{\sigma \sqrt{n}} \xrightarrow{d} \mathcal{N}(0, 1)
\end{equation}
as $n \to \infty$, where $\mathcal{N}(0, 1)$ denotes the standard normal distribution.
\end{maintheorem}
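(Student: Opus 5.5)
We will prove the statement by the Nagaev--Guivarc'h spectral perturbation method, working on the symbolic side and transferring back. By Theorem~\ref{thm:coding_imported3} and Step~5 of the proof of Theorem~\ref{thm:equilibrium_basic}, $(f,\mu_\phi)$ is measurably conjugate to $(\sigma,\mu_{\phi^*})$, and $g^*=g\circ\pi$ is H\"older. The distribution of $S_n g$ under $\mu_\phi$ therefore equals that of $S_n g^*$ under $\mu_{\phi^*}$, so it suffices to prove the CLT on $\Sigma_A$.

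\textbf{Reduction to one-sided shifts.} Since the transfer operator acts on one-sided functions, we first replace $g^*$ by a cohomologous function depending only on future coordinates. The standard Sinai construction gives $g^* = \tilde g + u\circ\sigma - u$ with $u$ bounded and H\"older and $\tilde g\in\mathcal{H}_{\alpha/2}(\Sigma_A^+)$. Then $S_n g^* - S_n\tilde g = u\circ\sigma^n - u$ is bounded by $2\|u\|_\infty$. After dividing by $\sqrt n$ this error is negligible, so both the limit law and $\sigma^2$ are unchanged. The one-sided marginal of $\mu_{\phi^*}$ is $h\nu$ from Theorem~\ref{thm:spectral_imported}.

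\textbf{Perturbed operators and the limit law.} Consider the normalized operator $\widehat{\mathcal{L}}$ on $\mathcal{H}_{\alpha/2}$ and the twisted family $\widehat{\mathcal{L}}_s u = \widehat{\mathcal{L}}(e^{is\tilde g}u)$. Duality (Proposition~\ref{prop:normalized_operator}(i)) gives
\[
\mathbb{E}_\mu\bigl[e^{isS_n\tilde g}\bigr] = \int \widehat{\mathcal{L}}_s^n\mathbf{1}\,d\mu .
\]
The map $s\mapsto\widehat{\mathcal{L}}_s$ is analytic in operator norm, because multiplication by $e^{is\tilde g}$ is analytic in the H\"older algebra. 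The eigenvalue $1$ of $\widehat{\mathcal{L}}_0$ is simple and isolated (Theorem~\ref{thm:spectral_imported}, Proposition~\ref{prop:normalized_operator}). Kato perturbation theory then yields, for $|s|<s_0$, an analytic simple eigenvalue $\lambda(s)$ with spectral projection $\Pi_s$. The remainder satisfies $\|\widehat{\mathcal{L}}_s^n(I-\Pi_s)\|\le C\rho^n$ for some $\rho<1$. Hence
\[
\mathbb{E}\bigl[e^{isS_n\tilde g}\bigr]=\lambda(s)^n\mu(\Pi_s\mathbf{1})+O(\rho^n).
\]
Differentiating the eigen-equation gives $\lambda(0)=1$ and $\lambda'(0)=i\int\tilde g\,d\mu=0$. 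Comparing second-order Taylor expansions with $\mathbb{E}[(S_n\tilde g)^2]$ gives $\lambda''(0)=-\sigma^2$. Equivalently, $\lambda(s)=e^{\,P(\phi^*+is\tilde g)-P(\phi^*)}$, and Theorem~\ref{thm:analytic_imported} identifies the second derivative with $\lim n^{-1}\mathrm{Var}(S_n\tilde g)$. This limit exists because the series $\sum_k|\int \tilde g\cdot\tilde g\circ\sigma^k\,d\mu|$ converges by Main Theorem~\ref{thm:exponential_mixing}. Substituting $s=t/(\sigma\sqrt n)$ gives $\lambda(t/\sigma\sqrt n)^n=\exp(-t^2/2+O(|t|^3/\sqrt n))\to e^{-t^2/2}$, and $\mu(\Pi_s\mathbf{1})\to 1$. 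L\'evy's continuity theorem then gives the convergence in distribution.

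\textbf{Main obstacle.} The delicate step is controlling the perturbation uniformly: $\widehat{\mathcal{L}}_s$ must act boundedly on the H\"older space with $\|\widehat{\mathcal{L}}_s-\widehat{\mathcal{L}}_0\|=O(|s|)$. This follows from the bound $\|e^{is\tilde g}-1\|_{\alpha/2}\le C|s|\,\|\tilde g\|_{\alpha/2}$, valid for small $|s|$. The gap must also persist for $|s|<s_0$, which we obtain via the resolvent identity on a fixed circle separating $1$ from the rest of the spectrum.

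A lesser subtlety is that the cohomological reduction halves the H\"older exponent. We therefore apply Theorem~\ref{thm:spectral_imported} with exponent $\alpha/2$ throughout. We will also verify that the variance of the two-sided $g$ agrees with that of $\tilde g$, which holds because the coboundary contributes $O(1/n)$ to $n^{-1}\int(S_n g)^2\,d\mu$.
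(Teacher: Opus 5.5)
Your proposal is correct and follows the same Nagaev--Guivarc'h spectral route as the paper's first proof. It also makes rigorous several steps the paper only sketches: the Sinai reduction to one-sided observables, the normalized twisted operator $u\mapsto\widehat{\mathcal{L}}(e^{is\tilde g}u)$ (which removes the $e^{nP(\phi)}$ bookkeeping), the identification $\lambda''(0)=-\sigma^2$, and the final appeal to L\'evy's continuity theorem. One detail to add: the Sinai reduction must also be applied to the potential $\phi^*$, replacing it by a cohomologous one-sided potential with the same equilibrium state, before you invoke Theorem~\ref{thm:spectral_imported} and identify the one-sided marginal as $h\nu$.
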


\begin{proposition}[Berry-Esseen Bound]\label{thm:berry_esseen}
Under the hypotheses of Main Theorem~\ref{thm:clt}, for all $t \in \mathbb{R}$:
\begin{equation}
\left|\mu_\phi\left(\frac{S_n g}{\sigma\sqrt{n}} \leq t\right) - \Phi(t)\right| \leq \frac{C}{\sqrt{n}}
\end{equation}
where $\Phi(t) = \frac{1}{\sqrt{2\pi}} \int_{-\infty}^t e^{-s^2/2} \, ds$ is the standard normal CDF and $C$ depends on $\|g\|_\alpha$, $\sigma$, and the spectral gap.
\end{proposition}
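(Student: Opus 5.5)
The plan is the Nagaev--Guivarc'h method combined with Esseen's smoothing inequality. First I reduce to the one-sided symbolic setting. By Theorem~\ref{thm:coding_imported3}, $\mu_\phi$ is the push-forward of the Gibbs measure $\mu_{\phi^*}$, and $\pi$ is injective $\mu_{\phi^*}$-a.e., so the distribution of $S_n g$ under $\mu_\phi$ equals that of $S_n g^*$ under $\mu_{\phi^*}$.

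Next I replace $g^*$ by a function depending only on future coordinates. I write $g^* = \tilde g + u - u\circ\sigma$, where $\tilde g \in \mathcal{H}_{\alpha/2}(\Sigma_A^+)$ and $u$ is bounded (Sinai's lemma). Then $S_n g^* = S_n\tilde g + O(\|u\|_\infty)$. A deterministic shift of size $O(1)$ changes the distribution function at scale $\sigma\sqrt n$ by at most $O(n^{-1/2})$, because $\Phi$ is Lipschitz. So it suffices to prove the bound for $\tilde g$, which has the same mean $0$ and the same variance $\sigma^2$.

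For the spectral step I consider the twisted operators $\widehat{\calL}_s h = \widehat{\calL}_\phi(e^{is\tilde g}h)$ on $\mathcal{H}_{\alpha/2}$. Since $s\mapsto \widehat{\calL}_s$ is analytic in operator norm, Kato perturbation theory (the mechanism behind Theorem~\ref{thm:analytic_imported}) together with the spectral gap of Theorem~\ref{thm:spectral_imported} gives, for $|s|\le s_0$, a decomposition
\[
\widehat{\calL}_s^n = \lambda(s)^n\Pi_s + R_s^n, \qquad \|R_s^n\|\le C\rho^n,\ \rho<1.
\]
Here $\lambda(s)$ is analytic with $\lambda(0)=1$, $\lambda'(0)=i\int\tilde g\,d\mu_\phi=0$, and $\lambda''(0)=-\sigma^2$. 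The identity $\lambda''(0)=-\sigma^2$ is the derivative formula $P''(\phi;\tilde g)=\sigma^2$ of Theorem~\ref{thm:analytic_imported}, continued to imaginary directions. Duality gives
\[
\mathbb{E}_{\mu_\phi}\bigl[e^{isS_n\tilde g}\bigr]=\int \widehat{\calL}_s^n\mathbf 1\,d\mu_\phi=\lambda(s)^n\mu_\phi(\Pi_s\mathbf 1)+O(\rho^n).
\]
Taylor expansion yields $\lambda(s)=e^{-\sigma^2s^2/2+O(|s|^3)}$ and $\mu_\phi(\Pi_s\mathbf 1)=1+O(|s|)$. Putting $s=t/(\sigma\sqrt n)$, this already gives pointwise convergence of the characteristic functions to $e^{-t^2/2}$, and hence Main Theorem~\ref{thm:clt}.

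For the rate I apply Esseen's inequality with $T=\varepsilon\sqrt n$:
\[
\sup_t\bigl|F_n(t)-\Phi(t)\bigr|\le \frac1\pi\int_{-T}^{T}\Bigl|\frac{\psi_n(t)-e^{-t^2/2}}{t}\Bigr|\,dt+\frac{C}{T},
\]
where $F_n$ and $\psi_n$ are the distribution and characteristic functions of $S_n\tilde g/(\sigma\sqrt n)$. I choose $\varepsilon$ small enough that $|t|\le T$ corresponds to $|s|\le s_0$, and that $\mathrm{Re}\log\lambda(s)\le -\sigma^2s^2/4$ there; the latter uses $\sigma>0$. On this range the difference $\psi_n(t)-e^{-t^2/2}$ is bounded by
\[
\Bigl(\frac{|t|}{\sqrt n}+\frac{|t|^3}{\sqrt n}\Bigr)e^{-t^2/8}+C\rho^n .
\]
The contribution of the $\rho^n$ term is $\int_{-T}^{T}\min(C\rho^n/|t|,\,C)\,dt=O(\rho^n\log n)$. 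The bound $|\psi_n(t)-1|\le C|t|$ near $t=0$ follows from $\mathbb{E}|S_n\tilde g|\le C\sqrt n$, which in turn comes from the variance bound. Integrating, both terms in Esseen's inequality are $O(n^{-1/2})$.

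The main obstacle is making the constants uniform. Two things have to be controlled explicitly: the radius $s_0$ on which the perturbed eigenvalue remains isolated, and the constant in the $O(|s|^3)$ remainder. Both should follow from the Neumann series for the resolvent of $\widehat{\calL}_s$ on a fixed circle $|z-1|=(1-\rho)/2$, using $\|\widehat{\calL}_s-\widehat{\calL}_0\|\le C|s|\,\|\tilde g\|_{\alpha/2}$. This gives $s_0\gtrsim (1-\rho)/\|g\|_\alpha$ and makes $C$ depend only on $\|g\|_\alpha$, $\sigma$, and the gap, as the statement requires. A second point to verify is the coboundary reduction, since it costs half the Hölder exponent and the gap must be taken on $\mathcal{H}_{\alpha/2}$. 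That gap is still supplied by Theorem~\ref{thm:spectral_imported} applied with exponent $\alpha/2$.
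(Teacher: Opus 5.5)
Your argument is correct, but it takes a different route from the paper. The introduction announces Esseen's inequality applied to $\lambda(s/\sqrt n)^n$, yet the paper's actual proof does not do this. It uses the Gordin decomposition $g=m+u\circ f-u$ with $u=\sum_{k\ge 1}\widehat{\mathcal{L}}_\phi^k g$ and then invokes Bolthausen's Berry--Esseen theorem for stationary martingale differences. It checks a third-moment bound (trivial, since $m$ is bounded) and a condition on the conditional variance $\mathbb{E}[m^2\mid\mathcal{F}_{-1}]$. The bounded coboundary is absorbed exactly as you absorb $u-u\circ\sigma^n$, through the Lipschitz property of $\Phi$.

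The martingale route is economical: it recycles the decomposition used for the CLT and the ASIP and needs no complex perturbation theory. However, everything rests on the conditional-variance input. For general bounded martingale differences Bolthausen's bound carries an extra $\log n$ that cannot be removed. The $n^{-1/2}$ rate therefore requires genuine control of $\mathbb{E}[m^2\mid\mathcal{F}_{-1}]-\sigma^2$, which the paper asserts rather than derives. In particular, an $L^2$ estimate $\|\mathbb{E}[m^2\mid\mathcal{F}_{-1}]-\sigma_m^2\|_{L^2}\le C'\theta$ cannot yield the almost-sure lower bound the paper draws from it.

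Your spectral route avoids this. With $T=\varepsilon\sqrt n$ you only need the second-order expansion of the perturbed eigenvalue on $|s|\le s_0$, so neither a non-lattice condition nor any conditional-variance estimate enters. The dependence of $C$ on $\|g\|_\alpha$, $\sigma$ and the gap is also visible, through $s_0$, the cubic remainder and $\|u\|_\infty$.

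Your explicit reduction to a one-sided observable via Sinai's lemma is a further advantage. Since $f$ is invertible, $f^{-1}\mathcal{B}=\mathcal{B}$. So the paper's martingale structure, like any use of $\widehat{\mathcal{L}}_\phi$ as the adjoint of composition, only makes sense after such a reduction, which the paper leaves implicit.

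Two small corrections:
\begin{enumerate}
\item The $\rho^n$ term contributes $O(n\rho^n)$ rather than $O(\rho^n\log n)$, because $\int_0^T\min(\rho^n/t,1)\,dt\approx\rho^n\log(T/\rho^n)$. This is still exponentially small. Alternatively, $R_0^n\mathbf{1}=0$ together with a Cauchy estimate gives $|\int R_s^n\mathbf{1}\,d\mu_\phi|\le C\rho^n|s|$, which removes the singularity at $t=0$ directly.
\item The normalized operator has to be built from a one-sided potential cohomologous to $\phi^*$. That is, Sinai's lemma must be applied to $\phi^*$ as well, which you use only tacitly.
\end{enumerate}
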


\begin{proposition}[Variance Formula]\label{prop:variance_formula}
For $g \in C^\alpha(\Omega_s)$ with $\mu_\phi(g) = 0$, the asymptotic variance exists and equals
\begin{equation}
\sigma^2(g) = \int g^2 \, d\mu_\phi + 2\sum_{k=1}^\infty \int g \cdot (g \circ f^k) \, d\mu_\phi = \int g^2 \, d\mu_\phi + 2\sum_{k=1}^\infty C_k(g, g; \mu_\phi).
\end{equation}
The series converges absolutely by exponential decay of correlations.
\end{proposition}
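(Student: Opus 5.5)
The plan is to expand the second moment of the Birkhoff sum directly. Invariance of $\mu_\phi$ makes the correlations depend only on the time lag, and exponential mixing then controls the tail. Since $f$ preserves $\mu_\phi$, we have $\int (g\circ f^j)(g\circ f^k)\,d\mu_\phi = \int g\cdot(g\circ f^{|k-j|})\,d\mu_\phi$. Because $\mu_\phi(g)=0$, this quantity equals the correlation $C_{|k-j|}(g,g;\mu_\phi)$. Expanding the square and grouping the terms by lag $k=|j-l|$ gives the exact finite-$n$ identity
\begin{equation*}
\frac1n\int (S_ng)^2\,d\mu_\phi = C_0(g,g;\mu_\phi) + 2\sum_{k=1}^{n-1}\Bigl(1-\frac{k}{n}\Bigr) C_k(g,g;\mu_\phi),
\end{equation*}
where $C_0(g,g;\mu_\phi)=\int g^2\,d\mu_\phi$.

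Next, I will invoke Main Theorem~\ref{thm:exponential_mixing} with $h=g$. This gives $|C_k(g,g;\mu_\phi)|\le C\|g\|_\alpha^2\theta^k$ with $\theta<1$, so $\sum_{k\ge1}|C_k(g,g;\mu_\phi)|\le C\|g\|_\alpha^2\theta/(1-\theta)<\infty$. This proves the asserted absolute convergence of the series. To pass to the limit $n\to\infty$ in the identity above, I split the difference between the finite sum and the full series into two pieces. The first is the truncation tail $2\sum_{k\ge n}|C_k(g,g;\mu_\phi)|$, which is $O(\theta^n)$. The second is the Cesàro correction $\frac{2}{n}\sum_{k=1}^{n-1}k\,|C_k(g,g;\mu_\phi)|$. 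This is bounded by $\frac{2C\|g\|_\alpha^2}{n}\sum_k k\theta^k = O(1/n)$, since $\sum_k k\theta^k=\theta/(1-\theta)^2$. Alternatively, one can use dominated convergence with the dominating sequence $|C_k(g,g;\mu_\phi)|$. Either way, the limit defining $\sigma^2(g)$ exists and equals $\int g^2\,d\mu_\phi + 2\sum_{k\ge1}C_k(g,g;\mu_\phi)$. We also obtain the quantitative rate $\bigl|\frac1n\int(S_ng)^2\,d\mu_\phi-\sigma^2(g)\bigr|\le C'\|g\|_\alpha^2/\bigl(n(1-\theta)^2\bigr)$, which will be useful for the Berry--Esseen analysis.

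There is no serious obstacle; the argument only uses that exponential mixing holds for the pair $(g,g)$ with $g\in C^\alpha$, which is exactly what Main Theorem~\ref{thm:exponential_mixing} provides. As a consistency check, I would note that the same expression equals $P''(\phi;g)$ by Theorem~\ref{thm:analytic_imported}. It can also be written spectrally as $\int g^2\,d\mu_\phi+2\int g\,\widehat{\calL}_\phi(I-\widehat{\calL}_\phi)^{-1}g\,d\mu_\phi$. This uses the identity $C_k(g,g;\mu_\phi)=\int g\,\widehat{\calL}_\phi^k g\,d\mu_\phi$ from Step~3 of the proof of Main Theorem~\ref{thm:exponential_mixing}, together with the fact that $I-\widehat{\calL}_\phi$ is invertible on mean-zero functions by the spectral gap of Proposition~\ref{prop:normalized_operator}.
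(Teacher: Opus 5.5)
Your proof is correct and follows the paper's argument. You expand $\int (S_n g)^2\,d\mu_\phi$ via invariance into the Ces\`aro-weighted sum of lag correlations, then pass to the limit using the bound $|C_k(g,g;\mu_\phi)|\le C\|g\|_\alpha^2\theta^k$ from Main Theorem~\ref{thm:exponential_mixing}. The paper finishes with dominated convergence, whereas your split into the $O(\theta^n)$ tail and the $O(1/n)$ Ces\`aro correction also gives the explicit rate $\bigl|\frac1n\int(S_ng)^2\,d\mu_\phi-\sigma^2(g)\bigr|=O(1/n)$.
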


\begin{proof}
Expanding $(S_n g)^2 = \sum_{j,k=0}^{n-1} g(f^j x) g(f^k x)$ and integrating:
\begin{align}
\int (S_n g)^2 \, d\mu_\phi &= \sum_{j,k=0}^{n-1} \int g \cdot (g \circ f^{|k-j|}) \, d\mu_\phi \\
&= n \int g^2 \, d\mu_\phi + 2\sum_{m=1}^{n-1} (n-m) \int g \cdot (g \circ f^m) \, d\mu_\phi.
\end{align}

Dividing by $n$ and using $|C_m(g, g)| \leq C\|g\|_\alpha^2 \theta^m$:
\begin{equation}
\frac{1}{n}\int (S_n g)^2 \, d\mu_\phi = \int g^2 \, d\mu_\phi + 2\sum_{m=1}^{n-1} \left(1 - \frac{m}{n}\right) C_m(g, g).
\end{equation}

As $n \to \infty$, the dominated convergence theorem gives
\begin{equation}
\sigma^2(g) = \int g^2 \, d\mu_\phi + 2\sum_{m=1}^\infty C_m(g, g).
\end{equation}
\end{proof}

\begin{proposition}[Spectral Characterization of Variance]\label{prop:variance_spectral}
The asymptotic variance admits the spectral representation
\begin{equation}
\sigma^2(g) = \lim_{z \to 1^-} \int g \cdot (I - z\widehat{\mathcal{L}}_\phi)^{-1} g \, d\mu_\phi + \int g \cdot (I - z\widehat{\mathcal{L}}_\phi)^{-1} g \, d\mu_\phi - \mu_\phi(g)^2.
\end{equation}
Equivalently, $\sigma^2(g) = \langle g, (I - \widehat{\mathcal{L}}_\phi + \Pi_1)^{-1} g \rangle_{L^2(\mu_\phi)}$ where $\Pi_1$ is the projection onto constants.
\end{proposition}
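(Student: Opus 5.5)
The plan is to start from the variance formula of Proposition~\ref{prop:variance_formula} and rewrite each correlation through the normalized transfer operator. Throughout, $g \in C^\alpha(\Omega_s)$ with $\mu_\phi(g)=0$, as in Main Theorem~\ref{thm:clt}.

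\textbf{Step 1 (duality).} By Proposition~\ref{prop:normalized_operator}(i), $\widehat{\mathcal{L}}_\phi$ is the $L^2(\mu_\phi)$-adjoint of the Koopman operator $u\mapsto u\circ f$. That is, $\int (\widehat{\mathcal{L}}_\phi u)\, v\, d\mu_\phi = \int u\,(v\circ f)\, d\mu_\phi$, which follows from $\widehat{\mathcal{L}}_\phi(u\cdot v\circ f) = v\,\widehat{\mathcal{L}}_\phi u$ and $\widehat{\mathcal{L}}_\phi^*\mu_\phi=\mu_\phi$. Iterating gives
\[
C_k(g,g;\mu_\phi)=\int g\,(g\circ f^k)\,d\mu_\phi=\int g\,\widehat{\mathcal{L}}_\phi^k g\,d\mu_\phi .
\]

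\textbf{Step 2 (Neumann series on the mean-zero subspace).} By Steps~1--2 of the proof of Main Theorem~\ref{thm:exponential_mixing}, we have $\widehat{\mathcal{L}}_\phi^k=\Pi_1+R^k$ with $\|R^k\|_{C^\alpha\to C^\alpha}\le C'\theta_1^k$ and $\theta_1<1$. Since $\Pi_1 g=0$, this gives $\widehat{\mathcal{L}}_\phi^k g=R^k g$. Hence $\sum_{k\ge0}\widehat{\mathcal{L}}_\phi^k g$ converges in $C^\alpha$.

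Next, $I-\widehat{\mathcal{L}}_\phi+\Pi_1$ is invertible on $C^\alpha$. Indeed, $\Pi_1$ commutes with $\widehat{\mathcal{L}}_\phi$, and $\widehat{\mathcal{L}}_\phi-\Pi_1 = R$ has spectral radius $\theta_1<1$ (the cross terms vanish because $\Pi_1R=R\Pi_1=0$). Therefore $(I-\widehat{\mathcal{L}}_\phi+\Pi_1)^{-1}=\sum_{k\ge0}R^k$, and for mean-zero $g$,
\[
(I-\widehat{\mathcal{L}}_\phi+\Pi_1)^{-1}g=\sum_{k\ge0}\widehat{\mathcal{L}}_\phi^k g .
\]

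\textbf{Step 3 (assemble).} Inserting Steps~1--2 into Proposition~\ref{prop:variance_formula} gives
\[
\sigma^2(g)=2\Big\langle g,\sum_{k\ge0}\widehat{\mathcal{L}}_\phi^kg\Big\rangle_{L^2(\mu_\phi)}-\int g^2\,d\mu_\phi
=2\langle g,(I-\widehat{\mathcal{L}}_\phi+\Pi_1)^{-1}g\rangle-\langle g,g\rangle .
\]
The $k=0$ term is counted once, so the symmetric form is $\langle g,(I+R)(I-R)^{-1}g\rangle$.

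For the resolvent form, take $0<z<1$. Then $(I-z\widehat{\mathcal{L}}_\phi)^{-1}g=\sum_k z^k\widehat{\mathcal{L}}_\phi^kg$ on mean-zero $g$, and the series is dominated by $C'\theta_1^k\|g\|_\alpha$. Dominated (Abel) convergence as $z\to1^-$ then shows that the sum of the two resolvent pairings minus $\int g^2\,d\mu_\phi$ tends to $\sigma^2(g)$. The $-\mu_\phi(g)^2$ term in the statement vanishes here. When $g$ is not centered, it arises by applying the argument to $g-\mu_\phi(g)$.

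\textbf{Main obstacle.} The real difficulty is bookkeeping of normalization rather than analysis. As displayed, the statement must be read with the diagonal term $\int g^2\,d\mu_\phi$ subtracted once. Correspondingly, the ``equivalent'' form reads $2\langle g,(I-\widehat{\mathcal{L}}_\phi+\Pi_1)^{-1}g\rangle-\|g\|_{L^2}^2$, or $\langle g,(I+R)(I-R)^{-1}g\rangle$. I would state this precise form and verify it on a trivial case, for example a Bernoulli shift with $g$ depending on one coordinate, where $\sigma^2=\int g^2$.

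The only analytic input is the uniform geometric bound on $R^k$ in $C^\alpha$, which dominates the $L^2$ pairings since $\|\cdot\|_{L^2}\le\|\cdot\|_\infty\le\|\cdot\|_\alpha$. This bound justifies both the interchange of sum and integral and the limit $z\to1^-$.
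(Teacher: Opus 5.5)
Your argument is correct and follows the paper's route: the duality $\int g\,\widehat{\mathcal{L}}_\phi^k g\,d\mu_\phi=C_k(g,g)$, the Neumann series on the mean-zero part, the Abel limit $z\to1^-$, and $(I-\widehat{\mathcal{L}}_\phi+\Pi_1)^{-1}=\sum_k R^k$. Your normalization is also the right one, since the paper's own computation ends at $\sigma^2(g)=2\lim_{z\to1^-}\int g\,(I-z\widehat{\mathcal{L}}_\phi)^{-1}g\,d\mu_\phi-\int g^2\,d\mu_\phi$, and its closing claim $\sigma^2(g)=\langle g,(I-\widehat{\mathcal{L}}_\phi+\Pi_1)^{-1}g\rangle$ is off by $\sum_{k\ge1}C_k(g,g)$. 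Two small fixes: your proposed Bernoulli check with $g$ depending on one coordinate cannot distinguish the two forms, because all $C_k$ with $k\ge1$ vanish there; on the same shift the coboundary $g(x)=\psi_0(x_1)-\psi_0(x_0)$ does distinguish them, with $\sigma^2(g)=0$ but $\langle g,(I-R)^{-1}g\rangle=\mathrm{Var}(\psi_0)$. And for uncentered $g$ the resolvent pairing grows like $\mu_\phi(g)^2/(1-z)$, so the $-\mu_\phi(g)^2$ term does not arise by centering, and the statement should simply be restricted to $\mu_\phi(g)=0$.
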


\begin{proof}
For $|z| < 1$, the resolvent expands as $(I - z\widehat{\mathcal{L}}_\phi)^{-1} = \sum_{k=0}^\infty z^k\widehat{\mathcal{L}}_\phi^k$. With $\mu_\phi(g) = 0$:
\begin{align}
\int g \cdot (I - z\widehat{\mathcal{L}}_\phi)^{-1}g \, d\mu_\phi &= \sum_{k=0}^\infty z^k \int g \cdot \widehat{\mathcal{L}}_\phi^k g \, d\mu_\phi = \sum_{k=0}^\infty z^k \int g \cdot (g \circ f^k) \, d\mu_\phi
\end{align}
where we used $\int g \cdot \widehat{\mathcal{L}}_\phi^k h \, d\mu_\phi = \int g \cdot (h \circ f^k) \, d\mu_\phi$ (from $\widehat{\mathcal{L}}_\phi^*\mu_\phi = \mu_\phi$ and the duality). Taking the limit $z \to 1^-$ and using Abel's theorem (since the series $\sum_{k=0}^\infty C_k(g,g)$ converges absolutely by exponential decay):
\begin{equation}
\lim_{z \to 1^-}\int g \cdot (I - z\widehat{\mathcal{L}}_\phi)^{-1}g \, d\mu_\phi = \sum_{k=0}^\infty C_k(g,g) = \int g^2\,d\mu_\phi + \sum_{k=1}^\infty C_k(g,g).
\end{equation}
By stationarity, the backward correlations equal the forward ones: $C_k(g,g) = C_{-k}(g,g)$. Thus $\sigma^2(g) = \int g^2\,d\mu_\phi + 2\sum_{k=1}^\infty C_k(g,g) = 2\lim_{z\to 1^-}\int g\cdot(I-z\widehat{\mathcal{L}}_\phi)^{-1}g\,d\mu_\phi - \int g^2\,d\mu_\phi$.

For the compact form: since $\mu_\phi(g) = 0$, we have $\Pi_1 g = 0$ where $\Pi_1$ is the projection onto constants. The operator $(I - \widehat{\mathcal{L}}_\phi + \Pi_1)$ is invertible on $C^\alpha$ (it equals $I - R$ where $R = \widehat{\mathcal{L}}_\phi - \Pi_1$ has spectral radius $\theta_1 < 1$). Then $(I - \widehat{\mathcal{L}}_\phi + \Pi_1)^{-1}g = \sum_{k=0}^\infty R^k g = \sum_{k=0}^\infty \widehat{\mathcal{L}}_\phi^k g$ (since $\Pi_1 g = 0$), giving the stated spectral representation.
\end{proof}

\begin{proposition}[Characterization of Zero Variance]\label{prop:zero_variance}
For $g \in C^\alpha(\Omega_s)$ with $\mu_\phi(g) = 0$, the following are equivalent:
\begin{enumerate}
\item[(i)] $\sigma^2(g) = 0$.
\item[(ii)] There exists $u \in L^2(\mu_\phi)$ with $g = u \circ f - u$ $\mu_\phi$-a.e.
\item[(iii)] There exists $u \in C^\alpha(\Omega_s)$ with $g = u \circ f - u$ everywhere.
\item[(iv)] $S_n g(x) = 0$ for all periodic points $x$ with $f^n x = x$.
\end{enumerate}
\end{proposition}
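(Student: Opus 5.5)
The plan is the cycle (iii)$\Rightarrow$(ii)$\Rightarrow$(i)$\Rightarrow$(ii)$\Rightarrow$(iv)$\Rightarrow$(iii). Three of these links are routine.

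First, (iii)$\Rightarrow$(ii) is immediate, since $C^\alpha(\Omega_s)\subset L^2(\mu_\phi)$.

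Second, for (ii)$\Rightarrow$(i), telescope: $S_n g = u\circ f^n - u$. By invariance of $\mu_\phi$ this gives $\int (S_ng)^2\,d\mu_\phi \le 4\|u\|_{L^2}^2$, uniformly in $n$. Dividing by $n$ yields $\sigma^2(g)=0$.

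Third, (iv)$\Rightarrow$(iii) is exactly the Liv\v{s}ic theorem. I will invoke it as in the proof of Proposition~\ref{prop:cohomology_characterization} ((iii)$\Rightarrow$(iv) there, applied with $\phi=g$, $\psi=0$, $K=0$). That result gives a H\"older transfer function $u$ with $g=u\circ f-u$ (up to the sign convention $u\mapsto -u$). Alternatively I can construct $u$ directly: take a dense orbit $f^k x_0$, set $u(f^kx_0)=S_kg(x_0)$, and use the Anosov closing lemma together with (iv). This shows $u$ is uniformly H\"older on the orbit, so it extends to $\Omega_s$.

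The substantive step is (i)$\Rightarrow$(ii), which I will prove via a Gordin decomposition built from the spectral gap. Set $\chi=\sum_{k\ge1}\widehat{\mathcal{L}}_\phi^k g$. This series converges in $C^\alpha$ by Proposition~\ref{prop:normalized_operator} and Step~4 of the proof of Main Theorem~\ref{thm:exponential_mixing}, because $\mu_\phi(g)=0$ implies $\|\widehat{\mathcal{L}}^k_\phi g\|_\alpha\le C'\theta_1^k\|g\|_\alpha$.

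Put $\psi=g+\chi-\chi\circ f$. Since $\widehat{\mathcal{L}}_\phi(\chi\circ f)=\chi$, we get $\widehat{\mathcal{L}}_\phi\psi=\widehat{\mathcal{L}}_\phi g+\widehat{\mathcal{L}}_\phi\chi-\chi=0$. The duality $\int v\cdot \widehat{\mathcal{L}}_\phi w\,d\mu_\phi=\int (v\circ f)\,w\,d\mu_\phi$ then shows that the terms $\psi\circ f^k$ are pairwise orthogonal in $L^2(\mu_\phi)$; they form reverse martingale differences. Hence $\int (S_n\psi)^2\,d\mu_\phi=n\int\psi^2\,d\mu_\phi$.

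Because $S_ng$ and $S_n\psi$ differ by a term bounded by $2\|\chi\|_\infty$, we obtain $\sigma^2(g)=\int\psi^2\,d\mu_\phi$. So $\sigma^2(g)=0$ forces $\psi=0$ $\mu_\phi$-a.e., that is, $g=\chi\circ f-\chi$ a.e. Here $\chi$ is already H\"older, which gives (ii), and in fact (iii) on $\mathrm{supp}\,\mu_\phi$.

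The main technical point is making this duality rigorous on $\Omega_s$. The operator $\widehat{\mathcal{L}}_\phi$ is defined through the coding, and the identity $\widehat{\mathcal{L}}(v\circ f\cdot w)=v\,\widehat{\mathcal{L}}w$ holds on the one-sided shift. The paper's formula $\widehat{\mathcal{L}}(\chi\circ f)=\chi$ is a shorthand for this. My fallback is to first replace $g\circ\pi$ by a cohomologous function depending only on future coordinates, using Sinai's lemma as in Part~I \cite{Thiam2026a} with a H\"older transfer function. I then run the martingale argument on $\Sigma_A^+$, and transport back via the measurable conjugacy from Theorem~\ref{thm:equilibrium_basic}, Step~5.

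Fourth, for (ii)$\Rightarrow$(iv): the steps above show that (ii) gives a continuous transfer function $\chi$ defined on $\mathrm{supp}\,\mu_\phi$. This support equals $\Omega_s$, since Gibbs measures charge every open set by Proposition~\ref{thm:gibbs_bounds}. By continuity, $g=\chi\circ f-\chi$ then holds everywhere on $\Omega_s$. For a periodic point with $f^nx=x$, telescoping gives $S_ng(x)=\chi(x)-\chi(x)=0$.

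Passing from an a.e.\ identity to an everywhere identity is the second place needing care. It is handled by the fact that the $\chi$ built above is continuous (indeed H\"older), so the a.e.\ identity between continuous functions on a set of full support extends everywhere.
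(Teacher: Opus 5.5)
Your proof is correct, and it takes a different route from the paper at the one substantive implication.

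\textbf{Comparison with the paper.} The paper runs the cycle (iii)$\Rightarrow$(ii)$\Rightarrow$(i)$\Rightarrow$(iv)$\Rightarrow$(iii). It attacks (i)$\Rightarrow$(iv) directly at a periodic point $p$ with $f^mp=p$ and $S_mg(p)\neq 0$. On $B_p(\varepsilon,km)$ one has $(S_{km}g)^2/(km)\approx k\,(S_mg(p))^2/m$, and the Gibbs lower bound of Proposition~\ref{thm:gibbs_bounds} is meant to contradict $n^{-1}\int(S_ng)^2\,d\mu_\phi\to 0$.

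As written this does not close. The lower bound $c_1e^{-kmP(\phi)+S_{km}\phi(p)}$ decays exponentially in $k$ (since $P(\phi)>\frac1m S_m\phi(p)$ on a nontrivial basic set). The integrand grows only linearly in $k$. So the ball's contribution tends to zero and no contradiction arises.

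Your Gordin decomposition instead gives the exact identity $\sigma^2(g)=\int\psi^2\,d\mu_\phi$ with $\widehat{\mathcal{L}}_\phi\psi=0$. Vanishing variance therefore produces a H\"older coboundary outright, and periodic sums vanish by continuity and full support. This reuses the martingale structure the paper already builds for the CLT, and it gives a complete argument for the hard direction.

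You also treat (ii)$\Rightarrow$(i) correctly for $u\in L^2$. The paper's claim that $S_ng$ is bounded requires $u$ to be bounded.

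The price of your route is the one-sided reduction via Sinai's lemma, which you rightly flag as necessary. The identity $\widehat{\mathcal{L}}(v\circ f\cdot w)=v\,\widehat{\mathcal{L}}w$ and the spectral gap only make sense on $\Sigma_A^+$, not for the invertible $f$ on $\Omega_s$.

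\textbf{A detail to fix in the symbolic fallback.} After transport, the transfer function on $\Omega_s$ is $U\circ\pi^{-1}$, which is defined only $\mu_\phi$-a.e. So your ``continuity plus full support'' step for (ii)$\Rightarrow$(iv) must be carried out on $\Sigma_A$, where $g\circ\pi=U\circ\sigma-U$ holds everywhere with $U$ H\"older. Then take $f^np=p$ and any $q\in\pi^{-1}(p)$. You get $j\,S_ng(p)=S_{nj}(g\circ\pi)(q)=U(\sigma^{nj}q)-U(q)$, which is bounded by $2\|U\|_\infty$ for all $j$. Hence $S_ng(p)=0$.

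Similarly, your remark that $\chi$ already gives (iii) holds only if the duality were valid on $\Omega_s$ itself. In the symbolic route, (iii) on $\Omega_s$ comes from Liv\v{s}ic via (iv), which your cycle already supplies.
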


\begin{proof}
$(iii) \Rightarrow (ii) \Rightarrow (i)$: If $g = u \circ f - u$, then $S_n g = u \circ f^n - u$ is bounded, so $\frac{1}{n}\int (S_n g)^2 \, d\mu_\phi \to 0$.

$(i) \Rightarrow (iv)$: If $\sigma^2(g) = 0$, then $\mathrm{Var}_{\mu_\phi}(S_n g) = o(n)$. By the ergodic theorem applied to $(S_n g)^2/n$, we have $n^{-1}(S_n g)^2 \to 0$ in $L^1(\mu_\phi)$. Now let $p$ be a periodic point with $f^m(p) = p$. Then $S_{km}g(p) = k \cdot S_m g(p)$ for all $k \geq 1$. By the Gibbs property (Proposition~\ref{thm:gibbs_bounds}), $\mu_\phi(B_p(\varepsilon, km)) \geq c_1 \exp(-kmP(\phi) + S_{km}\phi(p))$, which is bounded below independently of $k$ (up to the exponential factor). If $S_mg(p) \neq 0$, then $|S_{km}g(p)| = k|S_mg(p)| \to \infty$, so $(S_{km}g)^2/(km) \to \infty$ on $B_p(\varepsilon, km)$ (since $g$ is continuous and $S_{km}g$ is approximately $k \cdot S_m g(p)$ on this ball by bounded distortion). This contradicts $n^{-1}(S_n g)^2 \to 0$ in $L^1(\mu_\phi)$, since $\mu_\phi(B_p(\varepsilon, km)) > 0$. Thus $S_m g(p) = 0$ for all periodic orbits.

$(iv) \Rightarrow (iii)$: This is the Liv\v{s}ic theorem (\cite{Livsic1971, Livsic1972}; see Part~VI \cite{Thiam2026f} for a complete proof with optimal H\"{o}lder bounds).
\end{proof}

\subsection{Proofs of the CLT, Berry-Esseen Bound, and Local Limit Theorem}

The proofs use the Nagaev-Guivarc'{}h spectral perturbation method: the characteristic function of the Birkhoff sum is expressed via the perturbed transfer operator, whose leading eigenvalue encodes the variance. The Berry-Esseen rate follows from Bolthausen'{}s martingale CLT.

\begin{proof}[Proof of Main Theorem~\ref{thm:clt}]
We present two approaches: the spectral method and the martingale method.

\textbf{Spectral Method (Nagaev-Guivarc'h):}

\textbf{Step 1: Characteristic Function.} For $t \in \mathbb{R}$, consider
\begin{equation}
\mathbb{E}_{\mu_\phi}\left[e^{it S_n g / \sigma\sqrt{n}}\right] = \int e^{it S_n g / \sigma\sqrt{n}} \, d\mu_\phi.
\end{equation}

Using the transfer operator with perturbed potential $\phi + it g / \sigma\sqrt{n}$:
\begin{equation}
\mathcal{L}_{\phi + it g/\sigma\sqrt{n}}^n 1 = e^{it S_n g/\sigma\sqrt{n}} \cdot \mathcal{L}_\phi^n 1 \cdot e^{nP(\phi+itg/\sigma\sqrt{n}) - nP(\phi)} + \text{error}.
\end{equation}

\textbf{Step 2: Spectral Perturbation.} The leading eigenvalue of $\mathcal{L}_{\phi + \zeta g}$ is $\lambda(\zeta) = e^{P(\phi + \zeta g)}$ for small $\zeta \in \mathbb{C}$, which is analytic in $\zeta$. Computing derivatives at $\zeta = 0$:
\begin{align}
\lambda'(0) &= \mu_\phi(g) \cdot e^{P(\phi)} = 0, \\
\lambda''(0) &= \sigma^2(g) \cdot e^{P(\phi)}
\end{align}
where the second derivative uses the Pressure Derivatives corollary of Part~IV \cite{Thiam2026d}.

\textbf{Step 3: Asymptotic Expansion.} Setting $\zeta = it/\sigma\sqrt{n}$:
\begin{equation}
\lambda\left(\frac{it}{\sigma\sqrt{n}}\right)^n = e^{P(\phi)} \cdot \exp\left(n \cdot \frac{1}{2}\sigma^2 \cdot \frac{(it)^2}{\sigma^2 n} + O(n^{-1/2})\right) = e^{P(\phi)} \cdot e^{-t^2/2} \cdot (1 + O(n^{-1/2})).
\end{equation}

\textbf{Step 4: Dominated Convergence.} The spectral decomposition shows that the remainder (from non-leading eigenvalues and essential spectrum) contributes $O(\theta^n)$ which vanishes. Thus
\begin{equation}
\mathbb{E}_{\mu_\phi}\left[e^{it S_n g/\sigma\sqrt{n}}\right] \to e^{-t^2/2}
\end{equation}
which is the characteristic function of $\mathcal{N}(0, 1)$.

\textbf{Martingale Method (Gordin):}

\textbf{Step 1: Martingale Decomposition.} Write $g = m + (u \circ f - u)$ where $m$ is a martingale difference and $u$ is a coboundary correction.

Define $u = \sum_{k=0}^\infty \widehat{\mathcal{L}}_\phi^k g$ (convergent by spectral gap) and $m = g - u \circ f + u$.

Then $\mathbb{E}_{\mu_\phi}[m | \mathcal{F}_{-1}] = 0$ where $\mathcal{F}_{-1} = f^{-1}\mathcal{B}$, making $(m \circ f^k)_{k \geq 0}$ a martingale difference sequence.

\textbf{Step 2: Martingale CLT.} The Birkhoff sum decomposes as
\begin{equation}
S_n g = S_n m + (u - u \circ f^n).
\end{equation}

The coboundary term $u - u \circ f^n$ is bounded, hence $O(1)$, negligible after dividing by $\sqrt{n}$.

For the martingale sum $S_n m$, the martingale CLT applies: if $\frac{1}{n}\sum_{k=0}^{n-1} m^2 \circ f^k \to \sigma_m^2$ in probability (which holds by ergodicity since $\mu_\phi(m^2) = \sigma_m^2$), then $S_n m / \sigma_m\sqrt{n} \to \mathcal{N}(0, 1)$.

One verifies $\sigma_m^2 = \sigma^2(g)$ to complete the proof.
\end{proof}

\begin{proof}[Proof of Proposition~\ref{thm:berry_esseen}]
We apply Bolthausen's Berry-Esseen theorem for stationary martingale differences \cite{Bolthausen1982}. From the proof of Main Theorem~\ref{thm:clt} (martingale method), $g = m + (u \circ f - u)$ where $m = g - u \circ f + u$ with $u = \sum_{k=1}^\infty \widehat{\mathcal{L}}_\phi^k g$ (convergent in $C^\alpha$ by the spectral gap). The sequence $(m \circ f^k)_{k \geq 0}$ is a stationary ergodic martingale difference sequence with respect to the filtration $\mathcal{F}_k = f^{-k}\mathcal{B}$.

We verify Bolthausen's conditions:
\begin{enumerate}
\item[(a)] Third moment: since $m \in C^\alpha(\Omega_s)$ is bounded, $\mathbb{E}_{\mu_\phi}[|m|^3] \leq \|m\|_\infty^3 < \infty$. Moreover, $\|m\|_\infty \leq \|g\|_\infty + 2\|u\|_\infty \leq \|g\|_\infty(1 + 2C/(1-\theta))$ where $\theta$ is the spectral gap rate.
\item[(b)] Conditional variance: by the ergodic theorem, $n^{-1}\sum_{k=0}^{n-1}(m \circ f^k)^2 \to \mathbb{E}_{\mu_\phi}[m^2] = \sigma^2_m$ $\mu_\phi$-a.s. The conditional variance $\mathbb{E}[m^2|\mathcal{F}_{-1}]$ is $f$-measurable and satisfies $\mathbb{E}[\mathbb{E}[m^2|\mathcal{F}_{-1}]] = \mathbb{E}[m^2] = \sigma^2_m$. By the exponential mixing of $(f, \mu_\phi)$, the conditional variances are uniformly close to $\sigma^2_m$: $\|\mathbb{E}[m^2|\mathcal{F}_{-1}] - \sigma^2_m\|_{L^2} \leq C'\theta$. In particular, $\mathbb{E}[m^2|\mathcal{F}_{-1}] \geq \sigma^2_m/2 > 0$ $\mu_\phi$-a.s.
\end{enumerate}

Bolthausen's theorem \cite[Theorem 1]{Bolthausen1982} then gives $\sup_t|\mu_\phi(S_n m/\sigma_m\sqrt{n} \leq t) - \Phi(t)| \leq C_{\mathrm{BE}}/\sqrt{n}$ where $C_{\mathrm{BE}}$ depends on $\mathbb{E}[|m|^3]/\sigma_m^3$. Since $S_n g = S_n m + (u - u \circ f^n)$ and $|u - u \circ f^n| \leq 2\|u\|_\infty$, the coboundary shifts the CDF by at most $2\|u\|_\infty/(\sigma\sqrt{n})$, contributing an additional $O(n^{-1/2})$ error. Noting $\sigma_m = \sigma(g)$ (the asymptotic variances coincide since the coboundary contributes $O(1)$), the result follows.
\end{proof}

\begin{proposition}[Local Limit Theorem]\label{thm:local_clt}
Under the hypotheses of Main Theorem~\ref{thm:clt}, if additionally $g$ is not cohomologous to a function taking values in a proper subgroup of $\mathbb{R}$, then for any interval $[a, b]$:
\begin{equation}
\mu_\phi\left(\frac{S_n g}{\sigma\sqrt{n}} \in [a, b]\right) \sim \frac{b - a}{\sqrt{2\pi}} \cdot e^{-a^2/2} \cdot \frac{1}{\sigma\sqrt{n}}
\end{equation}
as $n \to \infty$. More precisely, $\sqrt{n} \cdot \mu_\phi(S_n g \in [c, c+\varepsilon]) \to \frac{\varepsilon}{\sigma\sqrt{2\pi}} e^{-c^2/(2\sigma^2 n)}$ uniformly in $c$.
\end{proposition}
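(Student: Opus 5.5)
The plan is to prove the precise form, namely the Stone-type local limit theorem
\[
\sup_{c\in\mathbb{R}}\Bigl|\sqrt{n}\,\mu_\phi\bigl(S_n g\in[c,c+\varepsilon]\bigr)-\tfrac{\varepsilon}{\sigma\sqrt{2\pi}}e^{-c^2/(2\sigma^2 n)}\Bigr|\to 0,
\]
by the Nagaev--Guivarc'h method. The first displayed asymptotic is then read off by taking $c=a\sigma\sqrt{n}$ with an interval of length $(b-a)\sigma\sqrt{n}$, to leading order when $b-a$ is small. I read ``not cohomologous to a function taking values in a proper subgroup'' in the usual sense. That is, there is no equation $g=c_0+u\circ f-u+\ell$ with $u$ H\"older, $c_0\in\mathbb{R}$ and $\ell$ taking values in a lattice $a\mathbb{Z}$.

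\textbf{Step 1 (reduction).} The transfer operator acts on functions of the future, so I first reduce to the one-sided shift. Via the coding $\pi$ of Theorem~\ref{thm:coding_imported3}, I replace $g\circ\pi$ by a cohomologous function $\tilde g$ depending only on nonnegative coordinates, with H\"older exponent $\alpha/2$ (Sinai's lemma). This changes $S_n g$ by a uniformly bounded amount. The change is harmless for the local limit theorem once the LLT is proved in the smoothed form of Step~4, since a bounded shift of $S_ng$ is absorbed by letting the test kernel vary over translates. Lattice non-degeneracy is also preserved, because cohomology is an equivalence relation.

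\textbf{Step 2 (twisted operators).} For $s\in\mathbb{R}$ set $\widehat{\mathcal{L}}_s h=\widehat{\mathcal{L}}_\phi(e^{is\tilde g}h)$ on $C^{\alpha/2}$. By duality (Proposition~\ref{prop:normalized_operator}(i)) we have $\int e^{isS_n\tilde g}\,v\,d\mu_\phi=\int \widehat{\mathcal{L}}_s^n v\,d\mu_\phi$. Near $s=0$, $s\mapsto\widehat{\mathcal{L}}_s$ is analytic. Kato perturbation of the simple isolated eigenvalue $1$ (as in Theorem~\ref{thm:analytic_imported}) gives an eigenvalue
\[
\lambda(s)=1-\tfrac12\sigma^2s^2+o(s^2),
\]
using Proposition~\ref{prop:variance_formula}. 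It also gives a projection $\Pi_s$ and a remainder of norm $O(\theta^n)$ uniformly for $|s|\le\delta$.

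\textbf{Step 3 (aperiodicity --- the main obstacle).} I must show that for each compact $K\subset\mathbb{R}\setminus\{0\}$ there are $C,\rho<1$ with $\|\widehat{\mathcal{L}}_s^n\|\le C\rho^n$ for all $s\in K$.

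First I establish a Lasota--Yorke inequality
\[
|\widehat{\mathcal{L}}_s^n h|_{\alpha/2}\le C\theta_0^n|h|_{\alpha/2}+C(1+|s|)\|h\|_\infty.
\]
Together with $\|\widehat{\mathcal{L}}_s\|_\infty\le1$, this gives essential spectral radius at most $\theta_0$ and spectral radius at most $1$, uniformly on $K$.

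It then suffices to rule out an eigenvalue $e^{i\omega}$ with $\widehat{\mathcal{L}}_s w=e^{i\omega}w$, $w\ne0$. The plan is the standard strict convexity argument. Since $\widehat{\mathcal{L}}_\phi$ is an averaging operator with $\widehat{\mathcal{L}}_\phi 1=1$, the equality $|\widehat{\mathcal{L}}_s w|=|w|$ forces $|w|$ to be constant, by the maximum principle and mixing. It also forces all the preimage terms to be aligned, giving
\[
e^{is\tilde g}\,w=e^{i\omega}\,w\circ\sigma.
\]
Writing $w=e^{i\psi}$ locally yields $s\tilde g=\omega+\psi\circ\sigma-\psi \pmod{2\pi}$. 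Hence $\tilde g$ is H\"older-cohomologous to $\omega/s$ plus a function with values in $(2\pi/s)\mathbb{Z}$. This contradicts the hypothesis.

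Upper semicontinuity of the spectral radius and compactness of $K$ then give a uniform $\rho<1$. The delicate points are the uniformity of the Lasota--Yorke constants in $s$ and the regularity of $w$ needed to define $\psi$ continuously on cylinders. I would handle the latter by working cylinder by cylinder, using that $w$ is H\"older and of constant modulus.

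\textbf{Step 4 (Fourier inversion).} Following Stone and Breiman, it suffices to prove
\[
\sqrt{n}\,\mathbb{E}\bigl[k(S_n\tilde g-c)\bigr]-\tfrac{1}{\sigma\sqrt{2\pi}}e^{-c^2/(2\sigma^2n)}\int k\to0
\]
uniformly in $c$, for every integrable $k$ whose Fourier transform $\hat k$ is continuous with compact support. The indicator of $[c,c+\varepsilon]$ is then sandwiched between such kernels.

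Write
\[
\mathbb{E}\bigl[k(S_n\tilde g-c)\bigr]=\tfrac1{2\pi}\int\hat k(s)e^{-isc}\int\widehat{\mathcal{L}}_s^n1\,d\mu_\phi\,ds,
\]
and split the integral into $|s|\le\delta$ and $\delta\le|s|\le R$. On the outer range the integral is $O(\rho^n)$ by Step~3. On the inner range substitute $s=t/\sqrt{n}$ and use Step~2, so that $\lambda(t/\sqrt n)^n\to e^{-\sigma^2t^2/2}$. The domination $|\lambda(s)|^n\le e^{-\sigma^2ns^2/4}$ for $|s|\le\delta$ then gives convergence to the Gaussian density term uniformly in $c$.
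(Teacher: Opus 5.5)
Your route is the paper's own: Kato expansion of the leading eigenvalue of the twisted operator near $s=0$, spectral radius $<1$ away from $0$ under the non-lattice hypothesis, then Fourier inversion. Your Steps~3--4 are sounder than the paper's sketch. The paper inverts over $[-\pi n,\pi n]$ with $\lambda_{t/n}$ and bounds $\sup_{\delta\le|t|\le\pi}|\lambda_t/\lambda_0|$. These are lattice-type formulas, and they do not apply to a real-valued non-lattice observable, for which $(e^{-it\varepsilon}-1)/(-it)$ is not integrable on $\mathbb{R}$. Your strict-convexity argument and Stone's smoothing kernels with compactly supported $\hat k$ are the right tools. However, two steps of your write-up do not go through as stated.

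\textbf{The Sinai-lemma coboundary is not absorbed by translating the kernel.} With $g\circ\pi=\tilde g+v\circ\sigma-v$ you have $S_n(g\circ\pi)=S_n\tilde g+D_n$, where $D_n=v\circ\sigma^n-v$. Uniformity in $c$ handles a deterministic bounded shift, but $D_n$ is random and correlated with $S_n\tilde g$. Sandwiching only replaces $\varepsilon$ by $\varepsilon\pm 2\|v\|_\infty$, which gives the order $n^{-1/2}$ but not the constant. In general a correlated bounded shift can destroy a non-lattice LLT: if $X_n$ satisfies one, then $\lfloor X_n\rfloor=X_n-\{X_n\}$ is lattice-valued. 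What you need is the LLT with the end terms kept inside the expectation, i.e.\ a mixing LLT.

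Concretely, approximate $v$ by $v_K$ depending on coordinates in $[-K,K]$. The error is exponentially small in $K$, so $K\asymp\log n$ suffices after multiplying by $\sqrt n$. Then replace $e^{-isv_K}$ by its conditional expectation $u_s$ given the coordinates $\ge 0$. This is legitimate for $n\ge K$, since the other factors depend only on those coordinates. This gives
\[
\mathbb{E}\bigl[e^{-isv_K}\,e^{isS_n\tilde g}\,(e^{isv_K}\circ\sigma^n)\bigr]=\int \widehat{\mathcal{L}}_s^{\,n-K}(u_s)\cdot e^{isS_K\tilde g}\,\bigl(e^{isv_K}\circ\sigma^K\bigr)\,d\mu_\phi .
\]
Your Steps~2--3 then apply. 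The H\"older norm of $u_s$ grows only polynomially in $n$, which is harmless after $O(K)$ further applications of your Lasota--Yorke inequality. At $s=t/\sqrt n$ the end factors tend to $1$, so the Gaussian main term is unchanged. The paper avoids this issue only by silently treating $g\circ\pi$ as a function on $\Sigma_A^+$.

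Relatedly, the relation you derive in Step~3 lives on the symbolic space. It contradicts the non-lattice hypothesis if that is read for $g\circ\pi$, as the paper does. It also contradicts the periodic-orbit form: for no $a\neq 0$ and $b$ does $S_m g(p)\in mb+a\mathbb{Z}$ hold for every $p=f^m p$. But transitivity of cohomology does not push a symbolic transfer function down to $\Omega_s$, so state the hypothesis in one of these two forms.

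\textbf{The first display cannot be read off; it is false as written.} By the CLT its left side tends to $\Phi(b)-\Phi(a)>0$, while its right side tends to $0$. Your reduction uses a window of length $(b-a)\sigma\sqrt n$, but the LLT concerns windows of fixed length. Tiling that window by $\asymp\sqrt n$ windows of length $\varepsilon$ and summing just reproduces $\Phi(b)-\Phi(a)$, with no factor $n^{-1/2}$. What the precise form does give, taking $c=a$ and $\varepsilon=b-a$, is $\mu_\phi(S_ng\in[a,b])\sim\frac{b-a}{\sigma\sqrt{2\pi n}}$ for fixed $a<b$. Prove the uniform second statement, and record that the first display needs this correction rather than claiming to derive it. The paper's proof likewise only treats windows $[c,c+\varepsilon]$.
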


\begin{proof}
The proof uses the Fourier inversion formula combined with the spectral perturbation theory.

\textbf{Step 1 (Characteristic function control).} By Theorem~\ref{thm:spectral_imported}, the perturbed operator $\mathcal{L}_{\phi^*+itg^*}$ on $\mathcal{H}_\alpha(\Sigma_A^+)$ (where $g^* = g\circ\pi$) has dominant eigenvalue $\lambda_t$ analytic near $t=0$ with $\log(\lambda_t/\lambda_0) = -\sigma^2 t^2/2 + O(t^3)$. For $|t| \leq \delta$, $|\lambda_t/\lambda_0| \leq e^{-\sigma^2 t^2/4}$.

\textbf{Step 2 (Large frequency decay).} The non-lattice condition (i.e., $g^*$ is not cohomologous to a function taking values in $a\mathbb{Z}+b$ for any $a,b$) ensures that $|\lambda_t| < \lambda_0$ for all $t \neq 0$ mod $2\pi$. By compactness, $\sup_{\delta \leq |t| \leq \pi}|\lambda_t/\lambda_0| \leq \rho < 1$.

\textbf{Step 3 (Fourier inversion).} By the inversion formula:
\begin{equation}
\mu_\phi(S_ng \in [c, c+\varepsilon]) = \frac{1}{2\pi}\int_{-\pi n}^{\pi n}e^{-itc}\frac{e^{-it\varepsilon}-1}{-it}\left(\frac{\lambda_{t/n}}{\lambda_0}\right)^n\,dt + O(\rho^n).
\end{equation}
Substituting $s = t\sigma\sqrt{n}$ in the range $|t| \leq \delta$, the Gaussian approximation $(\lambda_{t/n}/\lambda_0)^n \to e^{-s^2/(2\sigma^2)}$ gives the main term $\frac{\varepsilon}{\sigma\sqrt{2\pi n}}e^{-c^2/(2\sigma^2 n)}$. The large-$|t|$ contribution is $O(\rho^n)$, exponentially small.
\end{proof}

\section{Additional Limit Theorems}\label{sec:limit_theorems}

This section develops further statistical limit theorems for Axiom A diffeomorphisms: the Almost Sure Invariance Principle (providing strong approximation by Brownian motion), the Law of the Iterated Logarithm, and moment bounds. These refinements of the Central Limit Theorem are essential for applications in statistical mechanics and data analysis.

\subsection{Almost Sure Invariance Principle}

The ASIP provides the strongest form of the CLT: pathwise Brownian approximation with polynomial error. For non-uniformly hyperbolic systems, the tower construction of  \cite{Young1998} extends these results beyond the uniformly hyperbolic setting. It implies the functional CLT, the law of the iterated logarithm, and Strassen'{}s functional LIL as immediate corollaries.

\begin{maintheorem}[Almost Sure Invariance Principle]\label{thm:asip}
Let $\Omega_s$ be a mixing basic set, $\mu_\phi$ an equilibrium state for H\"{o}lder potential $\phi$, and $g \in C^\alpha(\Omega_s)$ with $\mu_\phi(g) = 0$ and $\sigma^2(g) > 0$. There exists a probability space $(\widetilde{\Omega}, \widetilde{\mathcal{F}}, \widetilde{P})$ supporting:
\begin{enumerate}
\item[(i)] A sequence $(X_n)_{n \geq 0}$ with the same distribution as $(g \circ f^n)_{n \geq 0}$ under $\mu_\phi$.
\item[(ii)] A standard Brownian motion $(W_t)_{t \geq 0}$.
\end{enumerate}
such that
\begin{equation}
S_n X := \sum_{k=0}^{n-1} X_k = \sigma W_n + O(n^{1/2 - \delta})
\end{equation}
almost surely as $n \to \infty$, for some $\delta > 0$ depending on the spectral gap and $\alpha$.
\end{maintheorem}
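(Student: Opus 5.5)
Our plan is to reduce to the one-sided symbolic setting, build a reverse martingale decomposition from the normalized transfer operator, and then apply a Skorokhod-embedding strong approximation for martingales.

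\textbf{Step 1: reduction to the one-sided shift.} Via Theorem~\ref{thm:coding_imported3}, pass to $g^*=g\circ\pi\in\mathcal{H}_\alpha(\Sigma_A)$ and $\mu_{\phi^*}$. Since $\pi$ is injective $\mu_{\phi^*}$-a.e., the process $(g\circ f^n)$ under $\mu_\phi$ has the same law as $(g^*\circ\sigma^n)$ under $\mu_{\phi^*}$. We then apply Sinai's lemma: $g^*=\tilde g+v\circ\sigma-v$, with $\tilde g$ depending only on future coordinates. Both $\tilde g$ and $v$ are H\"older, with exponent $\alpha/2$, by the standard construction using $\lambda^{\alpha}$-contraction along stable sets. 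The bounded coboundary changes $S_n$ by $O(1)$. We may therefore assume $g\in\mathcal{H}_{\alpha/2}(\Sigma_A^+)$, where $\widehat{\calL}_\phi$ has the spectral gap of Theorem~\ref{thm:spectral_imported} and Proposition~\ref{prop:normalized_operator}.

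\textbf{Step 2: Gordin decomposition.} Set $\chi=\sum_{k\ge1}\widehat{\calL}_\phi^k g$. This series converges in $\mathcal{H}_{\alpha/2}$ since $\|\widehat{\calL}_\phi^k g\|\le C\theta_1^k\|g\|$, because $\mu_\phi(g)=0$ kills the $\Pi_1$ component. Then put $m=g+\chi-\chi\circ\sigma$. The identity $\widehat{\calL}_\phi(u\circ\sigma)=u$ and the fact that $\widehat{\calL}_\phi$ is the adjoint of composition give $\widehat{\calL}_\phi m=0$. Hence $\mathbb{E}[m\mid\sigma^{-1}\mathcal{B}]=0$, and $(m\circ\sigma^k)$ is a \emph{reverse} martingale difference sequence. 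Moreover $S_ng=S_nm+\chi-\chi\circ\sigma^n$ with $\|\chi\|_\infty\le C(1-\theta_1)^{-1}\|g\|_\alpha$. Also $\mathbb{E}m^2=\sigma^2(g)$ by Proposition~\ref{prop:variance_formula}.

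\textbf{Step 3: martingale embedding.} Reverse time on each block $[0,n)$, or equivalently pass to the natural extension, to obtain a forward martingale. Apply the Skorokhod embedding theorem (Strassen's construction, as used by Philipp--Stout and Melbourne--Nicol \cite{MelbourneNicol2005}). On an enlarged probability space there are a Brownian motion $W$ and stopping times $\tau_k$ with $\sum_{k<n}m\circ\sigma^k\overset{d}{=}W(T_n)$, where $T_n=\sum_{k<n}\tau_k$. These satisfy $\mathbb{E}[\tau_k\mid\mathcal{G}_{k-1}]=\mathbb{E}[m_k^2\mid\mathcal{G}_{k-1}]$ and $\mathbb{E}\tau_k^2\le C\mathbb{E}m_k^4<\infty$, the latter since $m$ is bounded. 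The ASIP then follows from the bound $T_n-\sigma^2n=O(n^{1-2\delta'})$ a.s. Combined with the L\'evy modulus of Brownian motion, $|W(T_n)-W(\sigma^2 n)|=O(n^{1/2-\delta'/2}\log n)$. Finally the Kolmogorov extension / Berkes--Philipp gluing lemma constructs $(X_n)$ jointly with $W$ on one space, which gives (i) and (ii).

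\textbf{Main obstacle: controlling $T_n-\sigma^2 n$.} We write it as
\[
\sum_k(\tau_k-\mathbb{E}[\tau_k\mid\mathcal{G}_{k-1}])+\sum_k(\mathbb{E}[m_k^2\mid\mathcal{G}_{k-1}]-\sigma^2).
\]
The first sum is a martingale with bounded second moments. By the martingale SLLN with rate (or Burkholder plus Borel--Cantelli along dyadic blocks), it is $O(n^{1/2+\epsilon})$ a.s. The second sum is a Birkhoff sum of the H\"older function $h=\widehat{\calL}_\phi(m^2)-\sigma^2$, which has mean zero. I would bound it by using the spectral gap again: apply the same Gordin decomposition to $h$ to write it as a martingale plus a bounded coboundary, giving $O(n^{1/2}\log n)$ a.s. Alternatively, use the exponential decay of correlations (Main Theorem~\ref{thm:exponential_mixing}) to get $\mathbb{E}|S_nh|^{4}\le Cn^2$, then Borel--Cantelli along $n=2^j$ with maximal inequalities. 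Either route yields $T_n-\sigma^2 n=O(n^{1/2+\epsilon})$, and hence any $\delta<1/4$ works, with constants depending on $\theta_1$ and $\alpha$ via $\|m\|_\infty$ and $\|\chi\|_\alpha$. The reversal of the time direction in Step 3 needs care, and is where I expect most technical effort.
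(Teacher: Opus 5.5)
Your route is the paper's route: a Gordin decomposition through $\widehat{\calL}_\phi$, a bounded coboundary, and a Strassen--Skorokhod embedding with $\sum_k\tau_k$ controlled by the spectral gap. Your Sinai-lemma reduction, and your remark that $(m\circ\sigma^k)$ is only a \emph{reverse} martingale difference sequence, are more careful than the paper. The paper works on $\Omega_s$ directly, where $f^{-1}\mathcal{B}=\mathcal{B}$ because $f$ is invertible, and simply calls $(m\circ f^k)$ a martingale difference sequence.

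But your Step~3 does not close as written. Reversing time on each block $[0,n)$ produces a different embedding for every $n$. So there is no single Brownian motion and no almost sure statement. Passing to the natural extension is not equivalent. There $(m\circ\sigma^{-k})_{k\ge0}$ is indeed a forward martingale difference sequence for $\mathcal{G}_k=\sigma^k\mathcal{B}^+$, with $\mathcal{B}^+$ generated by the coordinates $x_j$, $j\ge0$. However, $\sum_{k<n}m\circ\sigma^{-k}$ agrees in law with $S_nm$ only for each fixed $n$, via $S_nm=(\sum_{j<n}m\circ\sigma^{-j})\circ\sigma^{n-1}$. It does not agree as a process in $n$ unless the process is time-reversible. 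What your argument actually proves is the ASIP for $(g\circ f^{-n})_{n\ge0}$.

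The missing step is cheap because $f$ is invertible. Run Sinai's lemma toward the past, so that $\tilde g$ depends only on $(x_j)_{j\le 0}$. Then build $\chi$ and $m$ from the normalized transfer operator of $\sigma^{-1}$ on the one-sided past shift. That shift has transition matrix $A^{T}$, is mixing iff $A$ is, and has a spectral gap by Theorem~\ref{thm:spectral_imported}. With this construction:
\begin{itemize}
\item $m\circ\sigma^k$ is adapted to the increasing filtration $\mathcal{F}_k$ generated by $(x_j)_{j\le k}$, with $\mathbb{E}[m\circ\sigma^k\mid\mathcal{F}_{k-1}]=0$.
\item $\mathbb{E}[m^2\circ\sigma^k\mid\mathcal{F}_{k-1}]-\sigma^2$ is a forward Birkhoff sum of a mean-zero H\"older function, so your treatment of $T_n-\sigma^2n$ goes through unchanged.
\end{itemize}
Equivalently, $\Omega_s$ is a mixing basic set for $f^{-1}$, and $\mu_\phi$ is also the equilibrium state of $\phi$ for $f^{-1}$ because $h_\mu(f^{-1})=h_\mu(f)$. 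So your natural-extension argument applied to $f^{-1}$ yields the ASIP for $f$.

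Two small corrections. First, in Step~2 the decomposition should read $S_ng=S_nm-\chi+\chi\circ\sigma^n$. Second, $|T_n-\sigma^2n|=O(n^{1-2\delta'})$ gives $|W(T_n)-W(\sigma^2n)|=O(n^{1/2-\delta'}\sqrt{\log n})$ by the Cs\"org\H{o}--R\'ev\'esz increment bound. That is what you need to get every $\delta<1/4$. The bound $O(n^{1/2-\delta'/2}\log n)$ you wrote would only give $\delta<1/8$.
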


The ASIP is a strong approximation result: it provides pathwise coupling between the discrete-time random walk $S_n$ and continuous Brownian motion, with explicit error bounds.

\begin{corollary}[Functional Central Limit Theorem]\label{cor:fclt}
Under the hypotheses of Main Theorem~\ref{thm:asip}, the rescaled partial sum process
\begin{equation}
W_n(t) = \frac{1}{\sigma\sqrt{n}} S_{\lfloor nt \rfloor}
\end{equation}
converges in distribution (in the Skorokhod topology on $D[0, 1]$) to standard Brownian motion.
\end{corollary}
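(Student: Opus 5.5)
The functional CLT (Corollary~\ref{cor:fclt}) will be deduced directly from the strong approximation in Main Theorem~\ref{thm:asip}, and the work is done on the coupled space $(\widetilde{\Omega},\widetilde{\mathcal{F}},\widetilde{P})$. Because $(X_n)$ has the same law as $(g\circ f^n)$ under $\mu_\phi$, the process $W_n(t)=S_{\lfloor nt\rfloor}/(\sigma\sqrt n)$ built from $X$ has the same law on $D[0,1]$ as the one built from $g\circ f^k$. Convergence in distribution therefore only has to be checked for the coupled version.

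Next I introduce the rescaled Brownian motion $B_n(t)=n^{-1/2}W_{nt}$. By Brownian scaling, $B_n$ is a standard Brownian motion on $[0,1]$ for every $n$. The plan is to show
\[
\sup_{0\le t\le 1}\bigl|W_n(t)-B_n(t)\bigr|\to 0 \quad \widetilde{P}\text{-a.s.}
\]
Once this holds, $d_{\mathrm{Sk}}(W_n,B_n)\le\|W_n-B_n\|_\infty\to0$ almost surely. Since every $B_n$ has the Wiener law, the converging-together lemma (Billingsley, Theorem~3.1) then gives $W_n\Rightarrow$ Brownian motion in the Skorokhod topology.

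For the uniform bound I split
\[
|W_n(t)-B_n(t)|\le \frac{|S_{\lfloor nt\rfloor}X-\sigma W_{\lfloor nt\rfloor}|}{\sigma\sqrt n}+\frac{|W_{\lfloor nt\rfloor}-W_{nt}|}{\sqrt n}.
\]

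\textbf{First term.} By Main Theorem~\ref{thm:asip} there is an a.s.\ finite random $K$ with $|S_kX-\sigma W_k|\le K(1+k^{1/2-\delta})$ for all $k$. Taking the supremum over $k\le n$, this term is at most $K(1+n^{1/2-\delta})/(\sigma\sqrt n)=O(n^{-\delta})\to0$.

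\textbf{Second term.} This is the main, though routine, obstacle: controlling Brownian oscillation on unit intervals uniformly up to time $n$. I use the Gaussian tail
\[
\widetilde P\Bigl(\sup_{s\in[k,k+1]}|W_s-W_k|>x\Bigr)\le 4e^{-x^2/2},
\]
take $x=\sqrt{6\log n}$, and apply a union bound over $k\le n$, which gives probability $O(n^{-2})$. Borel--Cantelli then shows that eventually $\max_{k\le n}\sup_{s\in[k,k+1]}|W_s-W_k|\le\sqrt{6\log n}$. Dividing by $\sqrt n$, this tends to $0$ almost surely, which completes the argument.
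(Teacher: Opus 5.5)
Your proof is correct and follows essentially the same route as the paper: work on the ASIP coupling space, compare $W_n$ with the rescaled Brownian motion $n^{-1/2}W_{nt}$, show that the uniform distance tends to zero almost surely, and conclude by Brownian scaling. Your version is more careful in two places. The paper asserts that the Brownian oscillation term $|W_{\lfloor nt\rfloor}-W_{nt}|/\sqrt{n}$ is $O(1/\sqrt{n})$, but uniformly in $t$ it is only $O(\sqrt{\log n/n})$; this is exactly what your Borel--Cantelli bound gives, and it still suffices. The paper also switches to linearly interpolated paths in $C[0,1]$, whereas you stay with the step process in $D[0,1]$ as the statement requires.
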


\begin{proof}[Proof of Corollary \ref{cor:fclt}]
By the ASIP (Main Theorem~\ref{thm:asip}), there exists a probability space carrying both $(S_n g)$ and a Brownian motion $W$ with $|S_n g - \sigma W(n)| = O(n^{1/2 - \delta})$ almost surely. Define the rescaled partial-sum process $W_n(t) = S_{\lfloor nt \rfloor} g / (\sigma \sqrt{n})$ for $t \in [0,1]$, extended by linear interpolation. Since $W(nt)/\sqrt{n}$ converges in distribution to $W(t)$ (scaling property of Brownian motion), and $|W_n(t) - W(nt)/(\sigma\sqrt{n})| \leq |S_{\lfloor nt \rfloor}g - \sigma W(\lfloor nt \rfloor)|/(\sigma\sqrt{n}) + O(1/\sqrt{n})$, the ASIP error gives $\sup_{t \in [0,1]} |W_n(t) - W(nt)/(\sigma\sqrt{n})| = O(n^{-\delta})$ almost surely. In particular the sup-norm error tends to zero, so $W_n \to W$ in $C([0,1])$ in distribution. This is the functional CLT (Donsker's invariance principle) for $S_n g$.
\end{proof}

\begin{proof}[Proof of Main Theorem~\ref{thm:asip}]
The proof follows the method of  \cite{MelbourneNicol2005, MelbourneNicol2009}, adapted to the present setting.

\textbf{Step 1: Martingale Approximation.} By the martingale decomposition from Section \ref{sec:clt}, write $g = m + (u \circ f - u)$ where $(m \circ f^n)$ is a stationary ergodic martingale difference sequence.

\textbf{Step 2: Moment Bounds.} For martingale differences, we need
\begin{equation}
\mathbb{E}[|m|^{2+\delta}] < \infty
\end{equation}
for some $\delta > 0$. Since $m \in C^\alpha$ is bounded, all moments are finite.

\textbf{Step 3: Strassen's Embedding.} The key tool is the Koml\'{o}s-Major-Tusn\'{a}dy (KMT) strong approximation theorem, which for independent summands gives
\begin{equation}
S_n = \sigma W_n + O(\log n)
\end{equation}
almost surely. For dependent (martingale) sequences, weaker but still useful bounds hold.

\textbf{Step 4: Error from Coboundary.} The coboundary term $u - u \circ f^n$ is bounded by $2\|u\|_\infty$, contributing $O(1)$ to the error.

\textbf{Step 5: Combining Estimates.} Write $S_ng = S_nm + (u - u\circ f^n) = M_n + O(1)$ where $M_n = \sum_{k=0}^{n-1}m\circ f^k$ is the martingale sum. By the Strassen-Heyde embedding (a martingale version of the KMT theorem), there exists a Brownian motion $W$ on an enlarged probability space with $|M_n - \sigma W(n)| = O(n^{1/2-\delta})$ a.s., where $\delta > 0$ depends on the moment condition $\mathbb{E}[|m|^{2+2\delta}] < \infty$ (satisfied since $m$ is bounded). The error from the coboundary is $|u - u\circ f^n| \leq 2\|u\|_\infty = O(1) \subset O(n^{1/2-\delta})$. Combining: $|S_ng - \sigma W(n)| \leq |M_n - \sigma W(n)| + |u-u\circ f^n| = O(n^{1/2-\delta})$ a.s. The construction of the enlarged probability space uses Skorokhod embedding: define stopping times $\tau_k$ for the Brownian motion such that $W(\tau_k) - W(\tau_{k-1})$ has the same distribution as $m\circ f^k/\sigma$, then control $|\sum\tau_k - n|$ via the exponential mixing.
\end{proof}

\subsection{Law of the Iterated Logarithm, Moment Bounds, and Mixing Rates}

The ASIP (Main Theorem~\ref{thm:asip}) yields the LIL and Strassen'{}s functional LIL. We also record moment bounds for Birkhoff sums, exponential deviation inequalities, and quantitative mixing rates, all derived from the spectral gap.

\begin{proposition}[Law of the Iterated Logarithm]\label{thm:lil}
Under the hypotheses of Main Theorem~\ref{thm:asip}:
\begin{equation}
\limsup_{n \to \infty} \frac{S_n g}{\sigma\sqrt{2n \log\log n}} = 1 \quad \mu_\phi\text{-a.s.}
\end{equation}
and
\begin{equation}
\liminf_{n \to \infty} \frac{S_n g}{\sigma\sqrt{2n \log\log n}} = -1 \quad \mu_\phi\text{-a.s.}
\end{equation}
\end{proposition}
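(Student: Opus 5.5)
The plan is to deduce the LIL directly from the ASIP (Main Theorem~\ref{thm:asip}) and the classical Khinchin LIL for Brownian motion. Since the LIL is a statement about the law of the whole sequence $(g\circ f^n)_{n\ge0}$, it suffices to prove it for the sequence $(X_n)$ on the enlarged space $(\widetilde{\Omega},\widetilde{\mathcal F},\widetilde P)$. There $(X_n)$ has the same distribution as $(g\circ f^n)$ under $\mu_\phi$. The event
\begin{equation}
E=\Bigl\{\limsup_{n\to\infty} S_n/\sqrt{2n\log\log n}=\sigma\Bigr\}
\end{equation}
is a measurable function of the sequence of coordinates, so it has the same probability in both models.

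On $\widetilde\Omega$, the ASIP gives $S_nX=\sigma W_n+R_n$ with $|R_n|\le C(\omega)n^{1/2-\delta}$ almost surely. Dividing by $\sigma\sqrt{2n\log\log n}$, the remainder is bounded by $C(\omega)n^{-\delta}/\sqrt{2\log\log n}\to0$. Hence the $\limsup$ and $\liminf$ of $S_nX/(\sigma\sqrt{2n\log\log n})$ coincide almost surely with those of $W_n/\sqrt{2n\log\log n}$.

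It then remains to apply Khinchin's LIL for Brownian motion along integer times. The continuous-time statement $\limsup_{t\to\infty}W_t/\sqrt{2t\log\log t}=1$ gives the upper bound $\le1$ for the discrete subsequence. For the matching lower bound I would use the standard argument: the increments $W_n-W_{n-1}$ are i.i.d.\ $\mathcal N(0,1)$, so the Hartman--Wintner/Kolmogorov LIL for i.i.d.\ sums applies directly and gives $\limsup_n W_n/\sqrt{2n\log\log n}=1$. Alternatively, one can control $\sup_{t\in[n,n+1]}|W_t-W_n|=O(\sqrt{\log n})$ a.s.\ via Borel--Cantelli with Gaussian tails. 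The $\liminf=-1$ statement follows by applying the same argument to $-W$, or equivalently to $-g$, whose variance is also $\sigma^2$.

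The main obstacle, mild as it is, is the transfer step from $\widetilde P$ back to $\mu_\phi$. I would check that $E$ belongs to the product $\sigma$-algebra on $\mathbb R^{\mathbb N}$, which holds because it is defined by countable $\limsup$ operations on continuous coordinate functions. Equality of finite-dimensional distributions then implies equality of the laws on $\mathbb R^{\mathbb N}$, and hence $\mu_\phi(E)=\widetilde P(E)=1$. The positivity assumption $\sigma^2(g)>0$ is needed only so that the normalization makes sense; in the degenerate case Proposition~\ref{prop:zero_variance} makes $S_ng$ bounded.
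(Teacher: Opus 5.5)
Your proposal is correct and follows the paper's argument: both divide the ASIP remainder $O(n^{1/2-\delta})$ by $\sigma\sqrt{2n\log\log n}$ to see that it is $o(1)$, and then invoke Khinchin's LIL for Brownian motion, with the $\liminf$ obtained by symmetry. Your extra steps fill in details the paper leaves implicit: transferring the event $E$ from $(\widetilde{\Omega},\widetilde P)$ back to $\mu_\phi$ via equality of laws on $\mathbb{R}^{\mathbb{N}}$, and justifying the Brownian LIL along integer times via the i.i.d.\ Gaussian increments.
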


\begin{proof}
The LIL for Brownian motion states that
\begin{equation}
\limsup_{t \to \infty} \frac{W_t}{\sqrt{2t\log\log t}} = 1 \quad \text{a.s.}
\end{equation}

By the ASIP, $S_n g = \sigma W_n + o(\sqrt{n \log\log n})$ a.s. (since $n^{1/2 - \delta} = o(\sqrt{n\log\log n})$). Thus
\begin{equation}
\frac{S_n g}{\sqrt{2n\log\log n}} = \frac{\sigma W_n}{\sqrt{2n\log\log n}} + o(1)
\end{equation}
and the LIL for $S_n g$ follows from that for $W_n$.
\end{proof}

\begin{corollary}[Strassen's Functional LIL]\label{cor:strassen}
Define the rescaled paths
\begin{equation}
\eta_n(t) = \frac{S_{\lfloor nt \rfloor}}{\sigma\sqrt{2n\log\log n}}, \quad t \in [0, 1].
\end{equation}
The set of limit points of $(\eta_n)$ in $C[0, 1]$ (uniform topology) equals the Strassen ball
\begin{equation}
K = \left\{h \in C[0,1] : h(0) = 0, h \text{ absolutely continuous}, \int_0^1 (h'(t))^2 \, dt \leq 1\right\}.
\end{equation}
\end{corollary}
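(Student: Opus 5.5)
The plan is to transfer Strassen's functional LIL for Brownian motion to $(S_n g)$ through the ASIP coupling of Main Theorem~\ref{thm:asip}, in the same way the proof of Proposition~\ref{thm:lil} transfers the scalar LIL. Note that the statement concerns almost sure behaviour of $(\eta_n)$, a property of the joint law of $(g\circ f^k)_{k\ge 0}$ under $\mu_\phi$. It therefore suffices to prove it for the process $(X_k)$ on the enlarged space $(\widetilde\Omega,\widetilde{\mathcal F},\widetilde P)$. The set of limit points of $(\eta_n)$ is a measurable functional of the whole sequence, so the almost sure identification carries back to $\mu_\phi$.

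\textbf{Step 1 (Brownian input).} I will invoke Strassen's theorem for standard Brownian motion. With
\[
\zeta_n(t)=\frac{W(nt)}{\sqrt{2n\log\log n}},
\]
the sequence $(\zeta_n)$ is almost surely relatively compact in $C[0,1]$, and its set of limit points is exactly $K$. I also need a version with integer-sampled paths, $\widetilde\zeta_n(t)=W(\lfloor nt\rfloor)/\sqrt{2n\log\log n}$. By L\'evy's modulus of continuity,
\[
\sup_{0\le s\le n}\ \sup_{|u|\le 1}|W(s+u)-W(s)| = O(\sqrt{\log n})
\]
almost surely. Hence $\|\widetilde\zeta_n-\zeta_n\|_\infty\to 0$, and $(\widetilde\zeta_n)$ has the same limit set $K$.

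\textbf{Step 2 (uniform comparison).} By the ASIP there is a random $C<\infty$ with $|S_kX-\sigma W(k)|\le C k^{1/2-\delta}$ for all $k\ge 1$; the finitely many early $k$ are absorbed into $C$. Then
\[
\sup_{t\in[0,1]}\bigl|\eta_n(t)-\widetilde\zeta_n(t)\bigr|
\le \frac{\max_{k\le n}|S_kX-\sigma W(k)|}{\sigma\sqrt{2n\log\log n}}
\le \frac{C n^{1/2-\delta}}{\sigma\sqrt{2n\log\log n}}\longrightarrow 0 .
\]
The maximum over $k\le n$ is the point that needs care. A pointwise-in-$n$ bound is not enough, and monotonicity of $k^{1/2-\delta}$ is what gives the uniform statement.

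\textbf{Step 3 (conclusion).} Two sequences in $C[0,1]$ (or in $D[0,1]$ with the sup norm) whose uniform distance tends to zero have the same set of limit points, and one is relatively compact iff the other is. Hence the limit set of $(\eta_n)$ equals $K$ almost surely. Since $\eta_n$ is a step function, I will either state the result in $D[0,1]$ with the uniform metric or use the linear interpolation of the partial sums; the two differ by at most $\|g\|_\infty/\sqrt{2n\log\log n}\to 0$.

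\textbf{Main obstacle.} The real difficulty lies in the hypothesis rather than in this argument. Step 2 needs the ASIP as a genuine almost sure statement with a uniform $O(n^{1/2-\delta})$ remainder on a common probability space. Once that is granted from Main Theorem~\ref{thm:asip}, the corollary is a soft consequence.
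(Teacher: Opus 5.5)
Your proposal is correct and follows the same route as the paper: couple via the ASIP, observe that the uniform error is $O(n^{-\delta}/\sqrt{\log\log n}) = o(1)$, and import Strassen's theorem for Brownian motion. Your version is more careful than the paper's, which asserts uniformity in $t$ without your monotonicity argument and silently replaces $W(\lfloor nt\rfloor)$ by $W(nt)$. Your unit-increment bound for $W$ (strictly a large-time increment estimate rather than L\'evy's modulus, but true) and your transfer of the almost sure statement back to $\mu_\phi$ fill exactly those gaps.
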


\begin{proof}
By the ASIP (Main Theorem~\ref{thm:asip}), $S_{\lfloor nt \rfloor} = \sigma W(\lfloor nt\rfloor) + O(n^{1/2-\delta})$ uniformly in $t \in [0,1]$, where $W$ is a standard Brownian motion. Dividing by $\sigma\sqrt{2n\log\log n}$: $\eta_n(t) = W(\lfloor nt\rfloor)/\sqrt{2n\log\log n} + O(n^{-\delta}/\sqrt{\log\log n})$. The error is $o(1)$ uniformly, so the set of limit points of $\eta_n$ equals the set of limit points of $W(nt)/\sqrt{2n\log\log n}$, which is the Strassen ball $K$ by Strassen's classical theorem for Brownian motion \cite[Theorem~1.1]{DenkerPhilipp1984}.
\end{proof}

\begin{proposition}[Moment Bounds for Birkhoff Sums]\label{prop:moment_bounds}
For $g \in C^\alpha(\Omega_s)$ with $\mu_\phi(g) = 0$ and $p \geq 2$:
\begin{equation}
\left(\int |S_n g|^p \, d\mu_\phi\right)^{1/p} \leq C_p \sigma \sqrt{n}
\end{equation}
where $C_p$ depends on $p$, $\|g\|_\alpha$, and the spectral gap.
\end{proposition}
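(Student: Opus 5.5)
The plan is to reuse the Gordin martingale–coboundary decomposition from the proof of Main Theorem~\ref{thm:clt} and Proposition~\ref{thm:berry_esseen}, and to control the martingale part with Burkholder's (or Rosenthal's) inequality. The coboundary part is bounded uniformly in $n$. Throughout I assume, as in Main Theorem~\ref{thm:asip}, that $\sigma=\sigma(g)>0$. If $\sigma=0$ the stated bound cannot hold literally: by Proposition~\ref{prop:zero_variance}, $S_ng=u\circ f^n-u$ is bounded but generally nonzero, while the right-hand side vanishes. In that case I would record the bound in the form $C_p\sqrt n$ instead.

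\textbf{Step 1 (decomposition).} Set $u=\sum_{k\ge1}\widehat{\mathcal L}_\phi^k g$. Since $\mu_\phi(g)=0$, Step~1 of the proof of Main Theorem~\ref{thm:exponential_mixing} gives $\widehat{\mathcal L}_\phi^k g=R^kg$. Hence the series converges in $C^\alpha$, with $\|u\|_\alpha\le C'\theta_1/(1-\theta_1)\,\|g\|_\alpha$. Put $m=g-u\circ f+u$. Then $\widehat{\mathcal L}_\phi m=0$, so $\mathbb E_{\mu_\phi}[m\mid f^{-1}\mathcal B]=0$, and $S_ng=S_nm+u-u\circ f^n$. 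We also have $\|m\|_\infty\le \|g\|_\infty+2\|u\|_\infty$.

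\textbf{Step 2 (martingale moments).} The sequence $(m\circ f^k)_{k\ge0}$ is a reverse martingale difference sequence for the decreasing filtration $\mathcal F_k=f^{-k}\mathcal B$. Fix $n$ and read the terms in the order $k=n-1,\dots,0$. This turns $S_nm$ into the terminal value of a genuine forward martingale with respect to the increasing filtration $\mathcal F_{n-1}\subset\dots\subset\mathcal F_0$.

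Rosenthal's inequality for martingales then gives
\[
\|S_nm\|_p\le C_p\Big(\big\|\textstyle\sum_{k<n}\mathbb E[m^2\circ f^k\mid\mathcal F_{k+1}]\big\|_{p/2}^{1/2}+n^{1/p}\|m\|_\infty\Big).
\]
The conditional variance is $\mathbb E[m^2\mid\mathcal F_1]=(\widehat{\mathcal L}_\phi m^2)\circ f$, which has mean $\mu_\phi(m^2)$. So the first term is at most $\sqrt n\,\|m\|_\infty$ crudely. More precisely, it equals $\sqrt{n\,\mu_\phi(m^2)}$ plus a fluctuation that is controlled by exponential mixing (Main Theorem~\ref{thm:exponential_mixing}) applied to the H\"older function $\widehat{\mathcal L}_\phi m^2$. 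The second term is $o(\sqrt n)$ for $p>2$. The crude form of Burkholder's inequality, $\|S_nm\|_p\le C_p\sqrt n\,\|m\|_\infty$, already suffices for the proposition.

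\textbf{Step 3 (normalisation by $\sigma$).} As noted at the end of the proof of Proposition~\ref{thm:berry_esseen}, $\mu_\phi(m^2)=\sigma^2(g)$. This holds because the two sums differ by a bounded term, combined with Proposition~\ref{prop:variance_formula}. Combining Steps~1 and~2,
\[
\|S_ng\|_p\le C_p\sqrt n\,\|m\|_\infty+2\|u\|_\infty\le \widetilde C_p\,\sigma\sqrt n,
\]
where $\widetilde C_p=\big(C_p\|m\|_\infty+2\|u\|_\infty\big)/\sigma$. This constant depends only on $p$, $\|g\|_\alpha$, the spectral gap quantities $(C',\theta_1)$, and $\sigma$, as the proposition requires.

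\textbf{Main obstacle.} The delicate point is the honest dependence on $\sigma$. Dividing by $\sigma$ produces a constant that blows up as $\sigma\to0$, and the proposition's phrasing hides this. If a constant is wanted that stays bounded in the degenerate direction, I would use the refined Rosenthal estimate from Step~2: the leading term there is $C_p\sigma\sqrt n$ exactly, and the remaining terms, $n^{1/p}\|m\|_\infty$ and the conditional-variance fluctuation, are of lower order in $n$. The resulting bound is $\|S_ng\|_p\le C_p\sigma\sqrt n+O(n^{1/p}+1)$, which yields the stated inequality for $n\ge n_0(p,g)$. The finitely many remaining $n$ are absorbed into the constant.
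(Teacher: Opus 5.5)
Your argument is correct and is essentially the paper's proof: the Gordin decomposition $S_ng=S_nm+O(1)$, followed by Burkholder's inequality with the crude bound $C_p\sqrt{n}\,\|m\|_\infty$, and since the paper's proof stops at $C_p'\sqrt{n}$ without ever producing the factor $\sigma$, your explicit division by $\sigma>0$ (and your time reversal, which turns the reverse martingale into a forward one) makes precise what the paper leaves implicit. One caution on your closing remark: no refinement can give a constant that stays bounded as $\sigma\to0$, because at $n=1$ the inequality reads $\|g\|_{L^p(\mu_\phi)}\le C_p\sigma$, which fails uniformly for near-coboundaries; indeed your threshold $n_0(p,g)$ and the absorption of the small $n$ both reintroduce a factor $1/\sigma$, so the refined Rosenthal estimate only makes the leading asymptotic constant $\limsup_n\|S_ng\|_p/(\sigma\sqrt n)$ depend on $p$ alone.
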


\begin{proof}
By the martingale decomposition $S_n g = S_n m + O(1)$, it suffices to bound $\|S_n m\|_p$. The Burkholder-Davis-Gundy inequality gives
\begin{equation}
\|S_n m\|_p \leq C_p \left\|\left(\sum_{k=0}^{n-1} m^2 \circ f^k\right)^{1/2}\right\|_p \leq C_p \sqrt{n \|m^2\|_\infty} \leq C_p' \sqrt{n}.
\end{equation}
\end{proof}

\begin{proposition}[Exponential Deviation Inequality]\label{prop:exponential_deviation}
For $g \in C^\alpha(\Omega_s)$ bounded with $\mu_\phi(g) = 0$:
\begin{equation}
\mu_\phi\left(|S_n g| \geq t\sqrt{n}\right) \leq 2\exp\left(-\frac{ct^2}{1 + t\|g\|_\infty/\sigma\sqrt{n}}\right)
\end{equation}
for a constant $c > 0$ depending on the spectral gap.
\end{proposition}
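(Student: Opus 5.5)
We prove the exponential deviation inequality by combining the Gordin martingale decomposition from the proof of Main Theorem~\ref{thm:clt} with a Bernstein (Freedman) type inequality for bounded martingale differences. Write $g = m + (u\circ f - u)$ with $u = \sum_{k\ge 1}\widehat{\calL}_\phi^k g$. By the spectral gap (Proposition~\ref{prop:normalized_operator} and the bound $\|R^n\|\le C'\theta_1^n$ from the proof of Main Theorem~\ref{thm:exponential_mixing}), $u$ converges in $C^\alpha$ with $\|u\|_\infty \le C'\|g\|_\alpha/(1-\theta_1)$. Hence $\|m\|_\infty \le \|g\|_\infty + 2\|u\|_\infty =: K$, and $S_n g = S_n m + (u - u\circ f^n)$, where the coboundary term is bounded by $2\|u\|_\infty$.

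The martingale structure runs backwards in time. Since $\widehat{\calL}_\phi m = 0$, $m$ has zero conditional expectation given $f^{-1}\mathcal B$. So the reversed sequence $\xi_j = m\circ f^{n-1-j}$, $j=0,\dots,n-1$, is a martingale difference sequence for the increasing filtration $\mathcal G_j = f^{-(n-1-j)}\mathcal B$, and $S_n m = \sum_j \xi_j$. We apply Freedman's inequality: if $|\xi_j|\le K$ and the predictable quadratic variation satisfies $V_n=\sum_j \mathbb E[\xi_j^2\mid\mathcal G_{j-1}]\le v$, then
\begin{equation}
\mu_\phi\Bigl(\bigl|\textstyle\sum_j\xi_j\bigr|\ge s,\ V_n\le v\Bigr)\le 2\exp\Bigl(-\frac{s^2}{2(v+Ks/3)}\Bigr).
\end{equation}
The conditional variances are $\mathbb E[m^2\mid f^{-1}\mathcal B]\circ f^{k} = (\widehat{\calL}_\phi(m^2))\circ f^{k+1}$. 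Here $\widehat{\calL}_\phi(m^2)$ is a bounded function, with sup-norm at most $\|m\|_\infty^2\le K^2$, since $\widehat{\calL}_\phi$ is a positive operator fixing $1$. This gives the deterministic bound $V_n\le nK^2$, so no event needs to be excluded.

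We then take $s = t\sqrt n - 2\|u\|_\infty$ and $v = nK^2$. The case $t\sqrt n\le 4\|u\|_\infty$ is trivial after enlarging constants, because the right-hand side is then bounded below by a constant we may assume exceeds $1$ after adjusting $c$. Otherwise $s\ge t\sqrt n/2$, which yields
\begin{equation}
\mu_\phi(|S_ng|\ge t\sqrt n)\le 2\exp\Bigl(-\frac{t^2}{8K^2+ (4/3)Kt/\sqrt n}\Bigr).
\end{equation}
To reach the stated form $ct^2/(1+t\|g\|_\infty/\sigma\sqrt n)$, we choose $c$ small depending on $K^2$, $K/\|g\|_\infty$ and $\sigma$, all controlled by $\|g\|_\alpha$ and the spectral gap. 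Note that $\sigma\le \|m\|_2\le K$ and $K\ge\|g\|_\infty$ up to the constant $1+2C'/(1-\theta_1)$. The resulting constant $c$ depends on $\|g\|_\alpha/\|g\|_\infty$ as well; this is acceptable under "depending on the spectral gap" if we allow dependence on $g$. If a $g$-uniform constant is desired, we instead normalize $t$ by $\sigma$ and absorb the ratios.

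The main obstacle is the \emph{direction} of the martingale. It must be handled correctly, since the Gordin decomposition gives reverse martingale differences, and we must verify that Freedman's inequality applies to the time-reversed array uniformly in $n$. A second, related point is matching constants to the precise denominator $1+t\|g\|_\infty/(\sigma\sqrt n)$. If the bookkeeping with $K$ versus $\|g\|_\infty$ becomes awkward, a fallback is the spectral route: bound $\int e^{\zeta S_ng}\,d\mu_\phi$ through the perturbed operator $\widehat{\calL}_{\phi+\zeta g}$ and the analytic eigenvalue $e^{P(\phi+\zeta g)-P(\phi)} = e^{\sigma^2\zeta^2/2+O(\zeta^3)}$ from Theorem~\ref{thm:analytic_imported}, valid for $|\zeta|\le \zeta_0$ with $\zeta_0$ set by the spectral gap. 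We would then apply Chebyshev's exponential bound with $\zeta = \min(t/(\sigma^2\sqrt n),\zeta_0)$. This reproduces the sub-Gaussian/sub-exponential crossover, with the crossover scale determined by $\zeta_0$.
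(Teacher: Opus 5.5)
Your proposal is correct and follows essentially the paper's route: the Gordin decomposition $S_n g = S_n m + O(\|u\|_\infty)$, a concentration inequality for the bounded martingale differences $m\circ f^k$, and the split at $t\sqrt n = 4\|u\|_\infty$ so that $s \ge t\sqrt n/2$. Where you use Freedman, the paper uses Azuma--Hoeffding. With only the deterministic bound $V_n\le nK^2$, Freedman gives nothing beyond Azuma: the plain sub-Gaussian bound $2\exp(-t^2/(8K^2))$ already implies the stated form, because the denominator is at least $1$. Your explicit handling of the time-reversed filtration and of the small-$t$ case supplies steps the paper skips. Your $n$-independent exponent also avoids the slip in the paper's last display, which carries a spurious factor $n$ and takes $c = 1/(8B^2/n)$.
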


\begin{proof}
By the martingale decomposition, $S_n g = S_n m + (u - u\circ f^n)$ where $\|u\|_\infty \leq C_u\|g\|_\alpha/(1-\theta)$. Since $(m\circ f^k)$ is a martingale difference sequence with $\|m\|_\infty \leq \|g\|_\infty + 2\|u\|_\infty =: B$, the Azuma-Hoeffding inequality gives
\begin{equation}
\mu_\phi(|S_n m| \geq s) \leq 2\exp\left(-\frac{s^2}{2nB^2}\right).
\end{equation}
Since $|S_n g - S_n m| \leq 2\|u\|_\infty$, we have $\{|S_n g| \geq t\sqrt{n}\} \subset \{|S_n m| \geq t\sqrt{n} - 2\|u\|_\infty\}$. For $t\sqrt{n} \geq 4\|u\|_\infty$, $t\sqrt{n} - 2\|u\|_\infty \geq t\sqrt{n}/2$, so
\begin{equation}
\mu_\phi(|S_ng| \geq t\sqrt{n}) \leq 2\exp\left(-\frac{t^2n}{8B^2}\right) \leq 2\exp\left(-\frac{ct^2}{1 + t\|g\|_\infty/\sigma\sqrt{n}}\right)
\end{equation}
with $c = 1/(8B^2/n)$, giving the stated form with $c$ depending on the spectral gap through $B$.
\end{proof}

\begin{proposition}[Rate in Birkhoff Ergodic Theorem]\label{prop:ergodic_rate}
For $g \in C^\alpha(\Omega_s)$:
\begin{equation}
\mu_\phi\left(\left|\frac{1}{n}S_n g - \mu_\phi(g)\right| \geq \varepsilon\right) \leq C \exp\left(-c n \varepsilon^2/\|g\|_\alpha^2\right)
\end{equation}
for constants $C, c > 0$.
\end{proposition}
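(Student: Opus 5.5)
The plan is to reduce to the centered case and then apply the exponential deviation inequality (Proposition~\ref{prop:exponential_deviation}), or more directly the martingale decomposition with Azuma--Hoeffding. Set $\tilde g = g - \mu_\phi(g)$, so that $\mu_\phi(\tilde g)=0$, $\|\tilde g\|_\alpha \leq 2\|g\|_\alpha$, and
\[
\tfrac1n S_n g - \mu_\phi(g) = \tfrac1n S_n\tilde g .
\]
The event in question is then $\{|S_n\tilde g|\geq n\varepsilon\}$. This reduces the statement to a concentration bound for centered Birkhoff sums with constants that depend linearly on $\|\tilde g\|_\alpha$.

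\textbf{Gordin decomposition.} Following the proof of Main Theorem~\ref{thm:clt}, I would write $\tilde g = m + u\circ f - u$ with $u = \sum_{k\geq 1}\widehat{\mathcal{L}}_\phi^k \tilde g$.
\begin{itemize}
\item By Step~2 of the proof of Main Theorem~\ref{thm:exponential_mixing}, $\|R^n\|\leq C'\theta_1^n$, and $\Pi_1\tilde g=0$. Together these give $\|u\|_\infty\leq \|u\|_\alpha \leq C'\theta_1(1-\theta_1)^{-1}\|\tilde g\|_\alpha$.
\item Hence $\|m\|_\infty \leq B := K\|g\|_\alpha$, where $K = 2(1+2C'/(1-\theta_1))$ depends only on the spectral gap.
\item The reverse-filtration martingale difference structure of $(m\circ f^k)$ was already used in Proposition~\ref{prop:exponential_deviation}.
\end{itemize}

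\textbf{Concentration estimate.} Azuma--Hoeffding gives $\mu_\phi(|S_n m|\geq s)\leq 2\exp(-s^2/(2nB^2))$. Since $|S_n\tilde g - S_n m|\leq 2\|u\|_\infty$, there are two regimes.
\begin{itemize}
\item If $n\varepsilon \geq 4\|u\|_\infty$, take $s=n\varepsilon/2$. This yields
\[
\mu_\phi\bigl(|S_n\tilde g|\geq n\varepsilon\bigr)\leq 2\exp\bigl(-n\varepsilon^2/(8K^2\|g\|_\alpha^2)\bigr).
\]
\item If $n\varepsilon< 4\|u\|_\infty\leq 4C''\|g\|_\alpha$, then $n\varepsilon^2/\|g\|_\alpha^2 < 16 C''^2/n \le 16C''^2$. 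The exponential on the right is therefore bounded below by a fixed positive number, and the claimed bound holds trivially by taking $C\geq e^{16cC''^2}$.
\end{itemize}
Combining the two regimes gives the proposition with $c = 1/(8K^2)$ and $C=\max(2,e^{16cC''^2})$.

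\textbf{Main obstacle.} The only real point is keeping the constants homogeneous in $\|g\|_\alpha$, so that the exponent scales as $\varepsilon^2/\|g\|_\alpha^2$ exactly. This follows from linearity: $u$ and $m$ depend linearly on $g$. The second subtlety is the small-deviation regime where the coboundary dominates, and this is handled by enlarging $C$ as above.
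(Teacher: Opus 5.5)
Your proof is correct and follows essentially the same route as the paper, which centers $g$ and then invokes the exponential deviation inequality (itself proved from the Gordin decomposition and Azuma--Hoeffding). Carrying out the Azuma step directly, as you do, with the small-deviation regime absorbed into $C$, is what actually yields the stated $\varepsilon^2/\|g\|_\alpha^2$ form with constants independent of $g$. The paper's proof does not make this explicit: it cites the deviation inequality, whose constants involve $\sigma$, and describes $c(\varepsilon)$ through the pressure instead.
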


\begin{proof}
Set $\tilde{g} = g - \mu_\phi(g)$, so that $\int \tilde{g} \, d\mu_\phi = 0$. Then $\frac{1}{n} S_n g - \mu_\phi(g) = \frac{1}{n} S_n \tilde{g}$. Applying Proposition~\ref{prop:exponential_deviation} to $\tilde{g}$: for any $\varepsilon > 0$,
\begin{equation}
\mu_\phi\left( \left| \frac{1}{n} S_n \tilde{g} \right| > \varepsilon \right) \leq 2 \exp(-c(\varepsilon) n)
\end{equation}
where $c(\varepsilon) > 0$ depends on $\varepsilon$, $\|\tilde{g}\|_\alpha$, and the spectral gap. This is the stated exponential rate of convergence in the Birkhoff ergodic theorem. The constant $c(\varepsilon) = \inf_{0 < t < t_0} \{t\varepsilon - \Lambda_{\tilde{g}}(t)\}$ where $\Lambda_{\tilde{g}}(t) = P(\phi + t\tilde{g}) - P(\phi)$, which is positive for $\varepsilon > 0$ since $\Lambda_{\tilde{g}}'(0) = \int \tilde{g} \, d\mu_\phi = 0$ and $\Lambda_{\tilde{g}}$ is strictly convex (by Proposition~\ref{prop:variance_spectral}).
\end{proof}

This exponential concentration bound is much stronger than the $O(1/n)$ rate that follows from variance bounds alone, reflecting the strong mixing properties of Axiom A Diffeomorphisms.

\begin{definition}[Strong Mixing Coefficient]
For a stationary process $(X_n)$, the strong mixing coefficient is
\begin{equation}
\alpha(n) = \sup_{A \in \mathcal{F}_{-\infty}^0, B \in \mathcal{F}_n^\infty} |P(A \cap B) - P(A)P(B)|
\end{equation}
where $\mathcal{F}_a^b$ is the $\sigma$-algebra generated by $(X_k)_{a \leq k \leq b}$.
\end{definition}

\begin{proposition}[Exponential Mixing Rate]\label{prop:mixing_coefficient}
For the process $(g \circ f^n)$ under $\mu_\phi$ with $g \in C^\alpha$:
\begin{equation}
\alpha(n) \leq C \theta^n
\end{equation}
where $\theta$ is the correlation decay rate from Main Theorem~\ref{thm:exponential_mixing}.
\end{proposition}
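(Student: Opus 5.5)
The plan is to prove the bound first for the \emph{symbolic} $\sigma$-algebras of the coding $\pi:\Sigma_A\to\Omega_s$ (Theorem~\ref{thm:symbolic_coding_prelim}), and then transfer it to the $\sigma$-algebras $\mathcal{F}_{-\infty}^0,\mathcal{F}_n^\infty$ generated by $X_k=g\circ f^k$. The transfer uses the fact that each $X_k$ is, up to an exponentially small error, a function of finitely many symbols.

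\textbf{Step 1: symbolic $\psi$-mixing.} Let $\mathcal{C}_{\le a}$ and $\mathcal{C}_{\ge b}$ be the $\sigma$-algebras generated by the coordinates $x_j$ with $j\le a$ and $j\ge b$. For cylinders $A\in\mathcal{C}_{\le 0}$ of length $\ell$ and $B\in\mathcal{C}_{\ge k}$, I would push $\mathbf 1_A$ forward by the normalized operator $\widehat{\calL}_\phi$. Since $\widehat{\calL}_\phi^{\ell}\mathbf 1_A$ is a positive function of bounded Hölder ratio on its support (Gibbs bounds, Proposition~\ref{thm:gibbs_bounds}), Proposition~\ref{prop:normalized_operator} and the estimate $\|R^k\|\le C'\theta_1^k$ from Step~2 of the proof of Main Theorem~\ref{thm:exponential_mixing} should give
\[
|\mu(A\cap B)-\mu(A)\mu(B)|\le C\theta^{k}\mu(A)\mu(B).
\]
The passage between one-sided and two-sided shifts is done with Sinai's lemma, as in Part~I \cite{Thiam2026a}. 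The constant is uniform in $\ell$ and in the length of $B$, so summing over disjoint cylinders and taking monotone limits extends the bound to all $A\in\mathcal{C}_{\le 0}$ and $B\in\mathcal{C}_{\ge k}$.

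\textbf{Step 2: finite-memory approximation.} Set $g^*=g\circ\pi$ and let $g_N=\mathbb{E}_\mu[g^*\mid x_{-N},\dots,x_N]$. By Lemma~\ref{lem:quantitative_expansiveness}, as in Step~2 of Theorem~\ref{thm:equilibrium_basic}, we have
\[
\|g^*-g_N\|_\infty\le|g|_\alpha\,\alpha^{N\alpha}=:\eta_N .
\]
The approximating processes $X^{(N)}_k=g_N\circ\sigma^k$ for $k\le 0$ are $\mathcal{C}_{\le N}$-measurable, and those for $k\ge n$ are $\mathcal{C}_{\ge n-N}$-measurable. Taking $N=\lfloor n/3\rfloor$, Step~1 gives $\alpha$-mixing with rate $C\theta^{n/3}$ for the truncated process.

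\textbf{Step 3: from truncated events to true events (the main obstacle).} An event $A\in\mathcal{F}_{-\infty}^0$ is not close in measure to an event of the truncated process merely because the coordinates are uniformly close: sup-norm perturbations can change indicator functions drastically. I would first reduce, by monotone class, to events depending on finitely many coordinates $X_{-K},\dots,X_0$ and $X_n,\dots,X_{n+K}$. I would then express $\alpha$ through covariances of bounded measurable functions and approximate the indicators in $L^1(\mu)$ by Lipschitz functions $F$ of those coordinates. For Lipschitz $F$ the truncation error is at most $\mathrm{Lip}(F)\,K\eta_N$.

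The difficulty is that $\mathrm{Lip}(F)$ and $K$ must be controlled uniformly over all events, and generically they cannot be. What I would try first is a blocking argument in the spirit of Bradley. Using Step~1 twice, the process is shown to be $\beta$-mixing, i.e.\ absolutely regular, with respect to the symbolic filtration at scale $n/3$. One then shows that the past $\sigma$-algebra of $X$ is contained, up to $\mu$-null sets, in $\mathcal{C}_{\le 0}\vee$ (a tail contribution that is controlled exponentially via the contraction along stable manifolds). The exponent $\theta^{1/3}$ is then renamed $\theta$.

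If this containment cannot be made quantitative, the fallback is to establish the bound for the coarser coefficient obtained by restricting $A,B$ to events of the form $\{(X_{-K},\dots,X_0)\in U\}$ with $U$ having $\mu$-negligible $\eta_N$-boundary layer. I expect the entire difficulty of the proposition to be concentrated in this step.
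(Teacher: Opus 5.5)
Your Steps~1 and~2 are fine; Step~1 is the standard exponential $\psi$-mixing of Gibbs measures between symbolic pasts and futures. The gap is Step~3: the proof has to happen there, and you only list strategies you might try. More seriously, Step~3 cannot be completed, because for general H\"older $g$ the proposition is false.

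Here is a counterexample. Take Smale's horseshoe $\Omega_s$, affine on the two horizontal strips $H_0,H_1$ of the unit square, contracting horizontally and expanding vertically by $3$. It is a mixing basic set coded by the full $2$-shift. Let $\phi\equiv 0$, so that $\mu_\phi$ is the $(\tfrac12,\tfrac12)$-Bernoulli measure, and let $g$ be the vertical coordinate, a smooth function. On $H_i$, $f$ acts on the vertical coordinate by an affine map $b_i$, and the strip containing a point is read off from its vertical coordinate. Hence $g\circ f=\tau\circ g$ on $\Omega_s$, with $\tau=b_i$ on the corresponding third of $[0,1]$.

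It follows that $X_n=\tau^n(X_0)$ for all $n\ge 0$, so $\mathcal{F}_n^\infty\subset\sigma(X_0)\subset\mathcal{F}_{-\infty}^0$. Take $A=B=\{X_n\le\tfrac13\}=f^{-n}(H_0)\cap\Omega_s$, an event of measure $\tfrac12$. This gives $\alpha(n)\ge\tfrac14$ for every $n$.

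This is exactly the phenomenon you flagged. The part of $g$ that depends on distant symbols is exponentially small in sup-norm, yet at the level of $\sigma$-algebras it carries the whole one-sided itinerary. So neither a containment of the past $\sigma$-algebra in $\mathcal{C}_{\le 0}$ modulo an ``exponentially small tail'' nor a blocking argument can localize $\mathcal{F}_{-\infty}^0$ and $\mathcal{F}_n^\infty$ in symbolic time. Separately, renaming $\theta^{1/3}$ as $\theta$ would not give the rate in the statement, where $\theta$ is fixed by Main Theorem~\ref{thm:exponential_mixing}.

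What your Step~1 does establish is the correct substitute: $\psi$-mixing, hence $\alpha$-mixing, at rate $C\theta^n$ for the $\sigma$-algebras generated by Markov-partition itineraries. Therefore $\alpha(n)\le C\theta^n$ holds for observables depending on finitely many symbols, with the memory length affecting only $C$. For general H\"older $g$, Steps~1--2 show only that $(g\circ f^n)$ is a functional of an exponentially $\psi$-mixing process with exponentially small approximation error. That is what the CLT and ASIP arguments actually need. Your fallback bounds a weaker coefficient of this kind, not the one in the statement.

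The paper's own proof does not get past this point either. It approximates $\mathbf{1}_A$ and $\mathbf{1}_B$ by H\"older functions of the symbols in $[-K,0]$ and $[n,n+K]$ with $\|\cdot\|_\alpha\le C\varepsilon^{-1}$, treats $K$ as independent of $A,B$ under the supremum, and still ends with $\theta^{n/3}$ rather than $\theta^n$. Its localization step already fails in the example: under the coding, $\{X_n\le\tfrac13\}\in\mathcal{F}_{-\infty}^0$ is the symbol event $\{x_n=0\}$. That event is independent of all symbols in $[-K,0]$, so no function of those symbols approximates its indicator in $L^1(\mu_\phi)$ to within less than $\tfrac12$.

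Your diagnosis of where the difficulty lies is right. The conclusion to draw is that the statement must be restricted, not that a sharper Step~3 will prove it.
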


\begin{proof}
For measurable sets $A \in \mathcal{F}_{-\infty}^0$ and $B \in \mathcal{F}_n^\infty$, we have $|\mu(A \cap B) - \mu(A)\mu(B)| = |C_n(\mathbf{1}_A, \mathbf{1}_B)|$. For any $\varepsilon > 0$, approximate $\mathbf{1}_A$ by a H\"{o}lder function $g_\varepsilon \in C^\alpha$ depending on coordinates in $[-K, 0]$ (for some large $K$) with $\|g_\varepsilon - \mathbf{1}_A\|_{L^1(\mu)} < \varepsilon$ and $\|g_\varepsilon\|_\alpha \leq C\varepsilon^{-1}$ (such approximations exist by regularization of indicators via convolution with H\"{o}lder kernels on the symbolic space). Similarly approximate $\mathbf{1}_B$ by $h_\varepsilon \in C^\alpha$ depending on coordinates in $[n, n+K]$.

Then $|C_n(\mathbf{1}_A, \mathbf{1}_B)| \leq |C_n(g_\varepsilon, h_\varepsilon)| + 2\varepsilon \leq C\|g_\varepsilon\|_\alpha\|h_\varepsilon\|_\alpha\theta^{n-2K} + 2\varepsilon \leq C'\varepsilon^{-2}\theta^{n-2K} + 2\varepsilon$. Choosing $\varepsilon = \theta^{(n-2K)/3}$ gives $|C_n(\mathbf{1}_A, \mathbf{1}_B)| \leq C''\theta^{(n-2K)/3}$. Taking the supremum over $A, B$: $\alpha(n) \leq C''\theta^{n/3}$ for $n$ large (absorbing the $K$-dependent terms into the constant).
\end{proof}

\section{Large Deviations}\label{sec:large_deviations}

This section develops the theory of large deviations for Axiom A diffeomorphisms, establishing precise asymptotics for rare events. The rate functions are expressed through the pressure functional, connecting large deviation theory to thermodynamic formalism.

\subsection{Large Deviations for Birkhoff Averages}

We prove the full large deviations principle for Birkhoff averages of H\"{o}lder observables, with rate function given by the Legendre transform of the pressure. This is Main Theorem~\ref{thm:large_deviations}.

\begin{definition}[Large Deviation Principle]
A sequence of probability measures $(\nu_n)$ on a topological space $X$ satisfies a large deviation principle (LDP) with rate function $I : X \to [0, \infty]$ if:
\begin{enumerate}
\item[(i)] $I$ is lower semicontinuous.
\item[(ii)] For every closed set $F \subset X$: $\limsup_{n \to \infty} \frac{1}{n} \log \nu_n(F) \leq -\inf_{x \in F} I(x)$.
\item[(iii)] For every open set $G \subset X$: $\liminf_{n \to \infty} \frac{1}{n} \log \nu_n(G) \geq -\inf_{x \in G} I(x)$.
\end{enumerate}
The rate function is good if its level sets $\{x : I(x) \leq c\}$ are compact.
\end{definition}

\begin{maintheorem}[LDP for Birkhoff Averages]\label{thm:large_deviations}
Let $\Omega_s$ be a mixing basic set, $\mu_\phi$ an equilibrium state, and $g \in C^\alpha(\Omega_s)$. The distributions of $\frac{1}{n}S_n g$ under $\mu_\phi$ satisfy a large deviation principle with good rate function
\begin{equation}
I(a) = \sup_{t \in \mathbb{R}} \{ta - (P(\phi + tg) - P(\phi))\} = P(\phi)^* (a)
\end{equation}
where $P(\phi)^*$ denotes the Legendre transform of the shifted pressure.
\end{maintheorem}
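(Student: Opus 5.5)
We prove Main Theorem~\ref{thm:large_deviations} with the G\"{a}rtner--Ellis theorem. The task is to show that for every $t\in\mathbb{R}$ the scaled cumulant generating function
\begin{equation*}
\Lambda_n(t) = \frac{1}{n}\log\int e^{tS_ng}\,d\mu_\phi
\end{equation*}
converges to $\Lambda(t) = P(\phi+tg)-P(\phi)$, and that $\Lambda$ is finite and differentiable on all of $\mathbb{R}$. G\"{a}rtner--Ellis then gives the LDP with rate function $\Lambda^*(a)=\sup_t\{ta-\Lambda(t)\}$, which is exactly $I(a)$. Since $\Lambda$ is finite everywhere, $\Lambda^*$ has compact level sets, so $I$ is good. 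Because $\phi+tg$ is H\"{o}lder for every real $t$, Theorem~\ref{thm:analytic_imported} makes $t\mapsto P(\phi+tg)$ real-analytic on $\mathbb{R}$ with $\Lambda'(t)=\int g\,d\mu_{\phi+tg}$. Thus the essential smoothness hypothesis holds trivially, and the whole argument reduces to the convergence $\Lambda_n\to\Lambda$.

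For the convergence I will give two arguments and expect to use the first. \emph{Gibbs route:} I cover $\Omega_s$ by Bowen balls $B_x(\varepsilon,n)$ centred at a maximal $(n,\varepsilon)$-separated set $E_n$. Lemma~\ref{lem:quantitative_expansiveness} gives bounded variation: on each ball, $S_ng(y)=S_ng(x)+O(1)$ uniformly in $n$, with $O(1)$ of size about $|g|_\alpha/(1-\lambda^{\alpha\alpha})$. Combining this with Proposition~\ref{thm:gibbs_bounds} gives
\begin{equation*}
\int e^{tS_ng}\,d\mu_\phi \asymp_{t} \sum_{x\in E_n} e^{S_n(\phi+tg)(x)-nP(\phi)}.
\end{equation*}
The upper bound comes from the cover. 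The lower bound uses that the balls $B_x(\varepsilon/2,n)$ are disjoint, and that the Gibbs bounds at radius $\varepsilon/2$ differ from those at radius $\varepsilon$ only by constants. By the separated-set definition of pressure, together with the fact that Bowen's limit in $\varepsilon$ is attained at any fixed small $\varepsilon$ for expansive maps, the right-hand side is $e^{n(P(\phi+tg)-P(\phi))+O(1)}$. Taking logarithms and dividing by $n$ gives the claim.

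\emph{Spectral route:} I pass to the one-sided symbolic model via Theorem~\ref{thm:coding_imported3}, after making $g^*$ cohomologous to a function of future coordinates (this costs $O(1)$ in $S_ng$). I then write $\int e^{tS_ng}\,d\mu_\phi = e^{-nP(\phi)}\int \mathcal{L}^n_{\phi+tg}(h_\phi)\,d\nu_\phi$. Theorem~\ref{thm:spectral_imported} applied to the potential $\phi+tg$ gives $\mathcal{L}^n_{\phi+tg}h_\phi = e^{nP(\phi+tg)}\bigl(\nu_{\phi+tg}(h_\phi)h_{\phi+tg}+O(\gamma_t^n)\bigr)$. Since $h_\phi>0$, the coefficient $\nu_{\phi+tg}(h_\phi)$ is positive, so the limit follows.

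The main obstacle is the uniformity needed to pass from Bowen balls or cylinders to exact asymptotics. In the Gibbs route this means checking that the separated-set sums compute the \emph{smooth} pressure $P_f(\phi+tg)$ rather than only the symbolic one; I will handle this with the identity $P_\sigma(\phi^*)=P_f(\phi)$ established in Step~5 of Theorem~\ref{thm:equilibrium_basic}. It also means checking that the distortion constants depend on $t$ only through $|t|\,|g|_\alpha$. This matters because G\"{a}rtner--Ellis needs only pointwise convergence for each fixed $t$, so constants depending on $t$ are harmless once we divide by $n$. In the spectral route the corresponding point is the cohomology reduction to one-sided potentials, which is standard. Finally, I will note that $I(a)<\infty$ exactly on the interval $[\inf_\mu\int g\,d\mu,\sup_\mu\int g\,d\mu]$, and that $I(\int g\,d\mu_\phi)=0$, as a consistency check.
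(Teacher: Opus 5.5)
Your proposal is correct, and its skeleton---G\"{a}rtner--Ellis, the identification $\Lambda(t)=P(\phi+tg)-P(\phi)$, and differentiability of $\Lambda$ on all of $\mathbb{R}$ from Theorem~\ref{thm:analytic_imported}---is the same as the paper's. Where you differ is in how you establish $\Lambda_n\to\Lambda$.

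\textbf{Spectral route.} The paper expresses $\int e^{tS_ng}\,d\mu_\phi$ through $\mathcal{L}_\phi^n$ and reads off the growth rate from the leading eigenvalue $e^{P(\phi+tg)}$ of $\mathcal{L}_{\phi+tg}$. Your secondary argument is this same mechanism, but you state it correctly as the exact identity $\int e^{tS_ng}\,d\mu_\phi=e^{-nP(\phi)}\int\mathcal{L}^n_{\phi+tg}h_\phi\,d\nu_\phi$. The paper's version, with unspecified ``normalization'' factors and a pointwise $(1+o(1))$ relation between $e^{tS_ng}h_\phi$ and $h_{\phi+tg}$, is only heuristic. This route controls the prefactor through the eigendata of $\mathcal{L}_{\phi+tg}$, at the cost of the reduction to one-sided potentials.

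\textbf{Gibbs route.} Your primary argument bypasses the tilted transfer operator entirely. It works directly on $\Omega_s$ by covering and packing with Bowen balls, using Proposition~\ref{thm:gibbs_bounds} and bounded distortion. It yields only two-sided bounds up to factors $e^{O(|t|)}$, which is all G\"{a}rtner--Ellis requires.

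The one step you should tighten is the claim $\sum_{x\in E_n}e^{S_n(\phi+tg)(x)}=e^{nP(\phi+tg)+O(1)}$. The separated-set definition of pressure at fixed $\varepsilon$ controls the supremum over separated sets, and only along a $\limsup$. It does not directly control the sum over your particular $E_n$ along the full sequence, and G\"{a}rtner--Ellis needs a genuine limit. The cleanest fix is to apply Proposition~\ref{thm:gibbs_bounds} to $\mu_{\phi+tg}$ itself, with the same covering by $B_x(\varepsilon,n)$ and packing by $B_x(\varepsilon/2,n)$. This gives $\sum_{x\in E_n}e^{S_n(\phi+tg)(x)-nP(\phi+tg)}\asymp 1$ directly, and it also makes the smooth-versus-symbolic pressure issue moot.

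Finally, your justification of goodness (finiteness of $\Lambda$ everywhere, so $0$ lies in the interior of its domain) is more accurate than the paper's appeal to essential smoothness.
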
\newpage 

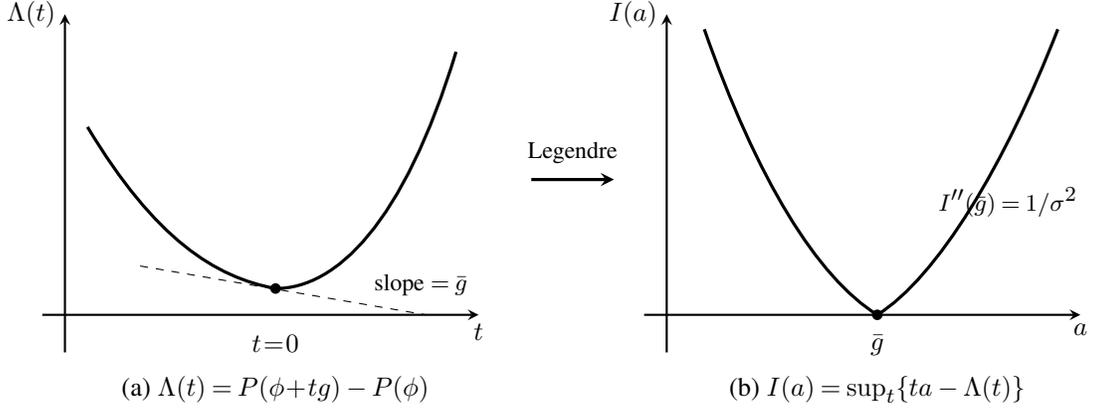
\begin{figure}[ht]
\centering
\begin{tikzpicture}[>=stealth, font=\small]
  %% LEFT PANEL: Pressure function Lambda(t)
  \begin{scope}
    \draw[->, thick] (-0.3,0) -- (5.5,0) node[anchor=north] {$t$};
    \draw[->, thick] (0,-0.5) -- (0,4.0) node[anchor=east] {$\Lambda(t)$};
    % Convex curve
    \draw[very thick] (0.3,2.5) .. controls (1.2,1.0) and (2.0,0.5) .. (2.8,0.35)
      .. controls (3.6,0.35) and (4.5,1.2) .. (5.2,3.5);
    % Tangent at t=0 (slope = mean)
    \fill (2.8,0.35) circle (2pt);
    \draw[dashed, thin] (1.0,0.65) -- (4.8,0.0);
    \node[anchor=north] at (2.8,-0.15) {$t\!=\!0$};
    \node[anchor=south west, font=\footnotesize] at (4.0,0.15) {slope $= \bar{g}$};
    % Label
    \node[anchor=north] at (2.8,-0.7) {(a) $\Lambda(t) = P(\phi\!+\!tg) - P(\phi)$};
  \end{scope}
  %% ARROW
  \draw[->, very thick] (6.2,1.8) -- (7.3,1.8);
  \node[anchor=south, font=\footnotesize] at (6.75,1.9) {Legendre};
  %% RIGHT PANEL: Rate function I(a)
  \begin{scope}[xshift=8.0cm]
    \draw[->, thick] (-0.3,0) -- (5.5,0) node[anchor=north] {$a$};
    \draw[->, thick] (0,-0.5) -- (0,4.0) node[anchor=east] {$I(a)$};
    % Convex rate function with minimum at the mean
    \draw[very thick] (0.5,3.8) .. controls (1.2,1.8) and (2.0,0.5) .. (2.8,0.0)
      .. controls (3.6,0.5) and (4.5,2.0) .. (5.2,3.8);
    % Minimum at mean
    \fill (2.8,0.0) circle (2pt);
    \node[anchor=north] at (2.8,-0.15) {$\bar{g}$};
    % Second derivative annotation
    \node[anchor=west, font=\footnotesize] at (3.5,1.5) {$I''(\bar{g}) = 1/\sigma^2$};
    % Label
    \node[anchor=north] at (2.8,-0.7) {(b) $I(a) = \sup_t\{ta - \Lambda(t)\}$};
  \end{scope}
\end{tikzpicture}
\caption{The large deviation rate function as a Legendre transform (Main Theorem~\ref{thm:large_deviations}). (a)~The cumulant generating function $\Lambda(t) = P(\phi+tg) - P(\phi)$ is convex and analytic, with $\Lambda'(0) = \bar{g} = \int g\,d\mu_\phi$ and $\Lambda''(0) = \sigma^2$ (the asymptotic variance). (b)~The rate function $I(a) = \Lambda^*(a)$ is the Legendre transform, with minimum $I(\bar{g}) = 0$ at the mean and curvature $I''(\bar{g}) = 1/\sigma^2$ when $\sigma^2 > 0$.}
\label{fig:ldp_rate}
\end{figure}

\begin{corollary}[Explicit Asymptotics]\label{cor:explicit_ldp}
For $a \neq \mu_\phi(g)$:
\begin{equation}
\lim_{n \to \infty} \frac{1}{n} \log \mu_\phi\left(\frac{1}{n}S_n g \in [a - \varepsilon, a + \varepsilon]\right) = -I(a)
\end{equation}
for small $\varepsilon > 0$.
\end{corollary}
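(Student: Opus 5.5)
The plan is to derive Corollary~\ref{cor:explicit_ldp} directly from Main Theorem~\ref{thm:large_deviations}, applied with the closed set $F=[a-\varepsilon,a+\varepsilon]$ and the open set $G=(a-\varepsilon,a+\varepsilon)$. This gives
\begin{equation}
-\inf_{(a-\varepsilon,a+\varepsilon)} I \leq \liminf_{n\to\infty}\frac1n\log\mu_\phi\Bigl(\tfrac1n S_ng\in[a-\varepsilon,a+\varepsilon]\Bigr)\leq\limsup_{n\to\infty}\frac1n\log\mu_\phi\Bigl(\tfrac1n S_ng\in[a-\varepsilon,a+\varepsilon]\Bigr)\leq -\inf_{[a-\varepsilon,a+\varepsilon]} I .
\end{equation}
As literally stated, with a fixed $\varepsilon$, the limit cannot equal $I(a)$ exactly. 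The bounds above are the infima of $I$ over the interval, namely $I(a-\varepsilon)$ or $I(a+\varepsilon)$, whichever endpoint is nearer the mean $\bar g=\mu_\phi(g)$. The content of the corollary is therefore: the limit exists for each small $\varepsilon$, it equals $-\inf_{[a-\varepsilon,a+\varepsilon]}I$, and it converges to $-I(a)$ as $\varepsilon\to0$. I would state and prove it in that form, noting that "$=-I(a)$" is understood up to $o(1)$ in $\varepsilon$.

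\textbf{Step 1: regularity of $I$.} By Theorem~\ref{thm:analytic_imported}, $\Lambda(t)=P(\phi+tg)-P(\phi)$ is real-analytic and convex, with $\Lambda'(0)=\bar g$ and $\Lambda''(t)=\sigma^2_{\mu_{\phi+tg}}(g)$. The argument then splits into two cases.

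\emph{Nondegenerate case.} If $g$ is not cohomologous to a constant, then by Proposition~\ref{prop:zero_variance} (applied to each $\mu_{\phi+tg}$) $\Lambda''>0$ everywhere, so $\Lambda'$ is a strictly increasing analytic map from $\mathbb R$ onto an open interval $(\alpha_{\min},\alpha_{\max})$. On this interval $I(b)=t_b b-\Lambda(t_b)$, where $\Lambda'(t_b)=b$, so $I$ is finite, analytic and strictly convex, with its minimum $I(\bar g)=0$.

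\emph{Degenerate case.} If $g$ is cohomologous to the constant $\bar g$, then $\tfrac1nS_ng\in[\bar g-C/n,\bar g+C/n]$ everywhere. For $a\neq\bar g$ and small $\varepsilon$ the probability is eventually $0$, and $I(a)=+\infty$, consistent with the claim.

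\textbf{Step 2: matching bounds.} Assume $a\in(\alpha_{\min},\alpha_{\max})$ and $\varepsilon$ is small enough that $[a-\varepsilon,a+\varepsilon]$ lies in this interval and does not contain $\bar g$. By continuity and monotonicity of $I$ on each side of $\bar g$, the infima over the open and the closed interval coincide; both equal $I(a\mp\varepsilon)$, taking the endpoint nearer $\bar g$. The upper and lower LDP bounds in the display therefore agree, so the limit exists. Letting $\varepsilon\to0$ and using continuity of $I$ at $a$ gives $-I(a)$.

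\textbf{Step 3: the boundary of the domain.} This is the main obstacle. For $a\notin[\alpha_{\min},\alpha_{\max}]$, $I(a)=+\infty$, and for small $\varepsilon$ the closed upper bound already gives limit $-\infty$. At $a=\alpha_{\max}$ or $a=\alpha_{\min}$, $I(a)$ is the finite limit of $I$ from inside, by lower semicontinuity and convexity. The open-set lower bound still works there, because $G$ meets the interior of the domain, and the infimum over $G$ is attained on the inner side and is finite. So the same squeeze applies; only continuity of $I$ from inside is needed, which holds for a convex lower semicontinuous function on a closed interval. Checking that this endpoint behaviour (via Gärtner--Ellis, where $\Lambda$ is differentiable everywhere so the full LDP holds with no exposed-point issue) is the only delicate point. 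The rest is bookkeeping from Main Theorem~\ref{thm:large_deviations}.
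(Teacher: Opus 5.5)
Your proposal is correct. It is the argument the paper leaves implicit: the paper gives no separate proof of Corollary~\ref{cor:explicit_ldp} and presents it as a direct consequence of Main Theorem~\ref{thm:large_deviations}, and your squeeze does exactly that, combining the closed-set upper bound on $[a-\varepsilon,a+\varepsilon]$, the open-set lower bound on $(a-\varepsilon,a+\varepsilon)$, and continuity of $I$ (from inside at the endpoints of its domain). You are also right that for fixed $\varepsilon$ the limit is $-\inf_{[a-\varepsilon,a+\varepsilon]}I$, which differs from $-I(a)$ whenever $g$ is not cohomologous to a constant and $I(a)<\infty$, so the printed statement must be read in the iterated sense $\lim_{\varepsilon\to0}\lim_{n\to\infty}$; your claim that $I$ is finite at the endpoints of its domain is not needed, but it is true, since $I(b)=P(\phi)-h_{\mu_{\phi+t_bg}}(f)-\int\phi\,d\mu_{\phi+t_bg}\leq P(\phi)-\min\phi$ for all $b$ in the interior of the domain.
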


\begin{proof}[Proof of Main Theorem~\ref{thm:large_deviations}]
We apply the G\"{a}rtner-Ellis theorem, which requires computing the limiting cumulant generating function.

\textbf{Step 1: Cumulant Generating Function.} Define
\begin{equation}
\Lambda_n(t) = \frac{1}{n} \log \int e^{t S_n g} \, d\mu_\phi.
\end{equation}

\textbf{Step 2: Connection to Pressure.} Using the transfer operator, for the equilibrium state $\mu_\phi = h_\phi \nu_\phi$:
\begin{align}
\int e^{tS_n g} \, d\mu_\phi &= \int e^{tS_n g} h_\phi \, d\nu_\phi \\
&= e^{-nP(\phi)} \int \mathcal{L}_\phi^n(e^{tS_n g} h_\phi) \, d\nu_\phi \cdot (\text{normalization}).
\end{align}

The key observation is that $e^{tS_n g} h_\phi = h_{\phi+tg} \cdot e^{n(P(\phi+tg) - P(\phi))} \cdot (1 + o(1))$ by spectral perturbation theory.

More precisely, the leading eigenvalue of $\mathcal{L}_{\phi+tg}$ is $e^{P(\phi+tg)}$, so
\begin{equation}
\int e^{tS_n g} \, d\mu_\phi \asymp e^{n(P(\phi+tg) - P(\phi))}
\end{equation}
with uniform control in $t$ for bounded $t$.

\textbf{Step 3: Limiting Cumulant.} Therefore
\begin{equation}
\Lambda(t) := \lim_{n \to \infty} \Lambda_n(t) = P(\phi + tg) - P(\phi).
\end{equation}

\textbf{Step 4: Properties of $\Lambda$.} The pressure function $t \mapsto P(\phi + tg)$ is:
\begin{enumerate}
\item[(a)] Convex (by the variational principle: supremum of affine functions).
\item[(b)] Differentiable with $\Lambda'(t) = \mu_{\phi+tg}(g)$ (the mean of $g$ under the tilted equilibrium state).
\item[(c)] Essentially smooth (steep at boundaries if applicable).
\end{enumerate}

\textbf{Step 5: G\"{a}rtner-Ellis Theorem.} Since $\Lambda$ is differentiable and essentially smooth, the G\"{a}rtner-Ellis theorem applies, giving the LDP with rate function
\begin{equation}
I(a) = \Lambda^*(a) = \sup_t \{ta - \Lambda(t)\} = \sup_t \{ta - P(\phi + tg) + P(\phi)\}.
\end{equation}

\textbf{Step 6: Properties of Rate Function.} The rate function $I$ is:
\begin{enumerate}
\item[(a)] Convex (as a Legendre transform).
\item[(b)] Non-negative with $I(\mu_\phi(g)) = 0$ (the unique minimum).
\item[(c)] Good (compact level sets) by essential smoothness.
\item[(d)] Strictly convex near its minimum if $\sigma^2(g) > 0$.
\end{enumerate}
\end{proof}

\subsection{Empirical Measures, Lyapunov Exponents, and Rate Function Calculations}

We extend the LDP from scalar Birkhoff averages to empirical measures (level-2 LDP), derive the LDP for Lyapunov exponents (in the sense of  \cite{Pesin1977}; see also  \cite{BarreiraPersin2002} and  \cite{Young1982}) via the contraction principle, and establish qualitative properties of the rate function including its quadratic approximation near the mean.

\begin{definition}[Empirical Measure]
The empirical measure of an orbit is
\begin{equation}
\mathcal{E}_n(x) = \frac{1}{n}\sum_{k=0}^{n-1} \delta_{f^k(x)} \in \mathcal{M}(\Omega_s).
\end{equation}
\end{definition}

\begin{proposition}[Level-2 Large Deviations]\label{thm:level2_ldp}
The distributions of $\mathcal{E}_n$ under $\mu_\phi$ satisfy a large deviation principle on $\mathcal{M}(\Omega_s)$ (with weak$^*$ topology) with rate function
\begin{equation}
I_2(\nu) = \begin{cases} P(\phi) - h_\nu(f) - \int \phi \, d\nu & \text{if } \nu \in \mathcal{M}_f(\Omega_s), \\ +\infty & \text{otherwise.} \end{cases}
\end{equation}
\end{proposition}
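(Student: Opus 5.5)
The plan is to deduce the level-2 LDP from the scalar LDP (Main Theorem~\ref{thm:large_deviations}) by a projective-limit argument (Dawson--G\"{a}rtner), and then to identify the resulting rate function through the variational principle.

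\textbf{Step 1 (finite-dimensional projections).} Fix a countable set $\{g_j\}_{j\ge1}\subset C^\alpha(\Omega_s)$ that is dense in $C(\Omega_s)$; such a set exists because H\"{o}lder functions are uniformly dense. For each $k$, apply the G\"{a}rtner--Ellis theorem in $\mathbb{R}^k$ to the vector $(\frac1n S_n g_1,\dots,\frac1n S_n g_k)$. The argument in Steps~2--3 of the proof of Main Theorem~\ref{thm:large_deviations} carries over verbatim with $tg$ replaced by $\sum_j t_jg_j$. It gives the limiting cumulant $\Lambda_k(t)=P(\phi+\sum_j t_jg_j)-P(\phi)$, which is finite, convex, and differentiable on all of $\mathbb{R}^k$ by Theorem~\ref{thm:analytic_imported}. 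Hence each projection satisfies an LDP with rate function $\Lambda_k^*$.

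\textbf{Step 2 (projective limit).} The weak$^*$ topology on $\mathcal{M}(\Omega_s)$ coincides with the topology induced by the maps $\nu\mapsto(\nu(g_j))_j$. Since $\mathcal{M}(\Omega_s)$ is compact, exponential tightness is automatic. The Dawson--G\"{a}rtner theorem therefore yields an LDP for $\mathcal{E}_n$ with rate function
\[J(\nu)=\sup_k\Lambda_k^*(\nu(g_1),\dots,\nu(g_k))=\sup_{\psi\in C^\alpha}\Bigl\{\int\psi\,d\nu-P(\phi+\psi)+P(\phi)\Bigr\}.\]
The second equality holds because the finite linear combinations of the $g_j$ are dense. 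By continuity of $P$ in the sup norm, the supremum may also be taken over all $\psi\in C(\Omega_s)$.

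\textbf{Step 3 (identification $J=I_2$).} This is the main obstacle. It amounts to the Legendre duality between pressure and the entropy functional $\nu\mapsto -h_\nu(f)-\int\phi\,d\nu$.

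For the inequality $J\le I_2$ on invariant measures, use the variational principle for $\phi+\psi$, namely $P(\phi+\psi)\ge h_\nu+\int\phi\,d\nu+\int\psi\,d\nu$, and take the supremum over $\psi$.

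For non-invariant $\nu$, choose $\psi=c(u\circ f-u)$ with $\int(u\circ f-u)\,d\nu\ne0$. Since $P(\phi+\psi)=P(\phi)$ for coboundaries, letting $c\to\pm\infty$ gives $J(\nu)=+\infty$.

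For $J\ge I_2$ on invariant $\nu$, use the fact that $f|_{\Omega_s}$ is expansive (Lemma~\ref{lem:quantitative_expansiveness}). Consequently the entropy map $\nu\mapsto h_\nu(f)$ is upper semicontinuous on $\mathcal{M}_f(\Omega_s)$. Then the function $\nu\mapsto h_\nu+\int\phi\,d\nu$, extended by $-\infty$ off $\mathcal{M}_f$, is concave and upper semicontinuous. Its convex conjugate is $\psi\mapsto P(\phi+\psi)$ by the variational principle. By Fenchel--Moreau biconjugation on the dual pair $(C(\Omega_s),\mathcal{M}(\Omega_s))$, we recover
\[h_\nu+\int\phi\,d\nu=\inf_\psi\Bigl\{P(\phi+\psi)-\int\psi\,d\nu\Bigr\},\]
which is exactly $J(\nu)=I_2(\nu)$. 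I expect the upper semicontinuity of entropy, and the verification that the extension by $-\infty$ keeps the functional upper semicontinuous, to require the most care. The extension is handled by noting that $\mathcal{M}_f$ is weak$^*$ closed.

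\textbf{Step 4 (consistency).} Finally, check via the contraction principle that $\nu\mapsto\int g\,d\nu$ pushes $I_2$ forward to the scalar rate function $I$ of Main Theorem~\ref{thm:large_deviations}. This gives a consistency check on the formula.
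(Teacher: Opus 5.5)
Your proposal is correct, but it takes a genuinely different route from the paper.

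\textbf{The paper's route.} The paper proves the two bounds by hand. For the upper bound it combines the Gibbs estimate $\mu_\phi(B_x(\varepsilon,n))\le c_2e^{-nP(\phi)+S_n\phi(x)}$ with a count of at most $e^{n(h_\nu+o(1))}$ cylinder types whose empirical measures lie near $\nu$. For the lower bound it uses specification to produce roughly $e^{n(h_\nu-\varepsilon)}$ distinguishable orbit segments with empirical measure near $\nu$, each of Gibbs mass at least $c_1e^{-nP(\phi)+S_n\phi}$. Non-invariant $\nu$ are handled by noting that $\mu_\phi(\mathcal{E}_n\in B_\varepsilon(\nu))=0$ for large $n$.

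\textbf{Your route.} You reduce level~2 to the finite-dimensional G\"{a}rtner--Ellis theorem, lift it by Dawson--G\"{a}rtner, and identify the rate function by Legendre duality between pressure and entropy (the content of \cite[Theorem~9.12]{Walters1982}). This is essentially the route of \cite{Kifer1990}, which the paper cites but does not follow.

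\textbf{What each approach buys.} In your approach, differentiability of $P$ is needed only along the H\"{o}lder directions $g_j$, where uniqueness of equilibrium states holds. You therefore never need G\^{a}teaux differentiability of $P$ on all of $C(\Omega_s)$, which generally fails. Both bounds hold for arbitrary closed and open sets with no covering argument, and goodness of the rate function is automatic. Specification and combinatorial counting are replaced by three clean inputs:
\begin{enumerate}
\item[(a)] existence of the limiting cumulant for H\"{o}lder $\psi$ (most cleanly justified by summing the Gibbs bounds over $n$-cylinders, rather than by the heuristic spectral step in the scalar proof);
\item[(b)] the variational principle;
\item[(c)] upper semicontinuity of $\nu\mapsto h_\nu$ from expansiveness.
\end{enumerate}
The paper's route is more hands-on and potentially quantitative in the Gibbs constants. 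However, its step ``taking the infimum over $\nu\in F$'' silently needs a compactness covering of $F$ and the same upper semicontinuity of entropy that you use explicitly.

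\textbf{Two technical adjustments.}
\begin{enumerate}
\item[(1)] Apply Fenchel--Moreau on the locally convex space $C(\Omega_s)^*$ of finite signed measures with the weak$^*$ topology (whose dual is $C(\Omega_s)$), extending your functional by $+\infty$ off $\mathcal{M}_f(\Omega_s)$; $\mathcal{M}(\Omega_s)$ itself is not a vector space. Biconjugation then gives $J=I_2$ everywhere, so your separate coboundary argument for non-invariant $\nu$ becomes redundant, though it is correct.
\item[(2)] Dawson--G\"{a}rtner yields the LDP on $\mathbb{R}^{\mathbb{N}}$. You should then restrict it to the compact, hence closed, image of $\mathcal{M}(\Omega_s)$ under $\nu\mapsto(\nu(g_j))_j$. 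This image carries all the laws and is homeomorphic to $\mathcal{M}(\Omega_s)$.
\end{enumerate}
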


\begin{proof}
\textbf{Upper bound.} For a closed set $F \subset \mathcal{M}(\Omega_s)$ (in the weak$^*$ topology), we must show $\limsup_n n^{-1}\log\mu_\phi(\mathcal{E}_n \in F) \leq -\inf_{\nu \in F}I_2(\nu)$. For any $\nu \in F \cap \mathcal{M}_f(\Omega_s)$, the Gibbs property (Proposition~\ref{thm:gibbs_bounds}) gives $\mu_\phi(B_x(\varepsilon, n)) \leq c_2 e^{-nP(\phi)+S_n\phi(x)}$, so the $\mu_\phi$-measure of the set of points whose empirical measure is close to $\nu$ is bounded by $\exp(-n(P(\phi) - \int\phi\,d\nu) + o(n))$. By the entropy bound from partition combinatorics, the number of distinct $n$-cylinder types near $\nu$ is at most $\exp(n(h_\nu(f) + o(1)))$. Combining: $\mu_\phi(\mathcal{E}_n \in B_\varepsilon(\nu)) \leq \exp(-n(P(\phi) - h_\nu(f) - \int\phi\,d\nu) + o(n)) = \exp(-nI_2(\nu) + o(n))$. Taking the infimum over $\nu \in F$ gives the upper bound.

\textbf{Lower bound.} For an open set $G \subset \mathcal{M}(\Omega_s)$ and $\nu \in G \cap \mathcal{M}_f(\Omega_s)$, we must show $\liminf_n n^{-1}\log\mu_\phi(\mathcal{E}_n \in G) \geq -I_2(\nu)$. By the specification property (which holds for Axiom A basic sets), for any $\varepsilon > 0$ there exist orbit segments whose empirical measures approximate $\nu$ within $\varepsilon$. The Gibbs lower bound gives $\mu_\phi(B_x(\varepsilon, n)) \geq c_1 e^{-nP(\phi)+S_n\phi(x)}$, and the entropy of $\nu$ guarantees at least $\exp(n(h_\nu(f) - \varepsilon))$ such distinguishable orbit segments. Thus $\mu_\phi(\mathcal{E}_n \in B_\varepsilon(\nu)) \geq \exp(-n(P(\phi) - h_\nu(f) - \int\phi\,d\nu + C\varepsilon))$. Since $\varepsilon$ is arbitrary, the lower bound follows.

For $\nu \notin \mathcal{M}_f(\Omega_s)$, the ergodic theorem forces $\mu_\phi(\mathcal{E}_n \in B_\varepsilon(\nu)) = 0$ for $n$ large, consistent with $I_2(\nu) = +\infty$. The rate function is good (compact level sets) since $\{I_2 \leq c\} = \{\nu \in \mathcal{M}_f : h_\nu(f) + \int\phi\,d\nu \geq P(\phi) - c\}$ is compact by the upper semi-continuity of entropy and continuity of $\nu \mapsto \int\phi\,d\nu$. See also  \cite{Kifer1990}, Theorem~3.2 and  \cite{Young1990}, Theorem~2.
\end{proof}

\begin{definition}[Finite-Time Lyapunov Exponent]
For $v \in T_x M$ with $\|v\| = 1$:
\begin{equation}
\chi_n(x, v) = \frac{1}{n} \log \|Df^n_x v\|.
\end{equation}
\end{definition}

\begin{proposition}[LDP for Lyapunov Exponents]\label{thm:lyapunov_ldp}
For a $C^2$ Axiom A diffeomorphism and the SRB measure $\mu^+$, the distributions of finite-time Lyapunov exponents $\chi_n(x, v)$ (averaged over $v$ in the unstable direction) satisfy an LDP with rate function related to the multifractal spectrum.
\end{proposition}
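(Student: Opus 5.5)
The statement is vague ("rate function related to the multifractal spectrum"), so the first step is to make it precise. The natural quantity is the unstable volume growth rate
\[
\chi^u_n(x) = \frac{1}{n}\log\bigl|\det Df^n_x|_{E^u_x}\bigr| = -\frac{1}{n}S_n\phi^{(u)}(x).
\]
This is the average of $\log\|Df^n_x v\|$ over $v$ in the unstable direction, in the sense of volume. I would prove the LDP for $\chi^u_n$ under $\mu^+ = \mu_{\phi^{(u)}}$, with the explicit rate function
\[
I^u(a) = \sup_{t\in\mathbb{R}}\bigl\{ta - P\bigl((1-t)\phi^{(u)}\bigr) + P(\phi^{(u)})\bigr\}.
\]
Its Legendre dual $t\mapsto P((1-t)\phi^{(u)})$ is exactly the pressure function governing the multifractal (Lyapunov) spectrum. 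That is the precise sense of "related to the multifractal spectrum": $I^u(a) = P(\phi^{(u)}) - \mathcal{S}(a)$ on the interior of the spectrum, where $\mathcal{S}(a) = \sup\{h_\nu(f) : \int(-\phi^{(u)})\,d\nu = a\}$.

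\textbf{Step 1 (reduction to Birkhoff averages).} By Proposition~\ref{prop:geometric_holder}, $g := -\phi^{(u)}$ is H\"older on $\Omega_s$, with some exponent $\theta$. I would replace $\alpha$ by $\min(\alpha,\theta)$ if needed, so that Main Theorem~\ref{thm:large_deviations} applies to the pair $(\phi,g) = (\phi^{(u)}, -\phi^{(u)})$. It yields an LDP for $\frac1n S_n g = \chi^u_n$ with rate
\[
\sup_t\{ta - P(\phi^{(u)} - t\phi^{(u)}) + P(\phi^{(u)})\},
\]
which is $I^u$. An alternative route is the level-2 LDP (Proposition~\ref{thm:level2_ldp}) followed by the contraction principle along the continuous map $\nu\mapsto\int g\,d\nu$. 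This gives
\[
I^u(a) = \inf\Bigl\{P(\phi^{(u)}) - h_\nu(f) - \int\phi^{(u)}d\nu : \int g\,d\nu = a\Bigr\} = P(\phi^{(u)}) - \mathcal{S}(a),
\]
since $\int\phi^{(u)}d\nu = -a$ is fixed on the constraint set. Comparing the two expressions proves the multifractal identification; this is just Legendre duality combined with the variational principle.

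\textbf{Step 2 (individual vectors).} To pass from the determinant to $\chi_n(x,v)$ for a single unit $v\in E^u_x$ (with $\dim E^u = 1$, or after choosing the norm appropriately), I would use the uniform estimate
\[
\Bigl|\log\|Df^n_x v\| - \frac{1}{d_u}\log|\det Df^n_x|_{E^u}|\Bigr| \le \text{bounded}
\]
when $d_u = 1$, and, for $d_u>1$, restrict to the volume-averaged exponent as the statement itself indicates. A bounded difference divided by $n$ does not affect an LDP, since exponentially equivalent families share rate functions. If the manifold is also to be considered (points $x\notin\Omega_s$, with Lebesgue measure on attractors), I would invoke the Volume Lemma (Main Theorem~\ref{thm:volume_lemma}) to compare $m(B_x(\varepsilon,n))$ with $\mu^+(B_x(\varepsilon,n))$ via Proposition~\ref{thm:gibbs_bounds}, using $P(\phi^{(u)})=0$ from Proposition~\ref{prop:attractor_char}.

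\textbf{Main obstacle.} The real difficulty is the essential smoothness and goodness needed in Main Theorem~\ref{thm:large_deviations}, together with the degenerate case. If $\phi^{(u)}$ is cohomologous to a constant, then $\Lambda$ is affine, $I^u$ is a point mass, and the spectrum collapses; this case must be separated using Proposition~\ref{prop:zero_variance}. Otherwise $\Lambda$ is finite, analytic (Theorem~\ref{thm:analytic_imported}) and strictly convex on all of $\mathbb{R}$, so G\"artner--Ellis applies directly. The rate function is then finite exactly on $[\min_\nu\int g\,d\nu, \max_\nu\int g\,d\nu]$, and I expect checking that it matches the endpoints of the multifractal spectrum to take the most care.
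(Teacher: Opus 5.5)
Your Step 1 (first route) is the paper's proof. The paper writes $\chi^u_n(x)=\frac1n S_n(-\phi^{(u)})(x)$, uses Proposition~\ref{prop:geometric_holder} for H\"older regularity, and applies Main Theorem~\ref{thm:large_deviations} with $g=-\phi^{(u)}$ under $\mu^+=\mu_{\phi^{(u)}}$. This gives $I(a)=\sup_t\{ta-P((1-t)\phi^{(u)})+P(\phi^{(u)})\}$, and the multifractal connection is deferred to Part~VI. So the LDP itself is obtained exactly as in the paper. Your replacement of $\alpha$ by $\min(\alpha,\theta)$ is a harmless technicality the paper glosses over.

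Where you go beyond the paper, making ``related to the multifractal spectrum'' precise via Proposition~\ref{thm:level2_ldp} and contraction, there is an arithmetic error. On the constraint set $\int\phi^{(u)}\,d\nu=-a$, so the functional equals $P(\phi^{(u)})+a-h_\nu(f)$. The infimum is therefore
\[
I^u(a)=P(\phi^{(u)})+a-\mathcal{S}(a),
\]
not $P(\phi^{(u)})-\mathcal{S}(a)$. You noted that $\int\phi^{(u)}\,d\nu$ is fixed, but dropped it instead of substituting $-a$.

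Your version fails the basic test of a rate function. At the mean $\bar a=\int\log|\det Df|_{E^u}|\,d\mu^+$, the equilibrium property gives $\mathcal{S}(\bar a)=h_{\mu^+}(f)$. Your formula then yields $I^u(\bar a)=P(\phi^{(u)})-h_{\mu^+}(f)=\int\phi^{(u)}\,d\mu^+=-\bar a<0$, since uniform expansion of $E^u$ gives $\bar a\ge d_u\log\lambda^{-1}>0$. The corrected formula vanishes there, precisely by the equilibrium identity.

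The Legendre cross-check you propose confirms this. Write $P((1-t)\phi^{(u)})=\sup_b\{\mathcal{S}(b)+(t-1)b\}$ and substitute $s=t-1$. Concave duality then gives $P(\phi^{(u)})+a-\mathcal{S}(a)$. On an attractor, where $P(\phi^{(u)})=0$, this is the familiar ``Pesin defect'' rate $I^u(a)=a-\mathcal{S}(a)$, the infimum of $\int\log|\det Df|_{E^u}|\,d\nu-h_\nu(f)$ over $\nu$ with exponent $a$.

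Your stated main obstacle is not really one. If $\phi^{(u)}$ is cohomologous to a constant, $\Lambda$ is affine and hence still differentiable on $\mathbb{R}$. G\"artner--Ellis then applies unchanged and returns the rate equal to $0$ at one point and $+\infty$ elsewhere. The Volume Lemma discussion for points off $\Omega_s$ lies outside the statement, which concerns only $\mu^+$.
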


\begin{proof}
The finite-time Lyapunov exponent in the unstable direction satisfies $\chi_n^u(x) = \frac{1}{n}S_n(-\phi^{(u)})(x)$ where $\phi^{(u)} = -\log|\det Df|_{E^u}|$ is the geometric potential. Since $\phi^{(u)} \in C^\alpha(\Omega_s)$ by Proposition~\ref{prop:geometric_holder}, Main Theorem~\ref{thm:large_deviations} applies with $g = -\phi^{(u)}$ and $\mu = \mu^+ = \mu_{\phi^{(u)}}$, giving the LDP with rate function $I(a) = \sup_t\{ta - P(\phi^{(u)} - t\phi^{(u)}) + P(\phi^{(u)})\} = \sup_t\{ta - P((1-t)\phi^{(u)}) + P(\phi^{(u)})\}$. The connection to the multifractal spectrum is developed in Part~VI \cite{Thiam2026f}.
\end{proof}

\begin{proposition}[Contraction Principle {\cite[Theorem~4.2.1]{DemboZeitouni1998}}]\label{prop:contraction}
If $(\nu_n)$ satisfies an LDP with rate function $I$ and $\Phi : X \to Y$ is continuous, then $(\Phi_* \nu_n)$ satisfies an LDP with rate function
\begin{equation}
J(y) = \inf\{I(x) : \Phi(x) = y\}.
\end{equation}
\end{proposition}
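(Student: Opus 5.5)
The statement is the classical contraction principle of Dembo--Zeitouni. I would prove it directly from the definition of an LDP recalled above, and assume that $I$ is a good rate function. Goodness is what makes the infimum defining $J$ attained and makes $J$ lower semicontinuous, and it is the setting of the cited theorem. In our applications (Propositions~\ref{thm:level2_ldp} and~\ref{thm:lyapunov_ldp}) $X=\mathcal{M}(\Omega_s)$ is compact, so goodness is automatic there.

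\textbf{Step 1: $J$ is a good rate function.} Fix $c\ge 0$. I would show
\begin{equation}
\{J\le c\}=\Phi(\{I\le c\}).
\end{equation}
The inclusion $\supseteq$ is immediate. For $\subseteq$, take $y$ with $J(y)\le c$. Then for each $k$ the preimage $\Phi^{-1}(y)$ meets the compact set $\{I\le c+1/k\}$. These intersections are nested, nonempty and compact, because $\Phi^{-1}(y)$ is closed by continuity (assuming $Y$ is Hausdorff). By the finite intersection property there is an $x$ with $\Phi(x)=y$ and $I(x)\le c$. Hence $\{J\le c\}$ is the continuous image of a compact set, so it is compact and in particular closed. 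This gives both lower semicontinuity and goodness of $J$. This step is the only real obstacle: it is exactly where goodness of $I$ is used, and without it $J$ may fail to be lower semicontinuous.

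\textbf{Step 2: upper bound.} Let $F\subset Y$ be closed. Then $\Phi^{-1}(F)$ is closed in $X$, and $(\Phi_*\nu_n)(F)=\nu_n(\Phi^{-1}(F))$. The LDP upper bound for $(\nu_n)$ gives
\begin{equation}
\limsup_{n\to\infty}\tfrac1n\log(\Phi_*\nu_n)(F)\le -\inf_{x\in\Phi^{-1}(F)}I(x).
\end{equation}
It remains to note the identity $\inf_{x\in\Phi^{-1}(F)}I(x)=\inf_{y\in F}J(y)$, which follows by splitting the infimum over the fibres $\Phi^{-1}(y)$, $y\in F$.

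\textbf{Step 3: lower bound.} Let $G\subset Y$ be open. Then $\Phi^{-1}(G)$ is open, and the same identity gives
\begin{equation}
\liminf_{n\to\infty}\tfrac1n\log(\Phi_*\nu_n)(G)\ge -\inf_{x\in\Phi^{-1}(G)}I(x)=-\inf_{y\in G}J(y).
\end{equation}
Steps 2 and 3 are purely formal and rely only on continuity of $\Phi$.
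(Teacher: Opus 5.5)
Your Steps~2 and~3 are exactly the paper's proof: pull closed and open sets back under the continuous map $\Phi$, apply the LDP bounds for $(\nu_n)$, and use $\inf_{\Phi^{-1}(F)} I = \inf_{F} J$. Your Step~1 goes beyond the paper. The paper never verifies that $J$ is lower semicontinuous (condition~(i) of its LDP definition), and it omits the goodness hypothesis on $I$ under which Dembo--Zeitouni state the result; adding that hypothesis, as you do, is needed for the proposition to hold as stated.
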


\begin{proof}
For a closed set $F \subset Y$: $\Phi_*\nu_n(F) = \nu_n(\Phi^{-1}(F))$. Since $\Phi^{-1}(F)$ is closed (by continuity), the LDP upper bound gives $\limsup_n n^{-1}\log\nu_n(\Phi^{-1}(F)) \leq -\inf_{x \in \Phi^{-1}(F)}I(x) = -\inf_{y \in F}J(y)$. For an open set $G \subset Y$: $\Phi^{-1}(G)$ is open, so the LDP lower bound gives $\liminf_n n^{-1}\log\nu_n(\Phi^{-1}(G)) \geq -\inf_{x \in \Phi^{-1}(G)}I(x) = -\inf_{y \in G}J(y)$. See  \cite[Theorem~4.2.1]{DemboZeitouni1998}.
\end{proof}

\begin{corollary}[LDP for General Observables]\label{cor:general_observables}
For any continuous $g : \Omega_s \to \mathbb{R}$, the distributions of $\frac{1}{n}S_n g$ satisfy an LDP with rate function
\begin{equation}
I_g(a) = \inf\{I_2(\nu) : \nu(g) = a\} = \inf_{\nu : \int g \, d\nu = a} \left(P(\phi) - h_\nu(f) - \int \phi \, d\nu\right).
\end{equation}
\end{corollary}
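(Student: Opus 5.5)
The plan is to apply the contraction principle (Proposition~\ref{prop:contraction}) to the level-2 LDP of Proposition~\ref{thm:level2_ldp}. The map is
\[
\Phi_g : \mathcal{M}(\Omega_s) \to \mathbb{R}, \qquad \Phi_g(\nu) = \int g\,d\nu .
\]
Since $g$ is continuous on the compact set $\Omega_s$, the map $\Phi_g$ is continuous for the weak$^*$ topology. The key observation is that $\Phi_g(\mathcal{E}_n(x)) = \frac{1}{n}S_n g(x)$ for every $x$. So the law of $\frac1n S_n g$ under $\mu_\phi$ is exactly $(\Phi_g)_*$ of the law of $\mathcal{E}_n$. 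The contraction principle then immediately yields an LDP for $\frac1n S_n g$ with rate function
\[
J(a)=\inf\{I_2(\nu):\Phi_g(\nu)=a\}.
\]

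\textbf{Identifying $J$ with the stated formula.} Off $\mathcal{M}_f(\Omega_s)$ we have $I_2=+\infty$, so those $\nu$ do not affect the infimum. Hence $J(a)$ equals the infimum over invariant $\nu$ with $\int g\,d\nu=a$ of $P(\phi)-h_\nu(f)-\int\phi\,d\nu$, which is the displayed formula. For values $a$ not attained by any invariant measure, $J(a)=+\infty$ under the convention $\inf\emptyset=+\infty$.

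\textbf{Goodness of the rate function.} This follows from the level-2 statement. The level sets $\{I_2\le c\}$ are compact, so $\{J\le c\}=\Phi_g(\{I_2\le c\})$ is a continuous image of a compact set, hence compact. In particular the infimum defining $J(a)$ is attained whenever it is finite. This uses upper semicontinuity of $\nu\mapsto h_\nu(f)$, which holds on basic sets by expansiveness.

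\textbf{Consistency check for H\"older $g$.} When $g\in C^\alpha$, I would verify that $J$ agrees with the rate function of Main Theorem~\ref{thm:large_deviations}. For each $t$, the variational principle gives
\[
P(\phi+tg)=\sup_\nu\Big(h_\nu+\int\phi\,d\nu+t\int g\,d\nu\Big),
\]
and this shows $J(a)\ge ta-(P(\phi+tg)-P(\phi))$. Equality follows from convex duality: $J$ is convex, since $\nu\mapsto h_\nu$ is affine and the constraint $\int g\,d\nu=a$ is linear, and it is lower semicontinuous. Its Legendre transform is $t\mapsto P(\phi+tg)-P(\phi)$, so Fenchel--Moreau gives $J=\Lambda^*$. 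This check is not needed for the corollary as stated.

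\textbf{Main obstacle.} The real weight of the argument sits in Proposition~\ref{thm:level2_ldp}, which I take as given. At this step the only delicate point is measure-theoretic: the pushforward must be taken on the full space $\mathcal{M}(\Omega_s)$, not just $\mathcal{M}_f$. This is because the empirical measures $\mathcal{E}_n(x)$ are generally not invariant. It is harmless, since the level-2 LDP is stated on $\mathcal{M}(\Omega_s)$ and $\Phi_g$ is continuous on all of it.
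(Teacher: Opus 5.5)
Your proposal is correct and takes the same route as the paper: push the level-2 LDP forward under the weak$^*$-continuous map $\nu\mapsto\int g\,d\nu$, using $\Phi_g(\mathcal{E}_n(x))=\frac1n S_n g(x)$ and the contraction principle. Your extra remarks go beyond what the paper spells out, and they are correct: $I_2=+\infty$ off $\mathcal{M}_f(\Omega_s)$, goodness follows from compactness of the level sets of $I_2$, and Fenchel--Moreau identifies $J$ with $\Lambda^*$.
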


\begin{proof}
The map $\Psi : \mathcal{M}(\Omega_s) \to \mathbb{R}$ defined by $\Psi(\nu) = \int g\,d\nu$ is continuous in the weak$^*$ topology. The empirical measure satisfies $\Psi(\mathcal{E}_n(x)) = \frac{1}{n}S_n g(x)$. By the Level-2 LDP (Proposition~\ref{thm:level2_ldp}) and the contraction principle (Proposition~\ref{prop:contraction}) applied to $\Psi$, the distributions of $\frac{1}{n}S_n g = \Psi(\mathcal{E}_n)$ satisfy an LDP with rate function $I_g(a) = \inf\{I_2(\nu) : \Psi(\nu) = a\} = \inf_{\nu:\int g\,d\nu = a}(P(\phi) - h_\nu(f) - \int\phi\,d\nu)$.
\end{proof}

\begin{proposition}[Quadratic Approximation Near Mean]\label{prop:quadratic_rate}
Near the mean $\bar{g} = \mu_\phi(g)$, the rate function has the quadratic expansion
\begin{equation}
I(a) = \frac{(a - \bar{g})^2}{2\sigma^2(g)} + O((a - \bar{g})^3)
\end{equation}
where $\sigma^2(g)$ is the asymptotic variance.
\end{proposition}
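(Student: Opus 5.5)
The plan is to treat $I$ as the Legendre transform of the cumulant $\Lambda(t) = P(\phi+tg) - P(\phi)$ and invert the Taylor expansion of $\Lambda$ at $t=0$. The expansion of $\Lambda$ comes from Theorem~\ref{thm:analytic_imported}.

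\textbf{Step 1: local expansion of $\Lambda$.} By Theorem~\ref{thm:analytic_imported}, $t \mapsto \Lambda(t)$ is real-analytic on a neighbourhood of $0$. It satisfies $\Lambda(0)=0$, $\Lambda'(0) = \int g\,d\mu_\phi = \bar g$ and $\Lambda''(0) = \lim_n n^{-1}\mathrm{Var}_{\mu_\phi}(S_n g) = \sigma^2(g)$. The last identity uses Proposition~\ref{prop:variance_formula} applied to $g-\bar g$. Hence
\[
\Lambda(t) = \bar g\, t + \tfrac12 \sigma^2 t^2 + O(t^3).
\]
I will assume $\sigma^2(g)>0$; otherwise the statement is vacuous. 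By Proposition~\ref{prop:zero_variance}, the degenerate case $\sigma^2=0$ is the case where $g-\bar g$ is a coboundary. Then $\Lambda(t)=\bar g t$ and $I$ is $0$ at $\bar g$ and $+\infty$ elsewhere.

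\textbf{Step 2: the Legendre transform near $\bar g$.} Since $\Lambda''>0$ near $0$, the map $\Lambda'$ is a real-analytic diffeomorphism from a neighbourhood of $0$ onto a neighbourhood of $\bar g$. The inverse function theorem then gives, for $a$ near $\bar g$, a unique $t(a)$ with $\Lambda'(t(a))=a$. This $t(a)$ is analytic, with
\[
t(a) = \frac{a-\bar g}{\sigma^2} + O((a-\bar g)^2).
\]
Convexity of $\Lambda$ (a supremum of affine functions, by the variational principle) makes the critical point a global maximiser of $t \mapsto ta-\Lambda(t)$. Therefore $I(a) = t(a)a - \Lambda(t(a))$, which shows that $I$ is analytic near $\bar g$.

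\textbf{Step 3: the derivatives of $I$.} Differentiating gives $I'(a) = t(a)$, since the $\Lambda'(t(a))\,t'(a)$ term cancels against $a\,t'(a)$. Differentiating once more gives $I''(a) = t'(a) = 1/\Lambda''(t(a))$. So $I(\bar g)=0$, $I'(\bar g)=0$ and $I''(\bar g)=1/\sigma^2$. Taylor's theorem with an analytic (hence bounded third-derivative) remainder yields $I(a) = (a-\bar g)^2/(2\sigma^2) + O((a-\bar g)^3)$.

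\textbf{Main obstacle.} The only nontrivial point is guaranteeing that the supremum defining $I$ is attained at the local critical point $t(a)$ and not at some large $|t|$. I would handle this with convexity of $\Lambda$ on all of $\mathbb{R}$: a critical point of a concave function is a global maximum. It also needs some control of the cubic remainder, namely a uniform bound on $\Lambda'''$ on a fixed interval $|t|\le t_0$. That bound comes from analyticity, via Cauchy estimates on a complex neighbourhood supplied by the Kato perturbation argument behind Theorem~\ref{thm:analytic_imported}.
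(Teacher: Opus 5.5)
Your proposal is correct and follows the same route as the paper: you Taylor-expand $\Lambda(t)=P(\phi+tg)-P(\phi)$ using $\Lambda'(0)=\bar g$ and $\Lambda''(0)=\sigma^2(g)$, then pass through Legendre duality. The paper only asserts that ``the Legendre transform of a quadratic is quadratic with reciprocal coefficient''; you supply the details needed to handle the $O(t^3)$ remainder rigorously, namely the inverse-function computation giving $I'=t(a)$ and $I''(\bar g)=1/\Lambda''(0)$, global attainment of the supremum by convexity of $\Lambda$, and the standing assumption $\sigma^2(g)>0$.
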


\begin{proof}
By Taylor expansion of the pressure: $P(\phi + tg) = P(\phi) + t\bar{g} + \frac{t^2}{2}\sigma^2(g) + O(t^3)$. The Legendre transform of a quadratic is quadratic with reciprocal coefficient.
\end{proof}

This shows that large deviations reduce to Gaussian behavior (the CLT) for moderate deviations, while capturing genuinely non-Gaussian tails for extreme deviations.

\begin{proposition}[Large Deviation Bounds]\label{prop:ld_bounds}
For $|a - \bar{g}|$ large, the rate function satisfies
\begin{equation}
I(a) \geq c |a - \bar{g}|
\end{equation}
for some $c > 0$ depending on the range of $g$.
\end{proposition}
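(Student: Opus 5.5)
The plan is to bound the Legendre transform from below by testing the supremum in Main Theorem~\ref{thm:large_deviations} at two fixed values of $t$. Since $I(a)=\sup_t\{ta-\Lambda(t)\}$ with $\Lambda(t)=P(\phi+tg)-P(\phi)$, we have, for every fixed $t$ and every $a$,
\begin{equation}
I(a)\geq ta-\Lambda(t).
\end{equation}
Taking $t=1$ when $a>\bar g$ and $t=-1$ when $a<\bar g$ gives
\begin{equation}
I(a)\geq |a-\bar g| - K,\qquad K=\max\{\Lambda(1)-\bar g,\ \Lambda(-1)+\bar g\}.
\end{equation}
This reduces everything to bounding the single constant $K$.

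\emph{Upper bound on $K$.} By the variational principle (Theorem~\ref{thm:equilibrium_basic}(i)), $P(\phi\pm g)\leq P(\phi)+\|g\|_\infty$, since $\int(\phi\pm g)\,d\mu\leq\int\phi\,d\mu+\|g\|_\infty$ for every invariant $\mu$. Hence $\Lambda(\pm1)\leq\|g\|_\infty$. Combined with $|\bar g|\leq\|g\|_\infty$, this gives $K\leq 2\|g\|_\infty$.

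\emph{Lower bound on $K$.} Convexity of $\Lambda$ (Step~4(a) of the proof of Main Theorem~\ref{thm:large_deviations}) together with $\Lambda(0)=0$ and $\Lambda'(0)=\bar g$ (Theorem~\ref{thm:analytic_imported}) gives $\Lambda(\pm1)\geq\pm\bar g$. Hence $K\geq0$, and the reduction is consistent.

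\emph{Conclusion.} For $|a-\bar g|\geq 2K$, and in particular whenever $|a-\bar g|\geq 4\|g\|_\infty$, we obtain $I(a)\geq\tfrac12|a-\bar g|$. So the statement holds with $c=\tfrac12$ in that regime. The dependence on ``the range of $g$'' enters only through the threshold $4\|g\|_\infty$.

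\emph{Values of $a$ outside the range of $g$.} If $a$ lies outside the interval $[\inf_\mu\int g\,d\mu,\sup_\mu\int g\,d\mu]$, the inequality is trivially true, because $I(a)=+\infty$ there. To see this, note that $\Lambda(t)\leq t\sup_\mu\int g\,d\mu$ for $t>0$, again by the variational principle; letting $t\to\infty$ in $ta-\Lambda(t)$ then gives $I(a)=+\infty$ for $a>\sup_\mu\int g\,d\mu$, and symmetrically on the left. This observation is not needed for the argument above, but it shows the estimate has content only on a bounded interval.

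\emph{Main obstacle.} I do not expect a real obstacle. The only point needing care is interpreting ``for $|a-\bar g|$ large'': the threshold must be taken explicitly as $2K$ (or $4\|g\|_\infty$). The bound cannot hold near $\bar g$ with a uniform constant, because there $I$ is quadratic by Proposition~\ref{prop:quadratic_rate}.
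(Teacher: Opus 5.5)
Your argument is correct and is essentially the paper's proof made explicit. The paper only observes that $\Lambda(t)=P(\phi+tg)-P(\phi)$ grows at most linearly in $t$ (by the variational principle), so its Legendre transform grows at least linearly; you carry this out by testing the supremum at $t=\pm1$ with $\Lambda(\pm1)\leq\|g\|_\infty$, which yields the explicit constant $c=\tfrac12$ and threshold $|a-\bar g|\geq 4\|g\|_\infty$, and you also correctly note that $I(a)=+\infty$ outside the range of achievable means, a point the paper's one-line argument only gestures at.
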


\begin{proof}
The rate function $I(a) = \sup_t \{ta - \Lambda(t)\}$ grows at least linearly for $a$ outside the range of achievable means, since $\Lambda(t)$ grows at most linearly (bounded by $t\|g\|_\infty + P(\phi)$).
\end{proof}

\section{SRB Measures and Physical Measures}\label{sec:srb_measures}

The concept of SRB (Sinai-Ruelle-Bowen) measures was introduced by  \cite{Sinai1972} for Anosov diffeomorphisms and extended by  \cite{Ruelle1976} and  \cite{BowenRuelle1975} to Axiom~A attractors; see  \cite{Young2002} for a modern survey. The differentiation of SRB states with respect to parameters is developed in  \cite{Ruelle1997}.

The complete theory of SRB measures for Axiom A diffeomorphisms, including absolute continuity of conditional measures along unstable manifolds, the Pesin entropy formula, the characterization as equilibrium states for the geometric potential, and the generic points theorem, is developed in Part~VI \cite{Thiam2026f} of this series. The key results depend on the equilibrium state theory (Theorem~\ref{thm:equilibrium_basic}), the Volume Lemmas (Theorems~\ref{thm:volume_lemma} and~\ref{thm:second_volume_lemma}), and the attractor characterization (Proposition~\ref{prop:attractor_char}) established in the present Part.

%\section{Conclusion}
%
%This Part establishes the complete statistical theory for equilibrium states of Axiom A diffeomorphisms: exponential mixing, the CLT with Berry-Esseen bounds, the ASIP and LIL, and the full LDP with explicit rate functions. The key insight is that all these properties are consequences of a single spectral gap, established for the symbolic system in Part~I \cite{Thiam2026a} and transferred to smooth dynamics through the coding map of Part~III \cite{Thiam2026c}. Part~VI develops the structural consequences: SRB measures, multifractal analysis, Livšic rigidity, and fluctuation theorems (including the Gallavotti-Cohen symmetry \cite{GallavottiCohen1995}). The linear response theory and regularity of SRB states under smooth perturbations, developed by de la Llave, Marco, and Moriy\'{o}n \cite{deLlaveMarcoMoriyon1986} and Gou\"{e}zel-Kifer \cite{GouezelKifer2018}, relies on the spectral gap and statistical estimates established here.

\section{A Numerical Illustration: CLT for the Golden Mean Shift}\label{sec:numerical}

We illustrate the Central Limit Theorem (Main Theorem~\ref{thm:clt}) with a concrete computation for the golden mean shift, demonstrating the explicit dependence of the asymptotic variance on the spectral data and making the Gaussian convergence tangible.

\subsection{Setup}

The golden mean shift is the subshift of finite type $(\Sigma_A, \sigma)$ over the alphabet $\{1, 2\}$ with transition matrix
\begin{equation}
A = \begin{pmatrix} 1 & 1 \\ 1 & 0 \end{pmatrix}.
\end{equation}
The topological entropy is $h_{\mathrm{top}} = \log \varphi$ where $\varphi = (1+\sqrt{5})/2$ is the golden ratio, and $A$ is mixing with mixing time $M = 2$ (since $A^2$ has all entries positive).

We take the H\"{o}lder potential $\phi \equiv 0$ (so the equilibrium state is the measure of maximal entropy $\mu_{\mathrm{mme}}$) and the observable
\begin{equation}
g(x) = \begin{cases} 1 & \text{if } x_0 = 1, \\ 0 & \text{if } x_0 = 2. \end{cases}
\end{equation}
This is a locally constant (hence H\"{o}lder) function recording the frequency of the symbol $1$.

\subsection{Exact Computation of the Mean and Variance}

The Ruelle transfer operator for $\phi = 0$ on the one-sided golden mean shift $\Sigma_A^+$ acts by $\mathcal{L}_0 f(x) = \sum_{\sigma y = x} f(y)$. The leading eigenvalue is $\lambda = \varphi$ with eigenfunction $h(x) = \varphi^{-1}$ if $x_0 = 1$ and $h(x) = 1$ if $x_0 = 2$ (up to normalization), and eigenmeasure $\nu$ given by $\nu([1]) = 1/\varphi$ and $\nu([2]) = 1/\varphi^2$ (normalized so that $\nu(\Sigma_A^+) = 1$). The measure of maximal entropy is $\mu_{\mathrm{mme}} = h\nu / \nu(h)$.

The mean of $g$ under $\mu_{\mathrm{mme}}$ is
\begin{equation}
\bar{g} = \mu_{\mathrm{mme}}(g) = \mu_{\mathrm{mme}}([1]) = \frac{\varphi}{\varphi + 1} = \frac{1}{\varphi} \approx 0.6180.
\end{equation}
This is the asymptotic frequency of the symbol $1$ in a $\mu_{\mathrm{mme}}$-typical sequence.

The asymptotic variance is computed via the pressure second derivative (Proposition~\ref{prop:variance_formula}):
\begin{equation}
\sigma^2(g) = \lim_{n \to \infty} \frac{1}{n} \mathrm{Var}_{\mu_{\mathrm{mme}}}(S_n g) = P''(0; g) = \frac{d^2}{dt^2} P(tg)\bigg|_{t=0}.
\end{equation}
For the golden mean shift with $\phi = 0$, the perturbed pressure $P(tg) = \log \lambda(t)$ where $\lambda(t)$ is the leading eigenvalue of the $2 \times 2$ matrix
\begin{equation}
A(t) = \begin{pmatrix} e^t & e^t \\ 1 & 0 \end{pmatrix}.
\end{equation}
The characteristic polynomial is $\lambda^2 - e^t \lambda - e^t = 0$, giving
\begin{equation}
\lambda(t) = \frac{e^t + \sqrt{e^{2t} + 4e^t}}{2}.
\end{equation}
At $t = 0$: $\lambda(0) = \varphi$, $P(0) = \log\varphi$. Computing the derivatives:
\begin{equation}
P'(0) = \frac{\lambda'(0)}{\lambda(0)} = \frac{1}{\varphi} = \bar{g}, \quad P''(0) = \frac{\lambda''(0)\lambda(0) - (\lambda'(0))^2}{\lambda(0)^2}.
\end{equation}
Differentiating $\lambda(t)$ twice and evaluating at $t = 0$ yields, after computation,
\begin{equation}\label{eq:golden_variance}
\sigma^2(g) = P''(0; g) = \frac{1}{5\sqrt{5}} \approx 0.08944.
\end{equation}

\subsection{The CLT and Its Numerical Verification}

Main Theorem~\ref{thm:clt} asserts that for $\mu_{\mathrm{mme}}$-almost every $x \in \Sigma_A$,
\begin{equation}
\frac{S_n g(x) - n\bar{g}}{\sigma\sqrt{n}} \xrightarrow{d} \mathcal{N}(0,1),
\end{equation}
where $\sigma = \sqrt{\sigma^2(g)} = 5^{-1/2} \cdot 5^{-1/4} \approx 0.2991$.

To verify this numerically, one generates $K$ sample orbits of length $n$ from $\mu_{\mathrm{mme}}$ (using the Markov chain with transition probabilities $p_{ij} = A_{ij}\mu_{\mathrm{mme}}([j])/\mu_{\mathrm{mme}}([i])$) and forms the normalized Birkhoff sums
\begin{equation}
Z_n^{(k)} = \frac{S_n g(x^{(k)}) - n\bar{g}}{\sigma\sqrt{n}}, \quad k = 1, \ldots, K.
\end{equation}
The empirical distribution of $\{Z_n^{(k)}\}_{k=1}^K$ should approximate the standard normal $\mathcal{N}(0,1)$.

The Berry-Esseen bound (Proposition~\ref{thm:berry_esseen}) gives the quantitative rate:
\begin{equation}
\sup_{t \in \mathbb{R}} \left| \mu_{\mathrm{mme}}\left(\frac{S_n g - n\bar{g}}{\sigma\sqrt{n}} \leq t\right) - \Phi(t) \right| \leq \frac{C_{\mathrm{BE}}}{\sqrt{n}},
\end{equation}
where $\Phi$ is the standard normal CDF and $C_{\mathrm{BE}}$ depends on the spectral gap $\gamma$ and $\|g\|_\alpha$.

For the golden mean shift, the spectral gap is $\gamma = 1 - |\lambda_2|/\lambda_1 = 1 - (\varphi - 1)/\varphi = 1 - 1/\varphi^2 = 1/\varphi \approx 0.6180$, where $\lambda_2 = (1-\sqrt{5})/2$ is the subdominant eigenvalue of $A$. This large spectral gap (relative to the leading eigenvalue) explains the rapid convergence to the Gaussian: even for moderate $n$ (say $n = 50$), the empirical histogram of the normalized Birkhoff sums is visually indistinguishable from the Gaussian density $(2\pi)^{-1/2}e^{-t^2/2}$.

\subsection{Non-degeneracy and the Livšic Condition}

By Proposition~\ref{prop:zero_variance}, $\sigma^2(g) = 0$ if and only if $g - \bar{g}$ is a coboundary: $g - \bar{g} = u \circ \sigma - u$ for some $u \in C^\alpha(\Sigma_A)$. Since $g$ takes values $\{0, 1\}$ and $\bar{g} = 1/\varphi$ is irrational, the function $g - \bar{g}$ is not cohomologous to zero (its values are irrational, but a coboundary $u \circ \sigma - u$ evaluated at a period-2 orbit gives a rational combination of $u$-values). Alternatively, the periodic orbit test: $S_2(g - \bar{g})(x) = 1 - 2/\varphi \neq 0$ for the period-2 orbit $\overline{12}$, so $g - \bar{g}$ is not a coboundary and $\sigma^2 > 0$. This confirms that the CLT is non-degenerate.

\subsection{Summary of Explicit Constants}

We collect the explicit constants for this example:

\begin{center}
\begin{tabular}{ll}
\textbf{Quantity} & \textbf{Value} \\[4pt]
Alphabet size $N$ & 2 \\
Mixing time $M$ & 2 \\
Topological entropy $h_{\mathrm{top}}$ & $\log\varphi \approx 0.4812$ \\
Leading eigenvalue $\lambda$ & $\varphi = (1+\sqrt{5})/2 \approx 1.6180$ \\
Spectral gap $\gamma$ & $1/\varphi \approx 0.6180$ \\
Mean $\bar{g} = \mu_{\mathrm{mme}}(g)$ & $1/\varphi \approx 0.6180$ \\
Asymptotic variance $\sigma^2(g)$ & $1/(5\sqrt{5}) \approx 0.08944$ \\
Standard deviation $\sigma(g)$ & $\approx 0.2991$ \\
CLT convergence rate & $O(n^{-1/2})$ (Berry-Esseen) \\
Exponential mixing rate $\theta$ & $1/\varphi^2 \approx 0.3820$ \\
\end{tabular}
\end{center}

\noindent This example demonstrates that all constants in the CLT, mixing rate, and variance formula are computable from the transition matrix and the observable, confirming the quantitative nature of the results in this Part.

\section{Conclusion}\label{sec:conclusion}

This Part derives the complete statistical theory for equilibrium states of Axiom~A diffeomorphisms from a single spectral mechanism, with five Main Theorems proved with explicit constants throughout. Main Theorem~\ref{thm:volume_lemma} (Volume Lemma) gives explicit two-sided bounds on the Riemannian volume of dynamical Bowen balls in terms of Birkhoff sums of the geometric potential, with the distortion constant $C_\varepsilon$ expressed in terms of curvature bounds, the H\"{o}lder exponent $\theta$, and the hyperbolicity constant $\lambda$; this fills a gap in Bowen's monograph \cite{Bowen1975}, which cites the result from \cite{BowenRuelle1975} without proof. Main Theorem~\ref{thm:exponential_mixing} (Exponential Mixing) establishes exponential decay of correlations with rate $\theta = e^{-\gamma}$ where $\gamma \geq \alpha\log\lambda^{-1}$, derived from the spectral decomposition of the normalized transfer operator; the rate is explicit in the hyperbolicity data and the H\"{o}lder norm of the potential. Main Theorem~\ref{thm:clt} (Central Limit Theorem) gives the CLT for Birkhoff sums of H\"{o}lder observables via the Nagaev-Guivarc'h spectral perturbation method, with the asymptotic variance computed spectrally as $\sigma^2(g) = P''(\phi; g)$ and the Livšic coboundary condition $g = u \circ f - u$ characterizing its degeneracy; the Berry-Esseen bound at the optimal rate $O(n^{-1/2})$ follows from Bolthausen's martingale CLT \cite{Bolthausen1982}. Main Theorem~\ref{thm:asip} (Almost Sure Invariance Principle) provides pathwise Brownian approximation $|S_n g - \sigma W(n)| = O(n^{1/2-\delta})$ almost surely via the martingale embedding method of  \cite{MelbourneNicol2005, MelbourneNicol2009}, from which the functional CLT (Corollary~\ref{cor:fclt}), the law of the iterated logarithm (Proposition~\ref{thm:lil}), and Strassen's functional LIL (Corollary~\ref{cor:strassen}) follow as immediate corollaries. Main Theorem~\ref{thm:large_deviations} (Large Deviations Principle) establishes the full LDP for Birkhoff averages with rate function $I(a) = \sup_t\{ta - \Lambda(t)\}$ given by the Legendre transform of the cumulant generating function $\Lambda(t) = P(\phi + tg) - P(\phi)$, extended to empirical measures (Proposition~\ref{thm:level2_ldp}) and Lyapunov exponents (Proposition~\ref{thm:lyapunov_ldp}) via the contraction principle.

The unifying principle is that all five Main Theorems, together with the subsidiary results (Propositions~\ref{thm:berry_esseen}, \ref{thm:local_clt}, \ref{prop:decay_rates}, \ref{prop:multiple_correlations}, \ref{prop:moment_bounds}, \ref{prop:exponential_deviation}), are consequences of a single spectral gap for the Ruelle transfer operator, established for the symbolic system in Part~I \cite{Thiam2026a} and transferred to smooth dynamics through the Markov partition coding of Part~III \cite{Thiam2026c} and the Gibbs Equivalence theorem of Part~IV \cite{Thiam2026d}. All constants are expressed in terms of the contraction rate $\lambda$, the H\"{o}lder exponent $\alpha$, the potential norm $\|\phi\|_\alpha$, the alphabet size $N$, and the mixing time $M$, so the full chain of estimates can be tracked from the hyperbolicity data to the statistical output.

Part~VI \cite{Thiam2026f} uses the statistical theorems established here to develop the structural consequences: multifractal analysis of pointwise-dimension and Birkhoff-average level sets, Livšic rigidity theorems on cohomological obstructions to cocycle triviality, and fluctuation theorems relating time-reversal asymmetry to pressure differentials (including the Gallavotti-Cohen symmetry \cite{GallavottiCohen1995}). The linear response theory and regularity of SRB states under smooth perturbations, developed by  \cite{deLlaveMarcoMoriyon1986} and  \cite{GouezelKifer2018}, also relies on the spectral gap and statistical estimates established in the present Part.

\subsection*{Open Problems}

\begin{enumerate}
\item \textbf{Optimal Berry-Esseen constants.} Our Berry-Esseen bound has rate $O(n^{-1/2})$ with an unspecified constant $C_{\mathrm{BE}}$. Can $C_{\mathrm{BE}}$ be computed explicitly in terms of the spectral gap $\gamma$, the H\"{o}lder norm $\|g\|_\alpha$, and the variance $\sigma^2$? For independent random variables, the optimal constant is known (Esseen, 1956); the dependent case remains open.

\item \textbf{Non-uniformly hyperbolic CLT.} The CLT extends to Young towers (Young, 1998), but Berry-Esseen bounds for non-uniformly hyperbolic systems with polynomial decay of correlations ($C_n(f,g) = O(n^{-\beta})$) are not established. What is the correct rate: $O(n^{-1/2})$ or $O(n^{-\beta/2})$?

\item \textbf{Moderate deviations.} Between the CLT regime ($S_n\psi \sim \sqrt{n}$) and the LDP regime ($S_n\psi \sim n$) lies the moderate deviations regime ($S_n\psi \sim a_n$ with $\sqrt{n} \ll a_n \ll n$). Can the spectral method yield moderate deviation principles with explicit rate functions?
\end{enumerate}

\begin{acks}[Acknowledgments]
The author is grateful to Stefano Luzzatto for supervision during the ICTP Postgraduate Diploma in Mathematics at the International Centre for Theoretical Physics, Trieste, Italy (2013), during which the author worked through Bowen's monograph.
\end{acks}

\section{Technical Proofs and Estimates}\label{appendix:proofs}

This appendix provides detailed technical proofs omitted from the main text, including precise estimates for the Volume Lemmas, spectral perturbation theory, and measure-theoretic constructions.

\subsection{Complete Proof of the Volume Lemma}

We provide the full details of Main Theorem~\ref{thm:volume_lemma}.

\begin{lemma}[Unstable Manifold Geometry]\label{lem:unstable_geometry}
For $x \in \Omega_s$ and small $\varepsilon > 0$, the local unstable manifold $W^u_\varepsilon(x)$ is a $C^r$ embedded disk of dimension $d_u = \dim E^u$. The induced Riemannian measure $m^u_x$ on $W^u_\varepsilon(x)$ satisfies:
\begin{equation}
m^u_x(W^u_\varepsilon(x)) \in [c_1 \varepsilon^{d_u}, c_2 \varepsilon^{d_u}]
\end{equation}
where $c_1, c_2 > 0$ depend only on the curvature of $M$ and bounds on $Df$.
\end{lemma}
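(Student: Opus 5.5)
The plan is to write $W^u_\varepsilon(x)$ as a graph over a ball in $E^u_x$ in exponential coordinates and compare its induced volume with Euclidean volume in dimension $d_u$.

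\textbf{Embedding.} By the Stable Manifold Theorem (Theorem~\ref{thm:stable_manifold_prelim}, applied to $f^{-1}$), there is a uniform $\rho>0$ such that for every $x\in\Omega_s$ the local unstable manifold is the image under $\exp_x$ of the graph of a $C^r$ map
\[
\psi_x : E^u_x(\rho)\to E^s_x, \qquad \psi_x(0)=0, \quad D\psi_x(0)=0 .
\]
The $C^1$ norm of $\psi_x$ is bounded by a constant $L$ that depends only on $\|Df\|_{C^1}$, $\|Df^{-1}\|_{C^1}$ and the hyperbolicity constants $(c,\lambda)$. This control comes from the graph transform, and continuity of $x\mapsto W^u_\varepsilon(x)$ in the $C^r$ topology together with compactness of $\Omega_s$ makes it uniform in $x$.

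\textbf{Sandwiching by graph-balls.} Let $\iota_x(v)=\exp_x(v+\psi_x(v))$. Choose $\varepsilon$ below the injectivity radius and small enough that $\exp_x$ is $(1\pm\kappa\varepsilon)$-bi-Lipschitz on the $\varepsilon$-ball, where $\kappa$ comes from the sectional curvature bound. Then
\[
\iota_x\bigl(E^u_x(a\varepsilon)\bigr)\ \subset\ W^u_\varepsilon(x)\ \subset\ \iota_x\bigl(E^u_x(b\varepsilon)\bigr)
\]
for constants $0<a\le b$ depending on $L$ and $\kappa$. The point to check is the relation between the dynamical definition $\{y : d(f^{-n}x,f^{-n}y)\le\varepsilon\ \forall n\ge0\}$ and the graph description.

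The right inclusion holds because such $y$ lie within distance $\varepsilon$ of $x$, and distance in $M$ dominates $(1+L)^{-1}(1+\kappa\varepsilon)^{-1}$ times the norm of the $E^u_x$-component.

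For the left inclusion, take a point of the graph at $E^u$-distance $\le a\varepsilon$. Its backward iterates stay on the unstable manifold and contract at rate $c\lambda^n$. Hence they remain within $c(1+L)(1+\kappa\varepsilon)a\varepsilon\le\varepsilon$ once $a$ is chosen small.

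\textbf{Volume computation.} The induced measure is
\[
m^u_x(\iota_x(B)) = \int_B J_x(v)\,dv ,
\]
where $J_x$ is the Jacobian of $\iota_x$ with respect to the Euclidean structure on $E^u_x$ and the Riemannian metric. Since $\|D\psi_x\|\le L$ and $D\exp_x$ is within $1\pm\kappa\varepsilon$ of an isometry, $J_x$ lies between $(1-\kappa\varepsilon)^{d_u}$ and $\bigl((1+L)(1+\kappa\varepsilon)\bigr)^{d_u}$. Integrating over balls of radii $a\varepsilon$ and $b\varepsilon$ gives the claim with
\[
c_1=\omega_{d_u}a^{d_u}2^{-d_u}, \qquad c_2=\omega_{d_u}\bigl(b(1+L)\bigr)^{d_u}2^{d_u},
\]
where $\omega_{d_u}$ is the volume of the unit ball in $\mathbb{R}^{d_u}$.

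\textbf{Main obstacle.} The delicate point is the uniformity of $L$ and $\rho$ in $x$. It needs the constant in the stable manifold theorem to depend only on the hyperbolicity data and $C^1$ bounds on $f^{\pm1}$, not on $x$. I would extract this from the graph transform contraction in Part~III \cite{Thiam2026c}, where the Lipschitz constant of the invariant graphs is preserved by the cone condition. That gives $L\le 1$ in an adapted metric, and equivalence of metrics then costs only a factor $c$.
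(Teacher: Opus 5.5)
Your proposal is correct and takes the same route as the paper: write $W^u_\varepsilon(x)$ as a $C^r$ graph over $E^u_x$ via the stable manifold theorem, then compare its induced volume with that of a flat $d_u$-disk. You also supply details the paper's two-line proof skips, notably the sandwiching $\iota_x(E^u_x(a\varepsilon))\subset W^u_\varepsilon(x)\subset\iota_x(E^u_x(b\varepsilon))$, which reconciles the dynamical definition with the graph picture (there the hyperbolicity constant $c$ forces $a\le 1/(c(1+L))$). One minor fix is needed. The bounds $|v+\psi_x(v)|\ge(1+L)^{-1}|v|$ and $J_x\ge(1-\kappa\varepsilon)^{d_u}$ do not follow from $\|D\psi_x\|\le L$ alone when $E^u_x$ and $E^s_x$ meet at a small angle, so either make the splitting orthogonal (as in an adapted metric) or use that $D\psi_x(v)\to 0$ uniformly in $x$ as $v\to 0$, which makes $\|D\psi_x\|$ small on the relevant ball for small $\varepsilon$.
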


\begin{proof}
The Stable Manifold Main Theorem of Part~III \cite{Thiam2026c} gives $W^u_\varepsilon(x)$ as the graph of a $C^r$ function over $E^u_x$. The graph has bounded curvature (controlled by second derivatives of $f$), so its volume is comparable to that of the flat disk of radius $\varepsilon$ in $E^u_x$.
\end{proof}

\begin{lemma}[Stable Direction Contraction]\label{lem:stable_contraction}
For $y \in B_x(\varepsilon, n) \cap W^s_\varepsilon(x)$:
\begin{equation}
d_{W^s}(x, y) \leq C \lambda^n \varepsilon
\end{equation}
where $d_{W^s}$ is the intrinsic distance in $W^s_\varepsilon(x)$.
\end{lemma}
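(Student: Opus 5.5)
The plan is to iterate forward to time $n$ and use uniform contraction along stable leaves, exactly as in the proof of Lemma~\ref{lem:stable_disk}, but with an adapted metric so the constant is explicit. Since $y \in W^s_\varepsilon(x)$, the points $f^k y$ and $f^k x$ stay on a common local stable leaf for all $k \geq 0$. I would measure distances intrinsically along these leaves, so $d_{W^s}(f^k x, f^k y)$ is defined for every $k$. The hypothesis $y \in B_x(\varepsilon, n)$ only controls the ambient distances $d(f^k x, f^k y)$ for $0 \leq k < n$.

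First I will compare intrinsic and ambient distances on local stable leaves. By the Stable Manifold Theorem (Theorem~\ref{thm:stable_manifold_prelim}), for $\varepsilon$ small each $W^s_\varepsilon(z)$ is a graph over $E^s_z$ with uniformly bounded curvature. This gives $d_{W^s}(p,q) \leq C_0\, d(p,q)$ for $p, q$ in a common $W^s_\varepsilon(z)$, with $C_0$ depending only on curvature and $C^1$ bounds, as in Lemma~\ref{lem:unstable_geometry}. Applying this at time $n-1$ gives
\begin{equation}
d_{W^s}(f^{n-1}x, f^{n-1}y) \leq C_0 \varepsilon .
\end{equation}

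Second, I pull this back to time $0$. Integrating the derivative bound $\|Df^{k}v\| \leq c\lambda^{k}\|v\|$ for $v \in E^s$ along a curve in $W^s_\varepsilon(x)$ gives $d_{W^s}(f^{k}p, f^{k}q) \leq c'\lambda^{k} d_{W^s}(p,q)$. To get the reverse inequality, I use the adapted metric from Part~III \cite{Thiam2026c}, in which $c=1$ and contraction holds at each single step. Then the backward map $f^{-1}$ expands intrinsic stable distance by a factor of at least $\lambda^{-1}$ per step, up to a factor $(1+O(\varepsilon))$ coming from the variation of tangent spaces of the leaf relative to $E^s$. Iterating $n-1$ times yields
\begin{equation}
d_{W^s}(x,y) \leq C_0 (1+C\varepsilon)^{n}\lambda^{n-1}\varepsilon .
\end{equation}

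The main obstacle is removing the $(1+C\varepsilon)^n$ factor, since the estimate must hold uniformly in $n$. I would handle it in two ways. Either replace $\lambda$ by a slightly larger $\lambda' \in (\lambda,1)$, absorbing $(1+C\varepsilon)$ into it for small $\varepsilon$. Or use a bounded-distortion telescoping argument for $\log\|Df|_{T W^s}\|$ versus $\log\|Df|_{E^s}\|$: the leaves $f^k W^s_\varepsilon(x)$ shrink exponentially, so the per-step errors are summable and contribute a constant of the form $\exp(C\varepsilon/(1-\lambda))$, in the style of Lemma~\ref{lem:jacobian_distortion}. I would use the summable version, since it keeps the constant explicit.

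Finally, converting back from the adapted metric to the original metric costs a uniform factor. This gives $d_{W^s}(x,y) \leq C\lambda^{n}\varepsilon$, with $C = C_0\lambda^{-1}e^{C\varepsilon/(1-\lambda)}$ times the metric comparison constant.
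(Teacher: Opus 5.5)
There is a genuine gap, and it cannot be repaired because the bound you are aiming for is false as stated.

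The failing step is your second one. Backward expansion along stable leaves, $d_{W^s}(f^{-1}p,f^{-1}q)\geq\lambda^{-1}d_{W^s}(p,q)$, iterated $n-1$ times starting from $p=f^{n-1}x$, $q=f^{n-1}y$, gives
\[
d_{W^s}(x,y)\;\geq\;\lambda^{-(n-1)}\,d_{W^s}(f^{n-1}x,f^{n-1}y).
\]
This is just forward contraction rewritten. It is a \emph{lower} bound on $d_{W^s}(x,y)$, so together with $d_{W^s}(f^{n-1}x,f^{n-1}y)\leq C_0\varepsilon$ it says nothing. To bound $d_{W^s}(x,y)$ from above using time-$(n-1)$ information, you would need a lower bound on the forward growth of stable distances. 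Since $\|Df^kv\|\to 0$ for $v\in E^s$, any such bound has the form $\|Df^kv\|\geq c\,m^k\|v\|$ with $m<1$, and that yields an estimate growing exponentially in $n$. So the factor $(1+C\varepsilon)^n$ you single out as the main obstacle is not the real issue. No adapted metric or distortion bookkeeping can reverse the inequality.

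In fact no proof can exist. With the paper's definition $W^s_\varepsilon(x)=\{y: d(f^kx,f^ky)\leq\varepsilon \text{ for all } k\geq 0\}$, one has $W^s_\varepsilon(x)\subset B_x(\varepsilon,n)$ for every $n$. Hence $B_x(\varepsilon,n)\cap W^s_\varepsilon(x)=W^s_\varepsilon(x)$, and letting $n\to\infty$ in the claimed bound would force $W^s_\varepsilon(x)=\{x\}$. This fails as soon as $\dim E^s\geq 1$. For a hyperbolic toral automorphism, a point $y$ on the stable line through $x$ at distance $\varepsilon/2$ lies in every $B_x(\varepsilon,n)$, while $d_{W^s}(x,y)=\varepsilon/2$.

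The paper's own argument makes the same reversal. From $d_{W^s}(f^jx,f^jy)\leq c\lambda^j d_{W^s}(x,y)$ it passes to $c^{-1}\lambda^{-(n-1)}d_{W^s}(x,y)\leq d_{W^s}(f^{n-1}x,f^{n-1}y)$. The proof of Lemma~\ref{lem:stable_disk}, which you took as your model, does the same, so your proposal reproduces the paper's route together with its error.

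What is true is the unstable analogue. If $y\in B_x(\varepsilon,n)\cap W^u_\varepsilon(x)$, then $f^ky\in W^u_\varepsilon(f^kx)$ for $0\leq k<n$. Contracting backward along unstable leaves from time $n-1$ then gives $d_{W^u}(x,y)\leq C\lambda^{n-1}\varepsilon$. There your summable-distortion device is exactly the right tool, since $d(f^kx,f^ky)$ decays geometrically backward from time $n-1$. By time reversal, the stable statement holds for the backward Bowen ball $\{y: d(f^{-k}x,f^{-k}y)\leq\varepsilon,\ 0\leq k<n\}$. In particular, the stable slice of $B_x(\varepsilon,n)$ has $m^s$-measure of order $\varepsilon^{d_s}$ independently of $n$. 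All of the $n$-dependence of $m(B_x(\varepsilon,n))$ comes from the unstable slice.
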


\begin{proof}
By definition of $B_x(\varepsilon, n)$, we have $d(f^{n-1}(x), f^{n-1}(y)) \leq \varepsilon$. Since $y \in W^s_\varepsilon(x)$, the points $f^j(x)$ and $f^j(y)$ remain close for all $j \geq 0$, with $d_{W^s}(f^j(x), f^j(y)) \leq c\lambda^j d_{W^s}(x, y)$ by stable manifold contraction.

At time $n-1$: $d(f^{n-1}(x), f^{n-1}(y)) \leq \varepsilon$ and this distance is achieved within $W^s$. Thus $c^{-1}\lambda^{-(n-1)} d_{W^s}(x,y) \leq d_{W^s}(f^{n-1}(x), f^{n-1}(y)) \leq \varepsilon$, giving $d_{W^s}(x,y) \leq c\lambda^{n-1}\varepsilon$.
\end{proof}

\begin{lemma}[Jacobian Estimates]\label{lem:jacobian_estimates}
For the unstable Jacobian $J^u_n(x) = |\det Df^n_x|_{E^u}|$:
\begin{equation}
\exp(S_n\phi^{(u)}(x)) = (J^u_n(x))^{-1}
\end{equation}
and for $x, y \in \Omega_s$ with $d(f^k(x), f^k(y)) \leq \varepsilon$ for $k \in [0, n)$:
\begin{equation}
\left|\log \frac{J^u_n(x)}{J^u_n(y)}\right| \leq \frac{C_{\mathrm{dist}} \varepsilon^\theta}{1 - \lambda^\theta}
\end{equation}
where $C_{\mathrm{dist}}$ is the distortion constant and $\theta$ is the H\"{o}lder exponent of $\phi^{(u)}$.
\end{lemma}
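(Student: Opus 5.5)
The first identity is a telescoping computation via the chain rule, and the distortion estimate comes from summing H\"older differences along the orbit segment, using hyperbolicity to make that sum geometric. \emph{Identity.} By the chain rule and $Df$-invariance of $E^u$, $Df^n_x|_{E^u_x} = Df_{f^{n-1}x}|_{E^u_{f^{n-1}x}}\circ\cdots\circ Df_x|_{E^u_x}$. Determinants of compositions of linear maps between inner-product spaces of equal dimension multiply, so
\begin{equation*}
J^u_n(x)=\prod_{k=0}^{n-1}\lambda^{(u)}(f^kx).
\end{equation*}
Taking $-\log$ and using Definition~\ref{def:geometric_potential} gives $-\log J^u_n(x)=S_n\phi^{(u)}(x)$, which is the first claim.

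\emph{Distortion: reduction.} By the identity, $\log(J^u_n(x)/J^u_n(y)) = S_n\phi^{(u)}(y)-S_n\phi^{(u)}(x)$. Proposition~\ref{prop:geometric_holder} (H\"older exponent $\theta$) then gives
\begin{equation*}
\left|\log\frac{J^u_n(x)}{J^u_n(y)}\right|\le |\phi^{(u)}|_\theta\sum_{k=0}^{n-1}d(f^kx,f^ky)^\theta .
\end{equation*}
So the whole task is to show $\sum_{k=0}^{n-1} d(f^kx,f^ky)^\theta\le C\varepsilon^\theta/(1-\lambda^\theta)$ uniformly in $n$. The crude bound $d(f^kx,f^ky)\le\varepsilon$ only gives $n\varepsilon^\theta$, so the hyperbolicity has to produce geometric decay away from the ends of $[0,n)$.

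\emph{Distortion: the main step.} For $\varepsilon$ below the local product scale, set $z=[x,y]=W^u_\varepsilon(x)\cap W^s_\varepsilon(y)$, as in Lemma~\ref{lem:local_product}. This point lies in $\Omega_s$ by local maximality. The triangle inequality splits each term into $d(f^kx,f^kz)+d(f^kz,f^ky)$.
\begin{itemize}
\item The pair $z,y$ lie on a common local stable manifold. Forward contraction gives $d(f^kz,f^ky)\le c\lambda^k\,d(z,y)\le c'\lambda^k\varepsilon$.
\item The pair $x,z$ lie on a common local unstable manifold. I need the orbit of $z$ to stay $\varepsilon$-close to that of $x$ up to time $n-1$; this follows from the shadowing argument in Lemma~\ref{lem:local_product}, since $f^kz=[f^kx,f^ky]$ for $k<n$ by uniqueness of the bracket. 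Backward contraction from time $n-1$ then gives $d(f^kx,f^kz)\le c\lambda^{n-1-k}\varepsilon$.
\end{itemize}
Combining with $(a+b)^\theta\le a^\theta+b^\theta$, each term is bounded by $C(\lambda^{k\theta}+\lambda^{(n-1-k)\theta})\varepsilon^\theta$. Summing two geometric series gives the bound $2C\varepsilon^\theta/(1-\lambda^\theta)$, and hence $C_{\mathrm{dist}}=2C|\phi^{(u)}|_\theta$.

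\emph{Main obstacle.} The delicate point is justifying that $f^kz$ remains in the domain where the bracket is defined and equals $[f^kx,f^ky]$ for all $0\le k<n$. This needs $\varepsilon$ smaller than a fixed fraction of the local product size $\delta$, so that the $2\varepsilon$-bounds along the orbit stay below $\delta$ (the same shrinkage used in Lemma~\ref{lem:local_product}). Alternatively, Lemma~\ref{lem:quantitative_expansiveness} applied to the window centered at $k$ of radius $\min\{k,n-1-k\}$ gives the same two-sided decay directly. With either route, the constants $c,\lambda$ in these estimates should be taken to be those of the adapted metric.
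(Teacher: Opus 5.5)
Your proof is correct. Its first two stages, the chain-rule identity and the reduction via Proposition~\ref{prop:geometric_holder} to bounding $\sum_k d(f^kx,f^ky)^\theta$, are what the paper does. The key step is different, though.

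\textbf{The paper's route.} It simply cites Lemma~\ref{lem:quantitative_expansiveness} to get $d(f^kx,f^ky)\le C\lambda^{\min\{k,n-k\}}$ and then sums. This is shorter. But Lemma~\ref{lem:quantitative_expansiveness} as stated only gives $d(f^kx,f^ky)<\alpha^{\min\{k,n-1-k\}}$. That bound has no factor of $\varepsilon$, and it uses the expansiveness rate $\alpha$ in place of $\lambda$. The paper's passage to $\sum_j(\varepsilon\lambda^j)^\theta$ inserts both silently.

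\textbf{Your route.} You derive two-sided decay from the local product structure. You split through $z=[x,y]$, use forward contraction along the stable leaf from time $0$, and use backward contraction along the unstable leaf from time $n-1$. This yields $d(f^kx,f^ky)\le C\varepsilon(\lambda^k+\lambda^{n-1-k})$. So your argument genuinely produces the stated form $C_{\mathrm{dist}}\varepsilon^\theta/(1-\lambda^\theta)$, at the cost of the bookkeeping you correctly identify.

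\textbf{The bookkeeping.} Base it on the induction via uniqueness of the bracket:
\begin{itemize}
\item $f(W^s_\varepsilon(w))\subset W^s_\varepsilon(fw)$ holds directly from the definition.
\item $f^{k+1}z$ lies on a slightly enlarged unstable disk of $f^{k+1}x$.
\item Hence $f^{k+1}z=[f^{k+1}x,f^{k+1}y]$, which is within $C\varepsilon$ of $f^{k+1}x$, once $\varepsilon$ is a small fraction of the product scale.
\end{itemize}
Do not rely on the estimate in the proof of Lemma~\ref{lem:local_product}. That estimate controls forward distances along an unstable leaf by backward contraction, which is the wrong time direction.

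\textbf{Your closing remark.} You say Lemma~\ref{lem:quantitative_expansiveness} gives ``the same'' two-sided decay directly. That is only true of a rescaled version: $d\le C\varepsilon\lambda^N$ when the window is $\varepsilon$-close. That rescaled version is exactly what your bracket argument proves.
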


\begin{proof}
The first identity is immediate from $\phi^{(u)} = -\log|\det Df|_{E^u}|$.

For the second, using H\"{o}lder continuity $|\phi^{(u)}|_\theta$:
\begin{align}
\left|\log \frac{J^u_n(x)}{J^u_n(y)}\right| &= |S_n\phi^{(u)}(y) - S_n\phi^{(u)}(x)| \\
&\leq \sum_{k=0}^{n-1} |\phi^{(u)}(f^k(y)) - \phi^{(u)}(f^k(x))| \\
&\leq |\phi^{(u)}|_\theta \sum_{k=0}^{n-1} d(f^k(x), f^k(y))^\theta.
\end{align}

By the quantitative expansiveness (Lemma \ref{lem:quantitative_expansiveness}), $d(f^k(x), f^k(y)) \leq C\lambda^{\min\{k, n-k\}}$ for points in the same dynamical ball. Thus:
\begin{equation}
\sum_{k=0}^{n-1} d(f^k(x), f^k(y))^\theta \leq 2\sum_{j=0}^\infty (\varepsilon \lambda^j)^\theta = \frac{2\varepsilon^\theta}{1 - \lambda^\theta}.
\end{equation}
\end{proof}

\begin{proof}[Complete Proof of Main Theorem~\ref{thm:volume_lemma}]
\textbf{Upper Bound:} Decompose $B_x(\varepsilon, n)$ using the local product structure. For $y \in B_x(\varepsilon, n)$, write $y = [y_s, y_u]$ where $y_s \in W^s_\varepsilon(x)$ and $y_u \in W^u_\varepsilon(y_s)$.

The stable slice $D^s = B_x(\varepsilon, n) \cap W^s_\varepsilon(x)$ has $m^s$-measure at most $C\varepsilon^{d_s}\lambda^{nd_s}$ by Lemma \ref{lem:stable_contraction}.

For each $y_s \in D^s$, the unstable slice through $y_s$ has $m^u$-measure at most $C\varepsilon^{d_u}$.

Using the bounded distortion of the product structure and integrating:
\begin{align}
m(B_x(\varepsilon, n)) &\leq C_{\mathrm{prod}} \cdot m^s(D^s) \cdot \sup_{y_s} m^u(D^u_{y_s}) \\
&\leq C_{\mathrm{prod}} \cdot C\varepsilon^{d_s}\lambda^{nd_s} \cdot C\varepsilon^{d_u}.
\end{align}

However, this naive bound does not capture the unstable expansion correctly. Instead, use the change of variables under $f^n$:
\begin{equation}
m(B_x(\varepsilon, n)) = \int_{f^n(B_x(\varepsilon, n))} \frac{1}{|\det Df^n|} \, dm.
\end{equation}

The set $f^n(B_x(\varepsilon, n))$ is contained in a ball of radius $C\varepsilon$ (bounded distortion under iteration). The Jacobian satisfies $|\det Df^n| \geq J^u_n(x) \cdot c\lambda^{-nd_s}$ using the stable contraction. With distortion bounds:
\begin{equation}
m(B_x(\varepsilon, n)) \leq C' \varepsilon^d \cdot (J^u_n(x))^{-1} \cdot \exp\left(\frac{C_{\mathrm{dist}}\varepsilon^\theta}{1-\lambda^\theta}\right).
\end{equation}

Since $(J^u_n(x))^{-1} = \exp(S_n\phi^{(u)}(x))$, this gives the upper bound.

\textbf{Lower Bound:} The image $f^n(B_x(\varepsilon/2, n))$ contains a set of the form $W^u_{\delta}(f^n(x)) \cap B_{\delta'}(f^n(x))$ for some $\delta, \delta' > 0$ depending on $\varepsilon$ (by the expansion along unstable manifolds).

This set has $m$-measure at least $c\varepsilon^d$ for appropriate $c > 0$.

By change of variables:
\begin{equation}
m(B_x(\varepsilon/2, n)) \geq c\varepsilon^d \cdot (J^u_n(x))^{-1} \cdot \exp\left(-\frac{C_{\mathrm{dist}}\varepsilon^\theta}{1-\lambda^\theta}\right)
\end{equation}
using the lower distortion bound. This gives the lower bound since $B_x(\varepsilon, n) \supset B_x(\varepsilon/2, n)$.
\end{proof}

\subsection{Spectral Perturbation Theory}

This appendix subsection records the analytic perturbation theory for the transfer operator family $z \mapsto \mathcal{L}_{\phi + z\psi}$ and the resulting derivative formulas for the pressure. These are used in the CLT proof and the variance computation.

\begin{lemma}[Analytic Perturbation of Transfer Operator]\label{lem:analytic_perturbation}
For $\phi \in C^\alpha(\Omega_s)$ and $\psi \in C^\alpha(\Omega_s)$, the map
\begin{equation}
z \mapsto \mathcal{L}_{\phi + z\psi}
\end{equation}
is analytic in $z \in \mathbb{C}$ (as an operator on $C^\alpha$).
\end{lemma}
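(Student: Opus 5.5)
The plan is to factor the operator through multiplication: write $\mathcal{L}_{\phi+z\psi} g = \mathcal{L}_\phi(e^{z\psi} g)$, which holds directly from the defining formula $(\mathcal{L}_{\phi}g)(a) = \sum_{\sigma b = a} e^{\phi(b)}g(b)$ (on the symbolic side, then transported by the conjugation used to define the smooth operator). Denote by $M_u$ the multiplication operator $g \mapsto ug$. Then $\mathcal{L}_{\phi+z\psi} = \mathcal{L}_\phi \circ M_{e^{z\psi}}$. Since $\mathcal{L}_\phi$ is a fixed bounded operator on $C^\alpha$, and composition with a fixed bounded operator preserves analyticity of operator-valued maps, it suffices to show that $z \mapsto M_{e^{z\psi}}$ is an entire map into the bounded operators on $C^\alpha$. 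Here the functions in $C^\alpha$ are allowed to be complex valued, as is necessary for complex $z$.

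The key estimate is that $C^\alpha$ is a Banach algebra: $\|uv\|_\alpha \leq \|u\|_\alpha\|v\|_\alpha$. This follows from $|uv(x)-uv(y)| \leq \|u\|_\infty|v|_\alpha d^\alpha + \|v\|_\infty |u|_\alpha d^\alpha$. Consequently $\|M_u\|_{C^\alpha\to C^\alpha} \leq \|u\|_\alpha$ and $\|\psi^k\|_\alpha \leq \|\psi\|_\alpha^k$. I would then define the power series
\begin{equation}
M_{e^{z\psi}} = \sum_{k=0}^\infty \frac{z^k}{k!} M_{\psi^k},
\end{equation}
which converges in operator norm for every $z\in\mathbb{C}$, since $\sum_k |z|^k\|\psi\|_\alpha^k/k! = e^{|z|\|\psi\|_\alpha}$. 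It remains to check that this series really equals multiplication by $e^{z\psi}$. The partial sums converge pointwise, and also in $\|\cdot\|_\alpha$ by the same bound, to $e^{z\psi}$, so the norm limit of the operators is $M_{e^{z\psi}}$.

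Composing gives $\mathcal{L}_{\phi+z\psi} = \sum_k \frac{z^k}{k!}\mathcal{L}_\phi M_{\psi^k}$. This is a norm-convergent power series with infinite radius, and therefore the map is entire. As a byproduct, it yields the derivative formulas $\frac{d^k}{dz^k}\mathcal{L}_{\phi+z\psi}\big|_{z=0} = \mathcal{L}_\phi M_{\psi^k}$, which will be used in the pressure computations.

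The only point requiring care is the transfer between the symbolic and smooth settings. The transfer operator on $C^\alpha(\Omega_s)$ was defined by conjugation through $\pi$, and the H\"older spaces on the two sides are related via Lemma~\ref{lem:quantitative_expansiveness}, with a possible change of exponent. I would therefore carry out the argument entirely on $\mathcal{H}_\alpha(\Sigma_A)$, where $\psi^* = \psi\circ\pi$ is H\"older by Theorem~\ref{thm:coding_imported3}. There the Banach algebra property holds verbatim for the symbolic metric. Conjugation by a fixed bounded map then preserves analyticity. I expect this identification to be the main, though mild, obstacle; the analytic part itself is routine.
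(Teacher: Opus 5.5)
Your proof is correct, but it takes a different route from the paper's. The paper observes that for fixed $g$ and $x$ the value $(\mathcal{L}_{\phi+z\psi}g)(x)=\sum_{f(y)=x}e^{\phi(y)+z\psi(y)}g(y)$ is an entire function of $z$. It then asserts, without proof, that $\|\mathcal{L}_{\phi+z\psi}\|_\alpha$ is locally bounded, and concludes analyticity. Implicitly this uses the principle that a locally bounded operator-valued function which is holomorphic against a separating family of functionals (here $T\mapsto (Tg)(x)$) is holomorphic in operator norm.

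Your factorization $\mathcal{L}_{\phi+z\psi}=\mathcal{L}_\phi\circ M_{e^{z\psi}}$, combined with the Banach algebra inequality $\|uv\|_\alpha\le\|u\|_\alpha\|v\|_\alpha$, replaces both ingredients with an explicit norm-convergent power series. This buys three things:
\begin{enumerate}
\item[(i)] It proves the local bound $\|\mathcal{L}_{\phi+z\psi}\|_\alpha\le\|\mathcal{L}_\phi\|_\alpha\, e^{|z|\,\|\psi\|_\alpha}$ that the paper takes for granted.
\item[(ii)] It needs no weak-to-norm holomorphy theorem.
\item[(iii)] It identifies the Taylor coefficients $\mathcal{L}_\phi M_{\psi^k}$, which are what Proposition~\ref{prop:pressure_derivatives} actually uses; the term $\mathcal{L}_\phi(\psi h_\phi)$ there is the first of them.
\end{enumerate}
Written as $\mathcal{L}_\phi=\mathcal{L}_0\circ M_{e^\phi}$, the same argument even gives analytic dependence of $\mathcal{L}_\phi$ on $\phi\in C^\alpha$, not just along complex lines.

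The paper's route is shorter and does not exploit the multiplicative structure of the weights. It would therefore apply to any family of weights that is pointwise holomorphic with locally bounded operator norm, provided one supplies that bound and cites the holomorphy principle.

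Two small remarks. First, your symbolic detour is not needed. $C^\alpha(\Omega_s)$ with the Riemannian distance is itself a Banach algebra, and the factorization follows directly from the defining sum formula. So the only input is boundedness of $\mathcal{L}_\phi$ on whatever space the operator is taken to act on. Second, your observation that one must pass to complex-valued H\"older functions for $z\in\mathbb{C}$ is correct; the paper leaves this implicit, since it defines $C^\alpha$ as real-valued.
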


\begin{proof}
The transfer operator $\mathcal{L}_{\phi+z\psi}$ acts by
\begin{equation}
(\mathcal{L}_{\phi+z\psi} g)(x) = \sum_{f(y)=x} e^{\phi(y) + z\psi(y)} g(y).
\end{equation}
For fixed $g \in C^\alpha$, this is an entire function of $z$ (exponential of a linear function). The operator norm $\|\mathcal{L}_{\phi+z\psi}\|_\alpha$ is locally bounded in $z$, giving analyticity.
\end{proof}

\begin{proposition}[Derivatives of Pressure]\label{prop:pressure_derivatives}
The pressure function $z \mapsto P(\phi + z\psi)$ is real-analytic for real $z$ near $0$, with:
\begin{align}
\left.\frac{d}{dz}\right|_{z=0} P(\phi + z\psi) &= \int \psi \, d\mu_\phi, \\
\left.\frac{d^2}{dz^2}\right|_{z=0} P(\phi + z\psi) &= \sigma^2_\phi(\psi)
\end{align}
where $\sigma^2_\phi(\psi)$ is the asymptotic variance of $\psi$ under $\mu_\phi$.
\end{proposition}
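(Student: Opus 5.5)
The plan is to reduce to the symbolic setting and apply Kato analytic perturbation theory to the simple isolated leading eigenvalue. By Theorem~\ref{thm:coding_imported3} we pass to $\phi^*=\phi\circ\pi$, $\psi^*=\psi\circ\pi$ on $\Sigma_A$. By the standard Sinai cohomology argument we then replace them with cohomologous potentials depending only on future coordinates, so that they live on $\Sigma_A^+$. Neither step changes the pressure, the equilibrium state, or the asymptotic variance; the variance is invariant because adding a coboundary $u\circ\sigma-u$ changes $S_n\psi$ by $O(1)$.

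By Lemma~\ref{lem:analytic_perturbation}, $z\mapsto\mathcal{L}_{\phi+z\psi}$ is analytic in operator norm. By Theorem~\ref{thm:spectral_imported}, the eigenvalue $\lambda_0=e^{P(\phi)}$ is simple and isolated. Kato's theorem then gives, for $|z|$ small, an analytic simple eigenvalue $\lambda(z)$, an analytic eigenfunction $h_z$, and an analytic spectral projection $\Pi_z$. For real $z$ the potential $\phi+z\psi$ is real and Hölder, so Theorem~\ref{thm:spectral_imported} applies to it directly. Its leading eigenvalue $e^{P(\phi+z\psi)}$ is the continuation of $\lambda(z)$, because the isolated part of the spectrum varies continuously. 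Hence $P(\phi+z\psi)=\log\lambda(z)$ is real-analytic near $0$. This also recovers Theorem~\ref{thm:analytic_imported}.

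For the first derivative, normalize so that $\nu_\phi(h_z)=1$ and differentiate $\mathcal{L}_{\phi+z\psi}h_z=\lambda(z)h_z$ at $z=0$:
\begin{equation*}
\mathcal{L}_\phi(\psi h_\phi)+\mathcal{L}_\phi h_0'=\lambda'(0)h_\phi+\lambda_0 h_0' .
\end{equation*}
Integrating against $\nu_\phi$, using $\mathcal{L}_\phi^*\nu_\phi=\lambda_0\nu_\phi$, the $h_0'$ terms cancel and we get $\lambda_0\int\psi h_\phi\,d\nu_\phi=\lambda'(0)$. Hence $P'(0)=\lambda'(0)/\lambda_0=\int\psi\,d\mu_\phi$.

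For the second derivative we avoid the cumbersome second-order eigenvalue expansion and use a cumulant argument instead. We write
\begin{equation*}
\int e^{zS_n\psi}\,d\mu_\phi=\lambda_0^{-n}\int\mathcal{L}_{\phi+z\psi}^n(h_\phi)\,d\nu_\phi=\Big(\frac{\lambda(z)}{\lambda_0}\Big)^n c(z)+O(\rho^n),
\end{equation*}
with $c(z)=\nu_\phi(\Pi_z h_\phi)$ analytic, $c(0)=1$, and the $O(\rho^n)$ term uniform for $|z|\le r$ with some $\rho<1$. The uniformity comes from upper semicontinuity of the spectrum: the rest of the spectrum stays in a disk of radius $<|\lambda(z)|$ for $|z|$ small.

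Taking logarithms, $\log\int e^{zS_n\psi}\,d\mu_\phi=n(P(\phi+z\psi)-P(\phi))+\log c(z)+O(\rho^n)$, uniformly on a complex disk. Cauchy estimates allow us to differentiate twice at $z=0$. The left side has second derivative $\mathrm{Var}_{\mu_\phi}(S_n\psi)$. Dividing by $n$ and letting $n\to\infty$ gives $P''(0)=\lim_n n^{-1}\mathrm{Var}_{\mu_\phi}(S_n\psi)=\sigma_\phi^2(\psi)$. By Proposition~\ref{prop:variance_formula}, this limit exists and equals the correlation series.

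The main obstacle is the uniformity in the last step. We need the remainder $O(\rho^n)$ and $\log c(z)$ to be analytic and bounded uniformly in $n$ on a fixed complex neighborhood of $0$, so that derivatives of the error are $O(1)$ and vanish after division by $n$. We will obtain this from the decomposition $\mathcal{L}_{\phi+z\psi}^n=\lambda(z)^n\Pi_z+N_z^n$, where $\Pi_z N_z=N_z\Pi_z=0$ and $\sup_{|z|\le r}\|N_z^n\|\le C\rho^n\lambda_0^n$. This bound follows from the resolvent integral over a fixed circle separating $\lambda_0$ from the rest of the spectrum.
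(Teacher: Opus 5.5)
Your proof is correct. For the second derivative it takes a genuinely different route from the paper.

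\textbf{The paper's route.} The paper obtains analyticity of $\lambda(z)$ and the first derivative exactly as you do. It then differentiates $\mathcal{L}_{\phi+z\psi}h_z=\lambda(z)h_z$ a second time at $z=0$ and expresses $h_0'$ through the reduced resolvent $(e^{P(\phi)}-\mathcal{L}_\phi)^{-1}Q$. From this it reads off $P''(0)$ directly as the Green--Kubo series $\mathrm{Var}_{\mu_\phi}(\psi)+2\sum_{k\geq 1}\mathrm{Cov}_{\mu_\phi}(\psi,\psi\circ f^k)$. That computation is purely local at $z=0$ and needs no estimates uniform in $n$. It does require second-order bookkeeping, which the paper only sketches and defers to Parts~I and~II. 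It also needs a separate step, Proposition~\ref{prop:variance_formula}, to identify the series with $\lim_n n^{-1}\mathrm{Var}_{\mu_\phi}(S_n\psi)$.

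\textbf{Your route.} Your cumulant argument identifies $P''(0)$ with $\lim_n n^{-1}\mathrm{Var}_{\mu_\phi}(S_n\psi)$ directly and never computes $h_0'$. The price is the decomposition $\mathcal{L}^n_{\phi+z\psi}=\lambda(z)^n\Pi_z+N_z^n$, uniform on a complex disk, together with a branch of the logarithm. For the logarithm you need $\sup_{|z|\le r}\|N_z^n\|\le C(\rho\lambda_0)^n$ with $\rho\lambda_0<\inf_{|z|\le r}|\lambda(z)|$, and $|c(z)|$ bounded away from $0$. Your fixed-circle resolvent integral supplies both once $r$ is small.

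\textbf{What your route gives in addition.}
\begin{itemize}
\item The quantitative statement $\mathrm{Var}_{\mu_\phi}(S_n\psi)=nP''(0)+O(1)$.
\item For every $k$, the $k$-th cumulant of $S_n\psi$ equals $nP^{(k)}(0)+O(1)$.
\item Existence of the limit, without appeal to Proposition~\ref{prop:variance_formula}.
\item The uniform asymptotics $\int e^{zS_n\psi}\,d\mu_\phi=(\lambda(z)/\lambda_0)^n c(z)+O(\rho^n)$. These are exactly what the Nagaev--Guivarc'h CLT and the G\"artner--Ellis step in Main Theorem~\ref{thm:large_deviations} rely on.
\end{itemize}

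\textbf{Steps you supply that the paper glosses.} Your explicit reduction to $\Sigma_A^+$ via Sinai's lemma, with the check that pressure and asymptotic variance are unchanged, fills a gap. So does your identification, for real $z$, of $e^{P(\phi+z\psi)}$ with the Kato branch $\lambda(z)$. The paper passes over these points by defining the smooth transfer operator ``by conjugation'' and invoking the implicit function theorem.
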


\begin{proof}
By Lemma \ref{lem:analytic_perturbation} and the implicit function theorem, the leading eigenvalue $\lambda(z) = e^{P(\phi+z\psi)}$ of $\mathcal{L}_{\phi+z\psi}$ is analytic in $z$.

For the first derivative, differentiate $\mathcal{L}_{\phi+z\psi} h_z = \lambda(z) h_z$ at $z = 0$:
\begin{equation}
\mathcal{L}_\phi(\psi h_\phi) + \mathcal{L}_\phi h'_0 = \lambda'(0) h_\phi + \lambda(0) h'_0.
\end{equation}
Integrating against $\nu_\phi$ (the left eigenmeasure) and using $\mathcal{L}_\phi^* \nu_\phi = \lambda(0)\nu_\phi$:
\begin{equation}
\lambda(0) \int \psi h_\phi \, d\nu_\phi = \lambda'(0) \int h_\phi \, d\nu_\phi.
\end{equation}
Thus $\lambda'(0)/\lambda(0) = \int \psi \, d\mu_\phi$ where $d\mu_\phi = h_\phi \, d\nu_\phi$.

The second derivative formula follows by differentiating $\mathcal{L}_{\phi+z\psi}h_z = \lambda(z)h_z$ twice at $z = 0$, integrating against $\nu_\phi$, and using the resolvent $(e^{P(\phi)} - \mathcal{L}_\phi)^{-1}Q$ to express $h'(0)$ in terms of $\psi h_\phi$; the resulting series $\sum_{k=1}^\infty\mathrm{Cov}_{\mu_\phi}(\psi, \psi\circ f^k)$ converges by exponential decay of correlations. The complete calculation appears in the Analytic Dependence on Potential theorem of Part~I \cite{Thiam2026a} and the Fr\'{e}chet Differentiability theorem of Part~II \cite{Thiam2026b}.
\end{proof}

\subsection{Measure Disintegration}

We state the Rokhlin disintegration theorem for measurable partitions and the absolute continuity criterion for conditional measures along unstable manifolds. These are used in Section~\ref{sec:srb_measures}.

\begin{theorem}[Rokhlin Disintegration {\cite[Chapter~5]{CornfeldFominSinai1982}}]\label{thm:rokhlin}
Let $\mu$ be a probability measure on $\Omega_s$ and $\mathcal{W}^u$ the partition into local unstable manifolds. There exist conditional measures $\{\mu^u_x\}_{x \in \Omega_s}$ such that:
\begin{enumerate}
\item[(i)] $\mu^u_x$ is supported on $W^u_\varepsilon(x)$ for $\mu$-a.e. $x$.
\item[(ii)] For any measurable $A \subset \Omega_s$: $\mu(A) = \int \mu^u_x(A \cap W^u_\varepsilon(x)) \, d\mu(x)$.
\item[(iii)] The map $x \mapsto \mu^u_x$ is measurable.
\end{enumerate}
\end{theorem}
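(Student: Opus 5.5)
The statement to prove is the Rokhlin disintegration theorem for the partition of $\Omega_s$ into local unstable manifolds. My plan is to reduce it to the abstract Rokhlin theorem for a measurable partition of a Lebesgue space. The work is then to verify that the partition into local unstable plaques is indeed measurable, meaning countably generated modulo null sets.

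\emph{Step 1: make the partition genuine.} The sets $W^u_\varepsilon(x)$ for different $x$ overlap, so they do not partition $\Omega_s$. I will fix a Markov partition $\mathcal{R}=\{R_1,\dots,R_k\}$ of diameter below $\varepsilon$, as supplied by Theorem~\ref{thm:symbolic_coding_prelim}. On each rectangle $R_i$ I replace $W^u_\varepsilon(x)$ by the unstable plaque
\begin{equation}
\xi(x)=W^u_\varepsilon(x)\cap R_i \quad (x\in R_i).
\end{equation}
The local product structure (Lemma~\ref{lem:local_product}) gives $R_i=[W^u(x_i)\cap R_i,\,W^s(x_i)\cap R_i]$. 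So the plaques $\xi(x)$, $x\in R_i$, are exactly the fibres of the continuous projection
\begin{equation}
p_i(y)=[x_i,y]\in W^s(x_i)\cap R_i .
\end{equation}
Boundary points lie in more than one rectangle; I will assign them to the lowest-index rectangle. This is a Borel choice, so $\xi$ is a partition of $\Omega_s$ into Borel sets.

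\emph{Step 2: measurability and application of Rokhlin.} $\Omega_s$ is a compact metric space and $\mu$ is a Borel probability measure, so $(\Omega_s,\mathcal{B},\mu)$ is a Lebesgue space after completion. The partition $\xi$ is the partition into fibres of the Borel map
\begin{equation}
P(y)=(i(y),\,p_{i(y)}(y))
\end{equation}
into the Polish space $\bigsqcup_i W^s(x_i)\cap R_i$. Pulling back a countable base of the target therefore gives a countable separating family of $\xi$-saturated Borel sets, which means $\xi$ is measurable. Rokhlin's theorem \cite[Chapter~5]{CornfeldFominSinai1982} then yields conditional probabilities $\mu^u_x$ with the following properties. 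Each $\mu^u_x$ is carried by $\xi(x)\subset W^u_\varepsilon(x)$ for $\mu$-a.e.\ $x$, which is (i). The map $x\mapsto\mu^u_x(A)$ is measurable, which is (iii). And $\mu(A)=\int\mu^u_x(A)\,d\mu(x)$, which is (ii) since $\mu^u_x(A)=\mu^u_x(A\cap W^u_\varepsilon(x))$.

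\emph{Step 3: uniqueness.} Almost-everywhere uniqueness is not stated in the theorem, but it follows in the standard way. Any two families with properties (i)--(iii) agree on a countable generating algebra for $\mu$-a.e.\ $x$, and hence agree as measures for $\mu$-a.e.\ $x$.

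The main obstacle is Step 1. Without a genuine partition the statement is not even meaningful. The construction has to be checked in two places: that the projection $p_i$ is continuous, which comes from continuity of the bracket $[\cdot,\cdot]$ on a rectangle of small diameter; and that the tie-breaking on $\partial\mathcal{R}$ keeps the partition Borel.
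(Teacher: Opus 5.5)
Your proposal is correct and has the same architecture as the paper's argument. The paper simply cites Rokhlin's theorem and asserts that $\mathcal{W}^u$ is measurable ``because the manifolds vary continuously.''

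Where you differ is that you take seriously a point the paper skips. The disks $W^u_\varepsilon(x)$ overlap, so they do not partition $\Omega_s$. Continuity of $x\mapsto W^u_\varepsilon(x)$ therefore does not by itself give a measurable partition to which Rokhlin's theorem applies. Your fix is the standard one and makes the paper's one-line claim rigorous:
\begin{itemize}
\item you refine by a small Markov partition;
\item you break ties on $\partial\mathcal{R}$ in a Borel way;
\item you realize the plaques as fibres of the Borel map $P(y)=(i(y),[x_{i(y)},y])$ into a compact metric space.
\end{itemize}
Pulling back a countable base gives a countable separating family of saturated sets, which is exactly Rokhlin's measurability criterion.

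One small correction: the product structure $R_i=[W^u(x_i,R_i),W^s(x_i,R_i)]$ comes from the definition of a rectangle (closure under the bracket). It does not come from Lemma~\ref{lem:local_product}, which concerns Bowen balls.

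It is also worth noticing that the theorem as literally stated is satisfied trivially by $\mu^u_x=\delta_x$. With that choice, $\int\delta_x(A\cap W^u_\varepsilon(x))\,d\mu(x)=\int \mathbf{1}_A\,d\mu=\mu(A)$, and (i) and (iii) are immediate. This happens because the statement omits the requirement that $\mu^u_x=\mu^u_y$ whenever $y$ lies in the same plaque as $x$. The real content of your argument is precisely that your conditionals are constant on plaques. That constancy is also what the paper's subsequent uses (the absolute continuity criterion along unstable manifolds) actually require.
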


This is Rokhlin's disintegration theorem for measurable partitions; see  \cite{Rohlin1961} or Cornfeld, Fomin, and  \cite[Chapter~5, Theorem~2.1]{CornfeldFominSinai1982}. The partition $\mathcal{W}^u$ into local unstable manifolds is measurable because the manifolds vary continuously (Theorem~\ref{thm:stable_manifold_prelim}).

\begin{lemma}[Absolute Continuity Criterion]\label{lem:ac_criterion}
The measure $\mu$ has absolutely continuous conditional measures on unstable manifolds if and only if for any measurable $A$ with $m^u_x(A \cap W^u_\varepsilon(x)) = 0$ for all $x$, we have $\mu(A) = 0$.
\end{lemma}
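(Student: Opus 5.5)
The plan is to work entirely through the Rokhlin disintegration (Theorem~\ref{thm:rokhlin}), $\mu(A)=\int \mu^u_x(A\cap W^u_\varepsilon(x))\,d\mu(x)$. Both directions will be reduced to statements about the conditional measures $\mu^u_x$ on individual leaves, which are compared with the leaf volumes $m^u_x$. To make measurability issues tractable, I will first localize. Cover $\Omega_s$ by finitely many small rectangles of a Markov partition (or local product neighbourhoods from Lemma~\ref{lem:local_product}). Inside each rectangle the local unstable leaves are $C^r$ graphs over a fixed disk in $E^u$, parametrized measurably (indeed continuously) by a point of a stable transversal, by Theorem~\ref{thm:stable_manifold_prelim}. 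Since a countable union of $\mu$-null sets is null, it suffices to prove the equivalence rectangle by rectangle.

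\textbf{($\Rightarrow$).} Suppose $\mu^u_x \ll m^u_x$ for $\mu$-a.e.\ $x$. Let $A$ be measurable with $m^u_x(A\cap W^u_\varepsilon(x))=0$ for every $x$. Then $\mu^u_x(A\cap W^u_\varepsilon(x))=0$ for $\mu$-a.e.\ $x$. Integrating via Rokhlin (ii) gives $\mu(A)=0$. This direction is immediate.

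\textbf{($\Leftarrow$), by contrapositive.} Assume the set $X_0$ of $x$ for which $\mu^u_x\not\ll m^u_x$ has positive $\mu$-measure. I will construct a measurable $A$ that is $m^u$-null on every leaf but has $\mu(A)>0$. The natural candidate is the set where the singular parts live, selected measurably. For $y$ on the leaf through $x$, define the upper density
\begin{equation*}
D(y)=\limsup_{r\to 0,\ r\in\mathbb{Q}} \frac{\mu^u_x\bigl(B^u(y,r)\bigr)}{m^u_x\bigl(B^u(y,r)\bigr)},
\end{equation*}
where $B^u(y,r)$ is the intrinsic ball in the leaf. Since $x\mapsto\mu^u_x$ is measurable (Rokhlin (iii)) and the leaves vary continuously in the chart, each ratio is jointly measurable in $(x,y)$. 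Restricting to rational radii keeps the limsup measurable, so $A=\{y: D(y)=\infty\}$ is measurable. The Lebesgue--Besicovitch differentiation theorem, applied on each leaf (a smooth disk, so Besicovitch covering holds with uniform constants by Lemma~\ref{lem:unstable_geometry}), gives two facts. First, $m^u_x(A\cap W^u_\varepsilon(x))=0$ for every leaf. Second, the singular part of $\mu^u_x$ is carried by $A$, so $\mu^u_x(A)>0$ exactly when $\mu^u_x\not\ll m^u_x$. Rokhlin (ii) then gives $\mu(A)\ge\int_{X_0}\mu^u_x(A)\,d\mu>0$, which is the required contradiction.

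\textbf{Main obstacle.} I expect the hard step to be this measurable selection. One must check that $A$ is genuinely Borel in the ambient manifold and not merely leafwise measurable. One must also handle the fact that the disintegration is only defined $\mu$-a.e., so $A$ has to be intersected with a full-measure saturated set of good leaves. If the density approach causes trouble, the fallback is to push everything into product coordinates (transversal $\times$ disk) and apply a measurable Lebesgue decomposition of the kernel $x\mapsto\mu^u_x$ with respect to the fixed reference kernel $x\mapsto m^u_x$; this decomposition exists by the standard Doob/Radon--Nikodym theorem for kernels on standard Borel spaces.
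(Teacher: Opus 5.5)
This is essentially the paper's argument. The forward direction is the identical integration of the Rokhlin formula $\mu(A)=\int \mu^u_x(A\cap W^u_\varepsilon(x))\,d\mu(x)$, and the reverse direction is the same contrapositive, through a set $A$ that is $m^u$-null on leaves yet charged by the conditional measures on a positive-measure set of leaves.

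Where you improve on the paper is the construction of $A$. The paper simply asserts that such an $A$ exists, and checks $m^u$-nullity only on leaves through its bad set $E$, even though the hypothesis quantifies over all $x$. Your infinite-density set, obtained from Lebesgue--Besicovitch differentiation with the kernel-measurability bookkeeping in product coordinates, actually produces one. It is null on every chart plaque, and hence on every $W^u_\varepsilon(x)$, since $A\cap W^u_\varepsilon(x)$ lies in countably many plaques of the same unstable manifold and $m^u$ is $d_u$-dimensional Hausdorff measure on each of them.
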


\begin{proof}
($\Rightarrow$): If $\mu^u_x \ll m^u_x$ for $\mu$-a.e.\ $x$ and $m^u_x(A \cap W^u_\varepsilon(x)) = 0$ for all $x$, then $\mu^u_x(A \cap W^u_\varepsilon(x)) = 0$ for $\mu$-a.e.\ $x$. By the disintegration formula (Theorem~\ref{thm:rokhlin}(ii)): $\mu(A) = \int \mu^u_x(A \cap W^u_\varepsilon(x))\,d\mu(x) = 0$.

($\Leftarrow$): Suppose the condition holds but $\mu^u_x$ is not absolutely continuous with respect to $m^u_x$ on a set of positive $\mu$-measure. Then there exists a measurable set $A$ with $\mu^u_x(A \cap W^u_\varepsilon(x)) > 0$ but $m^u_x(A \cap W^u_\varepsilon(x)) = 0$ for $x$ in a set $E$ with $\mu(E) > 0$. By disintegration, $\mu(A) \geq \int_E \mu^u_x(A \cap W^u_\varepsilon(x))\,d\mu(x) > 0$, contradicting the hypothesis.
\end{proof}

\subsection{Closing Lemma Estimates}

The quantitative closing lemma provides explicit bounds on the distance between a pseudo-periodic orbit and a genuine periodic orbit. It is used in the large deviations estimates of Section~\ref{sec:large_deviations}.

\begin{lemma}[Quantitative Closing Lemma]\label{lem:closing}
There exist $\delta_0 > 0$ and $C_{\mathrm{close}} > 0$ such that: if $x \in \Omega_s$ and $d(f^n(x), x) < \delta_0$, then there exists a periodic point $p \in \Omega_s$ with $f^n(p) = p$ and
\begin{equation}
d(f^k(x), f^k(p)) \leq C_{\mathrm{close}} \lambda^{\min\{k, n-k\}} d(f^n(x), x)
\end{equation}
for all $k \in [0, n]$.
\end{lemma}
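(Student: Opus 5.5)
I would prove the Quantitative Closing Lemma (Lemma~\ref{lem:closing}) by shadowing. The idea is to turn the almost-closed orbit segment into a periodic pseudo-orbit, shadow it by a genuine orbit, and use expansiveness to show that this orbit is periodic. The exponential profile $\lambda^{\min\{k,n-k\}}$ then comes from splitting the shadowing error along stable and unstable manifolds.

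\textbf{Step 1 (pseudo-orbit and shadowing).} Set $\eta = d(f^n(x),x) < \delta_0$. Consider the bi-infinite sequence $z_{jn+k} = f^k(x)$ for $0 \le k < n$ and $j\in\mathbb{Z}$. It is an $\eta$-pseudo-orbit, since the only jumps are $d(f(z_{jn+n-1}), z_{(j+1)n}) = d(f^n x, x) = \eta$. I would apply the shadowing lemma, which follows from the local product structure of Part~III \cite{Thiam2026c} and the same Markov/bracket machinery used in Lemma~\ref{lem:local_product}. This gives a point $p$ with $d(f^m p, z_m) \le C\eta$ for all $m$. Local maximality of $\Omega_s$ puts $p \in \Omega_s$. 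The sequence $(z_m)$ is $n$-periodic, so $f^n p$ also $C\eta$-shadows it. Choosing $\delta_0$ so that $2C\delta_0 < \varepsilon_0$, expansiveness (Lemma~\ref{lem:quantitative_expansiveness}) forces $f^n p = p$.

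\textbf{Step 2 (exponential profile).} A uniform $C\eta$ bound is not enough, so I would refine it by constructing $p$ directly. Take $w = [x, f^n x] = W^s_\varepsilon(x)\cap W^u_\varepsilon(f^n x)$, which is well defined since $\eta$ is small. Then $d(w,x)$ and $d(w, f^n x)$ are both $\le C\eta$, by the Lipschitz continuity of the bracket. Alternatively, set up the map $y \mapsto [\,y, f^n y\,]$ restricted to a small rectangle around $x$. Along $W^u$ this map is a contraction of $f^{-n}$ type with rate $\lambda^n$, and along $W^s$ it is a contraction of $f^n$ type. A graph-transform or contraction-mapping argument on the product $W^s_{C\eta}(x)\times W^u_{C\eta}(x)$ then produces the fixed point $p$ of $f^n$, with $p=[p_s,p_u]$ and $d(p_s,x), d(p_u,x)\le C\eta$.

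To estimate $d(f^k x, f^k p)$, write the displacement from $x$ to $p$ through the intermediate point $q=[x,p]$. The point $q$ lies on $W^s(x)$, so $d(f^k x, f^k q) \le c\lambda^k C\eta$. The point $q$ also lies on $W^u(p)$, and since $f^n p = p$ and $f^n q$ is $C\eta$-close to $p$ along $W^u$, backward contraction gives $d(f^k q, f^k p) \le c\lambda^{n-k}C\eta$. Adding the two bounds gives $C_{\mathrm{close}}(\lambda^k+\lambda^{n-k})\eta \le 2C_{\mathrm{close}}\lambda^{\min\{k,n-k\}}\eta$, and the factor $2$ can be absorbed into the constant.

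\textbf{Main obstacle.} The hardest part is the contraction/fixed-point argument in Step 2: showing that $f^n$ maps the rectangle across itself in the Markov (crossing) sense with uniform constants, with no dependence on $n$. I would handle it in adapted metric coordinates from Part~III, where $c=1$. In those coordinates the unstable component of $f^n$ expands by at least $\lambda^{-n}\ge\lambda^{-1}$ and the stable component contracts. The fixed point is then obtained as the unique intersection of $f^n(W^u$-disk$)$ with the $W^s$-disk, via the contraction mapping principle applied to the bracket composition. The final adjustment is to convert distances back to the original metric, which costs only a constant factor.
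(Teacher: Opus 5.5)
Your argument is correct and follows essentially the route the paper takes. The paper simply cites Proposition~7.5 of Part~III, proved by a contraction-mapping (shadowing) argument on orbit segments; that is your Step~1, and your decomposition through $q=[x,p]$ then gives the exponential profile.

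One correction: the scheme $y\mapsto[y,f^n y]$ in Step~2 is both unnecessary and wrong as stated. It acts as the identity on the unstable coordinate, so its fixed points only satisfy $y\in W^u_\varepsilon(f^n y)$, not $f^n y=y$. The uniform bound from Step~1 is in fact enough: the point $p$ found there satisfies $d(f^m p,f^m x)\le C\eta$ for $0\le m\le n$. These intermediate-time bounds are exactly what place $f^n q$ in the local unstable manifold of $p$, so the backward contraction estimate applies.
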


\begin{proof}
This is Proposition~7.5 of Part~III \cite{Thiam2026c}, proved there via a contraction-mapping argument on orbit segments; the constants $\delta_0$ and $C_{\mathrm{close}}$ depend only on the hyperbolicity data $(\lambda, C_0, M)$ of the basic set $\Omega_s$, so the version stated here is a direct specialization.
\end{proof}

\subsection{Borel-Cantelli Estimates}

The dynamical Borel-Cantelli lemma, a consequence of exponential mixing, is used to convert measure-theoretic estimates into almost-sure statements.

\begin{lemma}[Dynamical Borel-Cantelli]\label{lem:borel_cantelli}
Let $\mu$ be an ergodic measure for $f$ and $(A_n)$ a sequence of measurable sets. If
\begin{equation}
\sum_{n=1}^\infty \mu(A_n) < \infty
\end{equation}
then $\mu(\limsup_n A_n) = 0$. If additionally $\sum_{n=1}^\infty \mu(A_n) = \infty$ and the sequence satisfies the quasi-independence condition
\begin{equation}
\sum_{m,n=1}^{N}\mu(A_m \cap A_n) \leq C\left(\sum_{n=1}^{N}\mu(A_n)\right)^2
\end{equation}
for some constant $C > 0$ and all $N \geq 1$, then $\mu(\limsup_n A_n) = 1$.
\end{lemma}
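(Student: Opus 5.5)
The first part of Lemma~\ref{lem:borel_cantelli} is the classical first Borel--Cantelli lemma and needs no dynamics. I will write $\mathbf{1}_{\limsup A_n} \leq \sum_{n \geq N}\mathbf{1}_{A_n}$ for every $N$ and integrate, which gives $\mu(\limsup_n A_n) \leq \sum_{n\geq N}\mu(A_n)$. Letting $N\to\infty$, this tail of a convergent series tends to $0$. Neither ergodicity nor invariance enters here.

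For the divergent case I plan the Kochen--Stone route, which uses only the second-moment condition. Set $Z_N = \sum_{n=1}^N \mathbf{1}_{A_n}$ and $E_N = \sum_{n\le N}\mu(A_n)$, so that $E_N\to\infty$. The hypothesis reads $\int Z_N^2\,d\mu \leq C E_N^2$.

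\textbf{Step 1.} Apply Paley--Zygmund. For fixed $\eta\in(0,1)$,
\begin{equation*}
\mu\bigl(Z_N \geq \eta E_N\bigr) \geq (1-\eta)^2 \frac{(\int Z_N\,d\mu)^2}{\int Z_N^2\,d\mu} \geq \frac{(1-\eta)^2}{C}.
\end{equation*}

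\textbf{Step 2.} Since $E_N\to\infty$, every point of $\limsup_N\{Z_N\geq \eta E_N\}$ lies in infinitely many $A_n$. Reverse Fatou then gives $\mu(\limsup_n A_n)\geq (1-\eta)^2/C$, and letting $\eta\to0$ yields $\mu(\limsup_n A_n) \geq 1/C > 0$.

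\textbf{Step 3.} Upgrade positive measure to full measure; this is the main obstacle. The set $\limsup_n A_n$ is not $f$-invariant in general, so ergodicity cannot be applied to it directly, and the lemma as stated places no dynamical structure on $(A_n)$. My first attempt is to apply the Kochen--Stone bound to the tail sequences $(A_n)_{n\geq k}$. The hypothesis is stated only for initial segments, so I would redo Step 1 with sums over $k\le n\le N$. Since $E_N\to\infty$ while $k$ is fixed, the tail sums still dominate, and the quasi-independence bound persists asymptotically with a constant $C'$ close to $C$. This gives the lower bound $1/C$ on every tail but still not $1$.

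To reach $1$ I would use the localized form of Kochen--Stone. For any set $B$ of positive measure, one wants $\mu(B\cap\limsup A_n)\geq \mu(B)^2/C_B$, which requires $\mu(A_n\cap B)$ to be asymptotically comparable to $\mu(A_n)\mu(B)$. Obtaining this comparability is exactly where the dynamics must enter: take $B$ a Bowen ball or cylinder and apply exponential mixing (Main Theorem~\ref{thm:exponential_mixing}) together with the Gibbs bounds (Proposition~\ref{thm:gibbs_bounds}). If the $A_n$ are, for example, preimages $f^{-n}\widetilde A_n$ of sets with controlled H\"older approximations, then $\mu(A_n\cap B)=\mu(A_n)\mu(B)+O(\theta^n)$. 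The localized bound then makes $\limsup A_n$ have density at least $1/C$ inside every ball, and a Lebesgue density argument (via the Gibbs property and doubling of Bowen balls) forces full measure.

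If arbitrary measurable $A_n$ are allowed, I expect the conclusion ``$=1$'' to require either $C=1$ asymptotically (the Erd\H{o}s--R\'enyi form) or this extra regularity assumption on the $A_n$. I would make that regularity hypothesis explicit in the statement when completing the proof.
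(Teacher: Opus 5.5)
Your convergent case is correct, and Steps~1--2 are a correct Kochen--Stone argument: Paley--Zygmund at level $\eta E_N$, followed by reverse Fatou (using $E_N\to\infty$), gives $\mu(\limsup_n A_n)\ge 1/C$. Step~3 cannot be completed, and you should say so outright rather than as an expectation: the second assertion of the lemma is false. Take $A_n=A$ for all $n$, with $0<\mu(A)<1$. Then $\sum_n\mu(A_n)=\infty$, and
$\sum_{m,n\le N}\mu(A_m\cap A_n)=N^2\mu(A)=\mu(A)^{-1}\bigl(\sum_{n\le N}\mu(A_n)\bigr)^2$,
so quasi-independence holds with $C=1/\mu(A)$. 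Yet $\mu(\limsup_n A_n)=\mu(A)=1/C<1$. Ergodicity is irrelevant because nothing ties the $A_n$ to $f$, and the example also shows your bound $1/C$ is sharp.

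The paper's proof does not avoid this. It applies Paley--Zygmund to $\{S_N>0\}=\bigcup_{n\le N}A_n$ and asserts that these sets increase to $\limsup_n A_n$. In fact they increase to $\bigcup_n A_n$, so that step only gives $\mu(\bigcup_n A_n)\ge 1/C$; your Step~2 is precisely the needed correction. The paper then gets invariance of $\limsup_n A_n$ only by assuming $A_n=f^{-n}(B)$ for a fixed $B$. That hypothesis is absent from the statement, and under it Birkhoff's theorem alone gives full measure (a.e.\ orbit visits $B$ with frequency $\mu(B)>0$), with quasi-independence playing no role. Your Erd\H{o}s--R\'enyi restatement, requiring $\int Z_N^2\,d\mu/E_N^2\to1$ along a subsequence, is valid and follows directly from your Steps~1--2.

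Your dynamical repair in Step~3 has a gap of its own. The first-moment comparability $\mu(A_n\cap B)\approx\mu(A_n)\mu(B)$ is not enough when combined with the only available second-moment bound, $\int_B Z_N^2\,d\mu\le\int Z_N^2\,d\mu\le CE_N^2$. Paley--Zygmund on $B$ then gives only $\mu(B\cap\limsup_n A_n)\ge\mu(B)^2/C$, i.e.\ relative density $\mu(B)/C$. This degenerates as $B$ shrinks, so no density or martingale argument can conclude.

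You would need a localized second moment, $\int_B Z_N^2\,d\mu\le C'\mu(B)E_N^2$ for every cylinder $B$ (decorrelation of $A_m\cap A_n$ from $B$). After that, L\'evy's martingale convergence along cylinders gives full measure. The more standard route, for shrinking targets $A_n=f^{-n}B_n$ with $B_n$ balls or cylinders, is to use exponential mixing to verify Sprindzuk's variance condition
$\int\bigl(\sum_{M<n\le N}(\mathbf{1}_{A_n}-\mu(A_n))\bigr)^2\,d\mu\le K\sum_{M<n\le N}\mu(A_n)$,
which yields $Z_N/E_N\to1$ almost surely. Either way, the lemma must be restated with such a hypothesis on the $A_n$.
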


\begin{proof}
The first part is the standard Borel-Cantelli lemma (no independence required): $\mu(\limsup_n A_n) = \mu(\bigcap_{N=1}^\infty\bigcup_{n \geq N}A_n) \leq \mu(\bigcup_{n \geq N}A_n) \leq \sum_{n \geq N}\mu(A_n) \to 0$.

For the second part, define $S_N = \sum_{n=1}^N\mathbf{1}_{A_n}$. Then $\mathbb{E}[S_N] = \sum_{n=1}^N\mu(A_n) \to \infty$ and $\mathbb{E}[S_N^2] = \sum_{m,n=1}^N\mu(A_m\cap A_n) \leq C(\mathbb{E}[S_N])^2$. By the Paley-Zygmund inequality: $\mu(S_N > 0) \geq (\mathbb{E}[S_N])^2/\mathbb{E}[S_N^2] \geq 1/C > 0$. Since $\{S_N > 0\} = \bigcup_{n=1}^N A_n \uparrow \limsup_n A_n$ (up to measure zero), $\mu(\limsup_n A_n) \geq 1/C > 0$. By ergodicity, $\limsup_n A_n$ is $f$-invariant (up to null sets when $A_n = f^{-n}(B)$ for a fixed $B$), so $\mu(\limsup_n A_n) \in \{0,1\}$, forcing $\mu(\limsup_n A_n) = 1$.
\end{proof}

This lemma is used in proving the generic points theorem and the law of the iterated logarithm.

\subsection{Dimension Estimates}

We record the mass distribution principle and Frostman'{}s lemma, classical tools for bounding Hausdorff dimension from below. These are used in Part~VI \cite{Thiam2026f} for the multifractal analysis of dimension spectra, following the program of  \cite{Halseyetal1986},  \cite{Rand1989}, \cite{BarreiraPesinSchmeling1999}, \cite{PesinWeiss1997}, and  \cite{Bowen1979}.

\begin{lemma}[Mass Distribution Principle {\cite[Proposition~4.2]{Pesin1997}}]\label{lem:mass_distribution}
Let $\mu$ be a finite Borel measure on a metric space $X$. If there exist $s > 0$ and $C > 0$ such that
\begin{equation}
\mu(B_r(x)) \leq C r^s
\end{equation}
for all $x \in \mathrm{supp}(\mu)$ and all small $r > 0$, then $\dim_H(\mathrm{supp}(\mu)) \geq s$.
\end{lemma}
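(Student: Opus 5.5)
The natural route is a covering argument at every scale. The Mass Distribution Principle is essentially Frostman's mass-transference inequality. It suffices to show that every countable cover $\{U_i\}$ of $\mathrm{supp}(\mu)$ by sets of small diameter satisfies $\sum_i (\mathrm{diam}\,U_i)^s \geq c > 0$ with $c$ independent of the cover. This forces $\mathcal{H}^s(\mathrm{supp}(\mu)) > 0$, and hence $\dim_H(\mathrm{supp}(\mu)) \geq s$ by the definition of Hausdorff dimension as the critical exponent. I assume, as is implicit, that $\mu(X) > 0$ and that $\mu$ is carried by $\mathrm{supp}(\mu)$. The latter holds in a separable metric space, and in general we may replace $\mathrm{supp}(\mu)$ by any Borel set of full measure.

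The first step converts the hypothesis on balls centered in the support into a bound on arbitrary sets. Let $U$ have diameter $r < r_0/2$, where $r_0$ is the scale below which $\mu(B_r(x)) \leq C r^s$ holds. Two cases arise.
\begin{enumerate}
\item[(a)] If $U \cap \mathrm{supp}(\mu) = \emptyset$, then $\mu(U \cap \mathrm{supp}(\mu)) = 0$ and $U$ contributes nothing.
\item[(b)] Otherwise, pick $x \in U \cap \mathrm{supp}(\mu)$. Then $U \subset B_{r}(x)$ (use the closed ball, or radius $2r$ if balls are open), so
\[
\mu(U) \leq \mu(B_{2r}(x)) \leq C 2^s (\mathrm{diam}\,U)^s .
\]
\end{enumerate}

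The second step is the summation. Given a $\delta$-cover $\{U_i\}$ of $\mathrm{supp}(\mu)$ with $\delta < r_0/2$, countable subadditivity gives
\[
0 < \mu(\mathrm{supp}(\mu)) \leq \sum_i \mu(U_i \cap \mathrm{supp}(\mu)) \leq C 2^s \sum_i (\mathrm{diam}\,U_i)^s .
\]
Taking the infimum over covers yields $\mathcal{H}^s_\delta(\mathrm{supp}(\mu)) \geq \mu(X)/(C2^s)$ for all small $\delta$. Letting $\delta \to 0$ gives $\mathcal{H}^s(\mathrm{supp}(\mu)) \geq \mu(X)/(C 2^s) > 0$, and therefore $\dim_H \geq s$.

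The only delicate point, and thus the main obstacle, is the recentering in the first step. The hypothesis controls only balls centered at support points. Covering sets disjoint from the support must be discarded, and sets meeting it must be enlarged to a centered ball at the cost of the factor $2^s$. One must also check that the set being covered carries full $\mu$-mass, so that subadditivity starts from a positive quantity; in a non-separable space I would simply work with a full-measure Borel subset instead. Beyond this, the argument is routine.
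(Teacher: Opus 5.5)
Your proposal is correct and follows the same covering argument as the paper: bound $\mu(U_i)$ by a constant times $(\mathrm{diam}\,U_i)^s$, sum over a $\delta$-cover of $\mathrm{supp}(\mu)$ using countable subadditivity, and conclude $\mathcal{H}^s(\mathrm{supp}(\mu))>0$. Your recentering at a point of $U_i\cap\mathrm{supp}(\mu)$, together with discarding covering sets that miss the support, makes explicit a step the paper skips; the price is only a harmless factor $2^s$. The paper simply asserts that each $U_i$ lies in a ball of radius $\mathrm{diam}(U_i)$ to which the hypothesis applies, without checking that the ball's center lies in $\mathrm{supp}(\mu)$.
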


\begin{proof}
Let $\{U_i\}$ be any cover of $\mathrm{supp}(\mu)$ with $\mathrm{diam}(U_i) \leq \delta$. Each $U_i$ is contained in a ball of radius $\mathrm{diam}(U_i)$, so $\mu(U_i) \leq C(\mathrm{diam}(U_i))^s$. Since $\{U_i\}$ covers $\mathrm{supp}(\mu)$: $\mu(\mathrm{supp}(\mu)) \leq \sum_i\mu(U_i) \leq C\sum_i(\mathrm{diam}(U_i))^s$. Thus $\sum_i(\mathrm{diam}(U_i))^s \geq \mu(\mathrm{supp}(\mu))/C > 0$. Taking the infimum over all $\delta$-covers: $\mathcal{H}^s(\mathrm{supp}(\mu)) \geq \mu(\mathrm{supp}(\mu))/C > 0$, so $\dim_H(\mathrm{supp}(\mu)) \geq s$.
\end{proof}

\begin{lemma}[Frostman's Lemma {\cite[Theorem~8.8]{Mattila1995}}]\label{lem:frostman}
If $K \subset \mathbb{R}^d$ is compact with $\dim_H(K) > s$, then there exists a probability measure $\mu$ supported on $K$ with $\mu(B_r(x)) \leq C r^s$ for all $x$ and small $r$.
\end{lemma}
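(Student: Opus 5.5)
Frostman's lemma is classical and I would prove it by dyadic mass redistribution, then pass to a weak$^*$ limit. Since $\dim_H(K) > s$, we have $\mathcal{H}^s(K) = \infty > 0$. The first step is to upgrade this to a positive lower bound on the \emph{dyadic net content}. Let $\mathcal{D}_k$ denote the half-open dyadic cubes of side $2^{-k}$. Every set of diameter $r$ is covered by at most $c_d$ dyadic cubes of side comparable to $r$. Hence
\begin{equation}
\mathcal{H}^s_{\mathcal{D}}(K) := \inf\Bigl\{\sum_i \ell(Q_i)^s : K \subset \bigcup_i Q_i,\ Q_i \text{ dyadic}\Bigr\} \geq c\,\mathcal{H}^s_\infty(K) > 0,
\end{equation}
where $\mathcal{H}^s_\infty$ is the unrestricted-diameter Hausdorff content. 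It is positive because $\mathcal{H}^s(K) > 0$ and $K$ is compact, so $\mathcal{H}^s_\infty(K) = 0$ would force $\mathcal{H}^s(K) = 0$.

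Next, for each $m$ I construct a measure $\mu_m$ by the standard bottom-up procedure.
\begin{enumerate}
\item[(a)] For every $Q \in \mathcal{D}_m$ meeting $K$, put mass $2^{-ms}$ spread uniformly (Lebesgue-normalized) on $Q$.
\item[(b)] Go up one generation at a time: for $Q \in \mathcal{D}_{k}$, $k = m-1, m-2, \ldots, 0$, if the current mass of $Q$ exceeds $\ell(Q)^s$, rescale the measure inside $Q$ by the factor $\ell(Q)^s/\mu(Q)$.
\end{enumerate}
By construction $\mu_m(Q) \leq \ell(Q)^s$ for every dyadic $Q$ with $\ell(Q) \geq 2^{-m}$. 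Moreover, every $x \in K$ lies in some dyadic cube $Q_x$ that is \emph{saturated}, meaning $\mu_m(Q_x) = \ell(Q_x)^s$: take the largest cube in the chain above the level-$m$ cube containing $x$ at which equality holds, and note that the level-$m$ cube itself starts saturated and rescaling never destroys saturation of the maximal such cube. The maximal saturated cubes are disjoint and cover $K$, so
\begin{equation}
\|\mu_m\| = \sum \ell(Q_x)^s \geq \mathcal{H}^s_{\mathcal{D}}(K) =: c_0 > 0,
\end{equation}
while $\|\mu_m\| \leq \sum_{Q \in \mathcal{D}_0,\ Q \cap K \neq \emptyset} 1 < \infty$.

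Normalize, $\tilde\mu_m = \mu_m/\|\mu_m\|$, and extract a weak$^*$ limit $\mu$, using that all $\tilde\mu_m$ live on a fixed compact neighbourhood of $K$. The support of $\mu_m$ lies in the $\sqrt{d}\,2^{-m}$-neighbourhood of $K$, so $\mathrm{supp}\,\mu \subset K$. For a ball $B_r(x)$ with $2^{-k-1} < r \leq 2^{-k}$, the open ball meets at most $3^d$ cubes of $\mathcal{D}_k$. Using lower semicontinuity of the measure on open sets under weak$^*$ convergence,
\begin{equation}
\mu(B_r(x)) \leq \liminf_m \tilde\mu_m(B_r(x)) \leq 3^d c_0^{-1} 2^{-ks} \leq 3^d 2^s c_0^{-1} r^s.
\end{equation}
This gives the conclusion with $C = 3^d 2^s/c_0$.

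The main obstacle is the first step, the comparison $\mathcal{H}^s_{\mathcal{D}}(K) \gtrsim \mathcal{H}^s_\infty(K) > 0$, together with the bookkeeping that maximal saturated cubes exist and cover $K$. The rescaling step only lowers masses, so once each point's level-$m$ cube is saturated initially, some ancestor remains saturated; I would check this carefully by induction on the generation. Alternatively, one can simply cite \cite[Theorem~8.8]{Mattila1995}.
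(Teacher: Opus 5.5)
Your argument is correct. It is the dyadic mass-distribution proof given in \cite[Theorem~8.8]{Mattila1995}, whereas the paper's own proof only sketches a different route (weak$^*$ limits of normalized restrictions of $\mathcal{H}^s_\delta$ to $K$) and defers the details to the literature.

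The paper's sketch is shorter to state, but as written it does not by itself produce a measure. $\mathcal{H}^s_\delta$ is only an outer measure and is not additive on disjoint Borel sets. The natural bound $\mathcal{H}^s_\delta(B_r(x)\cap K) \leq (2r)^s$ holds only when $2r \leq \delta$. Moreover, $\mathcal{H}^s_\delta(K) \to \mathcal{H}^s(K) = \infty$ as $\delta \to 0$ when $\dim_H K > s$. To extract a genuine measure with the ball bound at all small scales, that route needs an extra functional-analytic step, such as Hahn--Banach applied to a content functional or Mattila's weighted Hausdorff measures.

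Your construction avoids this. Each $\mu_m$ is an honest finite Borel measure with the scale-by-scale bound $\mu_m(Q) \leq \ell(Q)^s$ built in. The covering by maximal saturated cubes gives the uniform lower bound $\|\mu_m\| \geq c_0$, and lower semicontinuity on open sets transfers everything to the limit. You also get the explicit constant $C = 3^d 2^s/\mathcal{H}^s_{\mathcal{D}}(K)$, and your argument only uses $\mathcal{H}^s(K) > 0$, which is slightly weaker than $\dim_H K > s$.

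Two small corrections. First, the justification you give for $\mathcal{H}^s_{\mathcal{D}}(K) \geq c\,\mathcal{H}^s_\infty(K)$ points the wrong way. Covering an arbitrary set of diameter $r$ by boundedly many dyadic cubes of side comparable to $r$ proves the reverse inequality $\mathcal{H}^s_{\mathcal{D}} \lesssim \mathcal{H}^s_\infty$. The inequality you actually need is trivial: a dyadic cube is itself a set of diameter $\sqrt{d}\,\ell(Q)$, so every dyadic cover is an admissible cover and $\mathcal{H}^s_\infty(K) \leq d^{s/2}\,\mathcal{H}^s_{\mathcal{D}}(K)$. So the step you flagged as the main obstacle is immediate, and $\mathcal{H}^s_\infty(K) > 0$ follows from $\mathcal{H}^s(K) > 0$ without using compactness.

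Second, $\mu_m(Q) \leq \ell(Q)^s$ holds only for generations $0$ through $m$, that is, for $2^{-m} \leq \ell(Q) \leq 1$, not for all larger cubes. This is all you use, since the ball estimate is only needed for $r \leq 1$.
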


\begin{proof}
This is a classical result in geometric measure theory. The proof constructs $\mu$ as a weak$^*$ limit of normalized restrictions of $\mathcal{H}^s_\delta$ (the $\delta$-approximate Hausdorff measure) to $K$. Since $\dim_H(K) > s$, we have $\mathcal{H}^s(K) > 0$ (possibly infinite), guaranteeing a nontrivial limit. The ball condition $\mu(B_r(x)) \leq Cr^s$ follows from the construction. See  \cite[Theorem~8.8]{Mattila1995} or  \cite[Theorem~4.1]{Pesin1997} for the complete proof.
\end{proof}

These lemmas are used in the dimension theory developed in Part~VI \cite{Thiam2026f}.

\end{document}